\documentclass[11pt, a4paper, final]{amsart}
\usepackage[T1]{fontenc}
\usepackage{lmodern}
\usepackage{mathtools}
\usepackage[utf8]{inputenc}
\usepackage{graphicx}

\usepackage{amsfonts}
\usepackage{amsmath}
\usepackage{amssymb}
\usepackage{amsthm}
\usepackage{mathtools}
\usepackage{paralist, tabularx}
\usepackage{enumitem}
\usepackage[utf8]{inputenc}
\usepackage{mathabx,epsfig}
\usepackage{tikz-cd}
\usepackage{verbatim}

\usepackage{etoolbox}

\usepackage{xcolor}
\definecolor{green}{RGB}{0,127,0}
\definecolor{redd}{RGB}{191,0,0}
\definecolor{red}{RGB}{105,89,205}
\usepackage[colorlinks=true]{hyperref}

\usepackage[notref, notcite]{showkeys}
\usepackage[cmtip,arrow]{xy}

\DeclareMathOperator{\id}{id}

\newcommand{\RR}{{\bar{R}}}

\newcommand{\C}{\mathfrak{C}}

\newcommand{\nref}[2]{\hyperref[#1]{\ref*{#1}$_{#2}$}}

\DeclareMathOperator{\im}{{Im}}

\DeclareMathOperator{\Th}{{Th}}

\DeclareMathOperator{\tp}{{tp}}
\DeclareMathOperator{\cl}{{cl}}

\DeclareMathOperator{\Span}{{Span}}

\newtheorem{theorem}{Theorem}
\numberwithin{theorem}{section}
\newtheorem{lemma}[theorem]{Lemma}

\newtheorem{fact}[theorem]{Fact}
\newtheorem{proposition}[theorem]{Proposition}
\newtheorem{problem}[theorem]{Problem}
\newtheorem{conjecture}[theorem]{Conjecture}

\newtheorem{question}[theorem]{Question}
\newtheorem{corollary}[theorem]{Corollary}

\newtheorem{clm}{Claim}
\newtheorem*{clm*}{Claim}

\theoremstyle{definition}
\newtheorem{definition}[theorem]{Definition}

\newtheorem{example}[theorem]{Example}

\theoremstyle{remark}
\newtheorem{remark}[theorem]{Remark}

\newtheorem{innercustomgeneric}{\customgenericname}
\providecommand{\customgenericname}{}
\newcommand{\newcustomtheorem}[2]{%
  \newenvironment{#1}[1]
  {%
   \renewcommand\customgenericname{#2}%
   \renewcommand\theinnercustomgeneric{##1}%
   \innercustomgeneric
  }
  {\endinnercustomgeneric}
}

\newcustomtheorem{customthm}{Theorem}

\AtEndEnvironment{proof}{\setcounter{clm}{0}}
\newenvironment{clmproof}[1][\proofname]{\proof[#1]}{\endproof}

\usepackage[hyphenbreaks]{breakurl}

\newcommand{\orcidlogo}{\includegraphics[height=\fontcharht\font`\B]{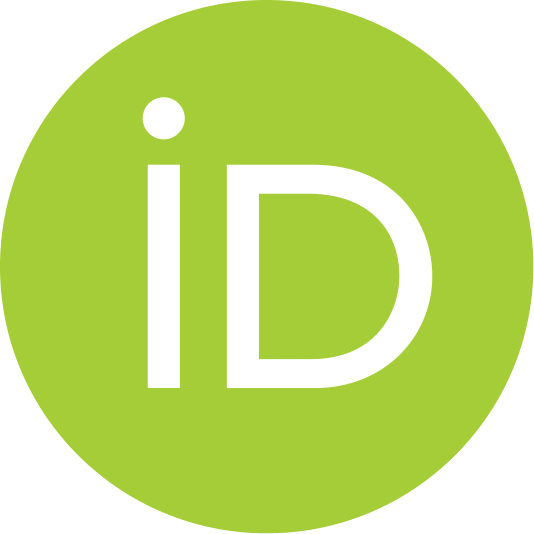}}
\newcommand{\orcid}[1]{\href{#1}{\orcidlogo #1}}

\usepackage[
backend=bibtex,
url=false,
isbn=false,
backref=true,
citestyle=alphabetic,
bibstyle=alphabetic,
autocite=inline,
maxnames=99,
minalphanames=4,
maxalphanames=4,
sorting=nyt,
]{biblatex}

\title{On the structure of approximate rings}

\author{Krzysztof Krupi\'{n}ski and Simon Machado}
\address{Krzysztof Krupi\'{n}ski \orcid{https://orcid.org/0000-0002-2243-4411}\\
Instytut Matematyczny, Uniwersytet Wroc{\l}awski\\
pl. Grunwaldzki 2, 50-384 Wroc{\l}aw, Poland}
\email{Krzysztof.Krupinski@math.uni.wroc.pl}

\address{Simon Machado \orcid{https://orcid.org/0000-0002-1787-6864}\\
ETH Zurich, Ramistrasse 101, Zurich, Switzerland}
\email{smachado@ethz.ch}

\keywords{Approximate ring, sum-product phenomenon, locally compact model, finite-dimensional algebra, escape norm.}
\subjclass[2020]{11B30, 20N99, 03C98, 11P70, 16B70, 20A15, 16P10}

\begin{document}
	
	\begin{abstract}
       By a [$K$-]approximate subring of a ring we mean an additively symmetric subset $X$ such that $X \cdot X \cup (X + X)$ is covered by finitely many [resp.\ $K$] additive translates of $X$.

We prove a structure theorem for finite approximate subrings. Our aim is to develop a general framework for the sum–product phenomenon that applies uniformly across arbitrary rings. The main result identifies nilpotent  quotients as the fundamental obstruction to growth under both addition and multiplication. Another application of the main structure theorem is a ring-theoretic counterpart of Gromov's theorem on groups of polynomial growth.

The principal tool in the proof is the existence of definable locally compact models for arbitrary approximate subrings  from \cite{Kru}.

This existence theorem extends beyond the finite (and pseudofinite) setting. To illustrate the scope of the method, we also establish a structure theorem for uniformly discrete approximate subrings of semi-simple real algebras, generalizing a classical sum-product result of Meyer.
    \end{abstract}

	\maketitle
	\tableofcontents

\section{Introduction}
The sum-product phenomenon, first studied by Erd\H{o}s and Szemer\'edi \cite{ErSz}, is a fundamental result in additive combinatorics. It expresses the principle that the additive and multiplicative structures of the ring of integers, $\mathbb{Z}$, are incompatible: there is a constant $\epsilon > 0$ such that for any finite subset $A \subseteq \mathbb{Z}$, either the sumset $A+A = \{a+b : a,b \in A\}$ or the product set $A\cdot A = \{ab : a,b \in A\}$ must be large, specifically of size at least $|A|^{1+\epsilon}$. Since its introduction, this phenomenon has found numerous striking applications. For instance, it played a crucial role in results by Helfgott \cite{Helfgott} and Bourgain--Gamburd \cite{BourgainGamburd}, where establishing the sum-product phenomenon in various rings, including finite ones, was pivotal in order to prove spectral gap for Markov operators. It also found striking applications in number theory via exponential sums \cite{Bourgain2005expsum}.

Foundational works by Bourgain--Katz--Tao \cite{MR2053599}, and Tao \cite{Tao2} have further explored how the phenomenon manifests in rings satisfying a ``few zero divisors'' assumption (see also the earlier work of \cite{Chang2007MatrixSpace} regarding matrix rings). These works distinguish between rings with few zero divisors, which are amenable to elementary methods, and rings with many scales (such as $\mathbb{Z}/p^n\mathbb{Z}$), which require advanced techniques like Bourgain's multiscale analysis.

In this paper, we argue that the key distinction lies not in the presence or absence of zero divisors, but rather in the structural properties of nilpotency. We show that the sum-product phenomenon is fundamentally tied to the nilpotency of certain substructures within the ring, offering a unified approach to understanding this phenomenon across \emph{all} rings.

We present here a simplified version (Corollary \ref{corollary: sum-product phenomenon with epsilon}) of our 
sum-product phenomenon; for the full statement, see Theorem \ref{theorem: sum-product phenomenon with any g}. For a subset $Z$ of a ring $R$ and $n \in \mathbb{N}_{>0}$, by $nZ$ we denote the set of sums of $n$ elements of $Z$.

\begin{theorem}[Sum-product phenomenon in all rings]\label{theorem: concrete sum-product}
Let $\epsilon > 0$. There exists a non-decreasing unbounded function $f\colon \mathbb{N} \rightarrow \mathbb{N}$ such that the following holds.

Let $R$ be a ring and $X \subseteq R$ be a finite subset. Define the set $X' := 4(X-X) + (X-X) \cdot 4(X-X)$. Then:
\begin{itemize}
    \item either $|X+X+X\cdot X| \geq f(|X|) |X|$,
    \item or there exists a subring $R' \subseteq R$ and an ideal $I \subseteq R' \cap X'$ such that $R'/I$ is nilpotent and $|X' \cap R'| \geq |X|^{1-\epsilon}$.
\end{itemize}
\end{theorem}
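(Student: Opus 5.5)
We argue by contradiction and compactness, using the paper's main structure theorem for finite approximate subrings, which the abstract says rests on the existence of definable locally compact models from \cite{Kru}. Suppose that for some $\epsilon>0$ no such $f$ exists. Then there is a constant $K$ and a sequence of rings $R_n$ with finite subsets $X_n\subseteq R_n$ such that $|X_n|\to\infty$ and $|X_n+X_n+X_n\cdot X_n|\le K|X_n|$, while the second alternative fails for every $n$. That is, there is no subring $R'$ with an ideal $I\subseteq R'\cap X_n'$ such that $R'/I$ is nilpotent and $|X_n'\cap R'|\ge |X_n|^{1-\epsilon}$.

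\textbf{Step 1: obtain an approximate subring.} From the small-growth hypothesis we pass to a genuine $K'$-approximate subring living inside $X_n'$. Plünnecke--Ruzsa type estimates give $|m(X_n-X_n)|\le K^{O(m)}|X_n|$. Covering lemmas (Ruzsa) then show that $A_n:=2(X_n-X_n)$, or a similar symmetric set, satisfies $A_n+A_n\subseteq F+A_n$ with $|F|\le K^{O(1)}$. For products we use $X\cdot X\subseteq$ few translates: from $|X+X\cdot X|\le K|X|$, each $x\cdot X$ has small doubling relative to $X$. The aim is to show that $A_n\cdot A_n$ is covered by $O_K(1)$ translates of $4(X_n-X_n)+(X_n-X_n)\cdot 4(X_n-X_n)$. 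This is exactly why $X'$ has the stated shape, and it requires careful Ruzsa-covering bookkeeping in both operations.

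\textbf{Step 2: apply the structure theorem.} The paper's main structure theorem for finite $K'$-approximate subrings provides a subring $R'$ and an ideal $I$ with $I\subseteq 4A$ (or inside a fixed bounded sum-product expression in $A$), where $R'/I$ is nilpotent of bounded class and $A\cap R'$ is large. In the qualitative form, $A$ is covered by $O_{K'}(1)$ translates of $A\cap R'$; in the weaker form, by $|A|^{o(1)}$ translates. The proof of the structure theorem goes via the locally compact model. One takes an ultraproduct, gets a locally compact ring model, and uses that a neighbourhood of $0$ in a locally compact ring contains an open subring (a compact open subring in the totally disconnected part). The real Lie part yields finite-dimensional real algebras whose nilpotent radical gives the nilpotent quotient. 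I assume this theorem as stated.

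\textbf{Step 3: counting and contradiction.} A covering by $C$ translates gives $|A_n\cap R'|\ge|A_n|/C\ge |X_n|/C$. Once $|X_n|$ is large, this exceeds $|X_n|^{1-\epsilon}$, and $A_n\cap R'\subseteq X'_n\cap R'$, so the second alternative holds and we reach a contradiction. Letting $f(N)$ be the largest $K$ that works for all $|X|\ge N$ then yields a non-decreasing unbounded function. The $\epsilon$ slack is there to absorb a non-uniform covering number that grows subpolynomially in $|X|$, should the structure theorem only give one.

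\textbf{Main obstacle.} The hardest part is Step 1 together with matching the constants in Step 2. The difficulty is ensuring that both $I$ and the large piece land inside the specific set $X'=4(X-X)+(X-X)\cdot4(X-X)$, rather than inside some larger bounded expression. I would first try to prove the structure theorem with ideal contained in $A+A\cdot A$ for the approximate subring $A$ built from $X-X$. I would then verify that the Ruzsa covering argument produces the needed approximate subring with $A+A\cdot A\subseteq X'$ itself, without extra translates.
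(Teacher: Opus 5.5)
Your architecture matches the paper's: turn small sum-product growth into an approximate subring, apply the finite structure theorem, count, and let $f$ grow slowly. Your reduction to a fixed $K$ is just a repackaging of the paper's explicit choice of $f$ with $N_2(K(f(n)))\le n^{\epsilon}$.

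The gap is exactly the point you flag as the main obstacle and leave open: nothing in your argument places $I$, or the large piece, inside the specific set $X'$.
\begin{itemize}
\item In Step 2 you use the structure theorem with ``$I\subseteq 4A$ (or inside a fixed bounded sum-product expression in $A$)''. The latter, e.g.\ $I\subseteq A_3$ as in Theorem \ref{theorem: Structure finite app rings}, does not give $I\subseteq X'$.
\item With your candidate $A=2(X-X)$, even the sharpest available form only gives $I\subseteq 4A+A\cdot 4A=8(X-X)+2(X-X)\cdot 8(X-X)$, which need not lie in $X'$.
\item Your proposed remedy, a structure theorem with $I\subseteq A+A\cdot A$, is unproved and is not what the method delivers. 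The locally compact model of Fact \ref{fact: locally compact model exists} only puts the preimage of a neighbourhood of $0$ inside $4A+A\cdot 4A$, and the paper remarks that $4A+A\cdot A$ is not enough there.
\item Step 1 is aimed at the wrong target. Being an approximate subring requires $A\cdot A$ to be covered by translates of $A$, not of $X'$. The shape of $X'$ does not come from covering bookkeeping.
\end{itemize}

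The missing observation is that $X'$ is literally $4A+A\cdot 4A$ for $A:=X-X$. This is exactly where the paper's structure theorem puts both the commensurable approximate subring and the ideal.

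First, by Remark \ref{remark: small n-pling for rings}, $|X+X+X\cdot X|\le K|X|$ already makes $A=X-X$ a $(K^5+K^{19})$-approximate subring. Pl\"{u}nnecke--Ruzsa makes $X-X$ a $K^5$-approximate subgroup. Ruzsa covering gives $X\cdot X\subseteq F+(X-X)$ with $|F|\le K$. Hence $(X-X)\cdot(X-X)\subseteq 2X^2-2X^2$ is covered by $K^{19}$ translates of $X-X$.

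Next, apply Theorem \ref{theorem: structure of finite approximate rings 1}, the regime in which the ideal lies in $Y$ itself. It gives $Y\subseteq 4A+A\cdot 4A=X'$ which is $N_2$-commensurable with $A$, and an ideal $I\lhd\langle Y\rangle$ with $I\subseteq Y$ and $\langle Y\rangle/I$ nilpotent. With $R':=\langle Y\rangle$ you get $I\subseteq Y\subseteq X'\cap R'$ and
\[
|X'\cap R'|\ge |Y|\ge |X-X|/N_2\ge |X|/N_2 .
\]
This exceeds $|X|^{1-\epsilon}$ once $|X|$ is large in terms of $N_2(K^5+K^{19})$.

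Use $Y$ itself as the large piece. The theorem covers $A$ by translates of $Y$, not of $A\cap R'$ as you assumed in Step 3.

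Separately, your sketch of why the structure theorem holds, via the nilpotent radical of the target algebra, is not how it goes. The nilpotent quotient comes from iterating the escape norm and quotienting by $\mathbb{Z}^*u$ for $u$ of infinitesimal norm. Since you take the theorem as given, this is not load-bearing.
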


While this result provides a unified framework, it comes at the cost of quantitative precision. Specifically, the function $f$ is unspecified. Unlike the classical sum-product phenomenon in $\mathbb{Z}$, where the growth is known to be a power function, the precise growth rate here remains unknown. This limitation reflects the nature of our methods, which rely on ideas developed in the proof of the structure theorem for finite approximate subgroups by Breuillard, Green, and Tao (henceforth the BGT theorem) \cite{BGT}.

A more precise statement, relating structural data to the ratio $\frac{|X+X+XX|}{|X|}$, requires the notion of \emph{approximate rings}. Recently introduced by the first author in \cite{Kru}, this concept captures the lack of growth under addition and multiplication quantitatively. A subset $X$ of a ring is called a \emph{$K$-approximate subring} if it is additively symmetric (i.e., $0 \in X$ and $X=-X$) and the set $X\cdot X \cup (X +X)$ can be covered by $K$ additive translates of $X$. Variants of this definition have appeared previously in the literature, often in finitary contexts (e.g., \cite{Kow, Bre}).

It is straightforward to verify that a \emph{finite} $K$-approximate subring satisfies $|X+X+XX| \leq K^3 |X|$. Conversely, a covering argument shows that if a subset $X$ satisfies $\frac{|X+X+XX|}{|X|} \leq K$, then the difference set $X-X$ is a $K^{O(1)}$-approximate subring (see Remark \ref{remark: small n-pling for rings}). Similarly, if the ambient ring is unital and $1 \in X$ with $|XX-XX| \leq K|X|$, then $X-X$ forms a $K^{O(1)}$-approximate subring (see \cite[Lemma 3.3.1]{Kow}).

Let $X$ be an approximate subring. We define recursively 
$$X_0=X \textrm{ and } X_{n+1}:=X_n +X_n + X_n \cdot X_n.\footnote{This definition and notation will be used throughout the paper.}$$\label{page: definition of  X_n}
Then the union $\bigcup_{n \in \mathbb{N}} X_n$ coincides with the subring $\langle X \rangle$ generated by $X$. With this notion at hand, we can state our main structural result for \emph{finite} approximate subrings (the full statement is preceded by a direct corollary which may be easier to grasp at first glance).

\begin{theorem}\label{theorem: Structure finite app rings}
    Let $K \geq 0$ and $X \subseteq R$ be a finite $K$-approximate subring. There exists a subring $R' \subseteq R$ and an ideal $I \subseteq R'$ such that:
    \begin{enumerate}
        \item $X$ is covered by $O_K(1)$ additive cosets of $R'$;
        \item $I \subseteq X_3$;
        \item $R'/I$ is nilpotent of class $O_K(1)$.
    \end{enumerate}
\end{theorem}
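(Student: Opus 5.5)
I will argue by contradiction via ultraproducts, following the Hrushovski/BGT strategy adapted to rings. Suppose the statement fails for some $K$. Then for each $n$ there is a finite $K$-approximate subring $X^{(n)}\subseteq R^{(n)}$ for which no choice of $R'$ and $I$ works with bounds $n$ in (1) and (3). Take a nonprincipal ultraproduct $X=\prod X^{(n)}/\mathcal U\subseteq R=\prod R^{(n)}/\mathcal U$. This is a pseudofinite (internal) $K$-approximate subring. By the existence theorem for definable locally compact models from \cite{Kru}, there is a locally compact ring $S$ and a ring homomorphism $\pi\colon\langle X\rangle\to S$ with the following properties: $\pi(X)$ is relatively compact; $\ker\pi$ lies inside $X_3$ (more precisely, inside a type-definable ideal contained in some fixed $X_m$, which I will aim to place in $X_3$ by choosing the model carefully); and preimages of compact/open sets are sandwiched between definable sets. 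Pseudofiniteness provides a finitely additive invariant measure (the normalized counting measure on $X$), and it transfers to a Haar measure on the additive group of $S$.

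Next I will analyse the structure of the locally compact ring $S$, or rather of the subring $\overline{\langle\pi(X)\rangle}$, which is compactly generated by an open relatively compact set. The plan is as follows.
\begin{enumerate}
\item Use the structure theory of locally compact rings. By van Dantzig-type results for rings, a locally compact ring that is totally disconnected and compactly generated has an open compact subring, while the connected component is a finite-dimensional real algebra.
\item Quotient the finite-dimensional real algebra part by its Jacobson radical, which is nilpotent. A semisimple real algebra would carry a nontrivial multiplicative scaling, and this contradicts the fact that $\pi(X)$ is compact yet its iterates $X_n$ remain covered by boundedly many translates. Hence the connected part is nilpotent modulo a compact piece.
\item Find an open subring $U\le S$ and a compact ideal $J$ of $U$ with $U/J$ nilpotent and $J\subseteq\pi(X_3)$ (the latter after shrinking).
\end{enumerate}
Pulling back, set $R'=\pi^{-1}(U)\cap\langle X\rangle$ and $I=\pi^{-1}(J)$. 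Then $X$ is covered by finitely many cosets of $R'$, because $\pi(X)$ is compact and $U$ is open. Nilpotency of $R'/I$ follows from that of $U/J$.

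To get back to the finite structures, I have to replace the type-definable objects by definable ones. I will approximate $R'$ and $I$ by definable sets using the sandwich property and then apply \L o\'s's theorem. This yields, for $\mathcal U$-many $n$, a subring and an ideal satisfying (1)--(3) with bounds independent of $n$. That contradicts the choice of the counterexamples.

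The main obstacle is the step in which the locally compact ring is shown to be "nilpotent modulo a compact ideal lying in $\pi(X_3)$" in a \emph{definable} way. Two points are delicate. First, subrings of the ultraproduct need not be definable, so $R'$ will have to be taken as the subring generated by a definable approximate piece, and its nilpotency class must be bounded uniformly. Second, one must ensure $I\subseteq X_3$ exactly, rather than only $I\subseteq X_m$ for some $m$. For the second point I would first try to use the Gleason–Yamabe style escape-norm argument: choose a small compact open neighbourhood inside $\pi(X_3)$ and take $J$ to be the largest ideal contained in it, which is compact by the no-small-subrings property of the quotient.
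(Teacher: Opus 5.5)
There is a genuine gap, and it sits in your step 2, which carries essentially all of the content of the theorem.

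\textbf{Why step 2 fails.} Your reason for excluding a semisimple quotient of the target is the following: a semisimple real algebra would carry a nontrivial multiplicative scaling, which would contradict compactness of $\pi(X)$ given that the $X_n$ stay covered by boundedly many translates. This is not a valid argument. Suppose $\pi(x)=te$ with $e$ idempotent and $t>1$. Then the powers $x^k$ do escape every compact set. But $x^k$ only lies in $X_{m(k)}$ with $m(k)\to\infty$, and this is perfectly compatible with each $\pi(X_n)$ being compact.

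The property you invoke holds for every approximate subring that has a locally compact model, and infinite approximate subrings can have semisimple targets. For example, $X=\{a+b\sqrt2\in\mathbb{Z}[\sqrt2]: |a-b\sqrt2|\le 1\}\subseteq\mathbb{R}$ is one of Meyer's sets $\mathcal{O}_{K,v}$ (Section \ref{Section: Sum-product for real algebras}). Its locally compact model $a+b\sqrt2\mapsto a-b\sqrt2$ lands in the field $\mathbb{R}$.

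So finiteness has to enter exactly at this point, and your sketch never uses it: the counting measure is introduced but plays no role. Step 3 and the ``largest ideal inside a small neighbourhood'' idea then simply presuppose the conclusion. In a connected target with additive group $\mathbb{R}^n\times C$, a neighbourhood containing no nontrivial subgroups contains no nonzero ideal, so this says nothing about nilpotency.

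\textbf{What the paper does instead.} The missing idea is a BGT-style escape-norm argument, run \emph{inside the pseudofinite ring} rather than on the target.

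\begin{enumerate}
\item Using Fact \ref{fact: locally compact model exists}, a ring version of Gleason--Yamabe and Kaplansky's theorem, the paper passes to $Y\subseteq 4X+X\cdot 4X$ with a model onto a connected $\mathcal{A}$ with additive group $\mathbb{R}^n\times C$ and $f^{-1}[U]\subseteq Y$.
\item It shrinks $Y$ using the Frobenius norm on the real algebra $\mathcal{A}/(\{0\}\times C)$, so that the escape norm satisfies $\|x+y\|\le 4\max(\|x\|,\|y\|)$ and $\|xy\|\le 2\|x\|\,\|y\|$.
\item Then $I_1=\{r:\|r\|=0\}$ is a definable ideal contained in $Y\subseteq X_3$. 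This, not a pullback of a compact ideal of the target, is the ideal in the theorem.
\item Finiteness enters through the existence of a nonzero $u\in Y/I_1$ of \emph{minimal} escape norm. This norm is infinitesimal, so submultiplicativity forces $ru=ur=0$ for all $r$ of sufficiently small standard norm. Such $r$ generate the whole ring, because the target is connected. Hence $\mathbb{Z}^*u$ is a null ideal.
\item Quotienting by $\mathbb{Z}^*u$ strictly lowers the Lie dimension of the target, since the image of $\mathbb{Z}^*u$ is non-discrete.
\item After at most $2\dim\mathcal{A}$ iterations, an induction shows that $\langle Y\rangle^*/I_1$ is nilpotent.
\end{enumerate}

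Nilpotency of the target follows from this only a posteriori. Without an argument of this kind, your step 2 is unproved.
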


\begin{theorem}[Structure of finite approximate subrings]\label{theorem: main theorem intro}
For any $K \in \mathbb{N}$, there exist constants $N_1(K), \dots, N_4(K) \in \mathbb{N}$ such that for every finite $K$-approximate subring $X$, there exists an $N_1(K)$-approximate subring $Y \subseteq 4X + X \cdot 4X$ which is $N_2(K)$-commensurable to $X$, and an ideal $I \lhd \langle Y \rangle$ contained in $Y_{N_3(K)}$ such that $\langle Y \rangle/I$ is nilpotent of class at most $N_4(K)$.

Furthermore, we establish two regimes for these constants:
\begin{itemize}
    \item One may choose $N_3(K)=1$ (providing strong control on the ideal depth) and $N_1(K)=K^{510}+K^{22}$;
    \item Alternatively, one may choose $N_4(K)=\lfloor 4 \log_2(K) \rfloor$ (providing logarithmic control on the nilpotency class) and $N_1(K)=K^{510}+K^{22}$.
\end{itemize}
\end{theorem}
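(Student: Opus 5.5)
We will argue by contradiction and ultraproducts, following the Breuillard--Green--Tao strategy but replacing Lie models by the definable locally compact ring models of \cite{Kru}. First the qualitative statement with unspecified $N_i(K)$. Suppose that for some $K$ there is a sequence of finite $K$-approximate subrings $X^{(m)}\subseteq R^{(m)}$ for which no choice of $Y,I$ with constants at most $m$ exists. Take a nonprincipal ultraproduct $X=\prod X^{(m)}/\mathcal U$ inside $R=\prod R^{(m)}/\mathcal U$. This is a pseudofinite $K$-approximate subring. By \cite{Kru} it has a definable locally compact model: a locally compact ring $S$ and a surjective ring homomorphism $\pi\colon\langle X\rangle\to S$. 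Here $\pi(X)$ is relatively compact, $\ker\pi\subseteq X_{n_0}$ for a small $n_0$ (in fact $\ker\pi\subseteq 4X+X\cdot 4X$ or $X_3$, depending on the version), and preimages of compact/open sets are sandwiched between definable sets.

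Next we analyse the structure of the locally compact ring $S$ generated by a compact neighbourhood $\pi(X)$. Its additive group is a compactly generated locally compact abelian group. The key step is to find a compact open subring $U\le S$, or more generally an open subring commensurable to $\pi(X)$, together with a closed ideal $J$ such that $U/J$ is nilpotent. I would do this as follows. Using van Kampen/Pontryagin structure, $S^+\cong \mathbb R^d\times\mathbb Z^e\times C$ up to compact pieces. The connected component $S^0$ is a closed ideal, and it is a finite-dimensional real algebra $A$ modulo a compact part. Since $\pi(X)$ is compact, its image in $A$ is bounded and generates an approximate subring. I claim the semisimple part of $A$ must vanish: a bounded set can neither be a subring nor contain a neighbourhood of $0$ that is closed under multiplication unless its elements are topologically nilpotent. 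Concretely, the spectral radius of each element of $\pi(X)$ must be at most $1$. Combining this with small neighbourhoods $V$ of $0$ satisfying $V\cdot\pi(X_k)\subseteq$ a slightly larger neighbourhood, one shows that the relevant elements act nilpotently. In the totally disconnected part, van Dantzig gives compact open subgroups, and a standard argument upgrades one to a compact open subring $U$. Then $U$ is profinite, and $J$ is chosen as an open ideal; the quotient $U/J$ is finite and can be taken nilpotent after passing to the Jacobson radical. I expect this step to be the main obstacle: controlling the archimedean part (and the interaction with the discrete part $\mathbb Z^e$) so as to get genuine nilpotency of the quotient, with bounded class.

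Then we pull back. Set $Y:=\pi^{-1}(U')\cap(4X+X\cdot 4X)$, where $U'$ is a suitable compact neighbourhood. Definability of the model makes $Y$ (approximately) definable, commensurable with $X$, and an approximate subring. Put $I:=\pi^{-1}(J)\cap\langle Y\rangle$, which is an ideal inside $Y_{N_3}$ because $J\subseteq U'$ and $\ker\pi\subseteq X_{n_0}$. Then $\langle Y\rangle/I$ embeds in $U/J$, so it is nilpotent of bounded class. By Łoś's theorem, these first-order properties (with fixed constants) transfer to $\mathcal U$-many $X^{(m)}$, which is a contradiction.

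For the explicit regimes, the ultraproduct argument cannot produce constants, so we would instead give a direct finitary argument for the specific bounds. For $N_4=\lfloor 4\log_2K\rfloor$, we use the filtration $X\supseteq XX\supseteq\cdots$ by product powers: a strict descent in the chain of commensurability classes $X^{\cdot n}$ can occur at most $\log_2 K^{4}$ times by doubling, and this gives the class bound. For $N_1=K^{510}+K^{22}$, we obtain the bound by explicit Ruzsa covering computations applied to $Y$ built from $4X+X\cdot4X$.
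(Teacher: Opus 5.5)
Your outer framework matches the paper: the ultraproduct, the locally compact model from \cite{Kru}, $Y$ obtained by intersecting $4X+X\cdot 4X$ with the preimage of an open subring, and $N_1$ from a covering computation. But the step that carries all the content is not proved, and the reasoning you sketch for it cannot work. That step is showing that the connected real part of the target is nilpotent. Topological nilpotency of small elements holds in \emph{every} finite-dimensional real algebra. For instance, $[-\tfrac12,\tfrac12]\subseteq\mathbb{R}$ is a bounded neighbourhood of $0$, closed under multiplication, whose elements satisfy $x^k\to0$. So this property says nothing about nilpotency of the algebra. Your spectral-radius claim fails for general approximate subrings: $X=[-2,2]$ is a $2$-approximate subring of $\mathbb{R}$ with locally compact model $\mathrm{id}\colon\mathbb{R}\to\mathbb{R}$, a semisimple target containing the element $2$. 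Any proof of nilpotency must therefore use pseudofiniteness, and your argument never invokes it. The totally disconnected part is also mishandled: the Jacobson radical of a finite ring is nilpotent, but the quotient by it is not. The paper instead uses a ring version of Gleason--Yamabe to put the compact part into a compact ideal lying inside a neighbourhood $U$ with $f^{-1}[U]\subseteq Y$. That part thus ends up in $I$, not in the nilpotent quotient.

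The missing idea is the escape-norm argument. After reducing to a target with additive group $\mathbb{R}^n\times C$, the paper uses the submultiplicative Frobenius norm on the real algebra $\mathcal{A}/(\{0\}\times C)$. It shrinks $X$ so that $\|x+y\|_X\le 4\max(\|x\|_X,\|y\|_X)$ and $\|xy\|_X\le 2\|x\|_X\|y\|_X$. Then $I_1=\{r:\|r\|_X=0\}$ is a definable ideal contained in $X$. Pseudofiniteness gives $u\in X/I_1$ of minimal positive norm, which is infinitesimal. The product inequality then forces $u$ to annihilate the generating set $\{r:\|r\|_{X/I_1}<1/2n^2\}$, so $\mathbb{Z}^*u$ is a definable null ideal. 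Passing to the quotient strictly lowers the Lie dimension of the target, because the image of $t\mapsto f(\lfloor tN\rfloor u)$ is non-discrete. Iterating yields a chain of definable ideals alternating between ``contained in $X$'' and ``null'', whence $\langle X\rangle^*/I_1$ is nilpotent of class at most $\dim_{\mathrm{Lie}}(\mathcal{A})$.

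This is proved for the definable ring $\langle X\rangle^*$, and that matters: your transfer step fails as written. The sets $\langle Y\rangle$ and $\pi^{-1}(J)\cap\langle Y\rangle$ are not definable, so \L{}o\'s does not apply to them. The ultraproduct of the finite rings $\langle Y^{(m)}\rangle$ is $\langle Y\rangle^*\supsetneq\langle Y\rangle$, and the model says nothing about $\langle Y\rangle^*\setminus\langle Y\rangle$.

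Finally, the explicit regimes need no separate finitary argument. One fixes $N_1=K^{510}+K^{22}$ and $N_4=\lfloor 4\log_2K\rfloor$ (or $N_3=1$) before running compactness, and lets only the remaining constants blow up. $N_1$ is the covering lemma for $(4X+X\cdot4X)\cap H$ with $H$ any subring. $N_4$ combines two facts. First, the Haar-measure inequality $\mu(C^2)\ge 2^d\mu(C)$ in $\mathbb{R}^d$, applied to the additive $K^2$-approximate group $X+X\cdot X$, gives $\dim\le 4\log_2K$. Second, the dimension drops at each step of the iteration. Your chain $X\supseteq XX\supseteq\cdots$ does not exist, since $XX\not\subseteq X$ in general, and it yields no bound on the class.
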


A direct application of Theorem \ref{theorem: main theorem intro} is the aforementioned sum-product phenomenon. Another application is a ring-theoretic analogue of Gromov's theorem on groups of polynomial growth (see Definition \ref{definition: ring of polynomial growth}). We state a simplified version here; the full statement (Theorem \ref{theorem: Gromov for torsion-free rings}) yields stronger structural information.

\begin{theorem}
A finitely generated torsion-free ring has polynomial growth if and only if it is virtually nilpotent.
\end{theorem}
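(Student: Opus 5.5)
The plan is to prove both directions separately, reducing the hard direction (polynomial growth implies virtually nilpotent) to Theorem \ref{theorem: main theorem intro}, in the same way that the Breuillard--Green--Tao theorem recovers Gromov's theorem.

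\emph{Easy direction.} Suppose $R$ is finitely generated, torsion-free and virtually nilpotent. I read this as: $R$ contains a subring $R'$ of finite additive index that is nilpotent. First, $R'$ is finitely generated as a ring, since a finite-index additive subgroup of a finitely generated abelian group is finitely generated. Since $R'$ is nilpotent of class $c$, every product of more than $c$ elements vanishes. Hence $R'$ is additively generated by the finitely many monomials of length at most $c$ in the generators, so $(R',+)\cong\mathbb{Z}^d$. With generating set $S$, the set $S_n$ of elements reachable in $n$ steps consists of integer combinations of boundedly many monomials. The monomials have degree at most $c$, so their coefficients grow at most like $n^{O(c)}$. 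This gives $|S_n| \le n^{O(d c)}$, and passing to finite index preserves polynomial growth.

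\emph{Hard direction.} Suppose $R=\langle S\rangle$ has polynomial growth, with $|S_n|\le C n^{D}$ for the natural word-length balls. I will follow the BGT strategy.
\begin{enumerate}
\item \textbf{Find an approximate subring.} By pigeonholing on scales there are infinitely many $n$ with $|B_{4n}|\le K|B_n|$ for some $K=K(D)$, where $B_n$ is the ball of radius $n$ (symmetrised). Products of elements of $B_n$ lie in $B_{2n}$, which has small doubling inside $B_{4n}$. The covering argument of Remark \ref{remark: small n-pling for rings} then shows that $X=B_n$ (or $B_n-B_n$) is a $K^{O(1)}$-approximate subring.
\item \textbf{Apply the structure theorem.} Theorem \ref{theorem: main theorem intro} gives $Y\subseteq B_{O(n)}$ commensurable with $B_n$, together with an ideal $I\lhd\langle Y\rangle$, $I\subseteq Y_{N_3}\subseteq B_{O(n)}$, such that $\langle Y\rangle/I$ is nilpotent of class $O_K(1)$.
\item \textbf{Use torsion-freeness to kill $I$.} The ideal $I$ is an additive subgroup contained in a finite set, so it is finite. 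Since $R$ is torsion-free, $(R,+)$ has no nontrivial finite subgroups, and therefore $I=0$. Thus $\langle Y\rangle$ is a nilpotent subring.
\item \textbf{Show $\langle Y\rangle$ has finite index in $R$.} By commensurability, $B_n$ is covered by $N_2$ translates of $\langle Y\rangle$, so $\langle Y\rangle$ meets $B_n$ in a large portion. The number of cosets of $\langle Y\rangle$ met by $B_m$ grows with $m$. I will argue as for groups: if $B_{m+1}$ meets no new cosets beyond those met by $B_m$, then the coset count stabilises and the index is finite. Additive translates suffice here because multiplication by a generator maps $B_m$ into $B_{m+1}$. Since $B_n$ meets at most $N_2$ cosets and $n$ can be taken large, stabilisation must occur before radius $n$.
\end{enumerate}

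\emph{Main obstacle.} The crux is the last step. The subring generated by $Y$ need not absorb multiplication by elements of $S$, so coset stabilisation under addition alone does not immediately give finite index. If the direct argument fails, I would instead show that $(R,+)$ is finitely generated, i.e.\ $R\cong\mathbb{Z}^d$ additively; polynomial growth should force additive rank at most $D$ via torsion-freeness. The finite-index subring $\langle Y\rangle+\dots$ would then be handled by noting that any subring of $\mathbb{Z}^d$ containing a full-rank lattice has finite index. Concretely, I would show that $\langle Y\rangle$ has full rank, because $B_n\subseteq N_2$ cosets of it and $B_n$ spans a full-rank lattice for large $n$.
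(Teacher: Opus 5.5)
Your steps 1--3 agree with the paper's argument: pigeonholing scales to get small tripling, Remark~\ref{remark: small n-pling for rings}, the structure theorem with $I\subseteq Y_{N_3(K)}$, and torsion-freeness forcing the finite ideal $I$ to vanish. The gap is step 4, which is where the real work lies.

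\textbf{Why step 4 fails as written.} The group-style stabilisation does not transfer. $H:=\langle Y\rangle$ is a subring but not an ideal of $R$. So from $B_{m+1}+H=B_m+H$ you cannot conclude that $B_m+H$ is closed under multiplication by generators, since $sh$ for $s\in S$, $h\in H$ is uncontrolled. Hence you cannot conclude $B_m+H=R$. Moreover $H=H_n$ depends on the scale $n$. All you know is that each good ball is covered by boundedly many cosets of \emph{some} nilpotent subring.

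Your fallback does not close this either. Polynomial growth does bound the rank $\dim_{\mathbb{Q}}(R\otimes\mathbb{Q})$ (count a box $\{\sum c_ie_i\}$ spanned by independent $e_i\in R$). But finite rank does not give $(R,+)\cong\mathbb{Z}^d$: in $\mathbb{Z}[1/2]$ the full-rank subgroup $\mathbb{Z}$ has infinite index. Also, ``$B_n$ spans and is covered by $N_2$ cosets of $H$'' only gives $\mathrm{rank}\,H\geq \mathrm{rank}\,R-N_2$, not full rank.

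\textbf{How the paper bridges the scales.} It uses compactness rather than stabilisation. The symmetrised balls $X_i'$ at the good scales form an increasing chain of $C(d)$-approximate subrings exhausting $S$. For each $i$, Theorem~\ref{theorem: structure of finite approximate rings 2} plus torsion-freeness gives $Y_i'$ with $\langle Y_i'\rangle$ nilpotent of bounded class and $X_i'$ covered by $N_2(K)$ translates of $Y_i'$. $\aleph_1$-saturation then yields a single pseudofinite $Y$ that does this for every $i$ simultaneously. Thus $S':=S\cap\langle Y\rangle$ is nilpotent, and every $X_i'$, hence $S$, is covered by $N_2(K)$ cosets of it. Lewin's lemma then upgrades $S'$ to a finite-index ideal.

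\textbf{A repair of your fallback.} In the torsion-free setting you can avoid finite generation entirely. Choose independent $e_1,\dots,e_r\in R$ with $r=\mathrm{rank}\,R<\infty$. For a good scale $n$ large enough, $B_n$ contains the box $\{\sum c_ie_i: 0\leq c_i\leq L\}$ with $L\geq N_2$. This box has $(L+1)^r$ points, while a coset of a subgroup of rank $<r$ meets it in at most $(L+1)^{r-1}$ points. So $H$ has full rank. Then $H\otimes\mathbb{Q}=R\otimes\mathbb{Q}$ is nilpotent, and hence so is $R$.

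\textbf{The easy direction.} Here you tacitly assume $(R,+)$ is finitely generated, and that polynomial growth passes to finite-index overrings. The same torsion trick repairs this. If $[R:R']=m$ and $R'^{c+1}=\{0\}$, then $m^{c+1}x_1\cdots x_{c+1}=(mx_1)\cdots(mx_{c+1})=0$. So $R$ itself is nilpotent, and it is additively generated by the finitely many monomials of length at most $c$ in its generators.
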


Future applications will likely be driven by obtaining effective quantitative bounds:

\begin{problem}
In Theorem \ref{theorem: Structure finite app rings}, determine explicit bounds (simultaneous or separate) for $N_1(K), \dots, N_4(K)$. Specifically, can $N_1, N_2, N_3$ be bounded polynomially and $N_4$ logarithmically?
\end{problem}

If all constants were shown to be polynomial, this would provide a unified statement generalizing both sum-product theorems in the ``few zero divisors'' setting and Bourgain's multiscale setting.  Namely, if $N_2(K)$ could be polynomially bounded, then the function $f$ in Theorem \ref{theorem: concrete sum-product} could be taken to be a certain explicit power function (see Remark \ref{remark: polynomial bound on f}). Moreover, in that case Theorem \ref{theorem: main theorem intro} would provide a generalization of Tao's sum-product phenomenon in the few zero divisors setting \cite{Tao2}. If moreover $N_3(K)$ and $N_4(K)$ could be bounded by a universal constant and a logarithm in $K$ respectively, then we would recover sum-product phenomena in $\mathbb{Z}/p\mathbb{Z}$ in the spirit of results due to Bourgain \cite{Bourgain08sumprod} and its extensions by Salehi Golsefidy \cite{SalehiGolsefidy2016sumprod}.

The model-theoretic methods employed here rely on compactness arguments and therefore do not yield effective bounds. However, they offer a clear strategy for obtaining effective (albeit likely weak) bounds. Unlike the general group-theoretic setting of BGT, the additive structure of a ring is abelian. This permits the use of the effective Freiman--Ruzsa theorem to initiate the structural analysis, avoiding some of the difficulties inherent in non-commutative inverse theorems. Note also that Theorem \ref{theorem: Structure finite app rings} yields a good bound on $N_1(K)$ simultaneously with separate (good) bounds on $N_3(K)$ and $N_4(K)$. Our methods may lead to a simultaneous explicit bound on all these three parameters. However, obtaining explicit (preferably polynomial) bound on $N_2(K)$ remains a significant challenge.

Importantly, the definition of approximate rings does not rely on finiteness. This enables us to generalize a sum-product phenomenon due to Meyer beyond the scope of classical additive combinatorics. Meyer showed in \cite{meyer1972algebraic} that infinite discrete approximate subrings of local fields are \emph{arithmetic}, in the sense that they arise from the number-theoretic construction of \emph{Pisot--Salem numbers}.

Pisot--Salem numbers are real algebraic integers $\alpha > 1$ such that all other Galois conjugates have modulus strictly less than $1$. Given a number field $K \subseteq \mathbb{R}$, the set $P(K)$ is defined as the collection of all Pisot--Salem numbers that generate $K$. Meyer's theorem asserts that infinite, uniformly discrete approximate subrings of $\mathbb{R}$ are commensurable with $P(K)$ for some number field $K \subseteq \mathbb{R}$. This result extends to other local fields, and Dani--Gowri Navada \cite{DaniGowriNavada1996Harmonious} expanded its scope to certain subsets of matrices.

A consequence of our main result concerning approximate subrings of semi-simple real algebras  (namely, Theorem \ref{Theorem: Gen. Pisot Meyer}) is the following generalization  of Meyer's theorem, which for simplicity we state here for simple real algebras (see Corollary \ref{corollary: gen. of Meyer to simple real algebras} and Remark \ref{corollary: gen. of Meyer to semi-simple real algebras}).  Recall that simple, finite-dimensional real algebras are precisely the matrix algebras $M_n(D)$, where $D$ is $\mathbb{R}$, $\mathbb{C}$, or the quaternions $\mathbb{H}$, whereas semi-simple, finite-dimensional real algebras are the products of finitely many such matrix algebras. The standard notation $\mathcal{O}_{K,v}$ used below is recalled in the introduction to Section \ref{Section: Sum-product for real algebras}.

\begin{theorem}\label{theorem: generalization of Meyer}
Let $X$ be an infinite, uniformly discrete approximate subring in a finite-dimensional simple real algebra $A$, and assume $X$ spans $A$. There exists a number field $K$, an archimedean place $v$, a $K_v$-algebra structure on $A$ extending the $\mathbb{R}$-algebra structure, and a $K_v$-basis $(e_1, \ldots, e_d)$ of $A$ such that $X$ is commensurable with $\sum_{i=1}^d \mathcal{O}_{K,v} \cdot e_i.$

In the specific case where $A=M_n(\mathbb{R})$, we have $K_v = \mathbb{R}$, $\mathcal{O}_{K,v}$ corresponds to the set of Pisot--Salem numbers $P(K)$, and
\[ X \text{ is commensurable with } \sum_{i=1}^d P(K)\cdot e_i.\]
\end{theorem}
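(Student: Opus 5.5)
The plan is to reduce the statement to the main structure theorem for uniformly discrete approximate subrings of semi-simple real algebras, Theorem \ref{Theorem: Gen. Pisot Meyer}, and then to use simplicity of $A$ to pin down the arithmetic data. The starting point is the locally compact model machinery of \cite{Kru}. Pass to an ultrapower, take the ring $\langle X\rangle$ together with the definable locally compact model $f\colon \langle X\rangle \to S$. Combining this model with the given embedding into $A$ produces a ``cut-and-project'' description: $X$ is commensurable with a model set obtained from a lattice-like subring $\Lambda$ in a product $A \times S$, with $S$ playing the role of internal space. Since $X$ is uniformly discrete and spans $A$, the additive group $\langle X\rangle$ is a finitely generated $\mathbb{Z}$-module; I would justify this via an escape-norm/Freiman-type argument, as in Meyer's original proof. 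Hence $\mathbb{Q}\langle X\rangle$ is a finite-dimensional $\mathbb{Q}$-algebra $B$ with $B\otimes_{\mathbb{Q}}\mathbb{R} \twoheadrightarrow A$.

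The second step is the algebraic identification. The nilpotent-quotient obstruction appearing in Theorem \ref{theorem: main theorem intro} has to be killed by simplicity. The Jacobson radical of $B$ maps into a nilpotent ideal of $A$, and such an ideal is zero, so after passing to a commensurable piece we may assume $B$ is semisimple. The image of $B$ is then Zariski dense and spans $A$. Write $B\otimes\mathbb{R}=\prod_w B_w$ over the places of the centre. Compactness of the internal space, i.e.\ relative compactness of the projection of $X$ to the other factors, forces exactly one factor to map isomorphically onto $A$, and that factor is simple. Its centre is then a number field $K$ with an archimedean place $v$, and $A\cong B\otimes_K K_v$, which gives the $K_v$-algebra structure extending the real one. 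Choose an order $\mathcal{O}\subset B$ commensurable with $\langle X\rangle$ and a $K$-basis $e_1,\dots,e_d$ of $B$ contained in $\mathcal{O}$. Then $\mathcal{O}$ is commensurable with $\sum \mathcal{O}_K e_i$. The model-set description identifies $X$, up to commensurability, with those elements whose conjugates at the other places $w\neq v$ lie in bounded sets. This is exactly $\sum_i \mathcal{O}_{K,v} e_i$, using the standard notation recalled in Section \ref{Section: Sum-product for real algebras}.

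For $A=M_n(\mathbb{R})$, the centre $\mathbb{R}$ forces $K_v=\mathbb{R}$, so $v$ is real. Meyer's classical identification of $\mathcal{O}_{K,v}$ with $P(K)$ up to commensurability then gives the last display.

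I expect the main obstacle to be the first step: showing that the locally compact model, which a priori is only an abstract locally compact ring, becomes a product of completions of a number-field algebra. In other words, I need to rule out non-archimedean or non-Lie components and residual nilpotent parts in $S$. My first attempt would be to use uniform discreteness to show that $S$ has no small subrings (an escape-norm argument), conclude via Gleason--Yamabe-type reasoning that $S$ is a finite-dimensional real algebra, and then apply Wedderburn's theorem to split off its radical.
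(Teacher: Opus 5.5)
Your second step broadly follows the paper's route: the Jacobson radical is killed by simplicity of $A$, the real algebra $\mathbb{R}\otimes_{\mathbb{Q}}B$ splits over the archimedean places, and the result is compared with $\sum_i\mathcal{O}_{K,v}e_i$. The problem is your first step, which you rightly flag as the crux. It rests on two claims that are false.

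Take $X=\frac12\mathbb{Z}\subseteq\mathbb{R}$. It is an infinite, uniformly discrete approximate subring spanning $\mathbb{R}$, since $X+X=X$ and $X\cdot X=\frac14\mathbb{Z}\subseteq\{0,\frac14\}+X$. It also satisfies the theorem, being commensurable with $\mathbb{Z}$. Nevertheless:
\begin{itemize}
\item $\langle X\rangle=\mathbb{Z}[\frac12]$ is not a finitely generated $\mathbb{Z}$-module, so no order is commensurable with $\langle X\rangle$.
\item No locally compact model $f\colon\mathbb{Z}[\frac12]\to S$ of $X$ has a finite-dimensional real algebra as target. Indeed, $f[X]$ would be a relatively compact subgroup of a vector space, hence $\{0\}$. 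Then $f=0$, so $\mathbb{Z}[\frac12]=f^{-1}[U]\subseteq X_m\subseteq 2^{-k}\mathbb{Z}$, which is absurd.
\end{itemize}
The natural model here is $\mathbb{Z}[\frac12]\to\mathbb{Q}_2$, which has small subrings. Likewise $\mathbb{Z}\to\widehat{\mathbb{Z}}$ is a model of $X=\mathbb{Z}$. Uniform discreteness of $X$ in the direct space $A$ only makes the graph of a model discrete in $A\times S$. It says nothing about small subrings of the internal space $S$, so no escape-norm argument can give you ``$S$ is a finite-dimensional real algebra'', nor finite generation of $\langle X\rangle$.

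The missing idea is that $X$ must be changed within its commensurability class at the same time as the model is changed. Use the ring-theoretic Gleason--Yamabe theorem (Proposition \ref{proposition: Gleason-Yamabe for compact rings}, Corollary \ref{corollary: Gleason-Yamabe for loc. compact rings}) to pass to an open subring $S'\subseteq S$ and a small compact ideal with quotient additively $\mathbb{R}^n\times C$. Replace $X$ by $(4X+X\cdot 4X)\cap f^{-1}[S']$. Then quotient by the compact ideal $\{0\}\times C$; this construction uses Kaplansky's theorem, Fact \ref{fact: Thm. 1 from Kaplansky}. This is exactly Proposition \ref{proposition: improved target space}, which the paper simply invokes in Proposition \ref{Proposition: Weak models for discrete app rings}.

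Once the internal space is a finite-dimensional real algebra $B$, the graph $\Delta$ is a discrete subring of $A\times B$, hence a lattice in its $\mathbb{R}$-span $A'$. This yields two things:
\begin{itemize}
\item finite generation, of $\langle Y\rangle$ for the replaced set $Y$ rather than of $\langle X\rangle$;
\item the isomorphism $A'\cong\mathbb{R}\otimes_{\mathbb{Q}}\mathbb{Q}\Delta$, compatible with the projection to $A$.
\end{itemize}
Your second step needs this isomorphism when it equates ``relatively compact projection to the internal space'' with ``bounded conjugates at $w\ne v$''. You also need $Y$ to still span $A$. For simple $A$ and infinite $X$ this follows from Proposition \ref{Prop: Schreiber}: it makes $X$, and hence $Y$, syndetic.

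With these repairs, your direct use of simplicity is a legitimate shortcut past the irreducibility bookkeeping of Theorem \ref{Theorem: Gen. Pisot Meyer}. Exactly one simple factor of $A'$ maps onto $A$. The kernel of the projection to $B$ must be that factor, since $X$ is infinite. The window then becomes a compact set in the complementary factors.
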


The main engine of our proof (Theorem \ref{Theorem: Ring cap}) reveals a fundamental connection to the concept of \emph{cut-and-project schemes}. Originating from Meyer's pioneering work on approximate lattices (see \cite{meyer1972algebraic}), these constructions yield large families of discrete sets that, while aperiodic, exhibit strong algebraic and arithmetic structure. 

Although we prove our main structural result in the context of semi-simple real agebras, it is very likely that such structural results should generalize to algebras over local fields and their finite products (in the spirit of \cite{hru2} for approximate lattices and discrete approximate subgroups). Indeed, Theorem \ref{Theorem: Ring cap}, which easily generalizes, reduces the study of discrete approximate subrings to that of co-compact subrings of such algebras. In turn, those often have a strong number-theoretic origin. 

It is insightful to compare Theorem \ref{theorem: main theorem intro} with Theorem \ref{theorem: generalization of Meyer}, as both establish structural results of a similar nature. In essence, \emph{arithmeticity} in Theorem \ref{theorem: generalization of Meyer} plays a role analogous to that of \emph{nilpotency} in Theorem \ref{theorem: main theorem intro}. This analogy hints at a deeper connection between both setups, which we explore through the concept of a \emph{locally compact model} discussed in Section \ref{subsection: strategies}.

\subsection{Locally compact compact model and overview of the methods}\label{subsection: strategies}
The key tool to prove the main results of this paper are {\em locally compact models of approximate subrings} introduced in \cite{Kru}. Below we discuss locally compact models and briefly describe the strategies of the main proofs. 

Recall that a subset $X$ of a group is called an {\em approximate subgroup} if it is symmetric (i.e. $e \in X$ and $X^{-1}=X$) and $X X \subseteq FX$ for some finite $F \subseteq \langle X \rangle$. Approximate subgroups were introduced by Tao in \cite{Tao} and have become one of the central objects in additive combinatorics. This notion originates in the fundamental objects of study in additive combinatorics, namely subsets of a group with small doubling, tripling, etc.

One of the most important tools to prove structural results on approximate subgroups are {\em locally compact models}. A breakthrough in the study of the structure of approximate subgroups was obtained by Hrushovski in \cite{Hru}, where a locally compact model for an arbitrary pseudofinite approximate subgroup (more generally, {\em near-subgroup}) $X$ was obtained by using model-theoretic tools, and in consequence also a Lie model  was found for some approximate subgroup commensurable with $X$. This paved the way for Breuillard, Green, and Tao to give a full classification of all finite approximate subgroups in \cite{BGT}. 
Moreover, the existence of locally compact models for some approximate subgroups was an engine for various structural results on approximate lattices \cite{hru2, machado2019goodmodels, Machado}. 

Let us recall the definition of a locally compact model.
Let $X$ be an approximate subgroup and $G := \langle X \rangle$ the subgroup generated by $X$. By a {\em locally compact [resp. Lie] model} of $X$ we mean a group homomorphism $f \colon \langle X \rangle \to H$ to some locally compact [resp. Lie] group $H$ such that $f[X]$ is relatively compact in $H$ and there is a neighborhood $U$ of the neutral element in $H$ with $f^{-1}[U] \subseteq X^m$ for some $m<\omega$.
(In this paper, locally compact spaces are Hausdorff by definition.) 

Unfortunately, there are approximate subgroups which do not have any locally compact models, e.g. see \cite[Section 4.3]{HKP}. Hrushovski found a remedy introducing generalized locally compact models (using appropriate quasi-homomorphisms instead of homomorphisms), proving that they always exist, and applying them to structural or classifications results on certain approximate lattices \cite{hru2}. They were also used by the second author in \cite{Machado} to describe the structure of approximate lattices in linear algebraic groups over local fields. In \cite{KrPi22}, a simpler construction of universal generalized locally compact models was obtained by means of topological dynamics and basic model theory.

In contrast to approximate subgroups, the main result of \cite{Kru} tells us that every approximate subring has a locally compact model (see Fact \ref{fact: locally compact model exists} below). 
A {\em locally compact model} of the approximate subring $X$ is a ring homomorphism $f \colon \langle X \rangle \to S$ to some locally compact ring $S$ such that $f[X]$ is relatively compact in $S$ and $f^{-1}[U] \subseteq X_m$ for some $m< \omega$ and $U \subseteq S$ a neighborhood of $0$ in $S$ (see page \pageref{page: definition of  X_n} for the definition of the sets $X_n$). Using locally compact models, some structural results for approximate subrings were already obtained in \cite[Section 5]{Kru}, e.g. a classification of all approximate subrings of rings of positive characteristic or a version of the sum-product phenomenon when there are no zero divisors.

It is also worth recalling (see \cite[Remark 3.6]{Kru}) that if an additively symmetric subset $X$ of a ring has a locally compact model, then $X_n$ is an approximate subring for some $n \in \mathbb{N}$. This shows that if we hope for locally compact models, we should work with our definition of approximate subrings.

Similarly to the case of approximate subgroups, also for approximate subrings it is useful (and natural from the model-theoretic point of view) to consider definable objects.

By a {\em definable} (in some structure $M$) {\em approximate subring} we mean an approximate subring $X$ such that $X_0,X_1,\dots$ are all definable in $M$ and $+$ and $\cdot$ restricted to any $X_n$ are also definable in $M$.  A locally compact model $f \colon \langle X \rangle \to S$ of $X$ is called {\em definable} if  for any $C \subseteq U \subseteq S$ where $C$ is compact and $U$ is open there exists a definable $Y$ such that $f^{-1}[C] \subseteq Y \subseteq f^{-1}[U]$. Equivalently, we can require that $Y$ is additionally contained in some $X_n$. Note that the definable context generalizes the classical one, since given an arbitrary approximate subring $X$, one can take $M : =\langle X \rangle$ equipped with the full structure (i.e., with predicates for all subsets of all finite Cartesian powers of $M$), and then $X$ is trivially definable in $M$ and every locally compact model of $X$ is automatically definable.

Here is the aforementioned main result of \cite{Kru} (see \cite[Corollary 4.11]{Kru}) which is the main tool in this paper.

\begin{fact}\label{fact: locally compact model exists}
Every definable approximate subring has a definable locally compact model $f \colon \langle X \rangle \to S$ with $f^{-1}[U] \subseteq 4X + X \cdot 4X$\footnote{In a forthcoming joined paper of the first author with Mateusz Rzepecki the expression $4X + X \cdot 4X$ in the main theorem of \cite{Kru} will be improved to $4X + X \cdot 2X$ (which is optimal in the sense that $4X + X \cdot X$ is not enough). Having this in mind, everywhere in this paper, the expression $4X + X \cdot 4X$ can be replaced by $4X + X \cdot 2X$. This allows to decrease some constants throughout the paper, e.g. in item (2) of Theorem \ref{theorem: Structure finite app rings}  we actually have $I \subseteq X_2$ and the constant $K^{510}+K^{22}$ in Theorem \ref{theorem: main theorem intro} could be replaced by a smaller one.} for some neighborhood $U$ of $0$ in $S$. 
\end{fact}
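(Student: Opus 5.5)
The plan is to work in a sufficiently saturated elementary extension $M^* \succ M$ and to build the model as a quotient $\langle X \rangle/I$ by a type-definable ideal $I$ of bounded index, with $I \subseteq 4X + X\cdot 4X$. The ring $S := \langle X\rangle / I$ then carries the logic topology, and the map $f$ is the quotient map. This is the ring analogue of Hrushovski's construction of locally compact models for approximate subgroups. The essential simplification is that the additive group is abelian, hence amenable, so the relevant connected components exist without any extra hypotheses.

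\textbf{Step 1 (additive part).} Regard $X$ as an additive approximate subgroup of the abelian group $(\langle X\rangle,+)$. By the Sanders--Massicot--Wagner type results for definable approximate subgroups of amenable groups (via Bogolyubov--Ruzsa and Croot--Sisask arguments), there is a type-definable subgroup $H \le (\langle X\rangle,+)$ of bounded index with $H \subseteq 4X$, that is, $H \subseteq X+X+X+X$ since $X=-X$. I then pass to the smallest such subgroup, $H^{00}$. It exists because in an abelian (amenable) setting the intersection of all type-definable bounded-index subgroups inside some $X_n$ is again of bounded index. It is still contained in $4X$.

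\textbf{Step 2 (ideal property).} For $r\in\langle X\rangle$, left multiplication $x\mapsto rx$ is an additive endomorphism. Restricted to each $X_n$ it is definable, since $r\in X_k$ for some $k$ and the multiplication $X_k\times X_n\to X_{\max(k,n)+1}$ is definable. Hence $\{x : rx\in H^{00}\}$ is a type-definable subgroup. It has bounded index because its index is bounded by that of $H^{00}$. By minimality it contains $H^{00}$, so $rH^{00}\subseteq H^{00}$; symmetrically $H^{00}r\subseteq H^{00}$. This step is delicate: one must check that the preimage is type-definable \emph{inside some $X_m$}, so that the minimality of $H^{00}$ applies.

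To stay robust, I would instead define $I$ to be the additive subgroup generated by $H^{00}$ and $X\cdot H^{00}$ and its bounded-index refinements. This is where the bound $4X + X\cdot 4X$ enters: multiplication by elements of $X$ moves $4X$ into $X\cdot 4X$, and closing up under $+$ keeps us inside $4X + X\cdot 4X$ once we use the invariance from Step 2. Concretely, I would show that $I$ is a type-definable two-sided ideal of $\langle X\rangle$ of bounded index with $I\subseteq 4X+X\cdot 4X$.

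\textbf{Step 3 (topology).} I equip $S=\langle X\rangle/I$ with the logic topology, in which a set is closed iff its preimage intersected with each $X_n$ is type-definable. The standard arguments from the group case (Pillay, Hrushovski) give that $(S,+)$ is a locally compact Hausdorff group. The image of $X$ is relatively compact, and the image of any definable $Y \supseteq I$ contains a neighbourhood of $0$. Continuity of multiplication at $(0,0)$, and of multiplication by fixed elements, follows from type-definability of the product restricted to $X_n\times X_n$ together with a compactness argument: if $a\cdot b\in Y$ is forced for all $a,b\in I$, then some definable $D_1,D_2\supseteq I$ already satisfy $D_1 D_2\subseteq Y$. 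Definability of $f$ is then the usual compactness fact that between a compact $C$ and an open $U\supseteq C$ one can interpolate a definable set. Finally, $f^{-1}[U]\subseteq 4X+X\cdot 4X$ for a suitable neighbourhood $U$, by compactness, since $I$ is contained in that definable set.

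The main obstacle I expect is Step 2: obtaining a genuinely two-sided, type-definable \emph{ideal} inside the small set $4X+X\cdot 4X$, rather than only an additive subgroup. Minimality arguments must be carried out piecewise on the $X_n$. The first thing I would try is the minimality of the additive $H^{00}$ under all the definable endomorphisms $x\mapsto rx$ and $x\mapsto xr$, falling back on an explicit generation argument if that fails.
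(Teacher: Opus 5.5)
\textbf{There is a genuine gap in Step 2, and Step 2 carries the whole content of the statement.} Your overall architecture is correct: take the quotient of $\bar R=\langle \bar X\rangle$ by an $M$-type-definable bounded-index two-sided ideal contained in $4\bar X+\bar X\cdot 4\bar X$, and give it the logic topology. This is the construction behind the result of \cite{Kru} that the paper quotes, and your Steps 1 and 3 are essentially standard.

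Write $H:=(\bar R,+)^{00}_M$. Your minimality argument works only for $r$ in the small ring $R=\langle X\rangle\subseteq M$. For such $r$, the set $P_r:=\{x\in H: rx\in H\}$ is an $M$-type-definable subgroup of bounded index, so $P_r=H$. (Intersecting with $H$ also settles your worry about staying inside some $\bar X_m$.) Hence $R\cdot H\subseteq H$, and symmetrically $H\cdot R\subseteq H$.

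But $S$ is a quotient of $\bar R$, so you need $rH\subseteq H$ for nonstandard $r\in\bar R$. For such $r$, the set $P_r$ is type-definable only over $M\cup\{r\}$, so minimality of $H$ among $M$-type-definable subgroups gives nothing. In the group case, normality of $G^{00}_M$ is obtained because a conjugate $gG^{00}_Mg^{-1}$ depends only on the coset $gG^{00}_M$. There is no analogous bound on the number of distinct sets $P_r$.

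In fact the conclusion is false in general. If $H$ were always an ideal, it would equal $\bar R^{00}_M$, and your Step 1 would give $\bar R^{00}_M\subseteq 4\bar X$. The paper, however, records an example in which not even $\bar R^{00}_M\subseteq 4\bar X+\bar X\cdot\bar X$ holds.

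\textbf{Your fallback does not close the gap either.} Part of it can be salvaged. Since $X_n\subseteq F_n+X$ for finite $F_n\subseteq R$, transfer gives $\bar R=R+\bar X$. Hence $\bar R\cdot H\subseteq R\cdot H+\bar X\cdot H\subseteq H+\bar X\cdot H\subseteq 4\bar X+\bar X\cdot 4\bar X$, and this is where the bound really comes from.

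What is still missing is a proof that $H+\bar X\cdot H$ is closed under addition and is a two-sided ideal of $\bar R$. On the right you only get $H\cdot\bar R\subseteq H+H\cdot\bar X$, so terms from $\bar X\cdot H\cdot\bar X$ still have to be absorbed. The additive subgroup generated by $H\cup\bar X\cdot H$ is a priori only a countable union of type-definable sets. So neither its type-definability nor its containment in $4\bar X+\bar X\cdot 4\bar X$ is automatic.

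Your sentence ``closing up under $+$ keeps us inside $4X+X\cdot 4X$'' is precisely the nontrivial identity $\bar R^{00}_M=(\bar R,+)^{00}_M+\bar R\,(\bar R,+)^{00}_M=(\bar R,+)^{00}_M+\bar X\,(\bar R,+)^{00}_M$ of \cite[Theorem 4.1]{Kru}. The cited fact rests on this identity, together with \cite[Proposition 3.3]{Kru} for your Step 3. As written, your proposal assumes this identity rather than proving it.
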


Let us now describe our strategy to prove Theorem \ref{theorem: main theorem intro}, say the version with $N_4(K)=\lfloor 4 \log_2(K) \rfloor$, $N_1(K)=K^{510}+K^{22}$, and unspecified  $N_2(K)$, $N_3(K)$. First, in Section \ref{section: target algebra}, for any definable (in an arbitrary structure $M$) $K$-approximate subring $X$, using a definable locally compact model of $X$ provided by Fact \ref{fact: locally compact model exists}, we find a definable $(K^{510}+K^{22})$-approximate subring $Y\subseteq X + X \cdot 4X$ commensurable to $X$ with a definable locally compact model whose target space is a finite-dimensional real algebra of dimension $\leq 4 \log_2(K)$ and as such can be treated as a subalgebra of $\textrm{M}_n(\mathbb{R})$. Using this reduction and the Frobenius norm on $\textrm{M}_n(\mathbb{R})$, as a byproduct we obtain the following surprising structural result on arbitrary approximate subrings.

\begin{corollary}\label{Cor: approximate subring closed under mult}
Any definable approximate subring $X$ is commensurable with a definable approximate subring $Y\subseteq \langle X \rangle$ which is closed under multiplication (i.e., $Y \cdot Y \subseteq Y$). In fact, elaborating on the arguments as briefly mentioned three paragraphs below, we obtain such a $Y$ which is additionally contained in $4X + X \cdot 4X$.
\end{corollary}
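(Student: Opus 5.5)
Using Fact \ref{fact: locally compact model exists} together with the reduction of Section \ref{section: target algebra}, we may assume that $X$ (after replacing it by a commensurable definable approximate subring contained in $4X+X\cdot 4X$) has a definable locally compact model $f\colon \langle X\rangle \to A$, where $A$ is a finite-dimensional real algebra. Via the regular representation we embed $A$ into $\mathrm{M}_n(\mathbb{R})$. Write $\|\cdot\|$ for the operator norm induced by the Frobenius norm, or the Frobenius norm itself; in either case it is submultiplicative, $\|ab\|\le\|a\|\|b\|$. The idea is to pull back a small ball that is closed under multiplication.

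For $0<r\le 1$, let $B_r=\{a\in A:\|a\|\le r\}$ (intersected with $A$). This set is compact, symmetric and contains $0$. Submultiplicativity gives $B_rB_r\subseteq B_{r^2}\subseteq B_r$ when $r\le1$. Choose $r$ so small that $B_{2r}\subseteq U$, where $U$ is the neighbourhood with $f^{-1}[U]\subseteq X_m$ (respectively $\subseteq 4X+X\cdot4X$). Then define $Y:=f^{-1}[B_r]$, or, to ensure definability, a definable $Y$ squeezed between preimages. Squeezing causes a problem, because $Y\cdot Y\subseteq Y$ must hold exactly. So I would instead use the following trick. Take a definable $Z$ with $f^{-1}[B_{r}]\subseteq Z\subseteq f^{-1}[\mathrm{int}\,B_{2r}]$, and set $Y:=Z\cap f^{-1}[B_r]$. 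Alternatively, note that $f^{-1}[B_r]$ is type-definable, and closure under multiplication is only needed at the level of sets.

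The main obstacle is definability combined with exact closure. The cleanest fix is this: $f^{-1}[B_r]=\bigcap$ of definable sets. In the classical (non-definable) setting the full structure makes everything definable, so there $Y=f^{-1}[B_r]$ works outright. In the definable setting I would argue that, by the definition of a definable model, $f^{-1}[B_r]$ is an intersection of definable sets $Y_i$ between $f^{-1}[B_r]$ and $f^{-1}[B_{r+1/i}]$. For the statement one may also take $Y$ to be $\{x\in X_m : \|f(x)\|\le r\}$ only when this set is definable. If it is not, I would choose $Y=f^{-1}[B_r]$ and accept it as $\bigwedge$-definable, checking whether the paper's notion tolerates this.

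It remains to check the approximate-subring properties. $Y$ is additively symmetric. $Y+Y\subseteq f^{-1}[B_{2r}]$, and $B_{2r}$ is covered by finitely many translates $a_i+\mathrm{int}B_{r}$ by compactness. Choosing $x_i\in\langle X\rangle$ with $f(x_i)$ close to $a_i$ uses density of $f[\langle X\rangle]$; otherwise we work in the closure of the image, which is fine. This gives $Y+Y\subseteq \bigcup (x_i+Y)$, since $f^{-1}[f(x_i)+B_r]=x_i+f^{-1}[B_r]$. Multiplicative closure gives $YY\subseteq Y$ directly. Commensurability with $X$ follows because $f[X]$ is relatively compact, hence covered by finitely many translates of $\mathrm{int}B_r$, so $X$ is covered by finitely many translates of $Y$. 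Conversely, $Y\subseteq X_m$ is covered by finitely many translates of $X$. Finally, $Y\subseteq f^{-1}[U]\subseteq 4X+X\cdot4X$ gives the refined containment.
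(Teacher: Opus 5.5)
The overall idea of pulling back a small ball of a submultiplicative norm on the target algebra is the paper's. However, the proposal has a genuine gap exactly where you flag it, and a second gap in the refined containment.

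\textbf{Definability.} Your trick is vacuous. Since $f^{-1}[B_r]\subseteq Z$, the set $Z\cap f^{-1}[B_r]$ is just $f^{-1}[B_r]$, which in general is only an intersection of definable sets. Settling for a $\bigwedge$-definable $Y$ does not prove the statement, which asks for a definable approximate subring. The missing idea is to use the slack you wrote down but did not exploit: for $r\le\frac12$ one has $B_r\cdot B_r\subseteq B_{r^2}\subseteq B_{r/2}$. Apply definability of $f$ to the compact set $B_{r/2}$ inside the open set $\mathrm{int}\,B_r$, and intersect the result with its negative. This gives a definable symmetric $Y$ with $f^{-1}[B_{r/2}]\subseteq Y\subseteq f^{-1}[B_r]$, and then
\[
Y\cdot Y\subseteq f^{-1}[B_r]\cdot f^{-1}[B_r]\subseteq f^{-1}[B_r\cdot B_r]\subseteq f^{-1}[B_{r/2}]\subseteq Y .
\]
So the product of the outer set lands in the inner set, and exact closure holds for \emph{every} squeezed set. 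That $Y$ is an approximate subring commensurable with $X$ is then Fact~\ref{fact: D is approximate}. You should also adjoin a unit to $A$ before taking the regular representation, since, e.g., a null algebra has zero regular representation. With these fixes, your route through a finite-dimensional real algebra proves the first sentence of the statement.

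\textbf{The containment $Y\subseteq 4X+X\cdot 4X$.} This is not justified. Proposition~\ref{proposition: improved target space} gives a model into a finite-dimensional real algebra only with $f^{-1}[U]\subseteq Y_m$, so your pulled-back balls lie in some $Y_m$, not in $4X+X\cdot 4X$. In general no model into a finite-dimensional real algebra has $f^{-1}[U]\subseteq 4X+X\cdot 4X$. For instance, take $\mathbb{Z}$ with zero multiplication and $X=\{k:\mathrm{dist}(k\alpha,\mathbb{Z})\le\epsilon\}$ with $\alpha$ irrational and $\epsilon$ small. Relative compactness of $f[X]$ for the unbounded set $X$ forces $f=0$, while $4X+X\cdot 4X=4X\neq\mathbb{Z}$.

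The compact factor must be kept. The paper works with the model $f\colon\langle Y'\rangle\to\mathbb{R}^n\times C$ of Proposition~\ref{proposition: improved target space 2}, for which $f^{-1}[U\times V]\subseteq Y'\subseteq 4X+X\cdot 4X$. By Fact~\ref{fact: Thm. 1 from Kaplansky}, $\{0\}\times C$ is null, so $(a_1,c_1)(a_2,c_2)=(a_1,0)(a_2,0)\in\{a_1*a_2\}\times C$. Choose $V'\subseteq\mathrm{int}(V)$, then $U'\subseteq U$ with $(U'\times\{0\})\cdot(U'\times\{0\})\subseteq\mathbb{R}^n\times V'$, then $r<\frac12$ with $B(0,r)\subseteq U'$. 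One gets $(B(0,r)\times V)\cdot(B(0,r)\times V)\subseteq B(0,\frac r2)\times V'$. Hence any definable symmetric $Z$ between $f^{-1}[B(0,\frac r2)\times V']$ and $f^{-1}[B(0,r)\times V]$ is closed under multiplication and contained in $Y'$.
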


Now, assume $X$ is a pseudofinite $K$-approximate subring (see Appendix \ref{appendix: pseudofiniteness} for an explanation of pseudofiniteness). Take $Y$ obtained in the previous paragraph. In Section \ref{section: escape norm}, using the Frobenius norm on $\textrm{M}_n(\mathbb{R})$ and our definable locally compact model of $Y$, we prove that the escape norm with respect to some definable approximate subring $Z \subseteq \langle Y \rangle $ with $\langle Z \rangle = \langle Y \rangle$ and commensurable to $X$ is {\em strong} in the sense that it satisfies properties (1)-(3) from Corollary \ref{proposition: good escape norm  2} (roughly speaking, $||x+y|| \leq 4(||x|| + ||y||)$ and $||xy|| \leq 2||x||\cdot||y||$). 
It is worth emphasizing that a quite technical part of \cite{BGT} which shows that the so-called trapping conditions imply that the escape norm is strong is completely eliminated in our situation by using the Frobenius norm on $\textrm{M}_n(\mathbb{R})$.

Theorem \ref{theorem: main theorem intro} is then proved in Section \ref{section: structure of finite approximate rings}. A general idea is to adapt to the context of rings  the strategy of the proof of the main theorem of \cite{BGT} as presented in \cite{Dries}.  More precisely, assuming that Theorem \ref{theorem: main theorem intro} with $N_4(K)=\lfloor 4 \log_2(K) \rfloor$ and $N_1(K)=K^{510}+K^{22}$ fails, by model-theoretic compactness, we get a pseudofinite $K$-approximate subring $X$ such that there is NO definable $(K^{510}+K^{22})$-approximate subring $Y \subseteq X + X \cdot 4X$ commensurable with $X$ for which there exists a definable ideal $I$ of $\langle Y \rangle^*$ contained in $Y_m$ for some $m \in \mathbb{N}$ and with $\langle Y \rangle^*/I$ nilpotent of class at most $4\log_2(K)$ (where $\langle Y \rangle^*$ denotes the smallest definable subring containing $Y$; see Corollary \ref{corollary: <X>*}). Then we replace $X$ by an approximate subring $Z$ with the properties from the previous paragraph, and we adapt the idea of the proof of \cite[Theorem 7.2]{Dries} to the context of rings (using the fact that the escape norm with respect to $Z$ is strong) to get a final contradiction.

Regarding Theorem \ref{theorem: main theorem intro} in the version with $N_3(K)=1$, $N_1(K)=K^{510}+K^{22}$, and unspecified  $N_2(K)$, $N_4(K)$, the general strategy is the same, but it is a bit more involved. First, from Gelason-Yamabe theorem we deduce its ring-theoretic counterpart (see Proposition \ref{proposition: Gleason-Yamabe for compact rings} and Corollary \ref{corollary: Gleason-Yamabe for loc. compact rings}). Using this together with Fact \ref{fact: locally compact model exists}, for any definable $K$-approximate subring $X$, we find a definable $(K^{510}+K^{22})$-approximate subring $Y \subseteq X + X \cdot 4X$ commensurable to $X$ and a definable locally compact model $f \colon \langle Y \rangle \to \mathcal{A}$ with dense image, where $\mathcal{A}$ is a connected locally compact ring whose additive group is a Lie group of the form $\mathbb{R}^n \times C$ for a connected compact Lie group $C$, and with $f^{-1}[U]\subseteq Y$ for some neighborhood $U$ of $0$ in $\mathcal{A}$ (the last property is essential to get $N_3(K)=1$). Then $\{0\} \times C$ is an ideal, and $\mathcal{A}/\{0\} \times C$ becomes an $n$-dimensional real algebra, on which we have the Frobenius norm. Then we follow the lines of the strategy described above with passing to a commensurable approximate subring with strong escape norm, etc., but now the resulting locally compact model still has the target space as described in this paragraph.

Finally, we discuss some tools involved in the proof of Theorem \ref{theorem: generalization of Meyer} presented in Section \ref{Section: Sum-product for real algebras}. Locally compact models are again found at the core of the proof. Indeed, they enable us to show that discrete approximate rings arise from a so-called \emph{cut-and-project scheme} originating in Meyer's work \cite{meyer1972algebraic}.

In general, a \emph{cut-and-project scheme} is the data of two locally compact rings $A,B$ and a discrete subring $\Delta \subseteq A \times B$  projecting injectively to $A$. Given such a triple $(A,B,\Delta)$, we can produce discrete approximate subrings of $A$ by selecting a relatively compact  additively symmetric neighbourhood $W_0 \subseteq B$ of $0$ and defining 
$$ M := \pi_A\left[\Delta \cap \left(A \times W_0\right)\right]$$
where $\pi_A: A \times B \rightarrow A$ is the natural projection. Then $M$ is a  uniformly discrete approximate subring of $A$. We call $W_0$ the \emph{window}, $A$ the \emph{direct space}, $B$ the \emph{internal space}, and $M$ is referred to as a \emph{(weak) model set}. 

Our main tool is: 

\begin{theorem}\label{Theorem: ring cap, intro}
    Let $\Lambda$ be a  uniformly discrete approximate subring of a locally compact ring $A$. Then $\Lambda$ is commensurable with a weak model set associated with  the direct space $A$ and  the internal space  being a finite-dimensional real algebra $B$. 
\end{theorem}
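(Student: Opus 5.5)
The plan is to follow Meyer's original strategy, using the locally compact model from Fact \ref{fact: locally compact model exists} as the internal space.

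\textbf{Step 1: build a diagonal embedding.} Apply Fact \ref{fact: locally compact model exists} to $\Lambda$, viewed with the full structure on $\langle \Lambda \rangle$. This gives a ring homomorphism $f\colon \langle \Lambda\rangle \to S$ to a locally compact ring $S$ with three properties: $f[\Lambda]$ is relatively compact, $f^{-1}[U] \subseteq 4\Lambda + \Lambda\cdot 4\Lambda$ for some neighbourhood $U$ of $0$, and the image can be taken dense. Set $\Delta := \{(x,f(x)) : x \in \langle \Lambda\rangle\} \subseteq A \times S$. This is a subring projecting injectively to $A$.

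\textbf{Step 2: show $\Delta$ is discrete in $A\times S$.} Let $V$ be a neighbourhood of $0$ in $A$ with $V\cap (4\Lambda+\Lambda\cdot 4\Lambda)=\{0\}$. Such a $V$ exists because $4\Lambda+\Lambda\cdot 4\Lambda$ is covered by finitely many translates of $\Lambda$, so it is uniformly discrete. Then $\Delta\cap(V\times U)=\{0\}$.

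\textbf{Step 3: obtain a weak model set.} Put $W_0 := \overline{f[\Lambda]}$, symmetrized. Then $\Lambda \subseteq M := \pi_A[\Delta\cap(A\times W_0)]$. Conversely, $M$ is covered by finitely many translates of $f^{-1}[U]$, by compactness of $W_0$ and density of $f[\langle\Lambda\rangle]$. Hence $M$ is covered by finitely many translates of $4\Lambda+\Lambda\cdot4\Lambda$, and so by finitely many translates of $\Lambda$. Thus $\Lambda$ and $M$ are commensurable. This gives the statement with internal space $S$.

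\textbf{Step 4: replace $S$ by a finite-dimensional real algebra.} This step is the main obstacle. I would use the ring-theoretic Gleason--Yamabe results (Proposition \ref{proposition: Gleason-Yamabe for compact rings}, Corollary \ref{corollary: Gleason-Yamabe for loc. compact rings}) to pass to an open subring $S'\subseteq S$ and a compact ideal $C\subseteq U\cap S'$ with $S'/C$ having Lie additive group. Replace $\langle\Lambda\rangle$ by $f^{-1}[S']$; this costs finitely many cosets, since $f[\Lambda]$ is relatively compact and $S'$ is open. Then compose $f$ with the quotient map to $S'/C$. Because $C\subseteq U$, preimages of small neighbourhoods still lie in a bounded $\Lambda_m$, so the commensurability argument of Step 3 survives. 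This yields a locally compact ring whose additive group is Lie, and as in the outline of Section \ref{section: target algebra} its identity component is of the form $\mathbb{R}^n\times C'$ with $C'$ compact connected.

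It remains to kill the compact part and the discrete part. Quotient by the compact ideal $\{0\}\times C'$, again absorbed into a neighbourhood after shrinking, using the $N_3=1$ variant of the construction. The totally disconnected remainder is handled by intersecting with the preimage of the identity component, which is open, so only finitely many cosets are lost. The result is a connected locally compact ring with additive group $\mathbb{R}^n$, and multiplication is then automatically $\mathbb{R}$-bilinear by continuity and additivity. So $B=\mathbb{R}^n$ is a finite-dimensional real algebra.

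The delicate point is that each passage to a subring or quotient must keep three things at once: the preimage of a neighbourhood inside a fixed $\Lambda_m$, density of the image, and discreteness of the new $\Delta$. I would verify these step by step, exactly as in the finite case treated in Section \ref{section: target algebra}.
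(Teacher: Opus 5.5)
Your proposal is correct and is essentially the paper's argument. The paper first upgrades the locally compact model of Fact \ref{fact: locally compact model exists} to one whose target is a finite-dimensional real algebra (Propositions \ref{proposition: improved target space 2} and \ref{proposition: improved target space}, via ring Gleason--Yamabe and quotienting by the compact ideal $\{0\}\times C$), and only then takes the graph, getting discreteness by isolating $0$ and commensurability from Remark \ref{remark: preimage of a compact set}; this is your Steps 1--4 in the opposite order.

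Two small fixes are needed. The window should be a compact symmetric neighbourhood of $0$ containing $\overline{f[\Lambda]}$, since $\overline{f[\Lambda]}$ itself need not be a neighbourhood. Also, $\{0\}\times C'$ cannot in general be shrunk into $U$. This is harmless, because you only need preimages of neighbourhoods to lie in some $\Lambda_m$, and $0$ is still isolated in $\Lambda_m$.
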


Here again, locally compact models underpin this result; the internal space $B$ is the target of a locally compact model of an approximate ring commensurable with $\Lambda$ and the implicit discrete subring $\Delta$ is the graph of the same locally compact model. 

When $A$ is furthermore a real algebra, Theorem \ref{Theorem: ring cap, intro} mostly reduces the study of  uniformly discrete approximate subrings of real algebras to the study of discrete subrings of real algebras. In turn, the latter often exhibit strong number theoretic properties -- much like discrete subrings of $\mathbb{R}$ are always of the form $m \mathbb{Z}$ for some integer $m$. We can then exploit this fact to prove Theorem \ref{theorem: generalization of Meyer}.

The readers interested only in Theorem \ref{theorem: main theorem intro} should go through Sections \ref{section: preliminaries}, \ref{section: target algebra}, \ref{section: escape norm}, and \ref{section: structure of finite approximate rings}. The readers interested in Theorem \ref{theorem: generalization of Meyer} should read Sections \ref{section: target algebra} and \ref{Section: Sum-product for real algebras}.

\subsection{Auxiliary structural applications of locally compact models}

In Section \ref{section: additional structural results}, we obtain some other consequences of the existence of locally compact models, this time using also model-theoretic connected components which stand behind locally compact models, which is discussed in the first part of Section  \ref{section: additional structural results}.

First of all, elaborating on the proof of \cite[Theorem 5.4]{Kru}, we get the following variants of Tao's sum-product phenomena from \cite{Tao2}.  The second statement goes much beyond the context of \cite{Tao2}, as it concerns infinite approximate subrings. 
(Thickness is combinatorial notion of largeness important in model theory; see Definition \ref{definition: thickness abstractly}.)

\begin{theorem}\label{theorem: thickness and sum-product}
For every $K \in \mathbb{N}$ there exists $N(K) \in \mathbb{N}$ such that for every  finite $K$-approximate subring $X$ of a ring either there is an $N(K)$-{\em thick} (in particular, of cardinality at least $\frac{|Y|}{N(K)-1}$) subset of $Y:=4X + X \cdot 4X$ consisting of zero divisors or $Y$ is a subring (additively $K^{11}$-commensurable with $X$).
\end{theorem}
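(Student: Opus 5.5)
The plan is to argue by contradiction, using compactness and a pseudofinite ultraproduct, and then to use a definable locally compact model from Fact \ref{fact: locally compact model exists}, following the proof of \cite[Theorem 5.4]{Kru}.

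\textbf{Setup and the easy parts.} The parenthetical claims are routine.
\begin{itemize}
\item Since $X\cdot X\cup(X+X)\subseteq F+X$ with $|F|\le K$, expanding $4X+X\cdot 4X$ covers $Y$ by at most $K^{11}$ translates of $X$ (up to the exact exponent bookkeeping). Conversely $X\subseteq Y$, so the two are additively $K^{11}$-commensurable.
\item If $Z\subseteq Y$ is $N$-thick, a maximal family $y_1,\dots,y_m\in Y$ with pairwise differences outside $Z$ has $m<N$. Hence $Y\subseteq\bigcup_{i<m}(y_i+Z)$, which gives $|Z|\ge |Y|/(N-1)$.
\end{itemize}

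\textbf{Reduction to a pseudofinite counterexample.} Suppose the theorem fails for some $K$. Then for each $N$ there is a finite $K$-approximate subring $X_N$ such that:
\begin{itemize}
\item $Y_N:=4X_N+X_N\cdot 4X_N$ is not a subring, and
\item the set of zero divisors in $Y_N$ is not $N$-thick in $Y_N$.
\end{itemize}
Take a nonprincipal ultraproduct of the rings $\langle X_N\rangle$, expanded by predicates for $X_N$ and for counting. Pass to a sufficiently saturated elementary extension. This gives a pseudofinite definable $K$-approximate subring $X$ with $Y=4X+X\cdot 4X$.

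By \L o\'s and compactness, the set $D$ of zero divisors of $Y$ is not $N$-thick for any $N$. So there is an infinite sequence $(a_i)_{i<\omega}$ in $Y$ with $a_i-a_j\notin D$ for all $i\ne j$. By Ramsey and compactness we may take it indiscernible over the parameters of the model.

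Fact \ref{fact: locally compact model exists} now gives a definable locally compact model $f\colon\langle X\rangle\to S$ with $f^{-1}[U]\subseteq Y$. Its kernel $J=\ker f=\bigcap_U f^{-1}[U]$ is a type-definable ideal of $\langle X\rangle$ contained in $Y$.

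\textbf{Key step: $J$ contains a non-zero-divisor.} This is the step I expect to be the main obstacle. Since $f[Y]$ is relatively compact, for every neighbourhood $U$ of $0$ there are $i\ne j$ with $f(a_i)-f(a_j)\in U$. Indiscernibility then makes this hold for all $i<j$. Using definability of $f$, which sandwiches $f^{-1}[U]$ between definable sets, we get that $d:=a_0-a_1$ lies in every $f^{-1}[U]$, hence in $J$.

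The delicate point is the transfer from ``some pair'' to ``all pairs'': one has to check that the definable approximations of $f^{-1}[U]$ are defined over parameters over which $(a_i)$ is indiscernible. If they are not, I would instead work directly with a Ramsey argument over the finitely many sets $f^{-1}[U_n]$.

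Once this is done, $d\in J$, and $d\notin D$, so left multiplication by $d$ is injective on $\langle X\rangle$.

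\textbf{Counting to conclude.} Since $J\subseteq Y$ is an ideal, $d\cdot X_n\subseteq J\subseteq Y$ for every $n$. Each $X_n$ is definable, and internal cardinalities are preserved by the definable injection $x\mapsto dx$. Hence $|X_n|=|dX_n|\le|Y|$ for all $n$. But $Y\subseteq X_n$ for $n\ge 3$, so $|X_n|=|Y|$, and pseudofiniteness gives $X_n=Y$ for every $n\ge 3$. In particular $Y+Y\subseteq Y$ and $Y\cdot Y\subseteq Y$, so $Y$ is a subring.

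This is a first-order statement about $Y$, so by \L o\'s it holds in $Y_N$ for ultrafilter-many $N$. That contradicts the choice of the $X_N$ and proves the theorem.
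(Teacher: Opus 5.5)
Your argument is correct, but it is organized differently from the paper's. The paper never produces a single non-zero-divisor in the limit. It uses Fact~\ref{fact: 00 exists} ($\bar R^{00}_M\subseteq \bar Y$) and Remarks~\ref{remark: intersection of thick sets} and~\ref{remark: thick} to write $\bar R^{00}_M$ as the intersection of a downward directed family of $M$-definable thick subsets of $Y$. Since $\bar R^{00}_M$ is an ideal, compactness gives one such $D$, say $n$-thick, with $D(Y\cdot Y+Y+Y)\subseteq Y$. \L o\'s pushes both properties down to $\mathcal U$-many finite $Y_m$. There counting shows $D_m$ consists of zero divisors: $Y_m\subsetneq Y_m\cdot Y_m+Y_m+Y_m$, so no element of $D_m$ multiplies injectively. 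This contradicts the choice of $X_m$ for $m\ge n$.

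You instead stay in the limit. Non-thickness of the zero-divisor set yields a sequence whose differences are non-zero-divisors. Compactness of $\bar f[\bar Y]$ plus indiscernibility puts one such difference $d$ into $\ker\bar f\subseteq\bar Y$. The counting is then transferred by \L o\'s to get $\bar X_n=\bar Y$ for $n\ge 3$. Your key step is essentially a hands-on proof of the relevant direction of Remark~\ref{remark: intersection of thick sets}, applied to $\ker\bar f$ rather than to $\bar R^{00}_M$. So your route needs only Fact~\ref{fact: locally compact model exists} and standard indiscernible/compactness tools. The paper's route is shorter given the cited results of \cite{Kru}, avoids indiscernibles, and exhibits the thick set of zero divisors directly in the finite rings.

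On the point you flagged, keep the ultraproduct $M$, over which $f$ is definable, separate from a $|M|^+$-saturated $\C\succ M$.
\begin{itemize}
\item Extend $f$ to $\bar f$ by Fact~\ref{fact: extension to the monster}; the same argument gives $\bar f^{-1}[U]\subseteq\bar Y$.
\item Take the sequence $M$-indiscernible in $\C$, using Ramsey and compactness; this preserves $a_i\in\bar Y$ and $a_i-a_j\notin\bar D$.
\item The sets sandwiched between $\bar f^{-1}[C]$ and $\bar f^{-1}[U]$ given by Remark~\ref{remark: characterization of M-definability of a model} are $M$-definable, so indiscernibility applies.
\end{itemize}
This separation is needed: if $f$ were only definable over a saturated model containing the sequence, no non-constant sequence could be indiscernible over it.

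You can also bypass indiscernibles altogether. Consider the partial type over $M$ in $(x,y)$ asserting $x,y\in\bar Y$, $x-y\notin\bar D$, and $x-y\in E$ for every $M$-definable $E\supseteq\ker\bar f$. Each such $E$ contains some $\bar f^{-1}[C]$ with $C$ a compact neighbourhood of $0$, by compactness. So your pigeonhole argument shows the type is finitely satisfiable, and it is realized in $\C$.
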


\begin{theorem}\label{theorem: NSOP}
Let $X$ be a definable approximate subring .
If $\Th(M)$ (i.e. the theory of $M$) has NSOP (i.e. {\em non strict order property}), then either there is a definable thick subset $D$ of $Y:=4X + X \cdot 4X$ consisting of zero divisors or $Y$ is a subring (additively $K^{11}$-commensurable with $X$).
\end{theorem}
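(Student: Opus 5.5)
We plan to work with the definable locally compact model $f\colon \langle X\rangle \to S$ from Fact \ref{fact: locally compact model exists}, with $f^{-1}[U]\subseteq Y:=4X+X\cdot 4X$ for some neighbourhood $U$ of $0$, and with the model-theoretic connected component behind it. Passing to a sufficiently saturated elementary extension $M^*$, the kernel of the induced map on $\langle X\rangle^*$ is a type-definable two-sided ideal $J\subseteq Y^*$; it is the intersection of definable sets $D_i$ with $D_{i+1}+D_{i+1}\subseteq D_i$, $D_{i+1}\cdot X_n\subseteq D_i$ and $X_n\cdot D_{i+1}\subseteq D_i$ for each $n$. We then split according to whether $J$ is small or not, in the sense of zero divisors.

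First we use the NSOP hypothesis. Suppose no definable thick subset of $Y$ consists of zero divisors. We claim that $J$ then contains an element $a$ that is not a zero divisor in $\langle X\rangle^*$. The idea is that the set of zero divisors in $Y$ is a union of the definable sets $Z_b=\{y\in Y: yb=0 \vee by=0\}$ for $b\neq 0$. By compactness, together with the fact that $J$ has bounded index (generic sets are thick), covering $J$ by zero divisors would produce finitely many $Z_b$ whose union covers a thick definable set. This is exactly the argument of \cite[Theorem 5.4]{Kru}, with "thick" replacing "generic". The place where NSOP is used is the following: under NSOP, a definable set $D$ with $D\cdot D\subseteq D$-type behaviour cannot contain a strictly decreasing chain of left translates $aD\subsetneq D$ indexed definably. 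A chain $D\supsetneq aD\supsetneq a^2D\supsetneq\cdots$ with $a$ a non-zero-divisor would give a definable strict order via $x\le y \iff xD\subseteq yD$.

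Second, given such an $a\in J$, we conclude that left multiplication $x\mapsto ax$ is injective on $\langle X\rangle^*$ and maps $Y$ into a definable subset of $Y$ inside $J$-neighbourhoods. The NSOP argument of the previous paragraph forces $aY$ and $Y$ to be "equal up to the structure", and iterating this shows that $J$ is actually a definable subring. Since $J\subseteq Y$ and $J$ has bounded index in $\langle X\rangle^*$, we get $J=Y=\langle X\rangle^*$, so $Y$ is a subring. Commensurability with $X$ is then immediate: $Y\subseteq 4X+X\cdot 4X$ is covered by $K\cdot K^{10}$ translates of $X$ by repeated use of the $K$-approximate subring property ($X\cdot X\subseteq F+X$, sums add factors of $K$). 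This yields the $K^{11}$ bound.

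The main obstacle is the first step: extracting a non-strict-order contradiction from the failure of the "thick set of zero divisors" alternative, and in particular converting a decreasing chain of annihilator- or translate-type definable sets into an instance of the strict order property. We would first try the formula $\varphi(x,y): \operatorname{Ann}(x)\cap Y\subsetneq \operatorname{Ann}(y)\cap Y$. We would use the chain condition that NSOP gives on uniformly definable families (every such family has no infinite strictly descending chain up to indiscernibility) to obtain an element of $J$ with trivial annihilator.
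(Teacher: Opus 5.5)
There is a genuine gap, and it sits exactly where NSOP has to be used. You place the main obstacle in finding a non-zero-divisor in $J$, and propose annihilator chains for that. That step needs no NSOP at all, whereas the step that does need NSOP is only gestured at.

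For the first step, note that the zero divisors in $\bar Y$ form an increasing countable union of the $M$-definable sets
$Z_n:=\{y\in\bar Y:\exists b\in\bar X_n\setminus\{0\}\,(yb=0\vee by=0)\}$.
If $J\subseteq\bigcup_n Z_n$, then by saturation $J\subseteq Z_n$ for a single $n$. Hence some $M$-definable symmetric $D$ satisfies $J\subseteq D\subseteq Z_n$. This $D$ is thick because it contains $(\bar R,+)^{00}_M$, and both thickness and consisting of zero divisors transfer down to $M$. Your version, with finitely many $Z_b$ for fixed parameters $b$, does not work: compactness does not apply to a union indexed by elements of the monster. So if there is no thick definable set of zero divisors, $J$ contains a non-zero-divisor $a$, by pure compactness.

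Your second step is not an argument as written. You say NSOP "forces $aY$ and $Y$ to be equal up to the structure", that "iterating" makes $J$ a definable subring, and that $J\subseteq Y$ of bounded index gives $J=Y=\langle X\rangle^*$. The last inference is false as stated: bounded index together with $J\subseteq\bar Y$ gives no equality (take $J=\{0\}$ for a finite ring).

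What is missing is the inclusion that makes a chain decreasing and strict. Since $a\in J$, $J$ is an ideal, and $J\subseteq\bar Y$, we have $a\bar W\subseteq J\subseteq\bar Y$ for $W:=Y\cdot Y+Y+Y$. If $Y$ is not a subring, then $Y\subsetneq W$, because $Y$ is additively symmetric. So $a\bar W\subseteq\bar Y\subsetneq\bar W$, and injectivity of $x\mapsto ax$ gives $a^{n+1}\bar W\subsetneq a^n\bar W$ for all $n$. This chain is uniformly definable, since every $a^n$ lies in $J\subseteq\bar Y\subseteq\bar X_3$ and multiplication is definable on each $\bar X_m$. That contradicts NSOP.

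Equivalently, run the same argument with $W=Y$: it forces $a\bar Y=\bar Y$, hence $\bar Y\subseteq J$, so $\bar Y=J$ is an ideal. This is presumably what you intended, but it must be derived this way, not from bounded index.

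For comparison, the paper avoids the element-wise compactness step altogether. It applies compactness once to get an $M$-definable thick $D\subseteq Y$ with $D(Y\cdot Y+Y+Y)\subseteq Y$. The dichotomy is then just whether $D$ contains a non-zero-divisor $d$; if it does, the chain $d^n(Y\cdot Y+Y+Y)$ gives the contradiction exactly as above.
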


The class of NSOP (see Definition \ref{definition: NSOP}) theories is reach. It contains all stable, and, more generally, all simple theories. Thus, among many interesting examples, this class includes the theories of algebraically closed fields, separably closed fields, differentially closed fields, or bounded PAC fields. In particular, Theorem \ref{theorem: NSOP} applies to all definable approximate subrings of $M_n(K)$, where $M:=(K,+,\cdot)$ [$M:=(K,+,\cdot, D)$ when $K$ is a diferrentailly closed field] is any of the above fields. 


The final part of Section \ref{section: additional structural results} is related to approximate subfields.
Recall the definition of a finite approximate subfield (e.g. see \cite[Definition 5.1]{Bre}). Let $F$ be a field. A
finite subset $X$ of $F$ is said to be a {\em K-approximate subfield of $F$} if it is additively and multiplicatively symmetric (multiplicatively in the sense that $1 \in X$ and $X\setminus \{0\} = (X\setminus \{0\})^{-1}$), and there is a subset $E$ of $F$ with $|E| \leq K$, such that $XX +X$ is contained in $EX \cap (E + X)$. By \cite[Lemma 5.2]{Bre}, we know that if $X$ is a finite approximate subfield, then for every $n \in \mathbb{N}\setminus \{0\}$, the set $\textrm{Alg}_n(X)$ (of the quotients of sums of at most $n$ terms each of which is a product of at most $n$ elements from $X$) is contained in $E_nX \cap (E_n+X)$  for some subset $E_n$ of cardinality $K^{O_n(1)}$.

We prove the following

\begin{proposition}\label{proposition: infinite approximate subfields}
If $X$ is an infinite approximate subring of a field with the property that for every positive $n \in \mathbb{N}$, $\textrm{Alg}_n(X)$ is covered by finitely many additive translates of $X$, then $4X + X \cdot 4X$ is a subfield.
\end{proposition}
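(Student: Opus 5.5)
We will use the locally compact model from Fact \ref{fact: locally compact model exists}, applied with $M := \langle X \rangle$ carrying the full structure, so that definability is automatic. This gives a ring homomorphism $f \colon \langle X \rangle \to S$ into a locally compact ring $S$, with $f[X]$ relatively compact and $f^{-1}[U] \subseteq Y := 4X + X\cdot 4X$ for some neighbourhood $U$ of $0$. The point is that the field hypothesis forces $S$ to be tiny.

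First, $\ker f \subseteq f^{-1}[U] \subseteq Y$, and $\ker f$ is an ideal of the subring $\langle X \rangle$ of the field $F$. If $\ker f \neq 0$, pick $0 \neq a \in \ker f$. Then $\ker f$ contains $a\langle X\rangle$. The hope is to show $\langle X\rangle \subseteq \ker f$, or at least that $\ker f$ is itself a field which is commensurable with $X$ and contained in $Y$.

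For this we use the $\textrm{Alg}_n$ hypothesis. The elements $1/x$ for $x \in X \setminus\{0\}$ are covered by finitely many translates of $X$. So for a nonzero ideal $I$ of $\langle X\rangle$, the set $I \cdot \textrm{Alg}_n(X)$ stays inside boundedly many translates of $X$ intersected with $a\cdot(\text{stuff})$. The cleaner route, however, is to pass to the subfield $L := \bigcup_n \textrm{Alg}_n(X)$ generated by $X$. The hypothesis says exactly that $X$ is an approximate subring of $L$ such that every $\textrm{Alg}_n(X)$ is covered by finitely many translates of $X$. We then apply Fact \ref{fact: locally compact model exists} to an approximate subring $X'$ of $L$ with $\langle X' \rangle = L$: take $X' := X \cup \{1/x : x\in X\setminus\{0\}\} \cup \ldots$, suitably symmetrised. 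This $X'$ is commensurable with $X$ by hypothesis, though it requires checking that $X'$ is still an approximate subring. Its model $f' \colon L \to S'$ has kernel an ideal of a field. So either $f'$ is injective, or $\ker f' = L$ and $L \subseteq X'_m$.

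Second, rule out the injective case using infiniteness. If $f'$ is injective, then $f'[L]$ is a field inside $S'$, and the image of $X'$ is relatively compact. Because $f'^{-1}[U]$ is a neighbourhood-preimage contained in $X'_m$, and $X'_m$ is covered by finitely many translates of $X'$, the topology is nondiscrete exactly when $X$ is infinite. We then get a sequence $x_k \to 0$ in $f'[L]$ with $x_k \neq 0$. Their inverses $1/x_k$ lie in $\textrm{Alg}_1(X')$, which is covered by finitely many translates of $X'$, so $f'(1/x_k)$ stays in a compact set. But $f'(x_k)f'(1/x_k) = 1$ while $f'(x_k) \to 0$ and $f'(1/x_k)$ is bounded, which forces $1 = 0$ in $S'$, a contradiction. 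Here one uses the continuity of multiplication on compact sets: the product of something tending to $0$ with something in a compact set tends to $0$. Hence $\ker f' = L$.

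Finally, return to the original $X$ and $Y = 4X + X\cdot 4X$. We want $Y$ itself to be a subfield, and the main obstacle is this transfer from the auxiliary $X'$ back to the specific set $Y$. The plan is to redo the argument directly with the model $f$ of $X$ inside $\langle X\rangle$. Take a nonzero $x \in X$ with $f(x)$ small, and use $1/x \in \textrm{Alg}_1(X) \subseteq E + X$ with $E$ finite. As above, $f(x^{-1})$ cannot be defined in $\langle X\rangle$ directly, so instead we argue that $\ker f$ is a nonzero ideal (by the compactness and inversion argument applied to elements of $\langle X \rangle$ approximating $0$) containing some $a \neq 0$. Then for any $z \in L$, the element $za$ lies in $\langle X \rangle$. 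We would try to show $\ker f$ is closed under multiplication by $\textrm{Alg}_n(X)$ and hence equals $\langle X\rangle$ and is a field. That gives $\langle X\rangle \subseteq Y$, so $Y = \langle X \rangle$ is a subfield. The delicate points are verifying $\ker f = \langle X \rangle$, which needs $1 \in \langle X\rangle$ (from $x \cdot x^{-1}$), and the fact that $Y \supseteq \ker f$ with $Y \subseteq \langle X\rangle$ forces equality.
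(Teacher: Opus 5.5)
There is a genuine gap, exactly at the point you flag as the ``main obstacle'', and your earlier steps do not get around it. First, the auxiliary $X'$ is not available. Fact \ref{fact: locally compact model exists} only gives a model on $\langle X'\rangle$. To get a model on the whole field $L$ you therefore need an approximate subring $X'$, commensurable with $X$, that generates $L$ \emph{as a ring}.

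If $X'$ lies inside a fixed $\mathrm{Alg}_n(X)$ (e.g.\ the symmetrisation of $X\cup\{1/x : x\in X\setminus\{0\}\}$), commensurability holds. However, you give no argument that the ring it generates contains inverses of arbitrarily long sums $x_1+\dots+x_k$, i.e.\ that it is all of $L$. The ``$\ldots$'' needed to close up under inverses is not known to stay inside any fixed $\mathrm{Alg}_n(X)$, so commensurability is then lost. Hence ``$\ker f'$ is an ideal of a field'' is unjustified.

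Second, even granting $X'$, your conclusion $L=\ker f'\subseteq 4X'+X'\cdot 4X'$ only says that $X$ is additively generic in $L$. Getting from genericity to $L=4X+X\cdot 4X$ is not a formality: it is precisely Corollary \ref{corollary: generics in fields}, which the paper \emph{deduces} from this proposition. Your last paragraph does not supply this step, for three reasons:
\begin{enumerate}
\item Inside $\langle X\rangle$ you cannot invert, so the compactness-and-inversion argument says nothing about $\ker f$ there. You note this yourself and then invoke the argument anyway.
\item The claim that $za\in\langle X\rangle$ for every $z\in L$ is unsupported.
\item Asking $\ker f$ to be closed under multiplication by $\mathrm{Alg}_n(X)$ makes no sense, since $f$ is only defined on $\langle X\rangle$.
\end{enumerate}

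The missing idea is never to replace $X$. View $L=\bigcup_n\mathrm{Alg}_{n+1}(X)$ as a $\bigvee$-definable ring in which $X$ is a definable generic subset; this is exactly what the $\mathrm{Alg}_n$ hypothesis provides. The proof of Fact \ref{fact: 00 exists} (Theorem 4.1 of \cite{Kru}) goes through verbatim in that setting. So $\bar L^{00}_M$ exists and equals $(\bar L,+)^{00}_M+\bar X\,(\bar L,+)^{00}_M\subseteq 4\bar X+\bar X\cdot 4\bar X$, with the \emph{original} $\bar X$ on the right. Since $\bar L$ is a field and $\bar L^{00}_M$ is an ideal of bounded index (hence nonzero, as $X$ is infinite), $\bar L^{00}_M=\bar L$, and so $L=4X+X\cdot 4X$.

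Your inversion-plus-compactness argument is a valid substitute for the ``bounded index, hence nonzero'' step. But it only works once you have a model defined on all of $L$ whose neighbourhood preimage lies inside $4X+X\cdot 4X$ for the original $X$. The $\bigvee$-definable version of the existence theorem provides exactly that; your construction does not.
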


\begin{corollary}\label{corollary: generics in fields}
Let $F$ be an infinite field and $X \subseteq F$ additively symmetric and generic (in the sense that finitely many additive translates of $X$ cover $F$). Then $4X + X \cdot 4X =F$.
\end{corollary}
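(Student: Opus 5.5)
We plan to deduce Corollary \ref{corollary: generics in fields} from Proposition \ref{proposition: infinite approximate subfields}. Since finitely many additive translates of $X$ cover $F$, say $F = E + X$ with $E$ finite, $X$ is trivially an approximate subring: it is additively symmetric by assumption, and $X\cdot X \cup (X+X) \subseteq F = E + X$. Moreover, for every positive $n$, $\textrm{Alg}_n(X) \subseteq F = E + X$ is covered by finitely many additive translates of $X$. Finally, $X$ is infinite, since $F$ is infinite and is a finite union of translates of $X$. Hence Proposition \ref{proposition: infinite approximate subfields} applies, and $Y := 4X + X\cdot 4X$ is a subfield of $F$.

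It remains to show that $Y = F$. Since $Y \supseteq X$, we have $F = E + X \subseteq E + Y$, so the additive subgroup $Y$ has finite index in $(F,+)$. We now use the field structure. Write $F$ as a vector space over the subfield $Y$. If $Y \neq F$, then $[F:Y] \geq 2$, and $F/Y$ contains a copy of $Y$ (choose $v \notin Y$; then $Yv \cap Y = \{0\}$, so the cosets $yv + Y$ for $y \in Y$ are pairwise distinct). Thus the index of $Y$ in $F$ is at least $|Y|$. But $Y$ is infinite, because it contains the infinite set $X$. This contradicts finite index, so $Y = F$.

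The only substantive step is the invocation of Proposition \ref{proposition: infinite approximate subfields}; everything else is a routine verification. The point requiring care is checking that the hypotheses of that proposition hold, in particular the covering of $\textrm{Alg}_n(X)$, which is immediate here from genericity. We do not expect a genuine obstacle.
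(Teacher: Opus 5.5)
Your proof is correct and follows the paper's argument: apply Proposition \ref{proposition: infinite approximate subfields} to conclude that $4X + X\cdot 4X$ is a subfield, then observe that a proper subfield would have infinite index in $(F,+)$, contradicting genericity. The only difference is that you write out the routine checks the paper leaves implicit: that $X$ is an infinite approximate subring with every $\textrm{Alg}_n(X)$ covered, that $Y \supseteq X$ has finite index, and why a proper infinite subfield has infinite index.
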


The next corollary is a variant of the classification of finite approximate subfields from \cite[Theorem 5.3]{Bre}. 

\begin{corollary}\label{corollary: classification of finite approximate subfields}
For every $K \in \mathbb{N}$ there is $N(K) \in \mathbb{N}$  such that for every  finite $K$-approximate subfield $X$ either $|X| <N(K)$ or $4X + X \cdot 4X$ is a subfield. 
More generally, instead of $X$ being a finite $K$-approximate subfield, one can consider the context when $X$ is a finite $K$-approximate subring of a field with $\textrm{Alg}_n(X)$ covered by $N(K,n)$ additive translates of $X$ (where $N(K,n) \in \mathbb{N}$ depends only on $K$ and $n$).
\end{corollary}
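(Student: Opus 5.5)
The plan is to deduce Corollary~\ref{corollary: classification of finite approximate subfields} from Proposition~\ref{proposition: infinite approximate subfields} by a compactness (ultraproduct) argument. I treat the more general formulation: fix $K$ and a function $n \mapsto N(K,n)$, and suppose no bound $N(K)$ works. Then for every $j \in \mathbb{N}$ there is a field $F_j$ and a finite $K$-approximate subring $X_j \subseteq F_j$ with $|X_j| \geq j$, with $\textrm{Alg}_n(X_j)$ covered by $N(K,n)$ additive translates of $X_j$ for every $n$, and such that $4X_j + X_j \cdot 4X_j$ is not a subfield of $F_j$. The special case of finite $K$-approximate subfields follows from \cite[Lemma 5.2]{Bre}: there $\textrm{Alg}_n(X) \subseteq E_n + X$ with $|E_n| = K^{O_n(1)}$, and $X$ is a $K$-approximate subring because $XX+X \subseteq E+X$ and $X+X \subseteq XX + X$ (using $1 \in X$). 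One may also need to observe that the relevant expressions stay inside $\textrm{Alg}_n(X)$.

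Next, take a nonprincipal ultrafilter $\mathcal{U}$ and form $F := \prod F_j/\mathcal{U}$, which is a field, together with $X := \prod X_j/\mathcal{U}$. The key step is to check that every property in play is first-order, uniformly in $j$, so that \L o\'s's theorem transfers it to $X$. Being a $K$-approximate subring is first-order: it says that $0 \in X$, $X = -X$, and there exist $K$ elements whose translates cover $X \cdot X \cup (X+X)$. Likewise, for each fixed $n$, the statement that $\textrm{Alg}_n(X)$ is covered by $N(K,n)$ translates is a single first-order sentence in the language with a predicate for $X$, since $\textrm{Alg}_n$ is defined by a bounded formula. Finally, the statement that $4X + X\cdot 4X$ is a subfield is one first-order sentence, namely closure of this definable set under $+$, $-$, $\cdot$ and inverses of nonzero elements, together with containing $1$. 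Since $|X_j| \to \infty$, $X$ is infinite.

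It is important that internal covers are genuine covers of $X$ in $F$, so that $X$ is an honest approximate subring of the field $F$ satisfying the hypothesis of Proposition~\ref{proposition: infinite approximate subfields}. Moreover, $4X + X \cdot 4X$ computed in $F$ is exactly the ultraproduct of the sets $4X_j + X_j \cdot 4X_j$. Proposition~\ref{proposition: infinite approximate subfields} then says that $4X + X \cdot 4X$ is a subfield of $F$. By \L o\'s, $4X_j + X_j\cdot 4X_j$ is a subfield for $\mathcal{U}$-almost all $j$, which is a contradiction.

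The main obstacle is the bookkeeping in the first-order translation. The subtle point is that the Proposition needs the whole hypothesis (covering of $\textrm{Alg}_n(X)$ for all $n$) simultaneously. This is why the bounds $N(K,n)$ must be uniform in $j$: then each $n$ contributes a single sentence true in all factors. If the Proposition were stated only in a definable or pseudofinite context, one would additionally note that $X$ is definable in the ultraproduct structure, which is immediate.
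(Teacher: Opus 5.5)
Your proposal is correct and follows the same route as the paper. The paper's proof is a single sentence: a non-principal ultraproduct of a sequence of counterexamples would contradict Proposition~\ref{proposition: infinite approximate subfields}. You have filled in the bookkeeping the paper leaves implicit: the reduction via \cite[Lemma 5.2]{Bre}, the first-order transfer by \L o\'s, and why the bounds $N(K,n)$ must be uniform across the factors.
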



\section{Preliminaries}\label{section: preliminaries}

\subsection{Model theory}

In this paper, we use only basic model theory. Here we recall some fundamental things. The reader interested in details may consult e.g. \cite{Tent_Ziegler}.

A {\em language} (usually denoted by  $\mathcal{L}$) consists of function, relation, and constant symbols. Using those symbols together with quantifiers, variables and logical symbols, one constructs recursively the set of all {\em $\mathcal{L}$-formulas}; {\em $\mathcal{L}$-sentences} are $\mathcal{L}$-formulas without free variables. An {\em $\mathcal{L}$-structure} is a nonempty set $M$ together with interpretations of all the symbols of $\mathcal{L}$. For example, if $\mathcal{L}$ consists of two binary function symbols, then any ring is an $\mathcal{L}$-structure. 
A {\em theory} in the language $\mathcal{L}$ is a set of $\mathcal{L}$-sentences.
A theory $T$ in $\mathcal{L}$ is {\em consistent} if it does not prove a contradiction (using classical logic calculus) or, equivalently, it has a {\em model}, i.e. an $\mathcal{L}$-structure $M$ in which all sentences from $T$ are true (symbolically, $M \models T$). {\em Compactness theorem} (or {\em model-theoretic compactness}) tells us that a theory $T$ has a model if and only if every finite subset of $T$ has a model. A consistent theory $T$ in $\mathcal{L}$ is {\em complete} if for every $\mathcal{L}$-sentence $\varphi$, $T$ proves $\varphi$ or $T$ proves $\neg \varphi$. Whenever $M$ is an $\mathcal{L}$-structure, then $\Th(M):=\{ \varphi \textrm{ an }\mathcal{L}\textrm{-sentence} : M \models \varphi\}$ (i.e. {\em the theory of $M$}) is a complete theory, and, conversely, any complete theory is the theory of each model of $T$.

Let us fix an arbitrary $\mathcal{L}$-structure $M$.
An $\mathcal{L}$-structure $N$ is an {\em elementary extension} of $M$ (symbolically, $M \prec N$) if $M \subseteq N$ and for every $\mathcal{L}$-formula $\varphi(x_1,\dots,x_n)$ and tuple $(a_1,\dots,a_n) \in M^n$ we have $M \models \varphi(a_1,\dots,a_n) \iff N \models  \varphi(a_1,\dots,a_n)$. 

For any subset $A$ of $M$ we can expand the language $\mathcal{L}$ to $\mathcal{L}_A$ be adding constant symbols for the members of $A$, which are then interpreted in $M$ as the corresponding elements of $A$.
By a {\em type} over $A \subseteq M$ in variables $x$ we mean a consistent collection $\pi(x)$ of $\mathcal{L}_A$-formulas, where $\pi(x)$ being {\em consistent} means that for any finitely many formulas $\varphi_1(x),\dots,\varphi_n(x) \in \pi(x)$ we have $M \models (\exists x)( \varphi_1(x) \wedge \dots \wedge \varphi_n(x))$. By compactness theorem, this is equivalent to the property that $\pi(x)$ has a realization $a$ in some $N \succ M$, i.e. $N \models \varphi(a)$ for all $\varphi(x) \in \pi(x)$. 
A {\em complete type} over $A$ in variables $x$ is a type $p(x)$ over $A$ such that for every $\mathcal{L}_A$-formula $\varphi(x)$ we have $\varphi(x) \in p$ or $\neg \varphi(x) \in p$. This is equivalent to saying that $p=\tp(a/A):=\{\varphi(x) \textrm{ an $\mathcal{L}_A$-formula}: N\models \varphi(a)\}$ for some tuple $a$ in some $N \succ M$.

For a given cardinal $\kappa$, we say that $N \succ M$ is {\em $\kappa$-saturated} if for every $B \subseteq N$ of cardinality $<\kappa$, every complete type $p$ over $B$ has a realization in $N$. 
Using compactness theorem, for every $\kappa$ there exists $N \succ M$ which is $\kappa$-saturated. 

The downward L\"{o}wenheim-Skolem theorem tells us that if $M$ is infinite and $\lambda$ is an infinite cardinal which is at least $|\mathcal{L}| + \aleph_0$, then there is $N \prec M$ of cardinality $\lambda$. In particular, if $\mathcal{L}$ is countable, every infinite $\mathcal{L}$-structure has an infinite  countable elementary substructure.

For an $\mathcal{L}_A$-formula $\varphi(x)$, $\varphi(M)$ denotes the {\em set of realizations} of $\varphi(x)$ in $M$, i.e. $\varphi(M):= \{ a \in M^{|x|}: M \models \varphi(a)\}$. By an {\em $A$-definable subset} of $M$ [more generally, of a Cartesian power $M^n$] we mean the set of realizations in $M$ of an $\mathcal{L}_A$-formula $\varphi(x)$ with one [resp. $n$] free variables $x$. By a {\em definable subset} we mean an $M$-definable subset. If $M$ is $\kappa$-saturated and $A \subseteq M$ is of cardinality less than $\kappa$, then by an {\em $A$-type-definable} set in $M$ we mean a nonempty intersection of possibly infinite family of $A$-definable sets. If $X$ is a definable set in $M$ over a set of parameters $A\subseteq M\cap N$, where $N \prec M$ or $N \succ M$, then $X(N)$ denotes the interpretation of $X$ in $N$ (i.e., the set of realizations in $N$ of a formula in the language $\mathcal{L}_A$ defining $X$ in $M$).

The whole discussion above works also for many-sorted languages and structures. The difference is that now the language $\mathcal{L}$ specifies a collection of sorts and all symbols of the language are associated with some products of sorts. Then a (many-sorted) $\mathcal{L}$-structure consists of sorts (or interpretations of sorts), each relation symbol is interpreted as a subset of the associated product of sorts, and similarly for function and constant symbols. For example, an $R$-module $V$ can be naturally treated as a 2-sorted structure with the sorts $(R,+_R,\cdot_R)$ and $(V,+_V)$, and with the function $\cdot \colon R \times V \to V$.

An {\em imaginary sort} is a set of the form $M^n/E$, where $E$ is a $\emptyset$-definable equivalence relation on $M^n$. By $M^{eq}$ one denotes $M$ expanded by all imaginary sorts and the quotient maps corresponding to them (so $M^{eq}$ is a many-sorted structure). It turns out that any definable subset of a product of sorts $M^{n_1}/E_1 \times \dots \times M^{n_k}/E_k$ is the quotient of a definable set in $M^{n_1} \times \dots \times  M^{n_k}$ and there is no harm in passing to $M^{eq}$. For example, we do not loose $\kappa$-saturation, being an elementary extension, etc.. Also, $(M^{eq})^{eq}$ is naturally identified with $M^{eq}$ (i.e. $M^{eq}$ trivially has elimination of imaginaries). Even if $E$ is an $A$-definable equivalence relation on $M^n$ (where $A \subseteq M$ is finite), the quotient $M^n/E$ is an imaginary sort working in the language $\mathcal{L}_A$. In fact, $M^n/E$ can be naturally identified with $(M^n \times \{a\})/E'$, where $E'$ is a $\emptyset$-definable (in $\mathcal{L}$) equivalence relation on $M^{n+|a|}$ (where $a$ is an enumeration of $A$). In any case, there is no harm to work with equivalence relations definable over parameters.

In this paper, the main model theory context will be that of pseudofinite approximate subrings. There are several ways to formalize this concept. We choose a rather explicit one, and present it in Appendix \ref{appendix: pseudofiniteness}. In this appendix, we will work with a finite language, and use imaginary sorts with respect to equivalence relations definable over parameters. Having in mind the above discussion, we can freely talk about definable subsets or definable functions between such imaginary sorts.

\subsection{Approximate subrings}

The definition of an approximate subring is given in the abstract. Definable approximate subrings and definable locally compact models of definable approximate subrings are defined in Section \ref{subsection: strategies}. 

To avoid any confusion, let us emphasize that rings in this paper are associative, but need not be commutative or unital. For a subset $X$ of any ring, $2X=X+X:=\{x+y: x,y \in X\}$ and $X^2=X \cdot X:=\{xy: x,y \in X\}$; $nX$ and $X^n$ are defined analogously. 

We say that a ring $R$ is {\em nilpotent of class at most $n$} if $R^{n+1}=\{0\}$. In particular, $R$ being nilpotent of class at most $0$ means that $R=\{0\}$, and being nilpotent of class at most $1$ means that $R^2=\{0\}$.

We say that a ring $R$ is {\em $n$-nilpotent} if there are $u_1,\dots,u_n \in R$ generating $R$ and such that $u_i \cdot u_j, u_j \cdot u_i \in \langle u_1,\dots,u_{i-1} \rangle$ for any $1\leq i \leq j \leq n$. Any tuple $u_1,\dots,u_n$ as above will be called a {\em nilpotent base} of $R$. It is clear that $n$-nilpotency implies being nilpotent of class at most $n$, but it is essentially stronger, as it bounds the number of generators. Note also that in the definition of $n$-nilpotency we can equivalently require that $u_i \cdot u_j, u_j \cdot u_i$ belong to the additive group (rather than ring) generated by $u_1,\dots,u_{i-1}$.

For the proof of following fact see e.g. Proposition 2.3 in \cite{Bre} and references in there. 

\begin{fact}[Plunnecke-Ruzsa sumset estimates]\label{fact: small n-pling implies approximate}
Let $X$ be a finite nonempty subset of an abelian group. If $|X+X| \leq K|X|$, then $|nX - mX|\leq K^{n+m}|X|$ for all $n,m \in \mathbb{N}$ and $X-X$ is a $K^5$-approximate subgroup.
\end{fact}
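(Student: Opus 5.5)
The first claim is the Plünnecke--Ruzsa inequality. I would prove it in the standard way through Petridis' lemma. Choose a nonempty $Z \subseteq X$ minimizing the ratio $|Z+X|/|Z|$, and call this minimum $K' \le K$. Petridis' lemma says that for every finite set $C$ we have $|Z+X+C| \le K'|Z+C|$. Iterating it gives $|Z+nX| \le K^n|Z|$ for all $n$.

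Next I would apply Ruzsa's triangle inequality $|U||V-W| \le |U+V||U+W|$, which holds in any abelian group via the injection $(u,v-w)\mapsto (u+v,u+w)$ once a representation $v-w$ is fixed for each difference. Take $U=Z$, $V=nX$ and $W=mX$. This gives
\[
|Z|\,|nX-mX| \le |Z+nX|\,|Z+mX| \le K^{n+m}|Z|^2 ,
\]
so $|nX-mX| \le K^{n+m}|Z| \le K^{n+m}|X|$. The cases $n=0$ or $m=0$ are covered by the same inequalities, with the convention that $0X=\{0\}$. The one non-formal input here is Petridis' lemma, proved by induction on $|C|$. I regard it as the main technical step but would simply cite it.

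For the second claim, put $A:=X-X$. This set is symmetric and contains $0$. Its doubling set is $A+A=2X-2X$, which the first part bounds by $|2X-2X| \le K^4|X|$. To cover it by translates of $A$, I would use Ruzsa's covering lemma. Take a maximal set $F\subseteq 2X-2X$ such that the translates $f+X$, $f\in F$, are pairwise disjoint. Their union lies in $3X-2X$, so
\[
|F|\,|X| \le |3X-2X| \le K^5|X| ,
\]
and hence $|F| \le K^5$. By maximality, every $y \in 2X-2X$ satisfies $(y+X)\cap(f+X)\neq\emptyset$ for some $f\in F$, which means $y\in f+X-X$. Therefore $A+A \subseteq F+A$ with $|F|\le K^5$, and $A$ is a $K^5$-approximate subgroup.

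The main obstacle is only the exponent bookkeeping: the covering must be done with disjoint translates of $X$ rather than of $A$, so that the relevant bound is $|3X-2X|$ and the exponent comes out as exactly $5$.
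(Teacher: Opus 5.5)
Your proof is correct. The paper does not prove this fact itself; it cites it (Breuillard, Prop.~2.3). Your argument is exactly the standard proof behind that citation: Petridis' lemma and the Ruzsa triangle inequality give $|nX-mX|\le K^{n+m}|X|$, and Ruzsa's covering lemma with disjoint translates of $X$ inside $3X-2X$ gives the $K^5$ bound. Note also that your covering set satisfies $F\subseteq 2X-2X\subseteq\langle X-X\rangle$, as the paper's definition of an approximate subgroup requires.
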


Let us first prove the following remark already mentioned in the introduction:

\begin{remark}\label{remark: small n-pling for rings}
Let $X$ be a finite nonempty subset of a ring. If $|X+X+XX| \leq K|X|$ (or just $|X+X|, |X+XX| \leq K|X|$), then $X-X$ is a $(K^{5} +K^{19})$-approximate subring.
\end{remark}

\begin{proof}
Since $|X+X| \leq K|X|$, by Fact \ref{fact: small n-pling implies approximate}, $X-X$ is a $K^5$-approximate subgroup, so $2(X-X) \subseteq E+(X-X)$ for some $E \subseteq \langle X \rangle$ of size at most $K^5$.  As $|X+X^2| \leq K|X|$, by Ruzsa's covering lemma (e.g., see \cite[Lemma 3.2]{Bre}), $X^2 \subseteq F +X-X$ for some $F\subseteq X^2$ of size at most $K$. Hence, $(X-X)\cdot(X-X)  \subseteq 2X^2 -2X^2\subseteq 2(F+X-X) - 2(F+X-X)=2F -2F +4(X-X) \subseteq 2F -2F + 3E + (X-X)$. The conclusion follows as $|2F-2F+3E| \leq K^2 \cdot K^2 \cdot (K^5)^3=K^{19}$.
\end{proof}

The next lemma leads to the explicit polynomial bound on $N_1(K)$ in Theorem \ref{theorem: main theorem intro}.

\begin{lemma}\label{lemma: K510}
If $X$ is a $K$-approximate subring and $H$ is a subring of an ambient ring $R$, then $Y:=(4X + X \cdot 4X) \cap H$ is a $(K^{510}+K^{22})$-approximate subring.
\end{lemma}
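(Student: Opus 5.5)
The plan is to reduce everything to covering statements about $X$ itself, using a standard intersection trick. \emph{Trick:} if $A \subseteq F + X$ with $F$ finite, then $A \cap H \subseteq F' + ((X - X) \cap H)$ for some $F'\subseteq H$ with $|F'| \leq |F|$. To see this, for each $f \in F$ with $(f+X)\cap H \neq \emptyset$ pick $h_f$ in that intersection; then $(f+X)\cap H \subseteq h_f + ((X-X)\cap H)$.

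Since $0 \in X = -X$, we have $X - X = 2X \subseteq 4X \subseteq 4X + X\cdot 4X$ (using $0 \in X \cdot 4X$). Hence $(X-X)\cap H \subseteq Y$. Additive symmetry of $Y$ and $0\in Y$ are immediate, because $4X$, $X\cdot 4X$ and $H$ are symmetric and contain $0$. So it suffices to cover $Y+Y$ and $Y\cdot Y$, which lie inside $H$, by few additive translates of $X$, with total count at most $K^{510}+K^{22}$.

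\textbf{Sums.} Fix $E$ with $|E|\le K$ and $X+X \subseteq E+X$, $X\cdot X\subseteq E+X$. By induction, $nX \subseteq (n-1)E + X$. Then
\[
Y+Y \subseteq 8X + 8(X\cdot X) \subseteq 8X + 8E + 8X ,
\]
which is covered by $K^{O(1)}$ translates of $X$. Counting carefully (absorbing sums of $X$ before expanding, and choosing the order of absorption to save a factor) should give the $K^{22}$ term.

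\textbf{Products.} Expanding $(4X + X\cdot 4X)(4X+X\cdot 4X)$ gives a sum of at most $16+64+64+256$ products of the shapes $X\cdot X$, $X\cdot X\cdot X$ and $X\cdot X\cdot X\cdot X$. The main obstacle is that $e\cdot X$ for $e\in E$ is not a priori controlled, so one cannot naively write $X\cdot X\cdot X\subseteq (E+X)\cdot X$. I would handle this using associativity and the freedom in choosing $E$. Choose $E$ so that each $e\in E$ equals $a - b$ with $a=uv\in X\cdot X$ and $b\in X$; this is possible since only translates meeting $X\cdot X$ are needed. Then
\[
x\cdot e = (xu)v - xb \in (E+X)v - X\cdot X \subseteq E v + X\cdot X - X\cdot X .
\]
Here $Ev$ is a finite set of size at most $K$ for each of the at most $K$ fixed choices of $v$, and $X\cdot X - X\cdot X \subseteq 2E-2E+4X$. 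Thus $X\cdot X\cdot X \subseteq X\cdot E + X\cdot X$ is covered by $K^{O(1)}$ translates of $X$. Iterating the same manoeuvre once more handles $X\cdot X\cdot X\cdot X$ (and products with left factors, symmetrically). Collecting all terms, then absorbing the resulting multiple sums of $X$ via $nX\subseteq (n-1)E+X$, gives a bound of the form $K^{c}$. Careful bookkeeping of the exponents, with the four-fold products dominating, should produce $K^{510}$; I expect this bookkeeping to be the tedious part.

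Finally, apply the trick to $Y\cdot Y\cup(Y+Y)\subseteq H$: it is covered by at most $K^{510}+K^{22}$ translates of $(X-X)\cap H\subseteq Y$, as required.
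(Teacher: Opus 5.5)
Your strategy is the paper's: the same intersection trick for passing to $H$, and the same key device for products. You take the translates covering $X\cdot X$ from $X\cdot X+X$ and use associativity to control $X\cdot(uv-b)$ and its mirror image; this is the paper's $F_2\subseteq X^2+X$, $f=f_1f_2+f_3$, with the finite sets $\bigcup_{f\in F_2}F_2f_2$ and $\bigcup_{f\in F_2}f_1F_2$. That argument does show $Y$ is a $K^{O(1)}$-approximate subring.

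\textbf{The gap.} The explicit constant $K^{510}+K^{22}$ is the real content of the lemma (it is what produces $N_1(K)$), and you assert it (``should produce'') without deriving it. Moreover, two of your choices make a straightforward count overshoot it.

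\emph{Covering by $X$ instead of $2X$.} Covering by translates of $X$ and intersecting with $(X-X)\cap H$ forces every count to end in a single copy of $X$. For sums this gives $Y+Y\subseteq 8X+8X^2\subseteq 8F+16X\subseteq 23F+X$, i.e.\ $K^{23}$ rather than $K^{22}$. Reordering does not help: converting the $8$ products costs $8$ elements of $F$, and merging the $16$ copies of $X$ costs $15$ more. The paper instead covers $2Z\cup Z^2$, with $Z=4X+X\cdot 4X$, by translates of $2X$. Your trick works verbatim there, since $(a+2X)\cap H\subseteq h+(4X\cap H)\subseteq h+Y$, and the sums stop at $22F+2X$.

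\emph{Overcounting the expansion.} Your count of $16+64+64+256$ products is too generous. The factor $X\cdot 4X$ has a single left factor, so $Z\cdot Z\subseteq 16X^2+32X^3+16X^4$. With $256$ four-fold products and the paper's treatment of $X^4$, the finite set $F_2F_2$ alone would enter as a $256$-fold sum. Under the estimate $|nS|\le |S|^n$ that is up to $K^{512}$ translates.

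\textbf{How to close it.} The tally must actually be done, and with some care. Even the paper's final display undercounts. The set $\bigcup_{f\in F_2}F_2f_2$ arises $16$ times from each of the two copies of $16X^3$ and $16$ times from $16X^4$. That is $48$ times rather than $32$, and with the same crude estimate it gives $K^{542}$.

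A more economical count fits your trick. Write $F=F_1\cup F_2$ with $2X\subseteq F_1+X$, $F_1\subseteq 3X$, $X^2\subseteq F_2+X$, $F_2\subseteq X^2+X$.
\begin{enumerate}
\item $Z\subseteq 11F+X=:T+X$ with $|T|\le K^{11}$, so $Z\cdot Z\subseteq TT+T\cdot X+X\cdot T+X\cdot X$.
\item Your associativity argument (trivial for $F_1\subseteq 3X$) gives $F\cdot X\subseteq U_1+3X^2$ and $X\cdot F\subseteq U_2+3X^2$. Here $U_1=\{0\}\cup\bigcup_{f\in F_2}f_1F_2$ and $U_2=\{0\}\cup\bigcup_{f\in F_2}F_2f_2$.
\item Since $T\cdot X\subseteq 11(F\cdot X)$, and symmetrically for $X\cdot T$, you get $Z\cdot Z\subseteq TT+11U_1+11U_2+132F+2X$.
\item This is at most $K^{22}(K^2+1)^{22}K^{132}\le K^{220}$ translates of $2X$ for $K\ge 2$.
\end{enumerate}
Together with $2Z\subseteq 22F+2X$, this gives the stated bound with room to spare.
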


\begin{proof}
Let $Z := 4X + X \cdot 4X$. First, observe that it is enough to show that $2Z \cup Z^2$ is covered by $C:=K^{510}+K^{22}$ additive translates of $2X$. For that we use the argument from \cite[Lemma 2.9]{Dries}. Namely, consider any $a \in R$ such that there is $h \in (a +2X) \cap (2Z \cup Z^2) \cap H$. Then $a-h \in 2X$, so $(a-h +2X) \cap H \subseteq Y$, and hence $(a+2X) \cap (2Z \cup Z^2) \cap H \subseteq (a+2X) \cap H \subseteq h+Y$. Thus, if $2Z \cup Z^2$ is covered by $C$ additive translates of $2X$, then $(2Z \cup Z^2) \cap H$ is covered by $C$ translates of $Y$. This is enough, because $2Y \cup Y^2 \subseteq (2Z \cup Z^2) \cap H$.

In the rest of the proof, we will show that $2Z \cup Z^2$ is covered by $C$ additive translates of $2X$. 
We have that $2X \subseteq F_1+X$ and $X^2 \subseteq F_2 +X$ for some $F_1 \subseteq 3X$ and $F_2 \subseteq X^2 +X$ with $F:= F_1 \cup F_2$ of size at most $K$. Write each element $f \in F_2$ as $f_1\cdot f_2 + f_3$ for some $f_1,f_2,f_3 \in X$. We have
\begin{equation}\label{first equation}
2Z = 8X + 2(X \cdot 4X) \subseteq 8X + 8X^2 \subseteq 8X + 8(F+X) = 8F + 16X \subseteq 22F + 2X.
\end{equation}
$Z^2$ requires a longer computation. Namely, 
\begin{equation}\label{second equation}
Z^2 \subseteq (4X)^2 + (4X) \cdot X \cdot (4X) + X \cdot (4X) \cdot (4X) + (X \cdot 4X)^2,
\end{equation}
and we have the following inclusions:

\begin{itemize}
\item $(4X)^2 \subseteq 16X^2 \subseteq 31F + X$.
\item 
Both $(4X) \cdot X \cdot (4X)$ and  $X \cdot (4X) \cdot (4X)$ are contained in $16X^3$. On the other hand, $X^3 =X \cdot X^2 \subseteq X(F_2 +X) \subseteq XF_2 + X^2 \subseteq F_2 +X +XF_2$. We also have $XF_2 =  X\cdot \{f_1f_2+f_3: f \in F_2\} \subseteq X \cdot \bigcup_{f \in F_2} (Xf_2 +X) \subseteq \bigcup_{f \in F_2} (X^2f_2 +X^2) \subseteq X^2 + \bigcup_{f \in F_2} (F_2+X)f_2 \subseteq 2X^2 + \bigcup_{f \in F_2} F_2f_2 \subseteq 3F +  \bigcup_{f \in F_2} F_2f_2 + X$.  We conclude that $16X^3$ is contained in $95F + 16\bigcup_{f \in F_2} F_2f_2 +X$. 

\item 
$(X \cdot 4X)^2 \subseteq 16X^4$. On the other hand, $X^4 \subseteq (F_2+X)(F_2+X) \subseteq F_2^2 +X^2+ F_2 X + X F_2\subseteq F_2^2 + F_2 +X +F_2X + XF_2$. In the second bullet, we already showed that  $XF_2 \subseteq 3F +  \bigcup_{f \in F_2} F_2f_2 + X$. Analogously, $F_2X \subseteq 3F +  \bigcup_{f \in F_2} f_1F_2 + X$. Therefore, $16X^4 \subseteq 16F^2 + 159F +  16\bigcup_{f \in F_2} f_1F_2 +   16\bigcup_{f \in F_2} F_2f_2 +X$.
\end{itemize}

By (\ref{second equation}) and the three bullets, we get that 
\begin{equation}
Z^2 \subseteq 16F^2 + 382F + 16\bigcup_{f \in F_2} f_1F_2 + 32\bigcup_{f \in F_2} F_2f_2 +2X,
\end{equation}
so $Z^2$ is covered by $K^{510}$ translates of $2X$, which together with (\ref{first equation}) yields the desired conclusion.
\end{proof}

From now on, in this section, let $M$ be a structure in a language $\mathcal{L}$, $X$ an approximate subring definable in $M$, $\C \succ M$ an $|M|^+$-saturated elementary extension, $R:=\langle X \rangle$, $\bar X:=X(\C)$, $\bar R :=\langle \bar X \rangle$. The sequence $(X_n)_{n \in \mathbb{N}}$ is defined as in the introduction on page \pageref{page: definition of  X_n}.

Locally compact models and the main result of \cite{Kru} on the existence of locally compact models are already discussed in Section \ref{subsection: strategies}; in particular, see Fact \ref{fact: locally compact model exists}.


The proof of Lemma 3.2(2) in \cite{Kru}, yields 

\begin{remark}\label{remark: premimages of compact sets}
If $f \colon \langle X \rangle \to S$ is a locally compact model and $C \subseteq S$ is compact, then $f^{-1}[C] \subseteq X_m$ for some $m$.
\end{remark}

\begin{fact}\label{fact: D is approximate}
Let $f \colon \langle X \rangle \to S$ be a locally compact model, $U \subseteq V$ with $U$ a  neighborhood of $0$ and $V$ relatively compact, and let $D$ be between $f^{-1}[U]$ and $f^{-1}[V]$. Then $D$ is commensurable with $X$. If $D$  is also symmetric, then it is an approximate subring commensurable with $X$. Moreover, if $D$ is definable, then $f|_{\langle D \rangle} \colon \langle D \rangle \to S$ is a definable locally compact model of $D$.
\end{fact}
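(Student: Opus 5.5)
The argument splits into commensurability, then approximate subring, then definability of the model. Throughout, $D$ satisfies $f^{-1}[U]\subseteq D\subseteq f^{-1}[V]$.

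\emph{Commensurability.} We must cover $D$ by finitely many additive translates of $X$, and $X$ by finitely many translates of $D$.

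For the first direction, $V$ is relatively compact, so $\overline{V}$ is compact. By Remark \ref{remark: premimages of compact sets}, $D\subseteq f^{-1}[\overline V]\subseteq X_m$ for some $m$. Since $X$ is an approximate subring, each $X_n$ is covered by finitely many additive translates of $X$, by an easy induction: sums and products of finite unions of translates of $X$ expand into finitely many translates of $2X$ and of $X^2$, together with sets of the form $aX$ or $Xa$. This last point needs care, because $aX$ is not a translate of $X$. To avoid it, I would instead use the compactness argument directly.

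Pick a symmetric open neighborhood $W$ of $0$ with $W+W\subseteq U$. Then $f^{-1}[W]$ is a set $D'$ with $D'-D'\subseteq D$. The set $\overline{f[X]}$ is compact, so it is covered by finitely many translates $s_i+W$, and we may take $s_i\in f[X]$ with $s_i=f(x_i)$. If $x\in X$ has $f(x)\in f(x_i)+W$, then $x-x_i\in f^{-1}[W]\subseteq D$. Hence $X\subseteq\bigcup_i (x_i+D)$.

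Symmetrically, $\overline{V}$ is compact and covered by finitely many translates of $W$. Intersecting with $f[\langle X\rangle]$ in the same way, $f^{-1}[V]$ is covered by finitely many translates of $f^{-1}[W+W]\subseteq f^{-1}[U]$. This in turn is covered, via Remark \ref{remark: premimages of compact sets} for a compact neighborhood inside $U$, by a set contained in some $X_m$. To pass from $X_m$ to translates of $X$, I would use the standard fact from \cite{Kru} that $X_m$ is covered by finitely many additive translates of $X$. This holds because $\langle X\rangle$ with $X$ relatively compact in the model behaves well; alternatively one can argue by compactness in the model, since $f[X]$ has nonempty interior up to translates. This covering of $X_m$ is the main technical point.

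\emph{Approximate subring.} If $D$ is symmetric, then $2D\cup D^2\subseteq f^{-1}[2V\cup V^2]$, and $2\overline V\cup\overline V^2$ is compact by continuity of the ring operations. Covering that compact set by finitely many translates $f(d_i)+W$ gives $2D\cup D^2\subseteq \bigcup_i (d_i+f^{-1}[W+W])\subseteq\bigcup_i(d_i+D)$. Here the preimage is taken within $\langle X\rangle$, and each $d_i$ is chosen in the relevant set. So $D$ is an approximate subring, and it is commensurable with $X$ by the first part.

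\emph{Definable model.} Now let $D$ be definable. Then $\langle D\rangle\subseteq\langle X\rangle$ and $f[D]\subseteq V$ is relatively compact. Also $f^{-1}[U]\cap\langle D\rangle\subseteq D=D_0$, so $f|_{\langle D\rangle}$ is a locally compact model of $D$.

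Each $D_n$ is definable, being built from $D$ inside some $X_k$, where the operations are definable. For definability of the model, take compact $C\subseteq$ open $O$. Write $f^{-1}[C]\subseteq Y\subseteq f^{-1}[O]$ with $Y$ definable, as given by definability of $f$. Then intersect with $\langle D\rangle$. Since $f^{-1}[C]\cap\langle D\rangle$ lies in some $D_n$, the set $Y\cap D_n$ is definable and works.

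The main obstacle is ensuring that preimages of compact sets are covered by finitely many translates of $X$, rather than merely lying in $X_m$. I will rely on Remark \ref{remark: premimages of compact sets} together with the fact that $X_m$ is commensurable to $X$ as in \cite{Kru}.
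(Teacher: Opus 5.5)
Your outline is correct, and it is more self-contained than the paper's proof. The paper simply cites \cite[Fact 3.5]{Kru} for commensurability and \cite[Corollary 2.2]{Kru} for the approximate-subring clause, and handles definability exactly as you do. Your covering arguments are valid and genuinely different from that. Covering $\overline{f[X]}$ by finitely many $f(x_i)+W$ with $W\subseteq U$ gives $X\subseteq\bigcup_i(x_i+D)$. Covering the compact closure of $f[2D\cup D^2]\subseteq 2V\cup V^2$ the same way shows that a symmetric $D$ is an approximate subring directly from the model, with translates in $\langle D\rangle$. This avoids first proving commensurability with $X$, which is the route the citation of \cite[Corollary 2.2]{Kru} takes.

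There is one step you leave unproved, and the justifications you suggest for it do not work. You need $X_m$ to be covered by finitely many additive translates of $X$, in order to pass from $D\subseteq f^{-1}[\overline V]\subseteq X_m$ (Remark~\ref{remark: premimages of compact sets}) to $D\subseteq E+X$ with $E$ finite. An argument through the model can only cover preimages of compact sets by translates of $f^{-1}[U]$. A locally compact model guarantees only $f^{-1}[U]\subseteq X_k$ for some $k$, so that route is circular. Moreover, $f[X]$ itself may be countable and dense, hence have empty interior. For the same reason your ``Symmetrically'' paragraph adds nothing.

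The fact is purely algebraic, and the induction you abandoned works once $aX$ is handled:
\begin{itemize}
\item Fix a finite $F$ with $(X+X)\cup XX\subseteq F+X$.
\item Show, by induction on how $a\in\langle X\rangle$ is built from $X$ by sums and products, that $aX\cup Xa\subseteq E_a+X$ for some finite $E_a$.
\item For $a\in X$, take $E_a:=F$.
\item For sums, $(b+c)X\subseteq bX+cX\subseteq E_b+E_c+F+X$.
\item For products, $(bc)X=b(cX)\subseteq bE_c+bX\subseteq bE_c+E_b+X$; the right-hand side $Xa$ is symmetric.
\item Then if $X_n\subseteq E+X$ with $E$ finite, expanding $X_{n+1}\subseteq 2E+2X+EE+EX+XE+XX$ covers $X_{n+1}$ by finitely many translates of $X$.
\end{itemize}
With this lemma in place, your proof is complete.
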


\begin{proof}
The first part follows from \cite[Fact 3.5]{Kru} and the second from \cite[Corollary 2.2]{Kru}. In the moreover part, the fact that  $f|_{\langle D\rangle}$ is a locally compact model of $D$ is clear from the choice of $D$. For the definability of $f|_{\langle D \rangle}$, consider any $C \subseteq O \subseteq S$, where $C$ is compact and $O$ is open. By the definability of $f$, there is a definable set $Y$ between $f^{-1}[C]$ and $f^{-1}[O]$. On the other hand, 
by Remark \ref{remark: premimages of compact sets},
$(f|_{\langle D\rangle})^{-1}[C] \subseteq D_n$ for some $n$. Hence, $Y':=Y \cap D_n$ is a definable subset of $\langle D \rangle$ which lies between $(f|_{\langle D\rangle})^{-1}[C]$ and $(f|_{\langle D\rangle})^{-1}[O]$.
\end{proof}

The next remark follows from Remark \ref{remark: premimages of compact sets} and Fact \ref{fact: D is approximate}.

\begin{remark}\label{remark: preimage of a compact set}
If $f \colon \langle X \rangle \to S$ is a locally compact model of $X$, then for every relatively compact $C \subseteq S$ the preimage $f^{-1}[C]$ is covered by finitely many additive translates of $X$. If $W$ is a relatively compact neighborhood of $0$, then  $f^{-1}[U]$ is commensurable with $X$. 
\end{remark}

\begin{fact}\label{fact: extension to the monster}
Every definable locally compact model $f \colon \langle X \rangle \to S$ of $X$ has a unique extension to an $M$-definable locally compact model $\bar f \colon \langle \bar X \rangle \to S$ of $\bar X$, where {\em $M$-definable} means that $\bar f^{-1}[F] \cap \bar X_n$ is $M$-type-definable for every closed $F \subseteq S$ and every $n \in \mathbb{N}$.
\end{fact}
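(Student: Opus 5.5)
The plan is to define $\bar f$ by type-definable approximation and check uniqueness through saturation. For each compact $C \subseteq S$ and open $O \supseteq C$, definability of $f$ gives an $M$-definable set $Y_{C,O} \subseteq X_m$ (for a suitable $m$) with $f^{-1}[C] \subseteq Y_{C,O} \subseteq f^{-1}[O]$. For $\bar x \in \bar R$, say $\bar x \in \bar X_n$, let
\[
\Phi(\bar x) := \{\, s \in S : \bar x \in Y_{C,O}(\C) \text{ for all compact } C \text{ and open } O \supseteq C \text{ with } s \in \operatorname{int} C \,\}.
\]
I would show that $\Phi(\bar x)$ is a singleton and put $\bar f(\bar x)$ equal to its unique element. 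A cleaner alternative formulation is the following: $\bar f(\bar x)$ is the unique $s$ such that, for every open $O \ni s$ and every compact neighbourhood $C$ of $s$ inside $O$, we have $\operatorname{tp}(\bar x/M) \vdash \bar x \in Y_{C',O}$ for suitable $C'$. This is the standard "standard part" construction.

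\textbf{Existence of the point.} Since $f[X_n]$ is relatively compact (as $f[X]$ is relatively compact and $f$ is a ring homomorphism), cover $\overline{f[X_n]}$ by finitely many compact sets $C_i$ inside open sets $O_i$. Then $X_n \subseteq \bigcup_i Y_{C_i,O_i}$, and this is a first-order statement about $M$-definable sets, so it transfers to $\C$. Hence for every finite open cover, $\bar x$ lies in some $Y_{C_i,O_i}(\C)$. A compactness argument in $S$, taking a shrinking net of covers, then gives a point $s$ with $\bar x \in Y(\C)$ for the relevant neighbourhoods of $s$.

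\textbf{Uniqueness of the point.} If $s \neq t$, pick disjoint open sets $O \ni s$ and $O' \ni t$ with compact $C,C'$ inside them. Then $Y_{C,O} \cap Y_{C',O'} \cap X_n = \emptyset$ in $M$, since a common element would map into $O \cap O' = \emptyset$. This emptiness transfers to $\C$.

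\textbf{Homomorphism property.} Additivity and multiplicativity follow by transferring statements of the form
\[
Y_{C_1,O_1} + Y_{C_2,O_2} \subseteq Y_{C_3,O_3} \cup (\text{complement in } X_{n+1}),
\]
which hold in $M$ once the neighbourhoods are chosen so that $O_1 + O_2 \subseteq C_3$, using continuity of the ring operations on $S$.

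\textbf{Locally compact model and $M$-definability.} Relative compactness of $\bar f[\bar X]$ is clear. For the preimage condition, take $U$ with $f^{-1}[U] \subseteq X_m$ and a compact $C \subseteq U$ with nonempty interior $U'$. Then $Y_{C,U} \subseteq X_m$ holds in $M$ and transfers, and $\bar f^{-1}[U'] \subseteq Y_{C,U}(\C) \subseteq \bar X_m$. For $M$-definability, one checks that
\[
\bar f^{-1}[F] \cap \bar X_n = \bigcap \{\, Y_{C,O}(\C) \cap \bar X_n : O \supseteq F \cap \overline{f[X_n]},\ O \text{ open} \,\},
\]
with the $C$'s chosen as compact neighbourhoods, possibly after taking finite unions. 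This is an intersection of $M$-definable sets.

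\textbf{Uniqueness of the extension.} Any $M$-definable model $g$ extending $f$ must satisfy $g(\bar x) \in \overline{O}$ whenever $\bar x \in Y_{C,O}(\C)$. To see this, note that $g^{-1}[S \setminus O] \cap \bar X_m$ is $M$-type-definable and disjoint from $Y_{C,O}(M)$. By saturation some $M$-definable $D \supseteq g^{-1}[S\setminus O]\cap \bar X_m$ is still disjoint from the $M$-points, and hence from $Y_{C,O}(\C)$. The formulas stay disjoint because $g$ agrees with $f$ on $M$. This pins $g(\bar x)$ down to $\bar f(\bar x)$.

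The main obstacle is the existence step: making the "standard part" precise when $S$ is only locally compact rather than metrizable. I would handle it by working inside the compact set $\overline{f[X_n]}$ and using finite intersection properties of the closed sets $\{s : \bar x \in Y_{C,O}(\C)\text{ for } C \ni s\}$.
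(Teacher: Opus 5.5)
Most of your standard-part construction works. It amounts to reproving \cite[Lemma 3.2(1)]{Kru}, which the paper simply cites for existence, uniqueness and the homomorphism property; the paper itself only checks $\bar f[\bar X]\subseteq \cl(f[X])$ and the preimage condition. The following parts of your argument are fine: uniqueness of the point, the homomorphism property, the preimage condition (you obtain it for the smaller neighbourhood $\operatorname{int} C$, which suffices), and $M$-definability. The existence step is cleanest as follows: if $\Phi(\bar x)=\emptyset$, then the compact set $\cl(f[X_n])$ is covered by finitely many $\operatorname{int} C_i$ with $\bar x\notin Y_{C_i,O_i}(\C)$, and transferring $X_n\subseteq \bigcup_i Y_{C_i,O_i}$ to $\C$ gives a contradiction.

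The genuine gap is in the uniqueness of the extension. Your step ``by saturation some $M$-definable $D\supseteq g^{-1}[S\setminus O]\cap \bar X_m$ is still disjoint from the $M$-points'' is not justified. Saturation is a property of $\C$, and knowing that the $M$-points of an $M$-type-definable set $\bigcap_i D_i(\C)$ avoid $Y(M)$ does not yield a single $D_i$ with $D_i(M)\cap Y(M)=\emptyset$ (compare $\bigcap_i \{x>i\}$ in $(\mathbb{N},<)$).

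Worse, the step would give $g(\bar x)\in O$, not just $\overline{O}$, for $\bar x\in Y_{C,O}(\C)$, and this can fail. If $f[Y_{C,O}(M)]$ accumulates at a point $t\in \partial O$, saturation produces $\bar x\in Y_{C,O}(\C)$ with $\bar f(\bar x)=t\notin O$. Information about preimages of closed sets cannot be pushed from $M$ to $\C$ this way.

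You must argue on the open side, exactly as the paper does for $\bar f^{-1}[U]\subseteq \bar X_m$:
\begin{itemize}
\item Suppose $g(\bar x)=t\notin\overline{O}$, and choose an open $V\ni t$ with $V\cap O=\emptyset$.
\item Then $g^{-1}[V]\cap \bar X_m$ is the complement in $\bar X_m$ of an $M$-type-definable set, hence a union of $M$-definable sets. So $\bar x\in E(\C)$ for some $M$-definable $E$ with $E(\C)\subseteq g^{-1}[V]$.
\item Since $g$ extends $f$, we get $E(M)\subseteq f^{-1}[V]$. So $E\cap Y_{C,O}=\emptyset$ holds in $M$, hence in $\C$, contradicting $\bar x\in E(\C)\cap Y_{C,O}(\C)$.
\item Hence $g(\bar x)\in \overline{O}$ for every pair $(C,O)$ with $\bar f(\bar x)\in \operatorname{int} C$. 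This forces $g(\bar x)=\bar f(\bar x)$.
\end{itemize}
Equivalently: an $M$-definable $g$ factors through a continuous map from the space of types over $M$ of elements of $\bar X_m$ into $S$. Realized types are dense in that space and $S$ is Hausdorff, so two such maps agreeing with $f$ on $M$ coincide.
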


\begin{proof}
The existence of a unique extension of $f$ to an $M$-definable map  $\bar f \colon \langle \bar X \rangle \to S$  and the fact that $\bar f$ is a ring homomorphism is explained in Lemma 3.2(1) of \cite{Kru}. As explained in the proof of that lemma, $\bar f [\bar X] \subseteq \cl(f[X])$, so $\bar f [\bar X]$ is relatively compact. 

Now, take an open neighborhood $U$ of $0$ in $S$ such that $f^{-1}[U] \subseteq X_m$ for some $m \in \mathbb{N}$. We will show that $\bar f^{-1}[U] \subseteq \bar X_m$, which will complete the proof that $\bar f$ is a locally compact model. By $M$-definability of $\bar f$, we have that $\bar f^{-1}[U]$ is a union of $M$-definable sets. If it was not contained in $\bar X_m$, then one of the sets of this union would intersect the complement of $\bar X_m$, so the same would hold for the corresponding sets computed in $M$, and hence $f^{-1}[U]$ would not be contained in $X_m$.
\end{proof}

The following remark easily follows from Remark \ref{remark: premimages of compact sets} and model-theoretic compactness.

\begin{remark}\label{remark: characterization of M-definability of a model}
A locally compact model $f \colon \langle \bar X \rangle \to S$ of $\bar X$ is $M$-definable if and only if for any $C \subseteq U \subseteq S$, where $C$ is compact and $U$ is open, there exists an $M$-definable set $Y$ such that $f^{-1}[C] \subseteq Y \subseteq f^{-1}[U]$.
\end{remark}

\begin{proposition}\label{proposition: model is onto}
Let $f \colon \langle X \rangle \to S$ be a definable locally compact model and $\bar f \colon \langle \bar X \rangle \to S$ the unique $M$-definable extension (see Fact \ref{fact: extension to the monster}). Then:
\begin{enumerate}
\item $\bar f[\bar X_n]= \cl(f[X_n])$ is compact for every $n \in \mathbb{N}$;
\item if $f$ has dense image, then $\bar f$ is surjective.
\end{enumerate}
\end{proposition}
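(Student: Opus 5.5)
For (1) I will prove the two inclusions $\bar f[\bar X_n] \subseteq \cl(f[X_n])$ and $\cl(f[X_n]) \subseteq \bar f[\bar X_n]$. Compactness is then immediate: $f[X_n]$ is relatively compact, because $X_n$ is covered by finitely many additive translates of $X$ (since $X$ is an approximate subring), $f[X]$ is relatively compact, and $f$ is additive.

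For the first inclusion, take $a \in \bar X_n$ and suppose $\bar f(a) \notin \cl(f[X_n])$. Choose an open neighbourhood $O$ of $\bar f(a)$ with compact closure, disjoint from $\cl(f[X_n])$. Pick a compact neighbourhood $C \subseteq O$ of $\bar f(a)$. By Remark \ref{remark: characterization of M-definability of a model} there is an $M$-definable $Y$ with $\bar f^{-1}[C] \subseteq Y \subseteq \bar f^{-1}[O]$. Then $a \in Y \cap \bar X_n$, so this $M$-definable set is nonempty. Since $M \prec \C$, the set $Y(M) \cap X_n$ has some element $b$. As $\bar f$ extends $f$, we get $f(b) \in O$, which contradicts $O \cap f[X_n] = \emptyset$. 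This is essentially the argument in Lemma 3.2(1) of \cite{Kru}, which treated the case $n=0$.

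For the reverse inclusion, take $s \in \cl(f[X_n])$. For each compact neighbourhood $C$ of $s$ and each open $O \supseteq C$, Remark \ref{remark: characterization of M-definability of a model} gives an $M$-definable $Y_{C,O}$ with $\bar f^{-1}[C] \subseteq Y_{C,O} \subseteq \bar f^{-1}[O]$. Consider the partial type $\{x \in \bar X_n\} \cup \{x \in Y_{C,O} : C,O\}$. It is finitely satisfiable in $M$: finitely many pairs $(C_i,O_i)$ give the neighbourhood $\bigcap_i \mathrm{int}(C_i)$ of $s$, which contains some $f(b)$ with $b \in X_n$, and then $b \in \bigcap_i Y_{C_i,O_i}$. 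It is a type over $M$, so by $|M|^+$-saturation it has a realization $a \in \bar X_n$. Then $\bar f(a) \in \bigcap O$ over all open $O$ containing a compact neighbourhood of $s$. By local compactness and Hausdorffness this intersection is $\{s\}$, so $s = \bar f(a)$. The main care needed is exactly here: the family of $Y$'s is indexed by a set of size possibly larger than $|M|$, but each $Y$ is defined over $M$, so the type has at most $|M| + |\mathcal L|$ formulas and saturation applies. Alternatively, one could write $\bar f^{-1}[F]\cap\bar X_n$ as $M$-type-definable by Fact \ref{fact: extension to the monster} with $F=\{s\}$ and check finite satisfiability directly.

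For (2), assume $f$ has dense image and let $s \in S$. Take an open relatively compact neighbourhood $W$ of $s$. By Remark \ref{remark: preimage of a compact set} (or Remark \ref{remark: premimages of compact sets}), $f^{-1}[\cl W] \subseteq X_n$ for some $n$. By density, $s \in \cl(f[f^{-1}[W]]) \subseteq \cl(f[X_n])$. By (1), $s \in \bar f[\bar X_n]$, so $\bar f$ is surjective.
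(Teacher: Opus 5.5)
Your proof is correct. Part (2), and the inclusion $\bar f[\bar X_n]\subseteq \cl(f[X_n])$, agree with the paper; the paper does not reprove that inclusion but takes it from the proof of Lemma 3.2(1) of \cite{Kru}.

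The two arguments differ in how they get compactness and the reverse inclusion in (1).
\begin{itemize}
\item \emph{The paper's route.} It first shows that $\bar f[\bar X_n]$ is itself compact. It notes that $\bar f^{-1}[\bar f[\bar X_n]]=\ker(\bar f)+\bar X_n$ is $M$-type-definable and contained in some $\bar X_k$. Hence preimages of closed subsets of $\bar f[\bar X_n]$ are $M$-type-definable, and a family of closed sets with the finite intersection property pulls back to $M$-type-definable sets that meet, by saturation. Being compact, $\bar f[\bar X_n]$ is closed, so it contains $\cl(f[X_n])$.
\item \emph{Your route.} For each $s\in\cl(f[X_n])$ you realize a type over $M$. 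That type is finitely satisfiable in $M$ exactly because $s$ is a limit of points $f(b)$ with $b\in X_n$. Compactness then comes for free from relative compactness of $f[X_n]$.
\end{itemize}

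What each buys:
\begin{itemize}
\item Your version is more self-contained. It uses only the characterization of $M$-definability (Remark~\ref{remark: characterization of M-definability of a model}) and avoids the bookkeeping with $\ker(\bar f)$.
\item The paper's version proves compactness of the image intrinsically, without using density of $M$-points. So it applies verbatim to $\bar f$-images of other $M$-type-definable subsets of some $\bar X_k$.
\end{itemize}

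A small remark on your compactness step. Covering $X_n$ by translates of $X$ works only if the translating elements lie in $\langle X\rangle$, which can be arranged. It is quicker, though, to note that $f[X_n]=(f[X])_n\subseteq(\cl f[X])_n$, which is compact by continuity of the ring operations.
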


\begin{proof}
(1) We first argue that $\bar f [\bar X_n]$ is compact. Note that $\bar f^{-1}[\bar f [\bar X_n]] = \ker(\bar f) + \bar X_n$ and $\ker(\bar f)$ is $M$-type-definable and contained in some $\bar X_m$, so $\bar f^{-1}[\bar f [\bar X_n]]$ is also $M$-type-definable and contained in $\bar X_{\max(m,n)+1}$. Hence, by $M$-definability of $\bar f$, for every nonempty closed subset $F$ of $\bar f [\bar X_n]$, we have that $\bar f^{-1}[F]$ is $M$-type-definable. Now, compactness of $\bar f [\bar X_n]$ follows from $|M|^+$-saturation of $\C$, as any family of $M$-type-definable sets with the finite intersection property has nonempty intersection.

As explained in the proof of Lemma 3.2(1) of \cite{Kru}, $\bar f[\bar X_n]\subseteq \cl(f[X_n])$. Since $f[X_n] \subseteq \bar f[\bar X_n]$ and $\bar f[\bar X_n]$ is compact and so closed by the first part of the proof, we conclude that $\bar f[\bar X_n]= \cl(f[X_n])$.

(2) Take $s \in S$. Take a relatively compact, open neighborhood $U$ of $s$. Since $U$ is relatively compact, $f^{-1}[U] \subseteq X_n$ for some $n \in \mathbb{N}$ by 
Remark \ref{remark: premimages of compact sets}. Since $U$ is open and $f$ has dense image, we conclude that $f[X_n]$ is dense in $U$. So, by (1), $s \in \cl(f[X_n])=\bar f [\bar X_n]$.
\end{proof}

\begin{remark}\label{remark: pi f is a model}
If $f \colon \langle X \rangle \to S$ is a definable locally compact model of $X$ and $I$ is a compact ideal of $S$, then $\pi \circ f \colon \langle X \rangle  \to S/I$ is also a definable locally compact model of $X$, where $\pi \colon S \to S/I$ is the quotient map.
\end{remark}

\begin{proof}
$\pi \circ f$ is clearly a ring homomorphism and $\pi[f[X]]$ is relatively compact, because $\pi[\cl(f[X])]$ is compact as a continuous image of a compact set. Now, take a compact neighborhood $U$ of $0$ in $S/I$. By compactness of $I$, we get that $\pi^{-1}[U]$ is compact in $S$, so $f^{-1}[\pi^{-1}[U]]$ is contained in some $X_n$. To see definability of $\pi \circ f$, consider any $C \subseteq U \subseteq S/I$, where $C$ is compact and $U$ is open. Then $\pi^{-1}[C] \subseteq \pi^{-1}[U] \subseteq S$, where $\pi^{-1}[C]$ is compact  and $\pi^{-1}[U]$ is open. Hence,  since $f$ is definable, $f^{-1}[\pi^{-1}[C]] \subseteq D \subseteq f^{-1}[\pi^{-1}[U]]$ for some definable $D$.
\end{proof}

\begin{corollary}\label{corollary: <Y>=<X>}
Let $f \colon  \langle X \rangle \to S$ be a definable locally compact model with dense image and connected $S$, and let $Y \subseteq X$ be a definable approximate subring which contains $f^{-1}[U]$ for some open neighborhood $U$ of $0$ in $S$. 
Then $\langle Y \rangle = \langle X \rangle$ and $h:=f|_{\langle Y \rangle}\colon \langle Y \rangle \to S$ is a definable locally compact model of $Y$.
\end{corollary}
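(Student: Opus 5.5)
The argument has two parts: first show $\langle Y \rangle = \langle X \rangle$ using density of $f[\langle Y\rangle]$ together with connectedness of $S$, then check that the restriction of $f$ satisfies the definition of a definable locally compact model.

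Set $G:=\langle Y \rangle \subseteq \langle X \rangle$. Since $G$ is a subring, it is in particular an additive subgroup, and it contains $f^{-1}[U]$. Put $H := f[G]$, an additive subgroup of $S$.

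Step one: $H$ is open in $S$. For $u \in U \cap f[\langle X\rangle]$ we have some $x\in f^{-1}[U]\subseteq G$ with $f(x)=u$, so $u \in H$; hence $U \cap f[\langle X \rangle] \subseteq H$. This is not yet openness in $S$, because $f$ need only have dense image. So we pass to closures and consider $\overline{H}$, which is a closed additive subgroup of $S$. Density of $f[\langle X\rangle]$ and openness of $U$ give $U \subseteq \cl(U\cap f[\langle X\rangle]) \subseteq \overline{H}$. A subgroup containing a neighborhood of $0$ is open, so $\overline{H}$ is open. An open subgroup is also closed, and $S$ is connected, so $\overline{H}=S$.

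Step two, the main point: pull this back to $\langle X \rangle$. Take any $a \in \langle X \rangle$. Choose a symmetric open neighborhood $V$ of $0$ with $V - V \subseteq U$. Since $H$ is dense, there is $h=f(g)\in H$ with $g \in G$ and $f(a)-f(g) \in V \subseteq U$. Then $a - g \in f^{-1}[U] \subseteq G$, hence $a \in G$. Thus $\langle Y\rangle = \langle X\rangle$. Density of $H$ is used exactly here; we do not need openness of $H$ itself.

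Step three: the locally compact model properties. Let $h := f|_{\langle Y\rangle}$, which by step two is just $f$. The image $h[Y] \subseteq f[X]$ is relatively compact. For the neighborhood condition we want $h^{-1}[U'] \subseteq Y_m$ for some $m$. Taking $U'=U$ gives $h^{-1}[U]=f^{-1}[U] \subseteq Y = Y_0$.

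Step four: definability. Given $C\subseteq O$ with $C$ compact and $O$ open, definability of $f$ yields a definable $D$ with $f^{-1}[C]\subseteq D\subseteq f^{-1}[O]$. We need $D$ to sit inside some $Y_n$. Cover $C$ by finitely many translates $f(g_i)+V$ with $g_i \in \langle Y\rangle$, which is possible by density of $H$. Then $f^{-1}[C] \subseteq \bigcup_i (g_i + f^{-1}[U]) \subseteq Y_n$ for a suitable $n$, since the $g_i$ lie in some $Y_k$ and $Y$ is additively closed up to passing to $Y_{k+1}$. Intersecting, $D\cap Y_n$ is definable (as $Y$ is a definable approximate subring) and still lies between $h^{-1}[C]$ and $h^{-1}[O]$. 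This is the same trick as in the proof of Fact \ref{fact: D is approximate}.
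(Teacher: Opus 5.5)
Your proof is correct, and it reaches $\langle Y\rangle=\langle X\rangle$ by a somewhat different route than the paper.

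The paper first checks that $h$ is a definable locally compact model, using Remark \ref{remark: premimages of compact sets} for definability. It then passes to the saturated extension. By Fact \ref{fact: extension to the monster} and Proposition \ref{proposition: model is onto}, $\bar h[\bar Y]=\cl(h[Y])\supseteq U$, so connectedness makes $\bar h$ \emph{surjective}. Since the preimage of $U$ (hence the kernel) lies in $\bar Y$, this gives $\langle \bar X\rangle=\langle \bar Y\rangle$, and compactness transfers the equality back down to $M$.

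You instead stay entirely in $M$. Connectedness only gives you \emph{density} of $f[\langle Y\rangle]$, not surjectivity. You compensate by absorbing the approximation error into the full neighbourhood preimage $f^{-1}[U]\subseteq Y$ via $a-g\in f^{-1}[U]$, rather than only into $\ker f$. This is more elementary: no saturation, no extension to the monster model, no compactness transfer. It shows the equality is really a topological-group fact. The paper's version has the advantage of fitting the surjective-model machinery it uses throughout.

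Your Step four is correct but does more than needed. Once $h=f$, the definable set $D$ supplied by definability of $f$ already witnesses definability of $h$. Refining it to $D\cap Y_n$ just gives the ``equivalently'' form of the definition. Your covering argument there is essentially a reproof of Remark \ref{remark: premimages of compact sets} for $h$.
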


\begin{proof}
Let $h:=f|_{\langle Y \rangle}\colon \langle Y \rangle \to S$. It is clearly a definable locally compact model (for definability use Remark \ref{remark: premimages of compact sets}). Take the extension $\bar h \colon \langle \bar Y \rangle \to S$ of $h$ from Fact \ref{fact: extension to the monster}. Using Fact \ref{proposition: model is onto}, $\bar h[Y] =\cl(h[Y])$ which contains $U$ by assumptions. Hence, $\bar h$ is surjective by connectendenss of $S$. On the other hand, $\bar{h}^{-1}[U] \subseteq \bar Y$ by the second paragraph of the proof of Fact \ref{fact: extension to the monster}, and so $\ker(\bar h) \subseteq \bar Y$. Therefore, $\langle \bar X \rangle = \langle \bar Y \rangle$, which implies that $\langle X \rangle = \langle Y \rangle$.
\end{proof}

\begin{fact}\label{fact: passing to a connected quotient}
\begin{enumerate}
\item Let $f \colon \langle X \rangle \to S$ be a definable locally compact model such that $f^{-1}[U] \subseteq X$  and $f[X] \cap U$ is dense in $U$ for some open neighborhood $U$ of $0$ in $S$. Suppose that $S'$ is an open subring of $S$ and $I$ is a compact two-sided ideal of $S'$ contained in $U$ such that $S'/I$ is connected. 
Then $Y:=X \cap f^{-1}[S']$ is a definable approximate subring commensurable to $X$ and contained in $X$, and there is a definable  locally compact model $f' \colon \langle Y \rangle \to S'/I$ with dense image and satisfying $f'^{-1}[V] \subseteq  Y$ for some neighborhood $V$ of $0$ in $S'/I$.
\item Let $f \colon \langle \bar X \rangle \to S$ be an $M$-definable locally compact model  such that $f^{-1}[U] \subseteq \bar X$ and $U \subseteq f[\bar X]$ for some open neighborhood $U$ of $0$ in $S$. Suppose that $S'$ is an open subring of $S$ and $I$ is a compact two-sided ideal of $S'$ contained in $U$ such that $S'/I$ is connected. 
Then $Y:=\bar X \cap f^{-1}[S']$ is an $M$-definable approximate subring commensurable to $\bar X$ and contained in $\bar X$, and there is a surjective $M$-definable locally compact model $f' \colon \langle Y \rangle \to S'/I$ satisfying $f'^{-1}[V] \subseteq  Y$ for some neighborhood $V$ of $0$ in $S'/I$.
\end{enumerate}
\end{fact}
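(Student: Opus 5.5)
We prove part (1) directly; part (2) is the same argument carried out in $\C$ with ``$M$-type-definable/$M$-definable'' in place of ``definable'', using surjectivity onto $U$ instead of density.

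\textbf{Definability of $Y$.} Since $S'$ is an open subring, it is also closed (as an open subgroup). Let $C:=\cl(f[X])$, which is compact. The open sets $s+S'$ cover $C$, so $C\cap S'$ is compact and is a finite union of cosets intersected with $C$, each of which is clopen in $C$. Definability of $f$ then gives a definable $D$ with $f^{-1}[C\cap S']\subseteq D\subseteq f^{-1}[S']$. Hence $Y=X\cap D$ is definable. $Y$ is additively symmetric because $X$ and $S'$ are.

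\textbf{Commensurability and approximate subring.} We have $f^{-1}[U\cap S']\subseteq X\cap f^{-1}[S']=Y$ and $Y\subseteq f^{-1}[C]$. So $Y$ lies between the preimage of the neighborhood $U\cap S'$ of $0$ and the preimage of a relatively compact set. By Fact \ref{fact: D is approximate}, $Y$ is an approximate subring commensurable with $X$, and $f|_{\langle Y\rangle}\colon\langle Y\rangle\to S'$ is a definable locally compact model. Its image lies in $S'$, since $\langle Y\rangle\subseteq f^{-1}[S']$.

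\textbf{Passing to $S'/I$.} $I$ is a compact ideal of $S'$, so by Remark \ref{remark: pi f is a model} the map $f':=\pi\circ f|_{\langle Y\rangle}$ into $S'/I$ is a definable locally compact model.

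\textbf{Dense image.} $f[Y]\supseteq f[X]\cap U\cap S'$, which is dense in $U\cap S'$. Hence $\pi[f[Y]]$ is dense in the open neighborhood $\pi[U\cap S']$ of $0$. The closure of $f'[\langle Y\rangle]$ is therefore a closed subgroup containing an open neighborhood of $0$. It is thus an open subgroup, and by connectedness of $S'/I$ it equals $S'/I$.

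\textbf{The condition $f'^{-1}[V]\subseteq Y$.} This is the main obstacle. Since $I\subseteq U\cap S'$ with $I$ compact and $U\cap S'$ open, a tube-lemma argument gives an open neighborhood $W$ of $0$ in $S'$ with $I+W\subseteq U\cap S'$. Set $V:=\pi[W]$, which is open. Then $\pi^{-1}[V]=I+W\subseteq U\cap S'$, so
\[
f'^{-1}[V]=\langle Y\rangle\cap f^{-1}[I+W]\subseteq f^{-1}[U]\cap f^{-1}[S']\subseteq X\cap f^{-1}[S']=Y.
\]
The hypothesis $I\subseteq U$ is used exactly here.

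In part (2), the same steps give surjectivity rather than density. $\pi[U\cap S']\subseteq f'[Y]$, so the image is an open subgroup of the connected space $S'/I$ and hence everything. $M$-definability of $Y$ follows by the argument in the first step together with Remark \ref{remark: characterization of M-definability of a model}.
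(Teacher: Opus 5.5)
Your proof is correct and follows the paper's argument step for step. The clopenness of $S'$ gives definability of $Y$. Fact~\ref{fact: D is approximate} and Remark~\ref{remark: pi f is a model} give the locally compact model into $S'/I$. Connectedness gives density (resp.\ surjectivity), and a neighbourhood $W$ with $I+W\subseteq U\cap S'$ gives $V$.

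The only difference is organizational. The paper proves (2) and deduces (1) by passing to $\C$ via Proposition~\ref{proposition: model is onto}, whereas you prove (1) directly, with density in place of surjectivity. You also spell out definability of $Y$ explicitly as $Y=X\cap D$.
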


\begin{proof}
Bearing in mind Proposition \ref{proposition: model is onto}, item (1) easily follows from item (2). A proof of item (2) is essentially explained in \cite[Theorem 5.5]{Dries}, but we give the details. Put 
$$Y:=\bar X \cap f^{-1}[S'].$$
Since $S'$ is clopen and $f$ is $M$-definable, we get that $Y$ is $M$-definable. Since $f^{-1}[U \cap S'] \subseteq Y$, using Fact \ref{fact: D is approximate} and its proof, we conclude that $Y$ is an approximate subring commensurable with $X$ and $f|_{\langle Y \rangle} \colon \langle Y \rangle \to S$ is an $M$-definable locally compact model of $Y$.  

Let $f' \colon \langle Y \rangle \to S'/I$ be the composition of $f|_{\langle Y \rangle}$ with the quotient map. By Remark \ref{remark: pi f is a model} and its proof, $f'$ is an $M$-definable locally compact model of $Y$. Moreover, $f'$ is surjective, which follows from the assumption that $S'/I$ is connected and the observation that $U \cap S'/I$ is an open subset of $f'[Y] \subseteq S'/I$.

By compactness of $I\subseteq U$, there exists a neighborhood $V$ of $0$ in $S'$ such that $I+V \subseteq U$. Then $V/I$ is a neighborhood of $0$ in $S'/I$ and $f'^{-1}[V/I] \subseteq f^{-1}[V+I] \subseteq f^{-1}[U \cap S'] \subseteq Y$.
\end{proof}

By $M_n(\mathbb{R})$ we denote the algebra of $n\times n$ matrices over $\mathbb{R}$. Recall that the Frobenius norm on $M_n(\mathbb{R})$ is given by $||(a_{ij})||_{\textrm{F}}:= \sqrt{\sum_{i,j} a_{ij}^2}$. 

The next fact is folklore.

\begin{fact}\label{fact: submultiplicativity}
Besides $||A+B||_{\textrm{F}} \leq ||A||_{\textrm{F}} + ||B||_{\textrm{F}}$ the Frobenius norm satisfies $||A\cdot B||_{\textrm{F}} \leq ||A||_{\textrm{F}} \cdot ||B||_{\textrm{F}}$ and induces the unique topology on $M_n(\mathbb{R})$ under which $M_n(\mathbb{R})$ is a topological vector space over $\mathbb{R}$.
\end{fact}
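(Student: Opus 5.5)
The triangle inequality is immediate: identify $M_n(\mathbb{R})$ with $\mathbb{R}^{n^2}$, where $\|\cdot\|_{\textrm{F}}$ is exactly the Euclidean norm. For submultiplicativity we will write $A=(a_{ik})$ and $B=(b_{kj})$. The $(i,j)$ entry of $AB$ is $\sum_k a_{ik}b_{kj}$, the inner product of the $i$-th row of $A$ with the $j$-th column of $B$. By Cauchy--Schwarz,
\[
\Big(\sum_k a_{ik}b_{kj}\Big)^2 \leq \Big(\sum_k a_{ik}^2\Big)\Big(\sum_k b_{kj}^2\Big).
\]
Summing over all $i,j$, the right-hand side factors as $\big(\sum_{i,k} a_{ik}^2\big)\big(\sum_{k,j} b_{kj}^2\big)=\|A\|_{\textrm{F}}^2\|B\|_{\textrm{F}}^2$. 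Taking square roots gives $\|AB\|_{\textrm{F}}\leq \|A\|_{\textrm{F}}\|B\|_{\textrm{F}}$.

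For the topological claim we will invoke the standard theorem that a finite-dimensional real vector space carries a unique Hausdorff topology making it a topological vector space, namely the Euclidean one. The norm topology of $\|\cdot\|_{\textrm{F}}$ is a vector space topology, since a norm is continuous and absolutely homogeneous. Under the identification with $\mathbb{R}^{n^2}$ it is precisely the Euclidean topology, so it coincides with that unique topology.

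If one wants to justify the uniqueness theorem itself, we will argue as follows. Let $\tau$ be a Hausdorff vector space topology on $\mathbb{R}^d$. The linear isomorphism $\mathbb{R}^d\to(\mathbb{R}^d,\tau)$, $x\mapsto \sum x_ie_i$, is continuous because addition and scalar multiplication are $\tau$-continuous. The remaining step, which is the only nontrivial point of the whole statement, is continuity of its inverse. For this, the image of the Euclidean unit sphere is $\tau$-compact and, by Hausdorffness, misses some balanced $\tau$-neighbourhood $W$ of $0$. Since $W$ is balanced, it then lies inside the image of the open unit ball, so the inverse map is continuous at $0$ and hence everywhere. Since the statement is labelled folklore, we will cite this step rather than reprove it.
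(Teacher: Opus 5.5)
Your proof is correct. The paper gives no argument for this statement; it is recorded as folklore. Your entrywise Cauchy--Schwarz computation for submultiplicativity, and your appeal to the uniqueness of Hausdorff vector space topologies in finite dimension, are exactly the standard justifications being taken for granted.

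Your insertion of ``Hausdorff'' is not cosmetic. Read literally, the uniqueness claim fails: the indiscrete topology makes $M_n(\mathbb{R})$ a topological vector space. So does the pullback of the Euclidean topology along any quotient map $M_n(\mathbb{R})\to M_n(\mathbb{R})/W$ with $W$ a nonzero proper subspace. In the paper, however, the fact is only applied to topologies coming from locally compact, hence Hausdorff, rings, so your reading is the intended one.

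One small gap in your sketch of the uniqueness theorem: write $T$ for your isomorphism $x\mapsto\sum x_ie_i$. The inclusion of $T^{-1}[W]$ in the open unit ball gives continuity of $T^{-1}$ at $0$ only after rescaling. You need $T^{-1}[\epsilon W]$ contained in the open ball of radius $\epsilon$. Here $\epsilon W$ is still a $\tau$-neighbourhood of $0$ because multiplication by $\epsilon\neq 0$ is a $\tau$-homeomorphism. This step is routine.
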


The next fact is \cite[Theorem 1]{Kap}.

\begin{fact}\label{fact: Thm. 1 from Kaplansky}
If $S$ is a locally compact ring and $C \leq (S,+)$ is compact, then $C \cdot S^0=S^0 \cdot C = \{0\}$, where $S^0$ denotes the connected component of $0$ in $S$.
\end{fact}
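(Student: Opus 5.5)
The plan is to use Pontryagin duality and turn the statement into one about homomorphisms into a discrete group. By symmetry (apply the argument to the opposite ring) it suffices to prove $C\cdot S^0=\{0\}$. Since characters of the locally compact abelian group $(S,+)$ separate points, it is enough to show that $\chi(cx)=1$ for every continuous character $\chi\colon (S,+)\to \mathbb{T}$, every $c\in C$ and every $x\in S^0$.

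\emph{Step 1: a homomorphism into $\widehat{C}$.} Fix a character $\chi$. For each $x\in S$, the map $\chi_x\colon C\to\mathbb{T}$, $c\mapsto \chi(cx)$, is a continuous character of the compact group $(C,+)$, because multiplication by $x$ on the right is additive and continuous. Moreover $x\mapsto \chi_x$ is a group homomorphism $\Phi\colon (S,+)\to \widehat{C}$, since $\chi(c(x+y))=\chi(cx)\chi(cy)$. As $C$ is compact, the dual $\widehat{C}$ is discrete.

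\emph{Step 2: $\ker\Phi$ is open.} This is the main point. Let $N\subseteq\mathbb{T}$ be a neighbourhood of $1$ containing no nontrivial subgroup, for instance the open arc of length less than $2\pi/3$ around $1$. I need a neighbourhood $W$ of $0$ in $S$ with $C\cdot W\subseteq \chi^{-1}[N]$. This comes from compactness of $C$ together with joint continuity of multiplication: for each $c\in C$ choose neighbourhoods $V_c\ni c$ and $W_c\ni 0$ with $V_cW_c\subseteq \chi^{-1}[N]$ (possible because $c\cdot 0=0$), cover $C$ by finitely many $V_c$, and intersect the corresponding $W_c$. Then for $x\in W$ the image $\chi_x[C]$ is a subgroup of $\mathbb{T}$ contained in $N$, hence trivial, i.e. $\chi_x=1$. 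So $W\subseteq\ker\Phi$, and $\ker\Phi$ is an open subgroup of $(S,+)$.

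\emph{Step 3: conclusion.} An open subgroup is also closed, so $\ker\Phi$ is clopen and contains $0$; hence it contains the connected component $S^0$. Thus $\chi(cx)=1$ for all $c\in C$, $x\in S^0$ and all $\chi$, which gives $cx=0$, i.e. $C\cdot S^0=\{0\}$. For $S^0\cdot C$, apply the same argument to $c\mapsto\chi(xc)$.

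The only delicate step is Step 2, the uniform smallness of $c\cdot x$ for $c\in C$ and $x$ near $0$; it uses exactly the compactness of $C$ and the joint continuity of multiplication. Instead of the explicit $W$ there, I could alternatively argue that $\Phi$ is continuous into the discrete group $\widehat{C}$, equipped with the compact-open topology; either way, connectedness of $S^0$ finishes the proof.
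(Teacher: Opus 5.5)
Your argument is correct. The paper gives no proof of its own here; it imports the statement as a black box from \cite[Theorem 1]{Kap}, so the comparison is between a citation and your self-contained proof.

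Your proof uses only three inputs:
\begin{enumerate}
\item continuous characters separate the points of the Hausdorff locally compact abelian group $(S,+)$ (this is where the paper's convention that locally compact spaces are Hausdorff matters);
\item compactness of $C$ together with joint continuity of multiplication at the points $(c,0)$, which gives one neighbourhood $W$ of $0$ with $C\cdot W\subseteq\chi^{-1}[N]$;
\item $\mathbb{T}$ has no small subgroups.
\end{enumerate}

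Two minor remarks. First, the discreteness of $\widehat{C}$ mentioned in Step 1 is never used in your explicit Step 2. It only matters for the alternative you sketch at the end, which is the same estimate rephrased as continuity of $\Phi$ into $\widehat{C}$ with the compact-open topology. Second, the no-small-subgroups claim for the arc deserves its one-line check. If the arc has half-length $\delta<\pi/3$ and $0<|\theta|<\delta$, let $k$ be the least positive integer with $k|\theta|\geq\delta$. Then $\delta\leq k|\theta|<2\delta<\pi$, so $e^{ik\theta}$ lies outside the arc.

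The same mechanism appears later in the paper, in the proof of Proposition~\ref{proposition: Gleason-Yamabe for compact rings}. There a homomorphic image of an additive subgroup is forced into a neighbourhood $O$ containing no nontrivial subgroup, and is therefore trivial.
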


Let us finish with the celebrated Gleason-Yamabe theorem \cite{Gle,Yam}. In fact, we will only need it in the context of compact groups in which case it follows from the fact (a consequence of Peter-Weyl theorem) that each compact group is an inverse limit of Lie groups.

\begin{fact}\label{fact: Gleason-Yamabe}
For any locally compact group $G$ and any neighborhood $U$ of the neutral element in $G$, there is an open subgroup $G'$ of $G$ and a compact normal subgroup $N \subseteq U$ of $G'$ such that $G'/N$ is a connected Lie group.
\end{fact}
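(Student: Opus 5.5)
We will only prove the statement in the case needed later, namely when $G$ is compact. For arbitrary locally compact $G$ we simply invoke the Gleason--Yamabe theorem \cite{Gle,Yam}. The compact case follows from the Peter--Weyl theorem, together with Cartan's closed subgroup theorem and a compactness argument.

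\emph{Step 1 (a small normal subgroup with Lie quotient).} Let $G$ be compact and $U$ a neighborhood of $e$, which we may take to be open.
\begin{itemize}
\item By Peter--Weyl, the continuous finite-dimensional unitary representations $\rho\colon G\to U(n_\rho)$ separate points. Hence $\bigcap_\rho \ker\rho=\{e\}$.
\item Each $\ker\rho$ is a closed normal subgroup. The sets $\ker\rho\cap(G\setminus U)$ are closed in the compact space $G\setminus U$, and their total intersection is empty.
\item By the finite intersection property, there are finitely many representations $\rho_1,\dots,\rho_k$ with $N:=\bigcap_{i\le k}\ker\rho_i\subseteq U$. Thus $N$ is a compact normal subgroup of $G$ contained in $U$.
\item The map $\rho_1\times\dots\times\rho_k$ induces an injective continuous homomorphism $G/N\to U(n_1)\times\dots\times U(n_k)$. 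Since $G/N$ is compact, this map is a topological isomorphism onto a closed subgroup.
\item By Cartan's closed subgroup theorem, $G/N$ is a Lie group.
\end{itemize}

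\emph{Step 2 (passing to the connected part).} A Lie group is locally connected, so the identity component $(G/N)^0$ is an open normal subgroup of $G/N$. Let $G'$ be its preimage under the quotient map $G\to G/N$. Then:
\begin{itemize}
\item $G'$ is an open subgroup of $G$ containing $N$;
\item $N$ is normal in $G'$, being normal in $G$;
\item $G'/N\cong (G/N)^0$ is a connected Lie group.
\end{itemize}
This gives the statement for compact $G$.

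\emph{Main obstacle.} The substantive input is the general locally compact case. One would first use van Dantzig's theorem on the totally disconnected quotient $G/G^0$ to obtain an open subgroup that is compact modulo its identity component. One would then need the full Gleason--Montgomery--Zippin--Yamabe machinery, in particular the no-small-subgroups characterization of Lie groups. We do not reprove this and cite it. For the applications in this paper, where the relevant groups are compact, Steps 1 and 2 suffice.
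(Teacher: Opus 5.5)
Your proposal is correct and matches the paper's treatment. The paper also simply cites Gleason--Yamabe for general locally compact groups, and remarks that only the compact case is needed, where it follows from Peter--Weyl (every compact group is an inverse limit of Lie groups). Your Steps 1 and 2 spell out exactly that argument: Peter--Weyl and the finite intersection property give a small compact normal subgroup with Lie quotient, and passing to the preimage of the identity component makes the quotient connected.
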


\section{Improving the target space}\label{section: target algebra}

In this section, we show that for every definable approximate subring $X$ we can find a commensurable definable approximate subring $Y\subseteq 4X+X \cdot 4X$ with a definable locally compact model $f \colon \langle Y \rangle \to S$ with dense image, where $S$ is a connected locally compact ring whose additive group is of the form $\mathbb{R}^n \times C$ for a connected compact Lie group $C$, and with $f^{-1}[U] \subseteq Y$ for some neighborhood $U$ of $0$ in $S$. Dropping the last requirement, the target space can be improved to be a finite-dimensional real algebra, so a subalgebra of $M_n(\mathbb{R})$ for some $n$. Using the Frobenius norm on $M_n(\mathbb{R})$, we deduce that every definable approximate subring $X$  is commensurable with a definable  approximate subring $Y \subseteq 4X+X\cdot 4X$ closed under multiplication.

We will need the following result which is a ring-theoretic strengthening of Gleason-Yamabe theorem for compact rings. Corollary \ref{corollary: Gleason-Yamabe for loc. compact rings} extends it also to locally compact rings.

\begin{proposition}\label{proposition: Gleason-Yamabe for compact rings}
Let $R$ be a compact ring and $U \subseteq R$ a neighborhood of $0$. Then there exists an open subring $S$ of $R$ and a closed ideal $J \lhd S$ contained in $U$ such that the additive group of $S/J$ is a (compact) connected Lie group.
\end{proposition}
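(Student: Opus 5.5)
The plan is to split $R$ into its connected part, which is multiplicatively inert, and a totally disconnected part, which is profinite. Then I would apply Peter--Weyl/Gleason--Yamabe (Fact \ref{fact: Gleason-Yamabe}) only to the additive group, and repair the ring structure using these two facts.

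\textbf{Step 1 (connected component).} Let $R^0$ be the connected component of $0$. Apply Fact \ref{fact: Thm. 1 from Kaplansky} with $S:=R$ and $C:=R$, which is allowed since $R$ is compact. It gives $R\cdot R^0=R^0\cdot R=\{0\}$. Consequently every closed additive subgroup of $R^0$ is a two-sided ideal of every subring of $R$ containing it. Moreover, multiplication on $R$ factors through a continuous bilinear map $m\colon R/R^0\times R/R^0\to R$.

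\textbf{Step 2 (totally disconnected part).} The quotient $R/R^0$ is a compact totally disconnected ring. By the classical theorem of Kaplansky (see also Anzai), it has a neighbourhood basis of $0$ consisting of open ideals. Preimages of these are open ideals $\tilde K\supseteq R^0$ of $R$. Fix $V$ with $V+V\subseteq U$. By compactness of $R/R^0$ and continuity of $m$, with $m(x,0)=m(0,x)=0$, for any neighbourhood $W$ of $0$ we can choose $\tilde K$ so that $R\tilde K\cup\tilde KR\subseteq W$.

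\textbf{Step 3 (additive Lie quotient).} Apply Fact \ref{fact: Gleason-Yamabe} (equivalently Peter--Weyl for the compact abelian group $(R,+)$). This gives a closed subgroup $T\subseteq V$ with $R/T$ a compact abelian Lie group. Since $R/T$ has no small subgroups, there is a neighbourhood $W$ of $0$ such that every subgroup of $R$ contained in $W+T$ lies in $T$. Choose $\tilde K$ as in Step 2 for this $W$. Let $P$ be the closed additive subgroup generated by $R\tilde K\cup \tilde K R$. Then $P\subseteq\tilde K$, and $RP\cup PR\subseteq R\tilde K\cup\tilde KR\subseteq P$, so $P$ is a closed ideal of $R$. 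Put
\[ S:=R^0+\bigl(\tilde K\cap \pi^{-1}[(R/T)^0]\bigr),\qquad J:=\text{a closed subgroup of } S \text{ with } P\subseteq J\subseteq T+P, \]
where $\pi\colon R\to R/T$. The idea is that $\tilde K$ is an ideal and the identity component of $R/T$ is open. Hence $S$ is an open subring once we check that products land in $P\subseteq S$, which holds because $S\cdot S\subseteq R\tilde K$. Any closed subgroup $J\supseteq P$ of $S$ is then automatically an ideal of $S$, since $SJ\cup JS\subseteq S\cdot S\subseteq P\subseteq J$. The choice of $J$ is to be made so that $S/J$ is the connected Lie group $(S+T)/T$, or a finite-dimensional quotient of it. The natural candidate is $J:=(T\cap S)+P$.

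\textbf{Main obstacle.} The delicate point is forcing $P$, the additive closure of small products, to be small enough that $J\subseteq U$. The products themselves lie in $W$, but the subgroup they generate need not. My first attempt would be the no-small-subgroups argument sketched in Step 3, applied to the image of $P$ in the Lie group $R/T$. I would also use that $P$ is generated by the image of the compact set $R/R^0\times \tilde K/R^0$ under a bilinear map, so that shrinking $\tilde K$ shrinks $P$ in the Hausdorff sense. If this fails, the fallback is to take $J$ to be the largest closed ideal of $S$ contained in $T\cap S$. One would then show that $S/J$ is still Lie: it embeds into a finite product of copies of $S/(T\cap S)$ via the finitely many multiplication maps coming from a finite quotient of $R/R^0$. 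Its identity component, intersected with $S$, would then give the required connected Lie quotient.
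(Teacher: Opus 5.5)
\textbf{There is a genuine gap.} It sits exactly at the step you flag as the main obstacle. Nothing in your argument shows that $P$ is contained in $T$, or even in $U$. Without this, the candidate $J=(T\cap S)+P$ is neither known to lie in $U$ nor known to give a Lie quotient, so the proof is incomplete. None of your proposed remedies closes the gap:
\begin{itemize}
\item The no-small-subgroups property from Step 3 only applies to subgroups already known to lie in $W+T$. You only know that $P$ is generated by a subset of $W$, not that $P$ itself lies in $W+T$.
\item The Hausdorff-shrinking heuristic fails, because a subgroup generated by an arbitrarily small set need not be small. For example, a single element very close to $0$ in a torus can generate a dense subgroup.
\item The fallback needs the maps $x\mapsto ax \bmod T$ to depend only on the class of $a$ in a finite quotient of $R/R^0$. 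That amounts to $(a-a')S\subseteq T$ whenever $a-a'\in\tilde K$, which is the same unproved statement in another form.
\end{itemize}

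\textbf{The missing idea is one line.} For a fixed $r\in R$, the map $x\mapsto rx$ is an additive endomorphism, so $r\tilde K$ and $\tilde K r$ are themselves subgroups of $R$. They lie in $R\tilde K\cup\tilde K R\subseteq W$, so by your Step 3 each of them lies in $T$. Since $T$ is a closed subgroup, it follows that $P\subseteq T$. This is precisely how the paper finishes: it applies the no-small-subgroups neighbourhood $O$ of $R/H$ to $l_r[I_i]$ and $r_r[I_i]$ one $r$ at a time.

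With this observation your construction goes through with $J:=T\cap S$. Your $S$ equals $\tilde K\cap\pi^{-1}[(R/T)^0]$, since $R^0$ already lies in both sets.
\begin{itemize}
\item $S\cdot S\subseteq \tilde K\cap T\subseteq S$, so $S$ is an open subring and $J$ is an ideal of $S$.
\item $J\subseteq T\subseteq U$.
\item $S/J\cong\pi[S]$ is an open subgroup of the connected group $(R/T)^0$, hence equals it. So $S/J$ is a compact connected Lie group.
\end{itemize}
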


\begin{proof}

By Fact \ref{fact: Gleason-Yamabe}, there is an open subgroup $G$ of $(R,+)$ and a closed subgroup $H \leq G$ contained in $U$ and such that $G/H$ is a connected Lie group. Replacing $G$ by an open subring $S$ of $R$ contained in $G$ (for the existence of $S$ see e.g. the first paragraph of the proof of Theorem 5.2 in \cite{Kru}) and $H$ by $H \cap S$, we can assume that $G$ is an open subring, so without loss of generality $G=R$. Thus, we can assume that $R/H$ is a connected Lie group. Then there is an open neighborhood $O$ of $\{0/H\}$ in $R/H$ which contains no subgroups other than $\{0/H\}$. Let $\pi_H \colon R \to R/H$ be the quotient map. Then $\pi_H^{-1}[O]$ is an open neighborhood of $0$ in $R$.

As $R$ is a compact ring, by Fact \ref{fact: Thm. 1 from Kaplansky}, $R^0 \cdot R = R \cdot R^0 =\{0\}$. Since $(R,+)$ is a compact group, $R/R^0$ is a profinite group (e.g. see \cite[Proposition 4.1.6(b)]{DeEc}). Since it is also a topological ring, it must be a profinite ring (e.g. see the claim in the proof of \cite[Proposition 2.10]{GJK}), i.e. $R^0$ is the intersection of a downward directed family $\{I_i:i \in I\}$ of open ideals of $R$ (so $(I,\leq)$ is a directed set and $I_j \subseteq I_i$ whenever $i \leq j$).

\begin{clm}
There exists $i \in I$ for which $R \cdot I_i \cup I_i \cdot R \subseteq \pi_{H}^{-1}[O]$.
\end{clm}

\begin{clmproof}
Suppose not. Then there are nets $(r_i)_{i \in I}$ in $R$ and $(s_i)_{i \in I}$ with $s_i \in I_i$ such that $r_is_i \notin \pi_{H}^{-1}[O]$ for all $i \in I$ or $s_ir_i \notin \pi_{H}^{-1}[O]$ for all $i \in I$. By compactness of $R$, passing to subnets, without loss of generality we can assume that the nets $(r_i)_{i \in I}$ and $(s_i)_{i \in I}$ converge to some $r$ and $s$, respectively. Then $rs=\lim r_is_i \notin \pi_{H}^{-1}[O]$ or $sr=\lim s_ir_i \notin \pi_H^{-1}[O]$. So $rs \ne 0$ or $sr \ne 0$. Hence, it is enough to show that $s \in R^0$, as then the last inequalities contradict the fact that $R \cdot R^0=R^0 \cdot R = \{0\}$.

Suppose for a contradiction that $s \notin R^0$. Then there is a compact neighborhood $V$ of $R^0$ such that $s \notin V$. By compacntess of $R$ and by the choice of the $I_i$'s, we get that there is $i_0 \in I$ such that  $I_i \subseteq V$ for all $i >i_0$. Then for all $i>i_0$  we have $s_i \in V$, and so $s = \lim_i s_i \in V$, a contradiction. 
\end{clmproof}

Pick $i$ from the claim. Then $R\cdot I_i/H \subseteq O$ and $I_i\cdot R/H \subseteq O$. On the other hand, since for every $r \in R$ the functions $l_r \colon R \to R/H$ and $r_r \colon R \to R/H$ given by $l_r(x):=rx/H$ and $r_r(x)=xr/H$ are group homomorphisms, we have that $l_r[I_i]$ and $r_r[I_i]$ are subgroups of $R/H$. By the preceding sentence, they are also contained in $O$, so they must be trivial by the choice of $O$. Hence, $R \cdot I_i \cup I_i \cdot R \subseteq H$. Therefore, replacing $R$ by the open ideal $I_i$ and $H$ by $H \cap I_i$, we get that $R \cdot R \subseteq H$, so $H$ is an ideal in $R$ and we are done.
\end{proof}

\begin{corollary}\label{corollary: Gleason-Yamabe for loc. compact rings}
Let $R$ be a locally compact ring and $U \subseteq R$ a neighborhood of $0$. Then there exists an open subring $S$ of $R$ and a compact ideal $J \lhd S$ contained in $U$ such that the additive group of $S/J$ is a connected Lie group of the form $\mathbb{R}^n \times C$ for a compact connected Lie group $C$.
\end{corollary}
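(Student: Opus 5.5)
We reduce Corollary \ref{corollary: Gleason-Yamabe for loc. compact rings} to Proposition \ref{proposition: Gleason-Yamabe for compact rings} by first passing to a compact open subring of the totally disconnected part, and then handling the connected component separately. First, replace $R$ by an open subring whose additive group is an extension of a compact group by $\mathbb{R}^n$. By the structure theory of locally compact abelian groups, $(R,+)$ has an open subgroup of the form $\mathbb{R}^n \times K$ with $K$ compact. As in the first paragraph of the proof of Theorem 5.2 in \cite{Kru}, we may further shrink to an open subring $R_1$ contained in it. We may also shrink $U$ freely, since the conclusion only requires $J \subseteq U$.

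Next we use Kaplansky's theorem (Fact \ref{fact: Thm. 1 from Kaplansky}). Let $C_0 \leq (R_1,+)$ be the maximal compact subgroup; it is a closed subgroup, so $C_0\cdot R_1^0=R_1^0\cdot C_0=\{0\}$. I would aim to find a compact open subring $T$ of $R_1$ modulo the vector part, or more simply argue as follows. Apply Fact \ref{fact: Gleason-Yamabe} to $(R_1,+)$ to get an open subgroup $G$ and a compact subgroup $N\subseteq U$ of $G$ with $G/N$ a connected Lie group. Then shrink $G$ to an open subring $S_0$, with $N$ replaced by $N\cap S_0$. The remaining task, exactly as in the claim in the proof of Proposition \ref{proposition: Gleason-Yamabe for compact rings}, is to upgrade the compact subgroup $N$ to an ideal. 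Take an open ideal $I$ of $S_0$ small enough that $S_0\cdot I\cup I\cdot S_0$ lands, modulo $N$, in a no-small-subgroups neighborhood $O$ of $G/N$. Then the group homomorphisms $x\mapsto rx/N$ and $x\mapsto xr/N$ send $I$ to subgroups inside $O$, hence to $0$. So $N\cap I$ becomes an ideal of the ring $I$, and $I/(N\cap I)$ is an open subgroup of the Lie group $S_0/N$.

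\textbf{Main obstacle.} The hard step is finding such an open ideal $I$ in the noncompact setting. The compactness argument via nets used in the claim of Proposition \ref{proposition: Gleason-Yamabe for compact rings} fails because $S_0$ is not compact. My plan here is to split off the vector part. The compact elements $C_0$ annihilate $S_0^0$ on both sides by Kaplansky, and $S_0/C_0$ is, after passing to an open subring, a ring whose additive group is $\mathbb{R}^n\times$ discrete. One can hope to choose $I$ of the form $S_0^0 + I'$, where $I'$ is an open ideal of a compact open subring of $C_0$ obtained from profiniteness of $C_0/C_0^0$ as in the Proposition. Products involving $S_0^0$ and $C_0$ vanish. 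Products $S_0\cdot I'$ can be controlled because left multiplication by the finitely many coset representatives is continuous, using that $S_0/(S_0^0+C_0)$ is discrete and may be taken finitely generated modulo the open piece.

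Finally, pass to the identity component. Take the connected component $J$-preimage, i.e.\ replace $S$ by the preimage of $(I/(N\cap I))^0$, which is open since Lie groups are locally connected and is a subring since it is the identity component of a topological ring. The quotient is then a connected abelian Lie group, hence of the form $\mathbb{R}^n\times C$ with $C$ a torus, i.e.\ compact connected. The ideal $J:=N\cap S$ remains compact and contained in $U$, which gives the corollary.
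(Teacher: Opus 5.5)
Your argument breaks at the step you flag as the main obstacle, and the failure is structural, not merely technical: the condition you aim for cannot hold once the connected component $R^0$ has non-zero multiplication. Suppose an open ideal $I$ of $S_0$ satisfied $S_0\cdot I\cup I\cdot S_0\subseteq\pi_N^{-1}[O]$. Your no-small-subgroups argument would then give $S_0\cdot I\cup I\cdot S_0\subseteq N$. In particular $I\cdot I$ would lie in the compact set $N$, and $I/(N\cap I)$ would have zero multiplication. But $I$ is open in $R$, hence contains $R^0$. Already for $R=\mathbb{R}$ this fails: the only open subring or ideal is $\mathbb{R}$ itself, $N=\{0\}$, and $\mathbb{R}\cdot\mathbb{R}\neq\{0\}$. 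Your proposed choice $I=S_0^0+I'$ does not avoid this. Fact \ref{fact: Thm. 1 from Kaplansky} kills the mixed products between $S_0^0$ and $C_0$, but not $S_0^0\cdot S_0^0$, which is in general unbounded and certainly not inside $\pi_N^{-1}[O]$.

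The missing idea is that you never need $S_0\cdot I$ to be small. You only need the compact subgroup $N$ to be absorbed, and since $N$ is compact, the connected part already annihilates it by Kaplansky. So the no-small-subgroups trick must be confined to the compact ring of compact elements; your abandoned first idea was the right one. This is the paper's route, extracted from the proof of Proposition \ref{proposition: improved target space 2}:
\begin{enumerate}
\item Write $(R,+)\cong\mathbb{R}^n\times D$. Van Dantzig in the totally disconnected ring $R/R^0$, followed by shrinking to a compact open subring, gives an open subring $\mathbb{R}^n\times A$ with $A$ compact.
\item $\{0\}\times A$ is its largest compact subgroup, hence an ideal, so $A$ is a compact ring.
\item Proposition \ref{proposition: Gleason-Yamabe for compact rings} applied to $A$ gives $A'$ and an ideal $B\subseteq U$.
\item $\mathbb{R}^n\times A'$ is a subring: the connected set $(\mathbb{R}^n\times\{0\})\cdot(\mathbb{R}^n\times\{0\})$ lies in every open subgroup, and mixed products vanish by Kaplansky. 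Again by Kaplansky, $\{0\}\times B$ is an ideal of it, with additive quotient $\mathbb{R}^n\times A'/B$.
\end{enumerate}

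Your outline can be repaired in the same spirit. Keep $I=S_0^0+I'$, but choose the open ideal $I'$ of the compact ring $C:=C_0\cap S_0$ only so that $C\cdot I'\cup I'\cdot C\subseteq\pi_N^{-1}[O]$. Then $C\cdot I'\cup I'\cdot C\subseteq N$. Combined with $S_0^0\cdot N=N\cdot S_0^0=\{0\}$ (note $N\subseteq C$), this shows $I$ is an open subring in which $N\cap I$ is a two-sided ideal, with $I/(N\cap I)\cong S_0/N$.
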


A proof of Corollary \ref{corollary: Gleason-Yamabe for loc. compact rings} can be easily extracted from the proof of Proposition \ref{proposition: improved target space 2} below which uses Proposition \ref{proposition: Gleason-Yamabe for compact rings}.

\begin{proposition}\label{proposition: improved target space 2}
Let $X$ be a definable (in $M$) $K$-approximate subring. Then there exists a definable $(K^{510}+K^{22})$-approximate subring $Y \subseteq 4X +X \cdot 4X \subseteq X_3$ commensurable to $X$ and a definable locally compact model $f \colon \langle Y \rangle \to \mathcal{A}$ with dense image, where $\mathcal{A}$ is a connected locally compact ring whose additive group is a Lie group of the form $\mathbb{R}^n \times C$ for a connected compact Lie group $C$, and with $f^{-1}[U]\subseteq Y$ for some neighborhood $U$ of $0$ in $\mathcal{A}$.  
Moreover, if $M$ is at least $(|\mathcal{L}| + \aleph_0)^+$-saturated, then $f$ can be chosen surjective, and whenever $X$ is definable over $M_0$ (as an approximate subring) where $M_0 \prec M$ is of cardinality at most $|\mathcal{L}| + \aleph_0$, then $f$ can be chosen to be surjective and $M_0$-definable and $Y$ also $M_0$-definable.
\end{proposition}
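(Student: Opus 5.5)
We begin with Fact \ref{fact: locally compact model exists}, which gives a definable locally compact model $g\colon \langle X\rangle \to S$ with $g^{-1}[U_0]\subseteq 4X+X\cdot 4X$ for some neighborhood $U_0$ of $0$. We may first replace $S$ by $\cl(g[\langle X\rangle])$, so that $g$ has dense image. Next we pass to the monster: by Fact \ref{fact: extension to the monster} there is an $M$-definable extension $\bar g\colon\langle\bar X\rangle\to S$, and by Proposition \ref{proposition: model is onto} it is surjective. Its preimage $\bar g^{-1}[U_0]$ is contained in $\bar Z$, where $Z:=4X+X\cdot 4X$.

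The core step is the ring Gleason--Yamabe reduction. Fix a relatively compact open neighborhood $U\subseteq U_0$ of $0$. We want an open subring $S'\subseteq S$ and a compact ideal $J\lhd S'$ with $J\subseteq U$ such that $S'/J$ is connected with additive group $\mathbb{R}^n\times C$; in other words, we prove Corollary \ref{corollary: Gleason-Yamabe for loc. compact rings} along the way.

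To do this, apply Fact \ref{fact: Gleason-Yamabe} to $(S,+)$, getting an open subgroup $G$ and a compact $N\subseteq U$ with $G/N$ a connected Lie group. Then $G/N\cong \mathbb{R}^n\times C$, since connected abelian Lie groups have this form. The difficulty is that $G$ need not be a subring and $N$ need not be an ideal. For this we use Fact \ref{fact: Thm. 1 from Kaplansky}: every compact subgroup of $S$ is annihilated by $S^0$. We also take a compact open subring $R_0$ of a suitable open subring, as in \cite[Theorem 5.2]{Kru}; the relevant open subring is $S^0+R_0$, where $S^0$ is a closed ideal. Applying Proposition \ref{proposition: Gleason-Yamabe for compact rings} to $R_0$ gives an open subring $T\subseteq R_0$ and an ideal $J\subseteq U$ of $T$ with $T/J$ a connected compact Lie group. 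Put $S':=S^0+T$. Since $S^0$ annihilates the compact set $T$, $J$ is an ideal of $S'$: we have $S^0J=JS^0=0$, so $S'J\subseteq TJ\subseteq J$, and similarly $JS'\subseteq J$. Connectedness of $S'/J$ comes from $S^0$ together with $T/J$ connected. Finally, $S'/J$ is a locally compact abelian Lie group, and connected ones have the form $\mathbb{R}^n\times C$.

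I expect the main obstacle to be this bookkeeping: checking that $S^0+T$ is really an open subring with the right quotient. A route that avoids it is to use the structure of locally compact rings directly: $S^0$ is an ideal, $S/S^0$ is totally disconnected, and it has a compact open subring by \cite[Theorem 5.2]{Kru}.

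Now apply Fact \ref{fact: passing to a connected quotient}(2) to $\bar g$. We take $\bar X$ there to be $\bar Y_0$, where $Y_0:=g^{-1}[W]\cap Z$ for a suitable definable symmetric set squeezed between $g^{-1}[U']$ and $g^{-1}[U]$; definability of the model gives such a set. By Fact \ref{fact: D is approximate}, $Y_0$ is a commensurable approximate subring. The resulting $Y:=Y_0\cap g^{-1}[S']$ is definable and contained in $Z$, because $S'$ is clopen and the model is definable. It has the required model $f\colon\langle Y\rangle\to S'/J=:\mathcal{A}$ with dense image and $f^{-1}[V]\subseteq Y$, via Fact \ref{fact: passing to a connected quotient}(1).

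It remains to get the constant. Since $Y=Z\cap H$ for the subring $H:=g^{-1}[S']\cap\langle X\rangle$ (after arranging $W\supseteq$ the relevant part), Lemma \ref{lemma: K510} shows that $Y$ is a $(K^{510}+K^{22})$-approximate subring. Indeed, choosing the window so that $Y=Z\cap g^{-1}[S']$ directly gives this; since $U\subseteq U_0$ and $J\subseteq U$, we still have $f^{-1}[V]\subseteq Y$.

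For the moreover part, when $M$ is $(|\mathcal{L}|+\aleph_0)^+$-saturated, Proposition \ref{proposition: model is onto}(2) gives surjectivity. This holds because the relevant type-definable sets are over a small set, which saturation of $M$ handles in place of $\C$. For $M_0$-definability, we run the construction inside $M_0$ and take the unique extension, as in Fact \ref{fact: extension to the monster}, with $M$ playing the role of the saturated extension of $M_0$. The model $g$ and the sets $Y$, $S'$ can all be taken over $M_0$.
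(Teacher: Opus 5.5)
Your outline follows the paper's proof: Fact~\ref{fact: locally compact model exists}, then Kaplansky's Fact~\ref{fact: Thm. 1 from Kaplansky}, Proposition~\ref{proposition: Gleason-Yamabe for compact rings}, Fact~\ref{fact: passing to a connected quotient}, and Lemma~\ref{lemma: K510} with $Y=(4X+X\cdot 4X)\cap g^{-1}[S']$, then the construction over $M_0$ and the unique extension. But the step you call ``bookkeeping'' is where the proof actually lives, and you do not supply it. \textbf{You never construct the compact ring $R_0$ to which Proposition~\ref{proposition: Gleason-Yamabe for compact rings} is applied.} As written, $R_0$ is a compact subring that is open in $S^0+R_0$. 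Such an $R_0$ cannot exist when $S^0$ is non-compact, because openness forces $S^0\subseteq R_0$. Your fallback only gives a compact open subring $I$ of $S/S^0$. Its preimage in $S$ is an open subring containing $S^0$, so it is not compact, and Proposition~\ref{proposition: Gleason-Yamabe for compact rings} does not apply to it.

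The missing idea is the paper's Claim~1. Identify $(S,+)$ with $\mathbb{R}^n\times D$, where $D$ has compact identity component $P$. Then the preimage of $I$ is $\mathbb{R}^n\times A$, with $A$ compact open in $D$ and $P\subseteq A$. The subgroup $\{0\}\times A$ is the largest compact subgroup of the subring $\mathbb{R}^n\times A$. Left and right multiplications are continuous additive endomorphisms, so they map it into itself; hence it is an ideal, and $A$ carries a compact ring structure.

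This is not cosmetic, because your assertion that $S'/J$ is a Lie group is unjustified. It is false in general unless $R_0$ contains the compact part $P$ of $S^0$. For example, take $S=\mathbb{R}\times\Sigma$ with $\Sigma$ a solenoid and zero multiplication. Then $R_0=\{0\}$ is a compact subring and $S^0+R_0=S$ is an open subring. Proposition~\ref{proposition: Gleason-Yamabe for compact rings} returns $T=J=\{0\}$, so $S'/J=S$, which is connected but not Lie. In the paper, $P\subseteq A'$ because $A'$ is an open subgroup of $A$. Hence $S'/J\cong\mathbb{R}^n\times A'/B$ is Lie because $A'/B$ is. Kaplansky then shows that $\mathbb{R}^n\times A'$ is a subring and $\{0\}\times B$ an ideal, exactly as in your computation.

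A smaller point concerns the first ``moreover'' clause. Proposition~\ref{proposition: model is onto} cannot be applied inside $M$ to a model definable with parameters from all of $M$. Instead, take $M_0\prec M$ of size $|\mathcal{L}|+\aleph_0$ containing the parameters of $X$, by downward L\"owenheim--Skolem, and reduce to the second clause.
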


\begin{proof}
By Fact \ref{fact: locally compact model exists}, there is a definable locally compact model $f \colon \langle X \rangle\to S$ of $X$ with $f^{-1}[U] \subseteq 4X + X \cdot 4X$ for some neighborhood $U$ of $0$ in $S$. Replacing $S$ by the closure of the image, we get that $f$ has dense image.

As a locally compact abelian group $(S,+)$ is topologically isomorphic to $\mathbb{R}^n \times D$, where $D$ is a locally compact group with compact connected component $P$ at $0$ (e.g., see \cite[Theorem 4.2.1]{DeEc}). Identify $S$ with $\mathbb{R}^n \times D$, and equip  $\mathbb{R}^n \times D$ with the induced topological ring structure.  Since $\mathbb{R}^n \times P$ is the connected component of $0$ in $S$, we know that it is a closed (two-sided) ideal, so $S/(\mathbb{R}^n \times P)$ is a locally compact ring.

We have the obvious topological groups  isomorphism $\rho \colon S/(\mathbb{R}^n \times P) \to D/P$ given by $\rho((x,y)/(\mathbb{R}^n \times P)):=y/P$. Since $D/P$ is totally disconnected, van Dantzig theorem (e.g., see \cite[Proposition 4.1.6(a)]{DeEc}) tells us that that it has a compact open subgroup. 
So let $H$ be a compact open subgroup of $S/(\mathbb{R}^n \times P)$. Then there is a compact open subring $I$ of $S/(\mathbb{R}^n \times P)$ contained in $H$ (e.g. see the first paragraph of the proof of Theorem 5.2 in \cite{Kru}).

Let $\pi \colon S \to S/(\mathbb{R}^n \times P) $ and $\pi_2 \colon D \to D/P$ be the quotient maps. We clearly have
$$\pi^{-1}[I] = \mathbb{R}^n \times \pi_2^{-1}[\rho[I]].$$
Put $A:=\pi_2^{-1}[\rho[I]]$.
Since $\rho[I]$ and $P$ are compact, $A$ is also compact. Since $\rho[I]$ is open, we also have that $A$ is open.


The set $U$ from the first paragraph of the proof can be decreased to be of the form $U_1 \times U_2$ for some neighborhoods $U_1$ and $U_2$ of $0$ in $\mathbb{R}^n$ and $A$, respectively.

\begin{clm}
$\{0\} \times A$ is an ideal of $\pi^{-1}[I]=\mathbb{R}^n \times A$.
\end{clm}

\begin{clmproof}
It follows from the fact that $\{0\} \times A$ is the largest (in the sense of inclusion) compact subgroup of  $\pi^{-1}[I]$.
\end{clmproof}

By Claim 1, $A$ has a compact ring structure induced by the identification with $\{0\} \times A$.
By Proposition \ref{proposition: Gleason-Yamabe for compact rings}, there is an open subring $A'$ of $A$ and its closed ideal $B$ contained in $U_2$ such that the additive group of $A'/B$ is a compact connected Lie group. 

\begin{clm}
$\mathbb{R}^n \times A'$ is an open subring of $\pi^{-1}[I]$ (so also of $S$), and $\{0\} \times B$ is its compact ideal contained in $U_1 \times U_2$.
\end{clm}

\begin{clmproof}
For the first part it is enough to show that $\mathbb{R}^n \times A'$ is closed under multiplication. First of all, $\{0\} \times A'$ is a subring of $\mathbb{R}^n \times A$, so it is closed under $\cdot$. Secondly,  $(\mathbb{R}^n \times \{0\}) \cdot (\mathbb{R}^n \times \{0\})$ is connected, so contained in the open subgroup $\mathbb{R}^n \times A'$. Thus, the conclusion follows using Fact \ref{fact: Thm. 1 from Kaplansky}, because, by this fact, we have $(\mathbb{R}^n \times \{0\}) \cdot (\{0\} \times A') = (\{0\} \times A') \cdot (\mathbb{R}^n \times \{0\}) =\{(0,0)\}$. 

The second part follows because $(\mathbb{R}^n \times \{0\})\cdot (\{0\} \times B)=(\{0\} \times B) \cdot (\mathbb{R}^n \times \{0\})=\{(0,0)\}$ and $(\{0\} \times A') \cdot (\{0\} \times B) \cup (\{0\} \times B) \cdot (\{0\} \times A') \subseteq  \{0\} \times B$ (the last inclusion holds since  $B$ is an ideal in $A'$).
\end{clmproof}

Now, applying Fact \ref{fact: passing to a connected quotient} and Lemma \ref{lemma: K510}, we get that $Y:= (4X + X \cdot 4X) \cap f^{-1}[\mathbb{R}^n \times A']$ is a definable $(K^{510}+K^{22})$-approximate subring commensurable to $X$ and there is a definable locally compact model $f' \colon \langle Y \rangle \to (\mathbb{R}^n \times A')/(\{0\} \times B)$ with dense image and with $f'^{-1}[V] \subseteq Y$ for some neighborhood $V$ of $0$, where the additive group of the target is topologically  isomorphic to $\mathbb{R}^n \times A'/B$ and $A'/B$ is a compact connected Lie group.


For the moreover part, consider any $M_0 \prec M$ of cardinality $|\mathcal{L}| + \aleph_0$ over which $X$ is a definable approximate subring. Apply the first part of the proposition to $M_0$ and $X(M_0)$ in place of $M$ and $X$, respectively, to produce a definable  locally compact model $f \colon \langle Y(M_0) \rangle \to \mathcal{A}$ with dense image for some definable in $M_0$ $(K^{510}+K^{22})$-approximate subring $Y(M_0) \subseteq 4X(M_0) +X(M_0) \cdot 4X(M_0)$ commensurable to $X(M_0)$, satisfying $f^{-1}[U] \subseteq Y(M_0)$ for some neighborhood $U$ of $0$ in $\mathcal{A}$, where $\mathcal{A}$ is as in the statement. Let $Y:=Y(M)$. By Fact \ref{fact: extension to the monster}, $f$ extends uniquely to an $M_0$-definable locally compact model $\bar f \colon \langle Y \rangle \to S$, which satisfies $\bar{f}^{-1}[U] \subseteq Y$ by the second paragraph of the proof of Fact \ref{fact: extension to the monster}. This unique $\bar f$ is onto by Proposition \ref{proposition: model is onto}. Since $Y(M_0) \subseteq 4X(M_0) +X(M_0) \cdot 4X(M_0)$, we have $Y \subseteq 4X+ X \cdot 4X$, and $Y$ is commensurable to $X$ as $Y(M_0)$ is commensurable to $X(M_0)$. 
\end{proof}



\begin{proposition}\label{proposition: improved target space}
The version of Proposition \ref{proposition: improved target space 2} with the stronger conclusion that the target space $\mathcal{A}$ is a finite-dimensional real algebra, but with the property $f^{-1}[U] \subseteq Y$ replaced by the weaker condition that $f^{-1}[U] \subseteq Y_m$ for some $m$, remains valid. 

In fact, whenever one has a definable locally compact model $f \colon \langle Y \rangle \to \mathcal{A}$ [with dense image] of a definable approximate subring $Y$, where $\mathcal{A}$ is a locally compact ring whose additive group is of the form $\mathbb{R}^n \times C$ for a compact group $C$, and with $f^{-1}[U]\subseteq Y$ for some compact  neighborhood $U$ of $0$ in $\mathcal{A}$, then $\{0\} \times C$ is an ideal in $\mathbb{R}^n \times C$, the induced ring structure on $\mathbb{R}^n$ identified with $(\mathbb{R}^n \times C)/(\{0\} \times C)$ turns it into an $n$-dimensional real algebra, and the composition $\pi_1 \circ f \colon \langle Y \rangle \to \mathbb{R}^n $ (where $\pi_1 \colon \mathbb{R}^n \times C \to \mathbb{R}^n$ is the projection) is  a definable locally compact model [with dense image] with target space being an $n$-dimensional real algebra (which can be identified with a subalgebra of the matrix algebra $M_{n+1}(\mathbb{R})$ with the inherited topology) and with $(\pi_1 \circ f)^{-1}[V] \subseteq Y_m$ for some $m$ where $V:=\pi_1[U]$ is a compact neighborhood of $0$ in $\mathbb{R}^n$. Moreover, if $M$ is at least $(|\mathcal{L}| + \aleph_0)^+$-saturated, and $f$ is $M_0$-definable [surjective] for some $M_0 \prec M$ of cardinality at most $|\mathcal{L}| + \aleph_0$ over which $Y$ is a definable approximate subring, then so is $\pi_1 \circ f$.
\end{proposition}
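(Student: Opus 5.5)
The plan is to prove the ``in fact'' part first; the first sentence then follows by applying it to the output of Proposition \ref{proposition: improved target space 2}, where $C$ is a compact connected Lie group. A compact neighborhood inside the given $U$ exists by local compactness.

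\emph{Step 1: $\{0\}\times C$ is an ideal.} I would argue as in Claim 1 of the proof of Proposition \ref{proposition: improved target space 2}. The set $\{0\}\times C$ is the largest compact subgroup of $(\mathbb{R}^n\times C,+)$, since $\mathbb{R}^n$ has no nontrivial compact subgroups. For $a\in\mathcal{A}$, left and right multiplication by $a$ are continuous additive endomorphisms, so they map $\{0\}\times C$ onto a compact subgroup, which must therefore lie in $\{0\}\times C$. Alternatively, Fact \ref{fact: Thm. 1 from Kaplansky} gives this directly on the connected part. Hence $\mathcal{A}/(\{0\}\times C)$ is a locally compact ring whose additive group is topologically isomorphic to $\mathbb{R}^n$ via $\pi_1$.

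\emph{Step 2: the quotient is an $\mathbb{R}$-algebra.} Multiplication on $\mathbb{R}^n$ is biadditive and continuous. A continuous additive map $\mathbb{R}^n\to\mathbb{R}^n$ is $\mathbb{R}$-linear, because it is $\mathbb{Q}$-linear and continuous. So multiplication is $\mathbb{R}$-bilinear, and $\mathbb{R}^n$ becomes an $n$-dimensional associative real algebra. Unitizing it and using the left regular representation embeds it into $M_{n+1}(\mathbb{R})$. By Fact \ref{fact: submultiplicativity}, any such linear embedding induces the Euclidean topology.

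\emph{Step 3: $\pi_1\circ f$ is a definable locally compact model.} It is a ring homomorphism, and by Remark \ref{remark: pi f is a model} it is a definable locally compact model, since $\{0\}\times C$ is a compact ideal; density of the image is preserved. For the quantitative condition, set $V:=\pi_1[U]$. This is compact, and it is a neighborhood of $0$ because $\pi_1$ is open. Then
\[
(\pi_1\circ f)^{-1}[V]\subseteq f^{-1}\bigl[U+(\{0\}\times C)\bigr].
\]
The set $U+(\{0\}\times C)$ is compact, so Remark \ref{remark: premimages of compact sets} puts its preimage in some $Y_m$. I do not expect to retain $f^{-1}[U]\subseteq Y$ itself, since $\{0\}\times C$ need not lie in $U$; this is why the statement weakens it to $Y_m$.

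\emph{Step 4: the moreover part.} $M_0$-definability is preserved by the same argument as in Remark \ref{remark: pi f is a model}. It uses the characterization in Remark \ref{remark: characterization of M-definability of a model}, taking preimages of compact and open sets under $\pi_1$. Surjectivity clearly passes to the composite.

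The only mildly delicate point is Step 2, namely identifying the ring structure as genuinely $\mathbb{R}$-bilinear; continuity handles it.
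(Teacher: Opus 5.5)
Your proposal is correct and follows the paper's proof essentially step by step. Both use Claim 1 of Proposition \ref{proposition: improved target space 2} for the ideal, continuity to force $\mathbb{R}$-bilinearity, and unitization with the left regular representation for the embedding into $M_{n+1}(\mathbb{R})$; both then use Remarks \ref{remark: pi f is a model} and \ref{remark: premimages of compact sets} for the model property and the $Y_m$ bound, and preimages under $\pi_1$ for $M_0$-definability, and your set $U+(\{0\}\times C)$ is the right one (the paper's $U\cdot(\{0\}\times C)$ is evidently a typo for this sum).
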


\begin{proof}
By Claim 1 in the proof of Proposition \ref{proposition: improved target space 2}, $\{0\} \times C$ is an ideal in $\mathbb{R}^n \times C$, so we indeed obtain the induced topological ring structure on $\mathbb{R}^n$. Since the obtained ring multiplication on $\mathbb{R}^n$ is continuous, it is easy (see \cite[Lemma 2]{JaTa}) that it  turns $\mathbb{R}^n$ into a (topological) real algebra of dimension $n$. As such, it is topologically isomorphic to a subalgebra of $M_{n+1}(\mathbb{R})$ (first, adjoining $1$ to our algebra, we obtain a topological real unital algebra structure on $\mathbb{R}^{n+1}$, and then we embed it into $M_{n+1}(\mathbb{R})$ via $v \mapsto A_v$, where $A_v(x):=v \cdot x$ for $x \in \mathbb{R}^{n+1}$). 

Since $\ker(\pi_1)=\{0\} \times C$ is compact, $\pi_1 \circ f$ is a definable locally compact model by Remark \ref{remark: pi f is a model} [which has dense image if $f$ had]. We have $(\pi_1 \circ f)^{-1}[V] =f^{-1}[U\cdot (\{0\} \times C)]$, and since $U\cdot (\{0\} \times C)$ is a compact, its preimage is indeed contained in some $Y_m$ by Remark \ref{remark: premimages of compact sets}. 

Finally, $M_0$-definability of $\pi_1 \circ f$ in the moreover part is obtained by noticing that in the proof of Remark \ref{remark: pi f is a model} the set $D$ at the end can be found to be $M_0$-definable.
\end{proof}

\begin{corollary}\label{corollary: improved Cor. 3.2}
Any definable approximate subring $X$ is commensurable with a definable approximate subring $Z\subseteq 4X + X \cdot 4X$ which is closed under multiplication (i.e., $Z \cdot Z \subseteq Z$). 
\end{corollary}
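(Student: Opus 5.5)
Apply Proposition \ref{proposition: improved target space} to $X$. This yields a definable approximate subring $Y \subseteq 4X + X\cdot 4X$ commensurable with $X$. It also yields a definable locally compact model $f\colon \langle Y\rangle \to \mathcal{A}$, where $\mathcal{A}$ is a finite-dimensional real algebra that we identify, with its topology, with a subalgebra of $M_N(\mathbb{R})$. Moreover $f^{-1}[U]\subseteq Y_m$ for some neighborhood $U$ of $0$ and some $m$. On $\mathcal{A}$ we use the Frobenius norm $||\cdot||_{\textrm{F}}$, which by Fact \ref{fact: submultiplicativity} is submultiplicative and induces the topology of $\mathcal{A}$. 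The idea is to take a small ball $B_\delta:=\{a\in\mathcal{A}: ||a||_{\textrm{F}}<\delta\}$ with $\delta\le 1$. Such a ball is additively symmetric, and by submultiplicativity $B_\delta\cdot B_\delta\subseteq B_{\delta^2}\subseteq B_\delta$. So $f^{-1}[B_\delta]$ is closed under multiplication and symmetric. The only problem is that $f^{-1}[B_\delta]$ need not be definable, nor inside $4X+X\cdot 4X$.

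To deal with containment, choose $\delta$ so small that the closed ball $\bar B_\delta$ lies in $U$; then $f^{-1}[\bar B_\delta]\subseteq Y_m$. This is not yet inside $4X+X\cdot 4X$. To fix it, I would rather use Proposition \ref{proposition: improved target space 2} directly: it gives $f^{-1}[U]\subseteq Y\subseteq 4X+X\cdot 4X$, with target $\mathbb{R}^n\times C$. Compose with $\pi_1$ to get the real algebra $\mathbb{R}^n$, whose preimages are controlled only up to the compact ideal $\{0\}\times C$. We may also shrink $U$ to have the form $U_1\times C$; this requires that $\{0\}\times C\subseteq U$ in the original model. In Proposition \ref{proposition: improved target space 2}, $B\subseteq U_2$ and the target is a quotient by $B$, so the compact part $C=A'/B$ is not automatically small. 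So the honest route is to accept $Y_m$ and then intersect: set $Z:=f^{-1}[B_\delta]\cap (4X+X\cdot 4X)$. This fails closure under multiplication in general, since a product of two elements of $4X+X\cdot4X$ need not lie there. Hence the key step, and the main obstacle, is guaranteeing $f^{-1}[B_\delta]\subseteq 4X+X\cdot 4X$. I would handle it by taking the model from Fact \ref{fact: locally compact model exists} with $f^{-1}[U_0]\subseteq 4X+X\cdot 4X$. I would then note that passing to $\mathbb{R}^n$ kills a compact ideal, which can be arranged inside $U_0$ via Corollary \ref{corollary: Gleason-Yamabe for loc. compact rings} (the ideal $J$ is chosen inside a prescribed neighbourhood). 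So the composite model $g$ to the real algebra $\mathcal{A}$ satisfies $g^{-1}[V]\subseteq 4X+X\cdot4X$ for a neighborhood $V$, and we take $B_\delta\subseteq V$.

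For definability, $f^{-1}[B_\delta]$ itself need not be definable. Instead, use definability of $g$ to find, for radii $\delta_1<\delta_2\le\delta$, a definable set $D$ with $g^{-1}[\bar B_{\delta_1}]\subseteq D\subseteq g^{-1}[B_{\delta_2}]$. Replace $D$ by $D\cap -D$; this keeps it symmetric and between the same sets. Multiplicative closure of $D$ itself can fail. Since products of elements of $g^{-1}[B_{\delta_2}]$ land in $g^{-1}[B_{\delta_2^2}]$, I would choose $\delta_2^2\le\delta_1$ (for instance $\delta_1=\delta_2^2$ with $\delta_2<1$). Then $D\cdot D\subseteq g^{-1}[B_{\delta_2^2}]\subseteq g^{-1}[\bar B_{\delta_1}]\subseteq D$. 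So $Z:=D$ is definable, symmetric, closed under multiplication, and contained in $4X+X\cdot4X$. Finally, Fact \ref{fact: D is approximate} shows that $Z$ is an approximate subring commensurable with $X$, which completes the argument.
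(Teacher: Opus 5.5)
There is a genuine gap: your argument does not deliver $Z\subseteq 4X+X\cdot 4X$. The multiplicative-closure and definability step is sound: pick $D$ between $g^{-1}[\bar B_{\delta_1}]$ and $g^{-1}[B_{\delta_2}]$ with $\delta_1=\delta_2^2$, symmetrize, and use submultiplicativity of the Frobenius norm. The containment fails, however.

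Midway you correctly noticed that the compact factor $C$ is not small. Your fix via Corollary \ref{corollary: Gleason-Yamabe for loc. compact rings} does not remove that problem. That corollary only places the ideal $J$ inside $U_0$, and $S/J$ still has additive group $\mathbb{R}^n\times C$. To land in a real algebra you must additionally quotient by $\{0\}\times C$. Its preimage $\tilde C\supseteq J$ in $S$ is a compact ideal which, when $C\neq 0$, cannot lie in a small neighbourhood (since $S/J$ has no small subgroups). So $g^{-1}[B_\delta]\supseteq f^{-1}[\tilde C]$, and there is no reason for it to sit inside $4X+X\cdot 4X$.

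Here is a concrete counterexample to your construction. Give $\mathbb{Z}$ the zero multiplication and let $X=\{k\in\mathbb{Z}:\|k\alpha\|<\epsilon\}$, with $\alpha$ irrational and $\epsilon<1/8$. Then $X$ is an approximate subring and $4X+X\cdot 4X=4X\subseteq\{k:\|k\alpha\|<4\epsilon\}$. Since $X$ is generic in $\mathbb{Z}$, every locally compact model with dense image has compact target. Hence $n=0$, your real algebra is $\{0\}$, and your $D$ is all of $\langle Y\rangle=m\mathbb{Z}$ for some $m\geq 1$. But $m\mathbb{Z}$ contains $k$ with $\|k\alpha\|\geq 4\epsilon$, so $D\not\subseteq 4X$. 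What your argument actually proves is the weaker version with $Z\subseteq Y_m$, i.e.\ $Z\subseteq\langle X\rangle$.

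The missing idea is to keep the $C$-coordinate rather than quotient it away. Work with the model $f\colon\langle Y\rangle\to\mathbb{R}^n\times C$ from Proposition \ref{proposition: improved target space 2}, where $f^{-1}[U\times V]\subseteq Y\subseteq 4X+X\cdot 4X$. By Fact \ref{fact: Thm. 1 from Kaplansky}, $\{0\}\times C$ is null, so
\[
(a_1,c_1)(a_2,c_2)=(a_1,0)(a_2,0)\in\{a_1*a_2\}\times C ,
\]
where $*$ is the induced algebra product on $\mathbb{R}^n$. Products are therefore governed by the $\mathbb{R}^n$-coordinates alone.

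Now use continuity of multiplication. Choose a compact symmetric $V'\subseteq\mathrm{int}(V)$ and a neighbourhood $U'\subseteq U$ with $(U'\times\{0\})(U'\times\{0\})\subseteq\mathbb{R}^n\times V'$, and take $r<1/2$ with $B(0,r)\subseteq U'$. Then any definable symmetric $Z$ between $f^{-1}[B(0,r/2)\times V']$ and $f^{-1}[B(0,r)\times V]$ satisfies $Z\cdot Z\subseteq f^{-1}[B(0,r^2)\times V']\subseteq Z$. It also lies in $Y\subseteq 4X+X\cdot 4X$. This is how the paper proves the corollary.
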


\begin{proof}
By Proposition \ref{proposition: improved target space 2}, we can find a commensurable to $X$ definable approximate subring $Y \subseteq 4X + X \cdot 4X$ with a definable locally compact model 
$f \colon \langle Y \rangle \to \mathcal{A}$, where $\mathcal{A}$ is a connected locally compact ring whose additive group is of the form $\mathbb{R}^n \times C$ for a compact connected Lie group $C$, such that $f^{-1}[U \times V] \subseteq Y$ for some neighborhoods $U$ and $V$ of $0$ in $\mathbb{R}^n$ and $C$, respectively.

As in the proof of Proposition \ref{proposition: improved target space}, $\{0\} \times C$ is an ideal, and $\mathbb{R}^n$ naturally identified with $(\mathbb{R}^n \times C)/(\{0\} \times C)$ becomes an $n$-dimensional real algebra with multiplication denoted by $*$, so can be embedded in matrices and considered with the Frobenius norm. Note, however, that $\mathbb{R}^n \times \{0\}$ need not be closed under multplication in the ring $\mathbb{R}^n \times C$, i.e. may produce nonzero elements on the second coordinate. By Fact \ref{fact: Thm. 1 from Kaplansky}, $\{0\} \times C$ is null in $\mathbb{R}^n \times C$, hence 
$$(*)\;\;\;\;\;\; (a_1,c_1) \cdot (a_2,c_2) = (a_1,0) \cdot (a_2,0) \in \{a_1*a_2\} \times C \textrm{ for any } a_1,a_2 \in \mathbb{R}^n \textrm{ and } c_1,c_2 \in C.$$

Pick a compact symmetric neighborhood $V' \subseteq \textrm{int}(V)$ of $0$ in $C$. By continuity of multiplication, there is a neighborhood $U' \subseteq U$ of $0$ in $\mathbb{R}^n$ such that $(U' \times \{0\}) \cdot (U' \times \{0\}) \subseteq \mathbb{R}^n \times V'$. 

Now, pick $r \in (0,\frac{1}{2})$ so small that $B(0,r) \subseteq U'$, where $B(0,s)$ denotes the closed ball centered at $0$ and of radius $s$ with respect to the Frobenius norm on $\mathbb{R}^n$ mentioned above.

Using definability of $f$, choose  a definable symmetric set $Z$ between $f^{-1}[B(0,\frac{1}{2}r) \times V']$ and $f^{-1}[B(0,r) \times V]$. By Fact \ref{fact: D is approximate}, $Z$ is an approximate subring commensurable with $X$. It is clear that $Z \subseteq Y$.
Moreover, by $(*)$, submultiplicativity of the Frobenius norm (see Fact \ref{fact: submultiplicativity}), and the choice of $r$, we have $(B(0,r) \times V)\cdot (B(0,r) \times V) \subseteq B(0,r^2) \times V' \subseteq B(0,\frac{1}{2}r) \times V'$. Therefore, $Z \cdot Z \subseteq f^{-1}[B(0,r) \times V]\cdot f^{-1}[B(0,r) \times V] \subseteq f^{-1}[(B(0,r) \times V)\cdot (B(0,r) \times V)]\subseteq f^{-1}[B(0,\frac{1}{2}r) \times V'] \subseteq Z$.
\end{proof}

The final goal of this section is to give a logarithmic bound on $n$ in Propositions \ref{proposition: improved target space 2} and \ref{proposition: improved target space}.
The following lemma follows from the proof of \cite[Corollary 7.8]{Dries}

\begin{lemma}\label{lemma: Lou's 7.8}
Let $X$ be a definable $K$-approximate subgroup and $f \colon \langle X \rangle \to G$ a definable locally compact model, where  $G$ is a locally compact abelian group such that there is a neighborhood $U$ of the neutral element in $G$ with $f^{-1}[U] \subseteq X^4$ and $\im(f) \cap U$ dense in $U$. Suppose $T$ is a closed subgroup of $G$ such that $G/T$ is a connected Lie group without non-trivial compact subgroups. Then $d:=\dim_{\textrm{Lie}}(G/T) \leq 2 \log_2 (K)$.
\end{lemma}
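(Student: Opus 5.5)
The idea is a doubling-versus-volume comparison in the Lie quotient $L:=G/T$. Since $L$ is a connected abelian Lie group with no non-trivial compact subgroups, $L\cong\mathbb{R}^d$. Let $\pi\colon G\to L$ be the quotient map and put $g:=\pi\circ f\colon\langle X\rangle\to\mathbb{R}^d$. Because $\im(f)\cap U$ is dense in $U$, the set $\im(g)\cap\pi[U]$ is dense in the open set $\pi[U]$. Fix a small closed cube (or Euclidean ball) $B$ around $0$ in $\mathbb{R}^d$ such that $\pi^{-1}[2B]$ meets $U$ in a neighborhood of $0$. We cannot demand $\pi^{-1}[2B]\subseteq U$, since $T$ need not be compact. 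This is the technical point, and it is handled by working with $U\cap\pi^{-1}[\,\cdot\,]$ throughout.

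The key steps are as follows.

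First, we bound how many translates of $X^4$ are needed. From the $K$-approximate property, $X^{k}$ is covered by $K^{k-1}$ translates of $X$. Hence a set contained in $X^{8}$ is covered by $K^{7}$ translates of $X$. The same then holds, with some fixed power, relative to $X^4$-sized pieces.

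Second, we exploit the geometry of $\mathbb{R}^d$. The cube $2B$ contains $2^d$ disjoint translates $b_i+B'$ of a slightly shrunk cube $B'=\tfrac12 B$, whose differences are separated. By density of $\im(g)$ we may take each $b_i\in g[\langle X\rangle]$, say $b_i=g(x_i)$. Choose $D:=f^{-1}[U\cap\pi^{-1}[B']]\subseteq X^4$. Then the sets $x_iD$ are pairwise disjoint, because their images under $g$ are disjoint. They are all contained in $f^{-1}[\ldots]$ of a compact set, hence in some $X^m$ by the group analogue of Remark~\ref{remark: premimages of compact sets}.

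Third, we count. Conversely, each $x_iD$ cannot fit into a single translate of $D$'s double $DD^{-1}$ more than boundedly often. Comparing with a translate-covering of $X^m$ by $K^{m-1}$ translates of $X$ gives $2^d\le K^{m-1}$, hence $d\le (m-1)\log_2K$. The point is to make the comparison sharp enough to reach the constant $2$. Following \cite[Corollary 7.8]{Dries}, I would iterate this scaling: for each $k$, the cube $2^kB$ contains $2^{kd}$ disjoint translates of $B'$. Meanwhile $f^{-1}$ of the $2^k$-fold sum lies in $X^{4\cdot 2^k}$, roughly covered by $K^{2\cdot 2^k\cdots}$ translates. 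Taking the asymptotics as $k\to\infty$ (or using the exact sumset growth $|2^kB|=2^{kd}|B|$ against $K$-covering of $X^2\subseteq FX$, with $|F|\le K$) yields $2^{d}\le K^{2}$, that is, $d\le2\log_2K$.

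The precise bookkeeping is where I expect difficulty. One must choose the disjoint family inside a doubled set and cover it using the single-step estimate $XX\subseteq FX$, applied twice through $X^4$ and $D\subseteq X^4$. The proof of \cite[Corollary 7.8]{Dries} does exactly this with Haar measure on $\mathbb{R}^d$: pulling back Haar measure via definability gives a finitely additive measure $\mu$ on definable subsets of $\langle X\rangle$. The doubling of $\mu$ on $D$ is at least $2^d$ by Brunn–Minkowski-type scaling. It is at most $K^2$ because $D^2\subseteq X^8\subseteq F^2X^4$-type coverings transfer to $\mu$. I would follow that measure argument, with the main obstacle being verifying that the pulled-back measure is well defined and additive despite $T$ being non-compact, using that $f^{-1}[U]\subseteq X^4$ and the density hypothesis.
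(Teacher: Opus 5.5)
There is a genuine gap. None of the comparisons you sketch gives a bound depending only on $K$, and the measure argument you finally commit to is applied to the wrong set.

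\emph{Single-scale count.} Having $2^d$ disjoint translates $x_iD$ inside $X^m$, together with a cover of $X^m$ by $K^{m-1}$ translates of $X$, yields at best $2^d\le N K^{m-1}$. Here $N$ is the number of translates of $D$ needed to cover $X$. Since $D$ is the preimage of a cube chosen small enough to fit into $\pi[U]$, $N$ is not controlled by $K$.

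\emph{Iteration over scales.} This makes things worse. The count $2^{kd}$ grows singly exponentially in $k$, while the covering number $K^{c\cdot 2^k}$ of $X^{4\cdot 2^k}$ grows doubly exponentially, so the inequality is vacuous as $k\to\infty$.

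\emph{Measure version.} The lower bound $\mu(D^2)\ge 2^d\mu(D)$ is fine. The upper bound $\mu(D^2)\le K^2\mu(D)$, however, does not follow from $D^2\subseteq X^8\subseteq F'X^4$. That inclusion only gives $\mu(D^2)\le |F'|\,\mu(X^4)$, and $\mu(X^4)/\mu(D)$ is unbounded. Since the image of $D$ is essentially a cube, the inequality $\mu(D^2)\le K^2\mu(D)$ is just a restatement of $2^d\le K^2$.

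Your concern about pulling back Haar measure to a finitely additive measure is beside the point. Only monotonicity and $\mu(FY)\le|F|\,\mu(Y)$ are needed, and both hold for Haar measure of images.

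The missing idea is to run the Brunn--Minkowski versus approximate-group comparison on the image of the approximate subgroup itself. This requires that image to be compact. Pass to an $|M|^+$-saturated $\C\succ M$ and the unique $M$-definable extension $\bar f$. Then every $\bar f[\bar X^n]$ is compact. Since $f^{-1}[U]\subseteq X^4$ and $\im(f)\cap U$ is dense in $U$, the closed set $\bar f[\bar X^4]$ contains $U$.

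Now let $C:=\bar f[\bar X]^2/T$, a compact subset of $G/T\cong\mathbb{R}^d$, and let $\mu$ be Haar measure there.
\begin{itemize}
\item Brunn--Minkowski gives $\mu(C^2)\ge 2^d\mu(C)$.
\item From $\bar X^2\subseteq F\bar X$ with $|F|\le K$ one gets $\bar X^4\subseteq F^2\bar X^2$, hence $\mu(C^2)\le K^2\mu(C)$.
\item We have $C^2=\bar f[\bar X^4]/T\supseteq U/T$, and $U/T$ has nonempty interior because quotient maps are open. So $\mu(C^2)>0$, and therefore $\mu(C)>0$.
\end{itemize}
Dividing gives $2^d\le K^2$. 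In particular, the non-compactness of $T$, which you flagged as the technical point, plays no role.
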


\begin{proof}
Passing to $\C \succ M$ at least $|M|^+$-saturated, and taking the unique extension $\bar f \colon \langle \bar X \rangle \to G$ from the obvious counterpart of Fact \ref{fact: extension to the monster} for approximate subgroups, we get (using the obvious counterpart of Proposition \ref{proposition: model is onto}) that $\bar f[\bar X^4]$ is closed. 
On the other hand, by assumption, $f[X^4]$ is a dense subset of $U$ which implies that $\bar f[\bar X^4] \cap U$ is dense in $U$. Thus, $U \subseteq \bar f[\bar X^4]$. 
This context is appropriate to apply the proof of Corollary 7.8 from \cite{Dries}. 
(By the way, it seems to us that the bound $3\log_2(K)$ is not really proved in \cite{Dries}. In the proof of \cite[Corollary 7.8]{Dries}, it is not clear why $\pi(X)^2 \cap \mathcal{G}'$ is a neighborhood of the identity. One should use the fact that $\pi(X)^4 \cap \mathcal{G}'$ is a neighborhood, and this leads to the bound  $6\log_2(K)$.) 

Let us give the details in our context. $f[\bar X]^2$ is compact by the obvious counterpart of Proposition \ref{proposition: model is onto}, and so is $C:=f[\bar X]^2/T$ as a subset of $G/T$. Hence, by \cite[Lemma 7.7]{Dries}, $\mu(C^2) \geq 2^d \mu(C)$, where $\mu$ is a Haar measure on $G/T$. 
On the other hand, since $\bar{X}^2$ is a $K^2$-approximate subgroup, so is $f[\bar X]^2$, and so is $C$. Thus, $\mu(C^2) \leq K^2\mu(C)$. Moreover, since $U/T \subseteq C^2$ and $U/T$ has a nonempty interior, $\mu(C^2) >0$. Therefore, $\mu(C) >0$. Thus, we conclude that $d \leq 2 \log_2(K)$.   
\end{proof}

\begin{remark}\label{remark: X +XX is approximate}
Let $X$ be a $K$-approximate subring of $(R,+,\cdot)$. Then $X+ X  \cdot X$ is a $K^2$-approximate subgroup of $(R,+)$.
\end{remark}

\begin{proof}
By assumption $X \cup (X\cdot X) \subseteq F + X$ for some $F$ of size at most $K$. Then $X + X\cdot X \subseteq  X + F +X  \subseteq 2F+X$, and clearly $|2F| \leq K^2$. The fact that $X + X \cdot X$ is additively symmetric is clear.
\end{proof}

\begin{corollary}\label{corollary: small dimension of the algebra}
If $X$ is a definable $K$-approximate subring, then the number $n$ in Proposition \ref{proposition: improved target space 2} is bounded by $4 \log_2 (K)$. Thus,
in Proposition \ref{proposition: improved target space}, we can additionally require that $\dim(\mathcal{A}) \leq 4 \log_2 (K)$ (where $\mathcal{A}$ is the finite-dimensional real algebra being the target space of the obtained locally compact model).
\end{corollary}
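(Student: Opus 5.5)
We will apply Lemma \ref{lemma: Lou's 7.8} to the additive group of the target of the model produced in Proposition \ref{proposition: improved target space 2}. Take the definable $(K^{510}+K^{22})$-approximate subring $Y \subseteq 4X + X\cdot 4X$ and the definable locally compact model $f\colon \langle Y\rangle \to \mathcal{A}$ with dense image, where $(\mathcal{A},+) \cong \mathbb{R}^n \times C$ and $f^{-1}[U]\subseteq Y$. Applying the lemma directly to $Y$ would only give a bound in terms of $K^{510}+K^{22}$. So we instead view $f$ additively as a model for a subgroup built from $X$ itself.

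Set $G := (\mathcal{A},+)$ and $T := \{0\}\times C$, a closed subgroup. Then $G/T \cong \mathbb{R}^n$ is a connected Lie group with no nontrivial compact subgroups, and $\dim_{\textrm{Lie}}(G/T)=n$. We need a definable approximate subgroup $W$ of $(\langle Y\rangle,+)$ whose approximation constant is a power of $K$ and which satisfies the hypotheses of the lemma. The natural choice is $W := X + X\cdot X$ restricted to $\langle Y\rangle$. By Remark \ref{remark: X +XX is approximate} it is a $K^2$-approximate subgroup, so the lemma would give $n \le 2\log_2(K^2) = 4\log_2(K)$, exactly the desired bound.

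To make this work, take $W := (X + X\cdot X)\cap \langle Y\rangle$. Arguing as in the proof of Lemma \ref{lemma: K510} (intersection with a subgroup costs at most passing to $2W$ and preserves the covering number of translates by $2W$), we get that $4W$ is covered by few translates of something like $2W$. Hence the constant should still come out as $K^2$ in the relevant doubling, possibly after replacing $W$ by a suitable symmetric version. We then check the hypotheses of the lemma.

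First, we need $f^{-1}[U']\subseteq W^4$, i.e. in $4W$, for some neighbourhood $U'$ of $0$. Since $Y \subseteq 4X + X\cdot 4X \subseteq 4(X+X\cdot X)$, we get $f^{-1}[U]\subseteq Y \subseteq 4W$ after intersecting with $\langle Y\rangle$. Second, $f[W]$ must be relatively compact; this holds because $W\subseteq Y_m$ for some $m$ (as $Y$ is commensurable with $X$, $X$ is covered by finitely many translates of $Y$, and these translates lie in $\langle Y\rangle$ after intersecting), so its image lies in a compact set. Third, $\im(f)\cap U'$ must be dense in $U'$; this follows from the density of $\im(f)$. Finally, $f$ restricted to $\langle W\rangle$ is definable, which comes from Fact \ref{fact: D is approximate} style arguments and Remark \ref{remark: premimages of compact sets}. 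If $\langle W\rangle \ne \langle Y\rangle$, we restrict $f$ to $\langle W\rangle$ and replace $G$ by the closure of $f[\langle W\rangle]$. This closure is open, because it contains $U$ via $f^{-1}[U]\subseteq 4W$ together with density, so it contains the identity component, and the dimension count is unchanged.

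The main obstacle is pinning the approximate-subgroup constant of the intersected set to exactly $K^2$ rather than a larger power, which would spoil the constant $4$. If the intersection causes trouble, the fallback is to avoid intersecting altogether: work in the ambient group $\langle X\rangle$, noting that $\langle Y\rangle \subseteq \langle X\rangle$. Extend $f$ additively on a finite-index-type piece, or apply the lemma with $W = X+X\cdot X$ in $G' := $ the closure of $f[\langle Y\rangle]$ and with $f^{-1}[U]\subseteq 4W$ read inside $\langle Y\rangle$. The second statement, about $\dim(\mathcal{A})$ in Proposition \ref{proposition: improved target space}, then follows immediately, since that target is $\mathbb{R}^n$ with the same $n$.
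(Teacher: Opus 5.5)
There is a genuine gap, and it lies where you suspected, but it is worse than a loss in the constant.

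With $W:=(X+X\cdot X)\cap\langle Y\rangle$, the inclusion $Y\subseteq 4W$ you need fails in general. $Y\subseteq 4(X+X\cdot X)$ only says that each element of $Y$ is a sum of four elements of $X+X\cdot X$. Those summands need not lie in $\langle Y\rangle$, because intersecting with a subgroup does not commute with taking sumsets.

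Independently, the argument of Lemma \ref{lemma: K510} only covers $2W$ by $K^2$ translates of the larger set $2(X+X\cdot X)\cap\langle Y\rangle$, not by translates of $W$. So the constant $K^2$, and with it the bound $4\log_2(K)$, is not available.

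Your fallback does not repair this. Lemma \ref{lemma: Lou's 7.8} requires a locally compact model defined on the whole additive group generated by $X+X\cdot X$. The model in the conclusion of Proposition \ref{proposition: improved target space 2} is defined only on $\langle Y\rangle$, and this can be a proper subring of $\langle X\rangle$: in that construction $Y=(4X+X\cdot 4X)\cap f^{-1}[\mathbb{R}^n\times A']$ for an open subring $\mathbb{R}^n\times A'$ of $S$. Elements of $\langle X\rangle$ mapping outside that open subring have no image in $\mathcal{A}=(\mathbb{R}^n\times A')/(\{0\}\times B)$. So there is no ``additive extension'' of the final model to appeal to.

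The missing idea is to read off $n$ one step earlier in the construction. Use the original model $f\colon\langle X\rangle\to S$ from Fact \ref{fact: locally compact model exists}, taken at the start of the proof of Proposition \ref{proposition: improved target space 2}. It already has $(S,+)\cong\mathbb{R}^n\times D$ with the same $n$, since passing to the open subring $\mathbb{R}^n\times A'$ and quotienting by the compact ideal $\{0\}\times B$ does not touch the $\mathbb{R}^n$ factor.

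That model is defined on all of $\langle X\rangle$ and has dense image. It also satisfies $f^{-1}[U]\subseteq 4X+X\cdot 4X\subseteq 4Z$ for $Z:=X+X\cdot X$, with no intersection needed. Restrict it to the additive group generated by $Z$, which is a $K^2$-approximate subgroup by Remark \ref{remark: X +XX is approximate}. Then apply Lemma \ref{lemma: Lou's 7.8} with $G=(S,+)$ and $T=\{0\}\times D$, so that $G/T\cong\mathbb{R}^n$. This gives $n\le 2\log_2(K^2)=4\log_2(K)$. This is the paper's proof, and your last sentence about Proposition \ref{proposition: improved target space} then applies unchanged.
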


\begin{proof}
Take $f$ and all the notation as in the proof of Proposition \ref{proposition: improved target space 2}. Let $Z:=X+X \cdot X$. By Remark \ref{remark: X +XX is approximate}, $Z$ is a definable $K^2$-approximate subgroup. Moreover, $f|_{\langle Z \rangle_+} \colon \langle Z \rangle_+ \to (S,+)$ is a definable locally compact model of $Z$ (where $\langle Z \rangle_+$ denotes the additive subgroup generated by $Z$) with $f^{-1}[U] \subseteq 4X + X \cdot 4X \subseteq 4Z$, and $f[4Z]$ is dense in $U$. As $T$ take $\{0\} \times D$. Then all the assumptions of Lemma \ref{lemma: Lou's 7.8} are satisfied, so $n=\dim(\mathbb{R}^n) = \dim_{\textrm{Lie}} (S/T) \leq 2 \log_2(K^2) = 4 \log_2 (K)$ (and clearly $\mathbb{R}^n$ is the target algebra of the model produced in Proposition \ref{proposition: improved target space}).
\end{proof}




\section{Escape norm}\label{section: escape norm}

The goal of this section is to show that for a given pseudofinite approximate subring, after passing to a suitable commensurable definable approximate subring, the escape norm satisfies certain good inequalities.

See Appendix \ref{appendix: pseudofiniteness} for the definition and formal treatment of the notion of pseudofinite approximate subring.

Let $X$ be a pseudofinite approximate subring (definable in a model $M=(G,\mathbb{R}^*)$ of $T_0$ in the notation from Appendix \ref{appendix: pseudofiniteness}). For every element $r \in \langle X \rangle^*$ (see Corollary \ref{corollary: <X>*}), we define the {\em escape norm of $r$ with respect to $X$} by:
$$||r||_X:= \inf\left\{\frac{1}{\nu +1}: \nu \in \mathbb{N}^* \textrm{ and } i r \in X \textrm{ for all } i \in \{0,\dots, \nu\}\right\}.$$

Note that $||r||_X$ belongs to the interval $[0,1]^*$ in $\mathbb{R}^*$, and:
$$||r||_X \leq \frac{1}{2} \iff r \in X,$$
$$||r||_X =1 \iff r \notin X,$$
$$||r||_X =0 \iff \nu r \in X \textrm{ for all } \nu \in \mathbb{N}^*.$$

Note also that if $||r||_X \ne 0$, then then infimum in the definition of $||r||_X$ is attained, i.e. instead of infimum we can write minimum. (This is because this property holds in every model from the class $\mathcal{K}$ from Appendix \ref{appendix: pseudofiniteness}.)

Note that the language $\mathcal{L}$ is countable in the present context, so $(\mathcal{L} + \aleph_0)^+$-saturation is just $\aleph_1$-saturation, and there exists a countable model $M_0 \prec M$. In all statements below, $M_0$ will be be countable.

Before we turn to the main result of this section, we need the following observation.

\begin{lemma}\label{lemma: escape norm in Lie groups}
Let $(G,+,0)$ be an abelian Lie group. Then for any neighborhood $V$ of $0$ there are symmetric neighborhoods $V_1 \subseteq V_2 \subseteq V$ of $0$ such that $V_1$ is compact, $V_2$ is open, and for 
any $g,h \in G$ such that $g,h,4g,4h \in V_2$ we have $g+h \in V_1$.
\end{lemma}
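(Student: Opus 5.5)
The plan is to pass to the Lie algebra. There the exponential map is a homomorphism with discrete kernel, so the hypothesis $4g\in V_2$ forces the logarithm of $g$ to be four times shorter than a priori, and that is what keeps $g+h$ inside a smaller compact set.

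\textbf{Setup.} Since $G$ is an abelian Lie group, its Lie algebra $\mathfrak g\cong\mathbb{R}^d$ is abelian. Hence $\exp\colon \mathfrak g\to G$ is a continuous group homomorphism from $(\mathbb{R}^d,+)$ onto the identity component $G^0$. It is a local diffeomorphism at $0$, and $G^0$ is open in $G$. Therefore $\exp$ is an open map into $G$, and its kernel $\Gamma$ is a discrete subgroup of $\mathbb{R}^d$. Fix any norm $|\cdot|$ on $\mathbb{R}^d$ and write $B(0,s)$ for the open ball and $\bar B(0,s)$ for the closed ball.

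\textbf{Choice of radius.} Choose $r>0$ so small that two conditions hold:
\begin{enumerate}
\item $\exp[\bar B(0,r)]\subseteq V$;
\item $B(0,5r)\cap\Gamma=\{0\}$, which is possible by discreteness of $\Gamma$.
\end{enumerate}
Put $V_2:=\exp[B(0,r)]$ and $V_1:=\exp[\bar B(0,r/2)]$. Then $V_2$ is open because $\exp$ is open. $V_1$ is compact as a continuous image of a compact set. Both are symmetric because $\exp(-x)=-\exp(x)$. Finally, $V_1\subseteq V_2\subseteq V$.

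\textbf{Key step.} Let $g,h\in G$ with $g,h,4g,4h\in V_2$. Write $g=\exp(x)$ and $4g=\exp(y)$ with $|x|,|y|<r$. Since $\exp$ is a homomorphism, $\exp(4x-y)=0$, so $4x-y\in\Gamma$. Moreover $|4x-y|<5r$, so condition (2) gives $4x=y$ and hence $|x|<r/4$. In the same way $h=\exp(z)$ with $|z|<r/4$. Then
\[
g+h=\exp(x+z)\quad\text{with}\quad |x+z|<r/2,
\]
so $g+h\in V_1$.

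No real obstacle is expected. The only point needing care is that the chosen logarithm $x$ of $g$ need not be unique. This is harmless: the argument works for any representatives with $|x|,|y|<r$, and the discreteness of $\Gamma$ within the ball $B(0,5r)$ handles the ambiguity.
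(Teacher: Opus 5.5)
Your proof is correct and follows essentially the same route as the paper: you pass to the abelian Lie algebra, use that $\exp$ is a homomorphism, and use local injectivity to force the logarithm of $4g$ to be four times that of $g$, with the same choices $V_2=\exp[B(0,r)]$ and $V_1=\exp[\bar B(0,r/2)]$. The only difference is cosmetic: you phrase local injectivity as $\ker(\exp)\cap B(0,5r)=\{0\}$, whereas the paper takes $r$ small enough that $\exp$ is a diffeomorphism on the open ball of radius $8r$, and these are equivalent because $\exp$ is a homomorphism.
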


\begin{proof}
Let $\exp \colon \mathfrak{g} \to G$ be the exponential map of the Lie algebra $\mathfrak{g}$ of $G$. Let $B_r$ denote the closed ball centered at $0$ of radius $r$ in $\mathfrak{g}$ with respect to some vector space norm on $\mathfrak{g}$. Take $r>0$ so small that $\exp |_{\textrm{int}(B_{8r})}$ is a diffeomorphism onto an open neighborhood of $0$ in $G$ and $\exp[B_r] \subseteq V$. Then $V_1:= \exp[B_{\frac{1}{2}r}]$ and $V_2:=\exp[\textrm{int}(B_r)]$ work. To see this, consider any $g,h \in G$ such that $g,h,4g,4h \in V_2$.
Then $4g =\exp(x)$, $4h =\exp(y)$, $g=\exp(z)$, $h=\exp(t)$ for some $x,y,z,t \in B_r$. Thus, $\exp(4z)=4g=\exp(x)$ and $\exp(4t)=4h=\exp(y)$, so $x=4z$ and $y=4t$. Hence, $g+h=\exp(\frac{1}{4}x+\frac{1}{4}y) \in \exp[B_{\frac{1}{2}r}] =V_1$.
\end{proof}

\begin{proposition}\label{corollary: model with the same target space}

Let $X$ be a pseudofinite approximate subring which has a definable locally compact model $f\colon  \langle X \rangle \to \mathcal{A}$ with dense image [surjective if $M$ is $\aleph_1$-saturated], with target space being a connected locally compact ring whose additive group is a Lie group of the form $\mathbb{R}^n \times C$ for a compact connected Lie group $C$, and such that $f^{-1}[W] \subseteq X$ for some neighborhood $W$ of $0$ in $\mathcal{A}$. 

Then there is a definable approximate subring $Y \subseteq X$ commensurable with $Y$ satisfying $\langle Y \rangle =\langle X \rangle$ and $Y^2 \subseteq Y$, such that $f \colon \langle Y \rangle \to \mathcal{A}$ is a definable locally compact model with dense image [surjective if $M$ is $\aleph_1$-saturated, in which case if we fix a countable $M_0 \prec M$ such that $X$ and $f$ are $M_0$-definable, we can also require that $Y$ is $M_0$-definable], with $f^{-1}[U]\subseteq Y$ for some neighborhood $U$ of $0$ in $\mathbb{R}^n \times C$, and for which the escape norm $|| \cdot ||_Y$ satisfies:
\begin{enumerate}
\item $||x+y||_Y \leq 4\max(||x||_Y,||y||_Y) \leq 4(||x||_Y+||y||_Y)$,
\item if $x,y \in Y$, then $||xy||_Y \leq 2||x||_Y\cdot ||y||_Y$,
\item if $||x||_Y =0$ or $||y||_Y =0$, then $||xy||_Y=0$,
\end{enumerate}
for all $x,y \in \langle Y \rangle^*$. Moreover, $I:=\{y \in \langle Y \rangle^*: ||y||_Y =0\}$ is a definable [$M_0$-definable if $M$ is $\aleph_1$-saturated and $Y$ is $M_0$-definable] (two sided) ideal of $\langle Y \rangle^*$ which is contained in $\ker(f)$. 
Furthermore, $Y$ can be chosen so that $\cl(f[Y])$ contains no subgroup other than $\{0\}$. In fact, for any given $k \in \mathbb{N}$, $Y$ can be chosen so that $\cl(f[Y_k])$ contains no subgroup other than $\{0\}$.
\end{proposition}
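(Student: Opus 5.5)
The plan is to take $Y$ to be a definable symmetric set squeezed between preimages of two small ``product balls'' $B(0,\rho)\times V'$ and $B(0,r)\times V$ in $\mathcal{A}=\mathbb{R}^n\times C$. Here $B(0,s)$ is the closed Frobenius ball in the $n$-dimensional real algebra $\mathbb{R}^n\cong\mathcal{A}/(\{0\}\times C)$ from Proposition \ref{proposition: improved target space}. This is exactly as in the proof of Corollary \ref{corollary: improved Cor. 3.2}, but the radii are now tuned so that the escape norm can be read off from $f$.

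\textbf{Choice of $Y$.} Choose $V' \subseteq V$ inside $W$ via Lemma \ref{lemma: escape norm in Lie groups} applied to the Lie group $(\mathcal{A},+)$, and take $r<\frac12$ small. By $(*)$ from the proof of Corollary \ref{corollary: improved Cor. 3.2} and submultiplicativity, the product of two such balls lands in $B(0,r^2)\times V'$, so $Y^2\subseteq Y$ as before. Fact \ref{fact: D is approximate} gives that $Y$ is an approximate subring commensurable with $X$ and that $f$ restricts to a definable model. Corollary \ref{corollary: <Y>=<X>} (connectedness, dense image) gives $\langle Y\rangle=\langle X\rangle$ and $f^{-1}[U]\subseteq Y$. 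For the $M_0$-version, choose $Y$ $M_0$-definable using Remark \ref{remark: characterization of M-definability of a model}. For the furthermore clause, shrink $r$ and $V$ so that $\cl(f[Y_k])$ lies in a neighborhood containing no nontrivial subgroup. A Lie group has such neighborhoods, and $f[Y_k]\subseteq f[Y]$ enlarged by sums and products stays small by continuity.

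\textbf{Inequalities (1)–(3).} Work in the $\aleph_1$-saturated extension, where $\bar f$ is surjective (Proposition \ref{proposition: model is onto}). I would first prove a comparison lemma: $\|x\|_Y$ is determined up to a factor $2$ by $\bar f(x)$. Concretely, for $x\in\langle Y\rangle^*$, write $\bar f(x)=(a,c)$. If $\nu x\in Y$ for all $\nu\le N$, then $\|\nu a\|_F\le r$, so $\|a\|_F\le r/N$. Conversely, if $\|a\|_F\le \rho/N$ and the $C$-coordinates of $ix$ stay in $V'$, then $ix\in Y$ for $i\le N$. Since $C$ is a compact Lie group, $V$ contains no nontrivial subgroup, so $\{ic\}_{i\le N}\subseteq V$ forces control. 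In pseudofinite terms this means $\nu x\in Y$ for all $\nu\le N$ iff $\bar f(\nu x)\in B(0,r)\times V$ for all $\nu \le N$, up to the sandwich.

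(1) Suppose $\|x\|,\|y\|\le 1/(N+1)$. Then for $i\le N/4$, Lemma \ref{lemma: escape norm in Lie groups} applied to $ix,iy$ (with $4ix,4iy\in Y$ mapping into $V_2$) gives $i(x+y)\in Y$. Hence $\|x+y\|\le 4\max$.

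(2) For $x,y\in Y$ with $\nu x,\mu y$ in $Y$ for $\nu\le N$, $\mu\le L$, we want $k xy\in Y$ for $k\le NL/2$. Write $k=\nu\mu+j$ and use $(*)$: $\|k(a*b)\|_F\le \|\nu a\|\,\|\mu b\|\cdot(k/\nu\mu)$. This gives radius $\le 2r^2<\rho$, while the $C$-part lies in $V'$ by the choice of $U'$ in that proof.

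(3) If $\|x\|=0$, then $\nu x\in Y$ for all nonstandard $\nu$, so $\bar f(x)$ generates a subgroup inside $\cl(f[Y])$, hence $\bar f(x)=0$. Consequently $I\subseteq\ker f$, and then $\nu xy=(\nu x)y\in Y\cdot Y\subseteq Y$ gives $\|xy\|=0$.

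\textbf{The ideal $I$.} By (1), $I$ is additively closed. By (3) it absorbs multiplication on both sides, since $\nu(xy)=(\nu x)y$ with $y$ in some $Y_m$. This needs a version of (3) for $y\in\langle Y\rangle^*$: use $\bar f(x)=0$ and $f^{-1}[U]\subseteq Y$, noting that $\nu x y$ maps to $0$. Definability of $I$ comes from pseudofiniteness, since $I$ is defined by the internal formula ``$\forall \nu\in\mathbb{N}^*\ \nu x\in Y$''.

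\textbf{Main obstacle.} I expect the hardest step to be (2) together with the fact that the $C$-coordinate is not multiplicative. It requires carefully converting ``many multiples of $x$ lie in $Y$'' into a Frobenius bound on $a$ and back, i.e. making the two-sided comparison between $\|\cdot\|_Y$ and $\|\cdot\|_F$ precise with constant $2$.
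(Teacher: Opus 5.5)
The set-up is sound: your choice of $Y$ and your proof of (1) are essentially the paper's, and your derivation of $I\subseteq\ker(f)$ is correct. (The paper applies Lemma \ref{lemma: escape norm in Lie groups} only to $C$ and controls the $\mathbb{R}^n$-coordinate by Frobenius scaling with the standard factor $4$. That is how your product sets $B(0,\rho)\times V'$ should be reconciled with the lemma.)

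\textbf{Main gap: property (2).} The gap comes from your comparison lemma, which is false. If $\|x\|_Y$ is infinitesimal, then $if(x)=f(ix)\in f[Y]$ for every standard $i$. So $\mathbb{Z}f(x)\subseteq\cl(f[Y])$, which forces $f(x)=0$. Thus $f$ cannot distinguish the norms $0$, $1/N$, $1/N^2,\dots$ (and $\|a\|_F$ ignores the $C$-coordinate anyway). Yet (2) is needed exactly at infinitesimal scales: in the proof of Theorem \ref{theorem: structure of finite approximate rings 1} it is applied to an element $u$ whose norm is a positive infinitesimal.

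For the same reason, your estimate $\|k(a*b)\|_F\le\frac{k}{\nu\mu}\|\nu a\|_F\|\mu b\|_F$ is meaningless for nonstandard $k,\nu,\mu$. It treats $f(kxy)$ as a real multiple of $f(x)f(y)$, but $f(kxy)$ is not determined by $f(x)$ and $f(y)$. Likewise, the choice of $U'$ only controls the $C$-coordinate of elements of the form $f(mx)f(ny)$, not of $f(kxy)$ for general $k$.

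\textbf{The missing idea.} Never scale by nonstandard integers inside $\mathcal{A}$; evaluate $f$ only on elements of $Y$. Suppose $mx\in Y$ for all $m\le\mu$ and $ny\in Y$ for all $n\le\nu$, with $\mu,\nu\ge 1$.
\begin{enumerate}
\item Write each $k\le\mu\nu$ as $k=m_1n_1+m_2n_2$ with $m_i\le\mu$ and $n_i\le\nu$ (e.g.\ $k=q\nu+1\cdot s$ with $s<\nu$).
\item Then $kxy=(m_1x)(n_1y)+(m_2x)(n_2y)$. By multiplicativity of $f$ and $(*)$ from the proof of Corollary \ref{corollary: improved Cor. 3.2}, each summand maps into $B(0,r^2)\times V_1'$.
\item Hence $f(kxy)\in B(0,2r^2)\times(V_1'+V_1')$. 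So you must choose the product-controlling neighbourhood $V_1'$ with $V_1'+V_1'\subseteq V_1$ (the inner set in $C$), and take $r<\frac14$.
\end{enumerate}
This gives $kxy\in Y$ for all $k\le\mu\nu$, hence $\|xy\|_Y\le\frac{1}{\mu\nu+1}\le\frac{2}{(\mu+1)(\nu+1)}$. Note that your target range $k\le NL/2$ would not yield the constant $2$.

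\textbf{Secondary gap: (3) and the ideal property on all of $\langle Y\rangle^*$.} These are claimed on all of $\langle Y\rangle^*$, where $f$ is not defined.
\begin{itemize}
\item Your argument via $(\nu x)y\in Y\cdot Y$ covers only $y\in Y$.
\item Your argument through the kernel covers only $y\in\langle Y\rangle$. There $\nu x\in I\subseteq\ker(f)$, so $(\nu x)y\in\ker(f)\subseteq f^{-1}[U]\subseteq Y$; this is correct and even simpler than the paper's Frobenius estimate.
\item It cannot be run with $\bar f$ for $y\in\langle Y\rangle^*\setminus\langle Y\rangle$.
\end{itemize}
To finish, use definability as the paper does. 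By (1), $I$ is an additive subgroup. So $\{z\in\langle Y\rangle^*: zI\subseteq I\}$ is a definable subring containing $\langle Y\rangle\supseteq Y$. By Corollary \ref{corollary: <X>*} it therefore equals $\langle Y\rangle^*$. Argue symmetrically on the right, and (3) follows.
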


\begin{proof}
We produce $Y$ from $X$ and $f$ in a similar way to how $Z$ was obtained from $Y$ and $f$ in the proof of Corollary \ref{corollary: improved Cor. 3.2}, but the choice of $V'$ and $r$ needs to be done more carefully. 

By assumption there exist neighborhoods $U$ and $V$ of $0$ in $\mathbb{R}^n$ and $C$, respectively, such that $f^{-1}[U \times V] \subseteq X$.
Since $\mathbb{R}^n \times C$ is a Lie group, we can find a compact neighborhood $O$ of $0$ in $\mathbb{R}^n \times C$ which contains no subgroup other than $\{0\}$. Shrink $U$ and $V$ to guarantee that $U \times V \subseteq O$ or even $(U \times V)_k \subseteq O$ for a given $k \in \mathbb{N}$.

Applying Lemma \ref{lemma: escape norm in Lie groups} to $G:=C$ and $V:=V$, we obtain $V_1 \subseteq V_2 \subseteq V$ as in the lemma.
There is a neighborhood $U' \subseteq U$ of $0$ in $\mathbb{R}^n$ such that $(U' \times \{0\}) \cdot (U' \times \{0\}) \subseteq \mathbb{R}^n \times V_1'$, where $V_1'$ a neighborhood of $0$ in $C$ such that   $V_1'+V_1' \subseteq V_1$.

Pick $r \in (0,\frac{1}{4})$ so small that $B(0,r) \subseteq U'$, where $B(0,s)$ denotes the closed ball centered at $0$ and of radius $s$ with respect to the Frobenius norm on $\mathbb{R}^n$ (see the proof of Corollary \ref{corollary: improved Cor. 3.2}).

Now, choose a definable [$M_0$-definable if $M$ is $\aleph_1$-saturated and $X$ and $f$  were assumed to be $M_0$-definable] symmetric set $Y$ between $f^{-1}[B(0,\frac{1}{2}r) \times V_1]$ and $f^{-1}[B(0,r) \times V_2]$. As in the proof of Corollary \ref{corollary: improved Cor. 3.2}, $Y \subseteq X$ is a definable 
[resp. $M_0$-definable]
approximate subring commensurable to $X$ and satisfying $Y^2 \subseteq Y$. By Fact \ref{fact: D is approximate}, $f|_{\langle Y \rangle} \colon \langle Y \rangle \to \mathcal{A}$ is a definable
[resp. $M_0$-definable] 
locally compact model of $Y$. We clearly have that $W' := \textrm{int}(B(0,\frac{1}{2}r)) \times \textrm{int}(V_1)$ is an open  neighborhood of $0$ with $f^{-1}[W'] \subseteq Y$. Hence, $\langle Y \rangle = \langle X \rangle$ by Corollary \ref{corollary: <Y>=<X>}. This implies that $f|_{\langle Y \rangle}$ has dense image [is surjective if $f$ was surjective].


The fact that $\cl(f[Y])$ contains no subgroup other than $\{0\}$ follows from the inclusion $\cl(f[Y]) \subseteq O$. More generally, the fact that $\cl(f[Y_k])$ contains no subgroup other than $\{0\}$ follows from the inclusions $\cl(f[Y_k]) =\cl(f[Y]_k)=(\cl(f[Y]))_k \subseteq (\cl(U \times V))_k \subseteq \cl((U \times V)_k) \subseteq \cl(O)=O$ (which hold by continuity of ring operations and compactness of $\cl[Y]$).

It remains to justify (1), (2), (3), and the moreover part. 

(1) It is enough to show that for every $\nu \in \mathbb{N}^*$, if $\max(||x||_Y, ||y||_Y) \leq \frac{1}{\nu+1}$, then $||x+y||_Y \leq \frac{4}{\nu+1}$.
So assume that $\max(||x||_Y, ||y||_Y) \leq \frac{1}{\nu+1}$. Then $nx,ny \in Y$ for every $n \leq \nu$ in $\mathbb{N}^*$. Consider any $n \leq \nu$ in $\mathbb{N}^*$ which is divisible by $4$. Then $nx,ny,\frac{n}{4}x,\frac{n}{4}y \in Y$. Hence, $f(nx),f(ny),f(\frac{n}{4}x), f(\frac{n}{4}y) \in B(0,r) \times V_2$. So, since $f(nx)=4f(\frac{n}{4}x)$ and  $f(ny)=4f(\frac{n}{4}y)$, using the choice of $V_1$ and $V_2$, we have $f(\frac{n}{4}(x+y))=f(\frac{n}{4}x) + f(\frac{n}{4}y) \in B(0,\frac{1}{2}r) \times V_1$, hence $\frac{n}{4}(x+y) \in Y$.

Let $\nu' \in \{ \nu,\nu-1,\nu-2,\nu-3\} \cap \mathbb{N}^*$ be divisible by $4$.  By the above paragraph, $||x+y||_Y \leq \frac{1}{\frac{\nu'}{4}+1}$. On the other hand, by the choice of $\nu'$, we have $\frac{1}{\frac{\nu'}{4}+1} \leq \frac{4}{\nu+1}$.

(2) It is enough to show that if $||x||_Y \leq \frac{1}{\mu +1}$ and $||y||_Y \leq \frac{1}{\nu+1}$ for some $\mu,\nu \geq 1$ in $\mathbb{N}^*$, then $||xy||_Y \leq \frac{2}{(\mu+1)(\nu +1)}$. 

Consider any $m\leq \mu$ and $n \leq \nu$. Then $mx,ny \in Y$, so $f(mx),f(ny) \in B(0,r) \times V_2$. In general, we have $(mx) \cdot  (ny)=(mn)(xy)$ (because this holds in the structures from the class $\mathcal{K}$ from  Appendix \ref{appendix: pseudofiniteness} working with standard natural numbers $m,n$). Hence, $(mn)(xy) \in Y^2 \subseteq \textrm{dom}(f)$, and,  by the choice of $r$, by $(*)$ in the proof of Corollary \ref{corollary: improved Cor. 3.2}, and by submultiplicativity of the Frobenius norm, we get $f((mn)(xy))=f((mx)(ny)) = f(mx)f(ny) \in (B(0,r) \times V_2)\cdot (B(0,r) \times V_2) \subseteq B(0,r^2) \times V_1' \subseteq B(0,\frac{1}{4}r) \times V_1'$.

Consider any element $k \leq \mu \nu$ of $\mathbb{N}^*$. It can be written as a sum $m_1n_1 + m_2n_2$ with $m_1,m_2 \leq \mu$ and $n_1,n_2 \leq \nu$ (all from $\mathbb{N}^*$). By the previous paragraph,  $f(kxy) =f((m_1n_1)(xy) + (m_2n_2)(xy)) = f((m_1n_1)(xy)) + f((m_2n_2)(xy))\in B(0,\frac{1}{2}r) \times (V_1'+V_1') \subseteq B(0,\frac{1}{2}r) \times V_1$. Thus, $kxy \in Y$.

Therefore, $||xy||_Y \leq \frac{1}{\mu \nu +1} \leq \frac{2}{(\mu+1)(\nu +1)}$ (note that the last inequality requires the assumption that $\mu,\nu \geq 1$).

(3) and the ``moreover'' part.
%
It is clear that $I$ is a definable [resp. $M_0$-definable] subset of $Y$. By (1), $I$ is a subgroup of $R$, so $f[I]$ is a subgroup of $\mathcal{A}$ contained in $f[Y]$. Since $f[Y]$ does not contain any subgroup of $\mathcal{A}$ other than $\{0\}$, we conclude that $f[I]=\{0\}$, i.e. $I \subseteq \ker(f)$.

For (3) it is enough to prove that $I$ is an ideal of $\langle Y \rangle^*$ (note that, conversely, (1) and (3) imply that $I$ is an ideal).  We will show that $I$ is a left ideal (the right version is symmetric). 
By (1), $I$ is a subgroup. So the set $Z:=\{x \in \langle Y \rangle^*: xI \subseteq I\}$ is a definable subring. Hence, in order to show that $Z= \langle Y \rangle^*$ (which is our goal), it is enough to prove that $\langle Y \rangle \subseteq Z$ (by virtue of Corollary \ref{corollary: <X>*}). 

So pick $y \in I$ and $x \in \langle Y \rangle$. Then $x \in Y_m$ for some $m$, hence $f(x) \in f[Y_m]$ which is relatively compact in $\mathcal{A}$ so contained in $B(0,N) \times C$ for some $N \in \mathbb{N}$. Choose $N'\geq N$ such that $(B(0,N) \times \{0\}) \cdot (B(0,\frac{1}{2N'}r) \times \{0\}) \subseteq \mathbb{R}^n \times V_1$. Consider any $n\in \mathbb{N}^*$. Since $||y||_Y=0$, we get that $2N'ny \in Y$, so $2N'f(ny) \in B(0,r) \times C$, hence $f(ny) \in B(0,\frac{1}{2N'}r) \times C$. Therefore,  by $(*)$ in the proof of Corollary \ref{corollary: improved Cor. 3.2} and by submultiplicativity of the Frobenius norm, $f(nxy)=f(x)f(ny) \in (B(0,N) \times C) \cdot (B(0,\frac{1}{2N'}r) \times C) \subseteq B(0,\frac{1}{2}r) \times V_1$. Thus, $nxy \in Y$. As $n\in \mathbb{N}^*$ was arbitrary, we conclude that $||xy||_Y=0$. i.e. $xy \in I$.
\end{proof}

\begin{corollary}\label{proposition: good escape norm  2}
Let $X$ be a pseudofinite $K$-approximate subring. 
Consider a definable $(K^{510}+K^{22})$-approximate subring $Y \subseteq 4X + X \cdot 4X$ and a definable locally compact model  $f \colon \langle Y \rangle \to \mathcal{A}$ with all the properties from Proposition \ref{proposition: improved target space 2}. In particular, the additive group of $\mathcal{A}$ is of the form $\mathbb{R}^n \times C$ for a compact connected Lie group $C$ and $n \leq 4\log_2(K)$ by  Corollary \ref{corollary: small dimension of the algebra}.

Then there is a definable approximate subring $Z \subseteq Y$ commensurable with $Y$ (equivalently with $X$) satisfying $\langle Z \rangle =\langle Y \rangle$ and $Z^2 \subseteq Z$, such that $f \colon \langle Z \rangle \to \mathcal{A}$ is a definable locally compact model with dense image [surjective if $M$ is $\aleph_1$-saturated, in which case if we fix a countable $M_0 \prec M$ such that $X$, $Y$ and $f$ are $M_0$-definable, we can also require that $Z$ is $M_0$-definable], with $f^{-1}[U]\subseteq Z$ for some neighborhood $U$ of $0$ in $\mathbb{R}^n \times C$, and for which the escape norm $|| \cdot ||_Z$ satisfies:
\begin{enumerate}
\item $||x+y||_Z \leq 4\max(||x||_Z,||y||_Z) \leq 4(||x||_Z+||y||_Z)$,
\item if $x,y \in Z$, then $||xy||_Z \leq 2||x||_Z\cdot ||y||_Z$,
\item if $||x||_Z =0$ or $||y||_Z =0$, then $||xy||_Z=0$,
\end{enumerate}
for all $x,y \in \langle Z \rangle^*$. Moreover, $I:=\{z \in \langle Z \rangle^*: ||z||_Z =0\}$ is a definable [$M_0$-definable if $M$ is $\aleph_1$-saturated and $Z$ is $M_0$-definable] (two sided) ideal of $\langle Z \rangle^*$ which is contained in $\ker(f)$. 
Furthermore, $Z$ can be chosen so that $\cl(f[Z])$ contains no subgroup other than $\{0\}$. In fact, for any given $k \in \mathbb{N}$, $Z$ can be chosen so that $\cl(f[Z_k])$ contains no subgroup other than $\{0\}$.
\end{corollary}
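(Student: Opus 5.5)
This corollary should follow by applying Proposition \ref{corollary: model with the same target space} with $Y$ in place of $X$, so the plan is to check that $Y$ and $f$ meet its hypotheses.

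First, $Y$ is pseudofinite. By Proposition \ref{proposition: improved target space 2}, $Y$ is a definable approximate subring contained in $4X + X\cdot 4X$, hence definable in the same model $M=(G,\mathbb{R}^*)$ of $T_0$. Its pseudofiniteness in the sense of Appendix \ref{appendix: pseudofiniteness} comes from $X$ being pseudofinite: $Y$ is defined by a formula, and its interpretations in the finite structures of the class $\mathcal{K}$ are finite approximate subrings, since the covering constant $K^{510}+K^{22}$ is uniform.

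Second, $f$ has the required properties. By Proposition \ref{proposition: improved target space 2}, $f\colon \langle Y\rangle \to \mathcal{A}$ is a definable locally compact model with dense image. It is surjective if $M$ is $\aleph_1$-saturated, and in that case it is $M_0$-definable for a countable $M_0$ over which $X$ is definable; $Y$ is then $M_0$-definable as well. The target $\mathcal{A}$ is connected, with additive group $\mathbb{R}^n\times C$ for a compact connected Lie group $C$. Finally, $f^{-1}[W]\subseteq Y$ for some neighborhood $W$ of $0$. The bound $n\le 4\log_2(K)$ is exactly Corollary \ref{corollary: small dimension of the algebra}.

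With the hypotheses verified, the proposition produces a definable approximate subring $Z\subseteq Y$ commensurable with $Y$ (call it $Y$ there). It satisfies $\langle Z\rangle=\langle Y\rangle$ and $Z^2\subseteq Z$. The map $f$ remains a definable locally compact model of $Z$ with dense image (surjective in the saturated case, with $Z$ being $M_0$-definable). We have $f^{-1}[U]\subseteq Z$, and the escape norm $\|\cdot\|_Z$ satisfies (1)--(3). The set $I$ is a definable ideal contained in $\ker(f)$, and the statement about $\cl(f[Z_k])$ holds. Commensurability of $Z$ with $X$ follows by transitivity, since $Y$ is commensurable with $X$.

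The main point needing care is the statement of Proposition \ref{corollary: model with the same target space}, which reads ``commensurable with $Y$''. This is a typo for ``commensurable with $X$'' (the given ring), which is what its proof via Fact \ref{fact: D is approximate} actually yields. A second point is whether the $M_0$ obtained for $Y$ and $f$ can be taken to contain the parameters for $X$. This holds by choosing $M_0$ as in the moreover part of Proposition \ref{proposition: improved target space 2}. Everything else is a direct transfer of conclusions.
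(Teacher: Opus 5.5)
Your proposal is correct and is the paper's argument: the paper proves the corollary by applying Proposition~\ref{corollary: model with the same target space} to $Y$ in place of $X$, and you are right that ``commensurable with $Y$'' in that proposition should read ``commensurable with $X$''. The only loose spot is your justification that $Y$ is pseudofinite. It is cleaner to say that $Y$ is a definable subset of the pseudofinite set $4X+X\cdot 4X$, and that definable subsets and definable images of pseudofinite sets are pseudofinite by transfer from the class $\mathcal{K}$; the covering constant plays no role there, and the structures in $\mathcal{K}$ are not finite.
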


\begin{proof}
It follows from Proposition \ref{corollary: model with the same target space} applied to $Y$ in place of $X$.
\end{proof}




\begin{corollary}\label{corollary: good escape norm with finite dimensional algebra}
Let $X$ be a pseudofinite $K$-approximate subring. 
Consider a definable $(K^{510}+K^{22})$-approximate subring $Y \subseteq 4X + X \cdot 4X$ and a definable locally compact model  $f \colon \langle Y \rangle \to \mathcal{A}$ with all the properties from Proposition \ref{proposition: improved target space}. In particular, $\mathcal{A}$ is a finite-dimensional real algebra (of dimension at most $4\log_2(K)$ by  Corollary \ref{corollary: small dimension of the algebra}) and there is an open set $W$ and $m \in \mathbb{N}$ such that $f^{-1}[W] \subseteq Y_m$. Then we have the same conclusion as in Corollary \ref{proposition: good escape norm  2}, but with the property $Z \subseteq Y$ replaced by the weaker condition that $Z \subseteq Y_m$.
%
\end{corollary}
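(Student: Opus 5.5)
The plan is to mirror the proof of Proposition \ref{corollary: model with the same target space}, now with the finite-dimensional real algebra $\mathcal{A}$ in place of $\mathbb{R}^n \times C$. This amounts to the degenerate case $C=\{0\}$, with the single change that the preimage of a neighborhood of $0$ is only contained in $Y_m$ rather than in $Y$.

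\textbf{Choosing the open set.} By Proposition \ref{proposition: improved target space}, $\mathcal{A}$ is an $n$-dimensional real algebra with $n\le 4\log_2(K)$ (Corollary \ref{corollary: small dimension of the algebra}). It embeds topologically into some $M_{n+1}(\mathbb{R})$, so it carries the submultiplicative Frobenius norm of Fact \ref{fact: submultiplicativity}. Moreover, $f^{-1}[W]\subseteq Y_m$ for some open $W \ni 0$. Since $\mathcal{A}\cong \mathbb{R}^n$ has no nontrivial compact subgroups, I fix a compact ball $O$ with no nontrivial subgroup and shrink $W$ so that $W_k\subseteq O$ for the given $k$. I then pick $r\in(0,\frac14)$ with $B(0,r)\subseteq W$.

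\textbf{Constructing $Z$.} Using definability of $f$ (and $M_0$-definability in the saturated case), I choose a definable symmetric $Z$ with
\[
f^{-1}[B(0,\tfrac12 r)]\subseteq Z\subseteq f^{-1}[B(0,r)]\subseteq Y_m .
\]
By Fact \ref{fact: D is approximate}, $Z$ is an approximate subring commensurable with $Y$, and $f|_{\langle Z\rangle}$ is a definable locally compact model. Submultiplicativity together with $r<\frac12$ gives $Z^2\subseteq Z$. The interior of $B(0,\frac12 r)$ is an open neighborhood whose preimage lies in $Z$.

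\textbf{Generation and density.} To get $\langle Z\rangle=\langle Y\rangle$, I use Corollary \ref{corollary: <Y>=<X>}, which needs $Z\subseteq X$; I apply it to the approximate subring $Y_m$ in place of $X$. Its proof only uses connectedness of $\mathcal{A}$, dense image, and that $\bar Z$ contains the kernel. Since $\langle Y_m\rangle=\langle Y\rangle$ and $f$ is a model of $Y_m$ with dense image, I conclude $\langle Z\rangle=\langle Y\rangle$, and also that the image of $f|_{\langle Z\rangle}$ is dense, or is everything in the surjective case.

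\textbf{Norm inequalities.} Properties (1)--(3) and the ideal $I$ are then proved exactly as before, with the $C$-coordinate suppressed. Lemma \ref{lemma: escape norm in Lie groups} is no longer needed, since in $\mathbb{R}^n$ linearity gives directly $f(\tfrac n4 x)=\tfrac14 f(nx)$, and then $f(\tfrac n4(x+y))\in B(0,\tfrac12 r)$.
\begin{itemize}
\item For (2), the product estimate is $f(mx)f(ny)\in B(0,r^2)\subseteq B(0,\tfrac14 r)$, and sums of two such terms lie in $B(0,\tfrac12 r)$.
\item For (3), multiplication by $f(x)$ with $x\in\langle Z\rangle$ bounded in norm by some $N$ is handled by the Frobenius bound.
\end{itemize}

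\textbf{Main obstacle.} I expect the only delicate point to be the bookkeeping around $Y_m$, namely checking that every earlier step used $f^{-1}[W]\subseteq Y$ only through membership in the domain and through Corollary \ref{corollary: <Y>=<X>}. Both of these transfer verbatim to $Y_m$.
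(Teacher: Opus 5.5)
Your proposal is correct and is essentially the paper's argument. The paper simply applies Proposition~\ref{corollary: model with the same target space} to $Y_m$ in place of $X$, taking $\mathcal{A}=\mathbb{R}^n$ (so $C$ is trivial) and using $f^{-1}[W]\subseteq Y_m$ as the required inclusion; this is exactly what you do when you rerun that proof with the $C$-coordinate suppressed and invoke Corollary~\ref{corollary: <Y>=<X>} for $Y_m$, and your dropping of Lemma~\ref{lemma: escape norm in Lie groups} is just the case $C=\{0\}$.
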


\begin{proof}
It follows from Proposition \ref{corollary: model with the same target space} applied to $Y_m$ in place of $X$.
\end{proof}


The next corollary follows from Proposition \ref{proposition: good escape norm 2} by model-theoretic compactness.

\begin{corollary}
Let $K \in \mathbb{N}$. There exists a constant $N(K)$ such that for every finite $K$-approximate subring there exists a finite $N(K)$-approximate subring $Y \subseteq 4X + X \cdot 4X$ which is $N(K)$-commensurable with $X$, satisfies $Y^2 \subseteq Y$, and such that the escape norm $|| \cdot ||_Y$ satisfies:
\begin{enumerate}
\item $||x+y||_Y \leq 4\max(||x||_Y,||y||_Y) \leq  4(||x||_Y+||y||_Y)$,
\item if $x,y \in Y$, then $||xy||_Y \leq 2||x||_Y\cdot ||y||_Y$,
\item if $||x||_Y =0$ or $||y||_Y =0$, then $||xy||_Y=0$,
\end{enumerate}
for all $x,y \in \langle Y \rangle$.
\end{corollary}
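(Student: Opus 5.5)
We argue by contradiction via a standard ultraproduct/compactness argument, reducing to Corollary \ref{proposition: good escape norm  2}. Suppose the statement fails for some $K$. Then for every $N \in \mathbb{N}$ there is a finite $K$-approximate subring $X^{(N)}$ of some ring $R^{(N)}$ such that no finite $N$-approximate subring $Y \subseteq 4X^{(N)} + X^{(N)}\cdot 4X^{(N)}$ that is $N$-commensurable with $X^{(N)}$ satisfies $Y^2 \subseteq Y$ together with (1)--(3). We place each $X^{(N)}$ into a structure of the class $\mathcal{K}$ from Appendix \ref{appendix: pseudofiniteness} (the ring together with the predicate for $X^{(N)}$ and the auxiliary sort $\mathbb{R}$ needed to express the escape norm). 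Passing to a nonprincipal ultraproduct, or to a model of the common theory, we obtain a pseudofinite $K$-approximate subring $X$ in a model $M$ of $T_0$, which we may take $\aleph_1$-saturated.

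We then apply Corollary \ref{proposition: good escape norm  2} to $X$. First take $Y'$ and $f$ from Proposition \ref{proposition: improved target space 2}. This yields a definable approximate subring $Z \subseteq Y' \subseteq 4X + X\cdot 4X$ commensurable with $X$, with $Z^2 \subseteq Z$, such that $\|\cdot\|_Z$ satisfies (1)--(3) for all $x,y \in \langle Z\rangle^*$. Say $Z$ is defined by a formula $\varphi(x,\bar a)$. Then there are standard $N_1, N_2$ such that $Z$ is an $N_1$-approximate subring and is $N_2$-commensurable with $X$. All of the following are first-order properties of $\bar a$ in the language of $T_0$:
\begin{itemize}
\item $\varphi(x,\bar a)$ is symmetric;
\item it lies inside $4X + X\cdot 4X$;
\item it is closed under multiplication;
\item it is an $N_1$-approximate subring and $N_2$-commensurable with $X$ (bounded-size coverings by translates);
\item inequalities (1)--(3) hold for the escape norm defined from $\varphi(x,\bar a)$, quantifying over elements of $\langle Z\rangle^*$.
\end{itemize}
Since the escape norm is defined uniformly from the predicate, via the $\mathbb{N}^*$ sort and $\mathbb{R}^*$, clauses (1)--(3) are indeed single first-order formulas.

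Let $N \ge \max(N_1,N_2)$. The conjunction of these statements, prefixed by $\exists \bar a$, holds in $M$. By Łoś's theorem (or elementarity), it holds in $M^{(N')}$ for some, indeed many, $N' \ge N$. In $M^{(N')}$ we get a finite $Y := \varphi(M^{(N')},\bar a')$ witnessing the statement for $X^{(N')}$ with constant $N'$. This contradicts the choice of $X^{(N')}$.

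The main obstacle is making sure that property (1)--(3) \emph{for all $x,y\in\langle Z\rangle^*$} transfers correctly to \emph{for all $x,y\in\langle Y\rangle$} in the finite structures. Here $\langle Z\rangle^*$ from Corollary \ref{corollary: <X>*} is the definable subring which, in each finite structure of $\mathcal{K}$, is interpreted as the genuine generated subring. Concretely, we expect Corollary \ref{corollary: <X>*} to say that $\langle Z\rangle^* = \bigcup_n Z_n$ evaluated in the pseudofinite sense, i.e. it is defined by a formula $\psi(x,\bar a)$ which in each finite member equals $\langle \varphi(\cdot,\bar a)\rangle$ because finite generation stabilizes. If instead $\langle Z\rangle^*$ is only defined by a formula that happens to be internally correct in $\mathcal{K}$, we write the transferred sentence using that same formula. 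The appendix guarantees that in finite members it agrees with the true generated subring.

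The same care applies to the escape norm. Its defining infimum ranges over $\mathbb{N}^*$, which in finite structures is the true $\mathbb{N}$, so $\|\cdot\|_Y$ computed in $M^{(N')}$ coincides with the ordinary escape norm.
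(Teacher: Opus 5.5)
Your proposal is correct and is the paper's argument: the paper derives this corollary from Corollary~\ref{proposition: good escape norm  2} by model-theoretic compactness, and you have written out the transfer. That transfer is the first-order expressibility of (1)--(3) via the $\mathbb{N}^*$-sort, followed by {\L}o\'s back to a factor with $N'\geq \max(N_1,N_2)$. The one slip is the remark that ``finite generation stabilizes'': this is false in general (e.g.\ $\{-1,0,1\}$ generates $\mathbb{Z}$, and the rings in $\mathcal{K}$ need not be finite), but your fallback is the right justification and suffices, namely that in every structure of $\mathcal{K}$ the uniform formula for $\langle Z\rangle^*$ from Corollary~\ref{corollary: <X>*} defines the genuine generated subring, since all finite sequences are coded by $R$.
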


\section{Structure of finite approximate subrings}\label{section: structure of finite approximate rings}

In this section, we prove the main theorem on the structure of finite approximate subrings, i.e. Theorem \ref{theorem: main theorem intro}. The two regimes in Theorem \ref{theorem: main theorem intro} are split into Theorems \ref{theorem: structure of finite approximate rings 1} and \ref{theorem: structure of finite approximate rings 2} below.

The pseudofinite context with the notation from Appendix \ref{appendix: pseudofiniteness} will be present all the time without mentioning. The engine will be Proposition \ref{corollary: model with the same target space} and Corollaries \ref{proposition: good escape norm  2} and \ref{corollary: good escape norm with finite dimensional algebra}.

Let us recall a few useful lemmas from \cite{Dries}. The first lemma is \cite[Lemma 6.1]{Dries}.

\begin{lemma}\label{lemma: Lao 6.1}
Let $\mathcal{G}$ be a topological (Hausdorff) group and $V$ a compact symmetric subset of $\mathcal{G}$ which contains no subgroup of $\mathcal{G}$ other than $\{e\}$. Then, for every neighborhood $U$ of $e$ there is $n \geq 1$ such that for every $a \in \mathcal{G}$: if $a^i \in V$ for all $i \in\{0,\dots,n\}$, then $a \in U$.
\end{lemma}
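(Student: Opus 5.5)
We argue by contradiction, using compactness of $V$ and a net argument. Suppose the conclusion fails for some neighborhood $U$ of $e$, which we may take open. Then for every $n \geq 1$ there is $a_n \in \mathcal{G}$ with $a_n^i \in V$ for all $i \in \{0,\dots,n\}$ but $a_n \notin U$. In particular $a_n \in V \setminus U$, and this set is compact. Since $\mathcal{G}$ need not be first countable, we work with nets: index by $n \in \mathbb{N}$ and pass to a convergent subnet $(a_{n_\alpha})_\alpha$ with limit $a \in V \setminus U$. Thus $a \neq e$.

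The key step is to show that $a^k \in V$ for every $k \in \mathbb{Z}$. Fix $k \geq 0$. For all $\alpha$ with $n_\alpha \geq k$ (cofinally many, since a subnet of the sequence has $n_\alpha \to \infty$) we have $a_{n_\alpha}^k \in V$. Multiplication is continuous, so $a_{n_\alpha}^k \to a^k$, and $V$ is closed (being compact in a Hausdorff space). Hence $a^k \in V$. For negative $k$ we use that $V$ is symmetric: $a^{-k} = (a^k)^{-1} \in V^{-1} = V$.

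It follows that the cyclic subgroup $\langle a \rangle = \{a^k : k \in \mathbb{Z}\}$ is contained in $V$. By hypothesis this forces $\langle a \rangle = \{e\}$, so $a = e$, contradicting $a \notin U$.

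The only delicate point is the possible lack of sequential compactness, which we handle by working with subnets rather than subsequences. If one prefers to avoid nets, the same argument goes through by applying the finite intersection property to the closed sets $F_m := \cl\{a_n : n \geq m\} \subseteq V \setminus U$ and taking $a \in \bigcap_m F_m$. In that version one checks $a^k \in V$ as follows: if $a^k \notin V$, continuity of $x \mapsto x^k$ gives a neighborhood $W$ of $a$ with $W^k$ disjoint from $V$, and $W$ must contain some $a_n$ with $n \geq k$, which is impossible.
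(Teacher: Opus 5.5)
Your proof is correct and follows the paper's argument essentially verbatim: extract a convergent subnet of the counterexample sequence in the compact set $V$, use continuity of multiplication and closedness of $V$ to get $a^i \in V$ for all $i \geq 0$, and use symmetry to obtain a nontrivial subgroup $\langle a \rangle \subseteq V$. Your finite-intersection-property variant is just a net-free rephrasing of the same step.
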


\begin{proof}
Suppose not, i.e. there is a sequence $(a_n)_{n \in \mathbb{N}}$ of elements of $\mathcal{G} \setminus U$ such that $a_n^i \in V$ for all $i \in \{0,\dots,n\}$. Since $V$ is compact, there is a subnet $(b_j)_j \in J$ of $(a_n)_{n \in \mathbb{N}}$ which converges to some $a \in V$. Then $a \ne e$, as $U$ is a neighborhood of $e$. By continuity of multiplication and the choice of the $a_n$'s, we get that $a^i \in V$ for all $i \in \mathbb{N}$. Since $V$ is symmetric, $a^i \in V$ for all $i \in \mathbb{Z}$, i.e. $V$ contains the subgroup $\langle a \rangle$ which is nontrivial as $a \ne e$, a contradiction.
\end{proof}

The next observation is an extension of \cite[Lemma 6.7]{Dries}.

\begin{lemma}\label{lemma: Lou's 6.7}
Let $X$ be a pseudofinite approximate subring and $f \colon \langle X \rangle \to S$ a locally compact model with $f^{-1}[U] \subseteq X$ for some neighborhood $U$ of $0$ in $S$. If $X \subseteq Y \subseteq X_k$ with $Y$ definable and symmetric, and $\cl(f[Y])$ contains no subgroup of $(S,+)$ other than $\{0\}$, then there is $n \in \mathbb{N}$ such that for every $r \in \langle X \rangle^*$ we have $||r||_X \leq n||r||_{Y}$.
\end{lemma}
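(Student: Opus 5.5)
The plan is to imitate \cite[Lemma 6.7]{Dries}, using Lemma \ref{lemma: Lao 6.1} applied to the additive group $(S,+)$ with $V:=\cl(f[Y])$. First I check that the hypotheses of that lemma hold. Since $Y \subseteq X_k$ and $f[X_k]$ is relatively compact, $V$ is compact. It is symmetric because $Y$ is. By assumption it contains no nontrivial subgroup. Take the neighborhood $U$ from the statement, so that $f^{-1}[U] \subseteq X$. Lemma \ref{lemma: Lao 6.1} then gives $n_0 \geq 1$ such that for every $a \in S$, if $ia \in V$ for all $i \in \{0,\dots,n_0\}$, then $a \in U$.

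Next I transfer this to the definable level. By Fact \ref{fact: extension to the monster} and the pseudofinite framework of the Appendix, I work in $\langle X \rangle^*$, where the relevant sets are internal. Suppose $r \in \langle X \rangle^*$ satisfies $ir \in Y$ for all $i \leq n_0$. Then $f(ir) = i f(r) \in V$ for all $i \leq n_0$, so $f(r) \in U$ and hence $r \in X$. More generally, fix $\nu$ with $jr \in Y$ for all $j \leq \nu$. For each $m \leq \nu/n_0$, the element $mr$ satisfies $i(mr) \in Y$ for all $i \leq n_0$, so $mr \in X$. This gives $\|r\|_X \leq \frac{1}{\lfloor \nu/n_0\rfloor + 1} \leq \frac{n_0}{\nu+1}$, up to a harmless constant, for instance by taking $n := 2n_0$. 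The degenerate cases are handled directly. If $\|r\|_Y = 1$ the inequality is trivial. If $\|r\|_Y = 0$, then $\nu r \in Y$ for all $\nu$, hence $\nu r \in X$ for all $\nu$ by the same argument, so $\|r\|_X = 0$. Since the infimum defining $\|r\|_Y$ is attained when it is nonzero, it suffices to run the argument with the $\nu$ attaining it.

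The main obstacle is justifying that the implication ``$ir \in Y$ for all $i \leq n_0$ implies $r \in X$'' holds for nonstandard elements $r \in \langle X \rangle^* \setminus \langle X \rangle$, where $f$ is a priori undefined. I would handle this first in $\langle X \rangle$ using $f$ as above. The statement is then a first-order sentence about the definable sets $X$ and $Y$ with $n_0$ standard, namely $\forall r\,(\bigwedge_{i\le n_0} ir \in Y \to r \in X)$, relativized to $\langle X\rangle^*$. To pass it to $\langle X\rangle^*$, I would argue via the definable subring $\langle X \rangle^*$ of Corollary \ref{corollary: <X>*}: the set of $r \in \langle X\rangle^*$ satisfying the implication would need to be shown to contain everything. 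The cleaner route is to note that any $r$ with $r \in Y \subseteq X_k \subseteq \langle X \rangle$ already lies in $\langle X \rangle$. Since the hypothesis includes $i=1$, i.e.\ $r \in Y$, the element $r$ is standard-ring, and $f(r)$ makes sense, so no extension is needed. Likewise each $mr$ considered lies in $Y \subseteq \langle X\rangle$. The internal multiple $mr$ for nonstandard $m$ is handled the same way, since it too lies in $Y$.
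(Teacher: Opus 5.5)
Your proposal is correct and follows the paper's proof. You apply Lemma \ref{lemma: Lao 6.1} to $\cl(f[Y])$ to get a standard $n_0$ such that ($ir \in Y$ for all $i \leq n_0$) $\Rightarrow r \in X$; here $f(r)$ makes sense because $r \in Y \subseteq X_k \subseteq \langle X\rangle$, as you note. You then apply this to the multiples $mr$ to compare the two escape norms. Your computation in fact gives $\|r\|_X \leq \frac{1}{\lfloor \nu/n_0\rfloor+1} \leq \frac{n_0}{\nu+1}$, so $n:=n_0$ already works and the extra factor $2$ is unnecessary.
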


\begin{proof}
Since $f[Y] \subseteq f[X_k]$ is relatively compact, $\cl(f[Y])$ is compact and clearly symmetric. By Lemma \ref{lemma: Lao 6.1}, there is a (standard) natural number $n \geq 1$ such that for every $s \in S$, if $is\in f[Y]$ for all $i \in\{0,\dots,n\}$, then $s \in U$. It follows that if $r$ is such that $ir \in Y$ for all $i \in \{0,\dots,n\}$, then $r \in X$. This easily implies that $||r||_X \leq n||r||_{Y}$.
\end{proof}

The next lemma is  essentially \cite[Lemma 6.8]{Dries}.

\begin{lemma}\label{lemma: nondiscrete image}
Let $X$ be a pseudofinite approximate subring and $f \colon \langle X \rangle \to S$ a locally compact model with $f^{-1}[U] \subseteq X$ for some neighborhood of $0$ in $S$.
Assume that $\cl(f[X])$ contains no subgroup of $(S,+)$ other than $\{0\}$. Let $u \in X$ be such that $||u||_X =\frac{1}{N}$ for some $N \in \mathbb{N}^*$ with $N>\mathbb{N}$ (i.e. $||u||_X$ is a nonzero infinitesimal). Then:
\begin{enumerate}
\item the map from $\mathbb{R}$ to $(S,+)$ given by $t \mapsto f(\lfloor tN \rfloor u)$ is a continuous group homomorphism;
\item the image of  the map from (1) is a non-discrete space.
\end{enumerate}
\end{lemma}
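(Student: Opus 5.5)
The plan is to work with the multiples $iu$, $i \in \mathbb{N}^*$. The hypothesis $||u||_X=\frac1N$ with the infimum attained gives $iu \in X$ for all $0 \le i \le N-1$ and $Nu \notin X$. All elements $\lfloor tN\rfloor u$ that we feed into $f$ lie in some $X_k$ (for $|t|\le 1$ they lie in $X$ or $X+X$). So they lie in $\langle X\rangle$ as understood in the pseudofinite setting of Appendix \ref{appendix: pseudofiniteness}; equivalently, we apply the $M$-definable extension from Fact \ref{fact: extension to the monster}. Throughout, put $V:=\cl(f[X])$. It is compact, symmetric, and by assumption contains no nontrivial subgroup, so Lemma \ref{lemma: Lao 6.1} applies to $V$.

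\emph{Step 1: $f(u)=0$.} For every standard $j$ we have $ju\in X$, so $jf(u)\in V$ for all standard $j$. By Lemma \ref{lemma: Lao 6.1}, $f(u)$ lies in every neighbourhood of $0$, hence $f(u)=0$ because $S$ is Hausdorff.

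\emph{Step 2: additivity.} For real $s,t$ the integer $\lfloor (s+t)N\rfloor-\lfloor sN\rfloor-\lfloor tN\rfloor$ lies in $\{0,1\}$. Hence $\lfloor (s+t)N\rfloor u$ differs from $\lfloor sN\rfloor u+\lfloor tN\rfloor u$ by $0$ or $u$. By Step 1, $\varphi(t):=f(\lfloor tN\rfloor u)$ then satisfies $\varphi(s+t)=\varphi(s)+\varphi(t)$. Strictly speaking, for large $|t|$ one first defines $\varphi$ on $[-1,1]$, where everything sits in $X$ or $2X$, and extends additively. The same error estimate shows the result agrees with $f(\lfloor tN\rfloor u)$, since $\lfloor tN\rfloor u$ lies in some $X_k$.

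\emph{Step 3: continuity.} A homomorphism $\mathbb{R}\to S$ is continuous once it is continuous at $0$. Given a neighbourhood $U'$ of $0$, take the standard $n$ from Lemma \ref{lemma: Lao 6.1} for $V$ and $U'$. If $0\le t\le 1/n$ and $j\le n$, then $0\le j\lfloor tN\rfloor \le jtN\le N$. If $j\lfloor tN\rfloor\le N-1$, then $j\lfloor tN\rfloor u\in X$. In the single edge case $j\lfloor tN\rfloor = N$ (which can occur only for $j=n$, $t=1/n$), replace $t$ by $t-\frac1N$, which changes $\varphi$ only by $f(u)=0$. In all cases $j\varphi(t)\in V$ for $j\le n$, so $\varphi(t)\in U'$. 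Negative $t$ follow by symmetry. This proves (1).

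\emph{Step 4: non-discreteness.} We have $\varphi(1)=f(Nu)$, with $Nu=(N-1)u+u\in 2X$. If $f(Nu)\in U$ then $Nu\in f^{-1}[U]\subseteq X$, a contradiction. Hence $\varphi(1)\ne 0$ and the image is a nontrivial continuous image of $\mathbb{R}$, hence connected with at least two points. A discrete space with at least two points is disconnected, so the image is non-discrete, which proves (2).

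The main obstacle is the bookkeeping in Steps 2 and 3. One must check that every element fed to $f$ lies in a fixed $X_k$ (so the standard-$n$ Lemma \ref{lemma: Lao 6.1} argument and $f$ itself apply). One must also handle the off-by-one and edge effects of $\lfloor\cdot\rfloor$, which is exactly where $f(u)=0$ is needed.
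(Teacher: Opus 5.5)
Your proof is correct, but it is organized differently from the paper's. The paper does not prove (1); it cites \cite{Dries}. It proves (2) by contradiction. Suppose $f(\lfloor N/k\rfloor u)=0$ for some standard $k$, and write $N=kM+r$. Since $f[\mathbb{Z}u]$ is a subgroup inside $f[X]$, it is trivial, and one gets $f(Nu)=0$. Then $Nu\in\ker(f)\subseteq X$, contradicting $||u||_X=\frac1N$. Combined with (1), this exhibits nonzero image points $f(\lfloor N/k\rfloor u)\to 0$.

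You instead give a self-contained proof of (1). Its key input, $f(u)=0$, is the same observation the paper uses. Your appeal to Lemma \ref{lemma: Lao 6.1} there is more than needed, since $\mathbb{Z}f(u)$ is already a subgroup of $\cl(f[X])$. For (2) you then only need $\varphi(1)=f(Nu)\neq 0$, and connectedness of a continuous image of $\mathbb{R}$ finishes. This is shorter: the paper's remainder computation essentially re-derives $\varphi(1)=k\varphi(1/k)$ by hand, which your additivity gives for free. Your conclusion is also slightly stronger: the image is a nontrivial connected subgroup of $\cl(f[\mathbb{Z}^*u\cap\langle X\rangle])$, which is what the later dimension drop needs.

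Two cosmetic points:
\begin{itemize}
\item No passage to the extension of Fact \ref{fact: extension to the monster} is needed. Every $\lfloor tN\rfloor u$ already lies in some standard $X_k$, hence in $\langle X\rangle$.
\item In Step 3, ``replace $t$ by $t-\frac1N$'' should read ``replace $\lfloor tN\rfloor$ by $\lfloor tN\rfloor-1$'', since $\varphi$ is only defined on standard reals. Alternatively, restrict to $0\le t<\frac1n$, which already suffices for continuity at $0$.
\end{itemize}
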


\begin{proof}
(1) is proved in \cite{Dries}. So let us prove (2). By (1), $\lim_{k \to \infty}  f(\lfloor \frac{1}{k}N \rfloor u) =0$, so it is enough to show that $f(\lfloor \frac{1}{k}N \rfloor u) \ne 0$ for every positive integer $k$. Suppose for a contradiction that $f(\lfloor \frac{1}{k}N \rfloor u) =0$ for some $k$. Write $N$ as $kM +r$ for some $M \in \mathbb{N}^*$ and $r \in \{0,\dots,k-1\}$. Then $\frac{1}{k}N=M + \frac{r}{k}$, so $\lfloor \frac{1}{k}N \rfloor = M$. Thus, $f(Mu)=0$, and so $f((kM)u)=0$. Since $f[\mathbb{Z}u]$ is a subgroup of $f[X]$, we get that $f[\mathbb{Z}u]=\{0\}$, so $f(ru)=0$. Hence, $f(Nu)=f((kM + r)u)=0$. Thus, $Nu \in \ker(f) \subseteq X$, which contradicts the assumption that $||u||_X = \frac{1}{N}$.
%
\end{proof}

The following lemma will be crucial in the proof of the main theorem.

\begin{lemma}\label{lemma: passing to quotients}
Let $X$ be an $M_0$-definable approximate subring of a definable (in $M$) ring $R$ with a surjective $M_0$-definable locally compact model $f \colon \langle X \rangle \to S$ for some $M_0 \prec M$ such that $M$ is $|M_0|^+$-saturated. Let $D$ be an $M_0$-definable ideal of $R$. Let $\eta \colon R \to R/D$ be the quotient map and $\mathcal{D}:=\cl(f[D \cap \langle X \rangle])\subseteq S$.
Then $\mathcal{D}$ is a closed ideal of $S$, so $S/\mathcal{D}$ is a locally compact ring. $\eta[X]=X/D$ is an $M_0$-definable approximate subring of $R/D$ and $\hat{f} \colon \langle X/D \rangle \to S/\mathcal{D}$ given by $\hat{f}(\eta(x)):= f(x)/\mathcal{D}$ is a surjective $M_0$-definable locally compact model of $X/D$ with $\ker(\hat{f}) = \eta[\ker(f)]$. In particular, if $U$ is a neighborhood of $0$ in $S$ such that $f^{-1}[U] \subseteq X_n$, then $U/\mathcal{D}$ is a neighborhood of $0/\mathcal{D}$ in $S/\mathcal{D}$ and $\hat{f}^{-1}[U/\mathcal{D}] \subseteq X_n/D$.
\end{lemma}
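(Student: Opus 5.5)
We prove the items in the order they are stated, passing to $\bar f$ and saturation wherever compactness arguments are needed.

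\textbf{$\mathcal{D}$ is a closed ideal.} $D\cap\langle X\rangle$ is an ideal of $\langle X\rangle$, so $f[D\cap \langle X\rangle]$ is an additive subgroup of $S$. Since $f$ is surjective, for every $s\in S$ we have $s f[D\cap\langle X\rangle]=f[x(D\cap\langle X\rangle)]\subseteq f[D\cap\langle X\rangle]$, where $x$ is a preimage of $s$. The same holds on the right. By continuity of multiplication, the closure $\mathcal{D}$ is a closed two-sided ideal. Hence $S/\mathcal{D}$ is a Hausdorff locally compact ring.

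\textbf{$X/D$ is an approximate subring and $\hat f$ is well defined.} $\eta[X]$ is additively symmetric, and it is an approximate subring because $\eta$ is a ring homomorphism: it maps covering translates to covering translates. It is $M_0$-definable as the image of an $M_0$-definable set under an $M_0$-definable quotient map, working in $M^{eq}$, and similarly for the sets $\eta[X]_n=\eta[X_n]$. Moreover $\langle X/D\rangle=\eta[\langle X\rangle]$. If $\eta(x)=\eta(x')$ with $x,x'\in\langle X\rangle$, then $x-x'\in D\cap\langle X\rangle$, so $f(x)-f(x')\in\mathcal{D}$. Thus $\hat f$ is a well-defined surjective ring homomorphism, and $\hat f[X/D]$ is the image of the relatively compact set $f[X]$ under the continuous quotient map, hence relatively compact.

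\textbf{The key claim: $f[D\cap\langle X\rangle]$ is closed on bounded pieces.} This is the main step. We want to show that for every $n$, $\mathcal{D}\cap \cl(f[X_n])\subseteq f[D\cap X_{n'}]$ for some $n'$, and in particular that $\ker\hat f=\eta[\ker f]$.

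We use saturation and the fact that $f$ is surjective and $M_0$-definable. Take $s\in\mathcal{D}$ with $s\in$ some compact $C$. For each neighborhood $V$ of $s$, choose $M_0$-definable sets $Y_V$ with $f^{-1}[\cl V']\subseteq Y_V\subseteq f^{-1}[V]$, and consider the partial type $\{x\in D\}\cup\{x\in Y_V\}_V$. It is finitely satisfiable, since $s$ lies in the closure of $f[D\cap\langle X\rangle]$, and the $Y_V$ all lie in a fixed $X_m$ by Remark \ref{remark: premimages of compact sets}. By $|M_0|^+$-saturation it is realized by some $x\in D\cap X_m$, and $f(x)=s$. This needs the version of $M_0$-definability in the sense of Remark \ref{remark: characterization of M-definability of a model}.

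Hence $\mathcal{D}=f[D\cap\langle X\rangle]$. Then $\hat f(\eta(x))=0$ implies $f(x)=f(d)$ for some $d\in D\cap\langle X\rangle$, so $x-d\in\ker f$ and $\eta(x)\in\eta[\ker f]$. The reverse inclusion is clear.

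\textbf{Neighborhoods and definability of $\hat f$.} If $f^{-1}[U]\subseteq X_n$, then $U/\mathcal{D}$ is open, since the quotient map is open. Suppose $\hat f(\eta(x))\in U/\mathcal{D}$. Then $f(x)=u+f(d)$ with $u\in U$ and $d\in D\cap\langle X\rangle$ by the previous step. So $x-d\in f^{-1}[U]\subseteq X_n$, and $\eta(x)\in X_n/D$. The same argument shows that preimages of open neighborhoods of $0$ lie in some $(X/D)_m$, so $\hat f$ is a locally compact model.

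For $M_0$-definability of $\hat f$: given compact $C\subseteq$ open $O$ in $S/\mathcal{D}$, the set $\pi^{-1}[O]$ is open. However, $\pi^{-1}[C]$ need not be compact. We therefore intersect with $\cl(f[X_m])$ to get a compact set $C'$ that still surjects onto $C$; this is possible because $\hat f^{-1}[C]\subseteq (X/D)_m$. We then take an $M_0$-definable $Y$ between $f^{-1}[C']$ and $f^{-1}[\pi^{-1}[O]]$, and use $\eta[Y]$, which is $M_0$-definable. It contains $\hat f^{-1}[C]$ by the kernel description, since $\eta[f^{-1}[C']]$ covers it, and it is contained in $\hat f^{-1}[O]$.
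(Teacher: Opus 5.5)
Your proof is correct, but at the key step it takes a different route from the paper.

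\textbf{The paper's route.} The paper never shows that $f[D\cap\langle X\rangle]$ is closed. It proves only the kernel identity $\ker(\hat f)=\eta[\ker(f)]$, in three moves:
\begin{enumerate}
\item For each $M_0$-definable $F\supseteq\ker(f)$ it introduces $U_F=\{s\in S: f^{-1}(s)\subseteq F\}$ and shows by saturation that $U_F$ is a neighbourhood of $0$.
\item From $f(x)\in\mathcal{D}$ it deduces $x\in\ker(f)+F+D$.
\item It applies saturation again, over the directed family of such $F$, to get $x\in\ker(f)+D$.
\end{enumerate}
The inclusion $\hat f^{-1}[U/\mathcal{D}]\subseteq X_n/D$ is then derived from the kernel identity, using surjectivity of $f$ to pick $r\in X_n$ with $f(r)=s$.

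\textbf{Your route.} You realize the single partial type $\{x\in D\}\cup\{x\in Y_V\}_V$. This is legitimate: the $Y_V$ lie in $\langle X\rangle$, the type is over $M_0$, and finite satisfiability is exactly the statement $s\in\cl(f[D\cap\langle X\rangle])$. You thereby get the stronger conclusion $\mathcal{D}=f[D\cap\langle X\rangle]$. This gives the kernel identity in one line, and it makes the neighbourhood inclusion immediate without choosing $r$.

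\textbf{Comparison.} Both arguments rest on the same ingredient: preimages of compact sets are bounded and $M_0$-type-definable, combined with $|M_0|^+$-saturation. Yours packages this more economically and also yields an explicit description of $\mathcal{D}$.

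For the $M_0$-definability of $\hat f$ you and the paper do essentially the same thing. Your $C'=\pi^{-1}[C]\cap\cl(f[X_m])$ plays the role of the paper's $\cl(f[Y])$ with $\eta[Y]=\hat f^{-1}[C]$. One small remark: there the inclusion $\hat f^{-1}[C]\subseteq\eta[Y]$ only needs representatives chosen in $X_m$, not the kernel description.
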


\begin{proof}
The facts that  $\mathcal{D}$ is a closed ideal of $S$ (so $S/\mathcal{D}$ is a locally compact ring) and $\eta[X]=X/D$ is an $M_0$-definable approximate subring of $R/D$ are obvious.  It is also clear that $\hat{f}$ is a well-defined ring epimorphism. $\hat{f}[X/D]=f[X]/\mathcal{D}$ is relatively compact, because $f[X]$ is relatively compact (in fact compact by Proposition \ref{proposition: model is onto} and uniqueness in Proposition \ref{fact: extension to the monster}).

\begin{clm}
$\ker(\hat{f}) = \eta[\ker(f)]$.
\end{clm}

\begin{clmproof}
The inclusion $(\supseteq)$ is clear, so let us focus on $(\subseteq)$. Consider any $x\in \langle X \rangle$ with $x/D \in \ker(\hat{f})$, i.e. $f(x) \in \mathcal{D}$. We need to show that $x/D \in \eta[\ker(f)]$.

Take any $M_0$-definable $F$ with $\ker(f) \subseteq F\subseteq \langle X \rangle$ (there exists such an $F$ by $M_0$-definability of $f$). Put
$$U_F:=\{ s \in S: f^{-1}(s) \subseteq F\}.$$
Let us show that $U_F$ is a neighborhood of $0$. If not not, then for every compact neighborhood $V$ of $0$, $f^{-1}[V] \cap (R \setminus F) \ne \emptyset$.  By Remark \ref{remark: premimages of compact sets} and $M_0$-definability of $f$, each $f^{-1}[V]$ is $M_0$-type-definable. So (by $|M_0|^+$-saturation of $M$) the intersection of all such $f^{-1}[V]$ with $R \setminus F$ is nonempty. But this intersection coincides with $\ker(f) \cap (R \setminus F)$, which contradicts the choice of $F$.

As $U_F$ is a neighborhood of $0$, we get that $f(x) \in U_F + f[D \cap \langle X \rangle]$, so $f(x)=f(s)+f(y) =f(s+y)$ for some $s \in F$ and $y \in D \cap \langle X \rangle$. Then $x \in \ker(f) + F + (D \cap \langle X \rangle) \subseteq \ker(f) + F + D$. By $M_0$-definability of $f$, $\ker(f)$ is the intersection of all $M_0$-definable sets $F$ containing $\ker(f)$, so, by $|M_0|^+$-saturation of $M$, $x \in \ker(f) + \ker(f) + D = \ker(f) + D$. Thus, $x/D \in \eta[\ker(f)]$. 
\end{clmproof}

Now, take a neighborhood $U$ of $0$ in $S$ such that $f^{-1}[U] \subseteq X_n$. Then $U/\mathcal{D}$ is a neighborhood of $0/\mathcal{D}$ in $S/\mathcal{D}$, and we will show that $\hat{f}^{-1}[U/\mathcal{D}] \subseteq X_n/D$. Take any $s \in U$. Then there is $r\in X_n$ with $f(r)=s$. Consider any $x \in \langle X \rangle$ such that $\hat{f}(x/D)= s/\mathcal{D}$, i.e. $f(x) \in s+\mathcal{D}$. Then, $x \in f^{-1}[\mathcal{D}]+r$, so, by the claim, $x \in D + \ker(f) +r$ which is contained in $D +X_n$ (because $f[\ker(f)+r] = \{f(r)\} \subseteq U$ and $f^{-1}[U]\subseteq X_n$). Hence, $x/D \in X_n/D$ as required. 

For the definability of $\hat{f}$ consider any $C \subseteq U \subseteq S/\mathcal{D}$, where $C$ is compact and $U$ is open. Since $\hat{f}$ is a locally compact model, by Remark \ref{remark: premimages of compact sets}, $\hat{f}^{-1}[C] \subseteq X_m/D$ for some $m$. So there is $Y\subseteq X_m$ with $\eta[Y] =Y/D =\hat{f}^{-1}[C]$.  Let $\pi \colon S \to S/\mathcal{D}$ be the quotient map; so $\pi \circ f = \hat f \circ \eta$.

Then $\pi[\cl(f[Y])] \subseteq \cl(\pi[f[Y]]) = \cl(\hat{f}[\hat{f}^{-1}[C]])=\cl(C)=C \subseteq U$, so $\cl(f[Y]) \subseteq \pi^{-1}[U]$. On the other hand,  $\cl(f[Y]) \subseteq \cl(f[X_m])$ is compact and $\pi^{-1}[U]$ is open. Hence, by $M_0$-definability of $f$, there is an $M_0$-definable set $F$ between $f^{-1}[\cl(f[Y])]$ and $f^{-1}[\pi^{-1}[U]]$. 
Then $\hat{f}^{-1}[C] \subseteq F/D \subseteq \hat{f}^{-1}[U]$ and $F/D$ is $M_0$-definable, which shows $M_0$-definability of $\hat{f}$ by Remark \ref{remark: characterization of M-definability of a model}. Indeed, $\hat{f}^{-1}[C] =\eta[Y] \subseteq \eta[f^{-1}[\cl(f[Y])]] \subseteq F/D \subseteq \eta[f^{-1}[\pi^{-1}[U]] =\eta[\eta^{-1}[\hat{f}^{-1}[U]]]= \hat{f}^{-1}[U]$.
\end{proof}

We have all the tools to prove two versions of our main result. The proofs of both versions are almost the same. The second theorem requires a slight modification of the beginning of the proof of the first theorem. Everything is explained below.

\begin{theorem}\label{theorem: structure of finite approximate rings 1}
For any $K \in \mathbb{N}$ there exist $N_2(K),N_4(K) \in \mathbb{N}$ such that for every finite $K$-approximate subring $X$ there exists a $(K^{510}+K^{22})$-approximate subring $Y \subseteq 4X + X \cdot 4X$ which is $N_2(K)$-commensurable to $X$ for which there exists an ideal $I \lhd \langle Y \rangle$ contained in $Y$ such that $\langle Y \rangle/I$ is 
nilpotent of class at most $N_4(K)$.
\end{theorem}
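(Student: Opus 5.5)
We argue by contradiction and compactness. Suppose the theorem fails for some $K$. Then for every $N$ there is a finite $K$-approximate subring $X^{(N)}$ such that no $(K^{510}+K^{22})$-approximate subring $Y\subseteq 4X^{(N)}+X^{(N)}\cdot 4X^{(N)}$ that is $N$-commensurable to $X^{(N)}$ admits an ideal $I\lhd\langle Y\rangle$ with $I\subseteq Y$ and $\langle Y\rangle/I$ nilpotent of class at most $N$. Taking an ultraproduct, in the formalism of Appendix \ref{appendix: pseudofiniteness}, gives a pseudofinite $K$-approximate subring $X$ in an $\aleph_1$-saturated $M$. For this $X$ there is no definable $(K^{510}+K^{22})$-approximate subring $Y\subseteq 4X+X\cdot 4X$ commensurable to $X$ with a definable ideal $I\lhd\langle Y\rangle^*$, $I\subseteq Y$, and $\langle Y\rangle^*/I$ nilpotent of bounded class. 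Here the passage from internal to standard objects uses Corollary \ref{corollary: <X>*}, together with the fact that the class bound is first order once fixed.

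Next we build the good approximate subring. Apply Proposition \ref{proposition: improved target space 2} in the moreover form to get $Y$ and a surjective $M_0$-definable model $f\colon\langle Y\rangle\to\mathcal{A}$ with $f^{-1}[U]\subseteq Y$. Then apply Corollary \ref{proposition: good escape norm  2} to get $Z\subseteq Y$ with $Z^2\subseteq Z$, a strong escape norm, the definable ideal $I=\{\|z\|_Z=0\}\subseteq\ker f\subseteq Z$, and $\cl(f[Z_k])$ free of nontrivial subgroups. Since $Z\subseteq Y$ is commensurable, the set $(4X+X\cdot4X)\cap\langle Z\rangle$, or $Y$ itself with $\langle Y\rangle=\langle Z\rangle$, carries the bound $K^{510}+K^{22}$ by Lemma \ref{lemma: K510}. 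So it suffices to show that $\langle Z\rangle^*/I'$ is nilpotent for some definable ideal $I'\subseteq Z$. We proceed by induction on $\dim \mathcal{A}$, or more precisely on $n+\dim C$, along the lines of \cite[Theorem 7.2]{Dries}.

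In the base case $\mathcal{A}=\{0\}$ we have $\langle Z\rangle=\ker f\subseteq Z$. Then $I'=\langle Z\rangle^*$ works, with nilpotency class $0$.

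In the inductive step, choose $u\in Z$ of minimal nonzero escape norm among a suitable definable set. This is possible by pseudofiniteness, and it yields $\|u\|_Z$ an infinitesimal $1/N$; otherwise every element has norm bounded away from $0$ or equal to $0$, so $f[Z]$ is discrete, $\mathcal{A}$ is trivial by connectedness, and we are in the base case. By Lemma \ref{lemma: nondiscrete image}, $t\mapsto f(\lfloor tN\rfloor u)$ is a nontrivial one-parameter subgroup. Using properties (1)--(3) of the escape norm, we show that $u$ is \emph{almost central with respect to multiplication}. For $x\in Z$ we have $\|xu\|_Z,\|ux\|_Z\le 2\|x\|_Z\|u\|_Z$, so by minimality $xu,ux$ have norm $0$ or are significantly smaller than $u$, that is, they lie in the ideal $I$ once $\|x\|_Z$ is small enough. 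We then form the definable ideal $D$ generated by $u$ modulo $I$, concretely $D:=\{y:\|y\|_Z\le c\}$-style approximations made definable, or more safely the ideal $\mathbb{Z}^*u+I$ after checking $Zu+uZ\subseteq I$ up to bounded multiples. We pass to the quotient via Lemma \ref{lemma: passing to quotients}. The closure $\mathcal{D}=\cl(f[D\cap\langle Z\rangle])$ contains the nondiscrete image from Lemma \ref{lemma: nondiscrete image}(2), so $\dim(\mathcal{A}/\mathcal{D})<\dim\mathcal{A}$. Lemma \ref{lemma: Lou's 6.7} controls the comparison of escape norms between $Z$ and its bounded multiples. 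The induction hypothesis gives that $\langle Z/D\rangle^*$ is nilpotent modulo an ideal inside $Z_k/D$. Lifting, we have $D\cdot\langle Z\rangle^*\cup\langle Z\rangle^*\cdot D\subseteq I$, so the nilpotency class increases by at most one. The number of steps is at most $\dim\mathcal{A}$, which is bounded in terms of $K$; this gives the bound $N_4(K)$ via compactness.

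The main obstacle is the inductive step. It must produce an ideal whose elements multiply everything into $I$, and it must keep the final ideal inside $Y$ rather than inside some $Y_k$. For the second issue we would rescale at each step: we would choose $Z$ with $\cl(f[Z_k])$ free of subgroups for the relevant $k$, replace $Z$ by a smaller commensurable set via Proposition \ref{corollary: model with the same target space} after each quotient, and use $f^{-1}[U]\subseteq Z$ to pull ideals contained in $\ker$ back into $Z$.
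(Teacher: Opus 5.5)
There is a genuine gap, and it sits in the lifting step. Everything before it agrees with the paper: compactness, passing to the good $Z$ via Corollary~\ref{proposition: good escape norm  2}, choosing $u$ of minimal nonzero escape norm, showing $\langle Z\rangle^*u\cup u\langle Z\rangle^*\subseteq I$, quotienting by $D=I+\mathbb{Z}^*u$ with Lemma~\ref{lemma: passing to quotients}, and getting the dimension drop from Lemma~\ref{lemma: nondiscrete image}.

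The induction hypothesis on the quotient gives $R^{c+1}\subseteq I_3$, where $I_3\supseteq D$ and $I_3/D$ is only \emph{bounded} (inside $Z_k/D$), not null. From this and $RD\cup DR\subseteq I$ you only get
\[
R^{c+2}\subseteq R I_3\subseteq I+R\cdot Z_k ,
\]
and $R\cdot Z_k$ is unbounded. So ``the nilpotency class increases by at most one'' does not follow. You also cannot keep $I_3$ as your ideal: it contains $\mathbb{Z}^*u$, which is not inside $Z$ because $\|u\|_Z\neq 0$. Your proposed fix of pulling ideals contained in $\ker f$ back into $Z$ does not apply either, since $f[\mathbb{Z}^*u\cap\langle Z\rangle]$ is nondiscrete, hence nonzero.

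The missing idea works as follows. The paper first builds the whole chain $\{0\}=I_0\lhd I_1\lhd\dots\lhd I_{2d+1}=R$ with these properties:
\begin{itemize}
\item $I_1=\{r:\|r\|_Z=0\}$;
\item the pieces $I_{2i+1}/I_{2i}$ are bounded;
\item the pieces $I_{2i+2}=I_{2i+1}+\mathbb{Z}^*u_i$ are null modulo $I_{2i+1}$, with the representatives chosen so that $u_i\in Z$.
\end{itemize}
Only then does it prove $R^{d+1}\subseteq I_1$ by induction on $d$.

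Take $r\in R$ and $s\in R^d\subseteq I_3\subseteq I_2+Z$. The full chain lets you write $r\in I_2+\sum_i r_i+\sum_i n_iu_i$ with boundedly many $r_i\in Z$ and $n_i\in\mathbb{Z}^*$. Using $(n_iu_i)s=u_i(n_is)$ with $n_is\in I_3$, you get $rs\in I_1+(2d-1)Z^2$, a fixed bounded set $Z_m$. Since $R^d$ is an additive group, the same computation applies to $n(rs)=r(ns)$ for every $n\in\mathbb{N}^*$. Hence $\|rs\|_{Z_m}=0$. Lemma~\ref{lemma: Lou's 6.7} then gives $\|rs\|_Z=0$, i.e.\ $rs\in I_1$; this is exactly why $Z$ is chosen with $\cl(f[Z_k])$ subgroup-free for $k=2\dim_{\textrm{Lie}}(\mathcal{A})$.

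This also settles your concern about landing inside $Y$ rather than $Y_k$. The final ideal is the norm-zero set, which lies in $Z\subseteq Y$ automatically, so no rescaling is needed. To run an induction on $\dim\mathcal{A}$ as you propose, you would have to carry this decomposition of $R$ along the chain in the inductive hypothesis, and replace ``class goes up by one'' with the multiples-plus-escape-norm argument.

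A minor point: $\dim\mathcal{A}$ is not bounded in terms of $K$, since $\dim C$ is uncontrolled. Compactness makes this harmless.
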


\begin{theorem}\label{theorem: structure of finite approximate rings 2}
For any $K \in \mathbb{N}$ there exists $N_2(K),N_3(K) \in \mathbb{N}$ such that for every finite $K$-approximate subring $X$ there exists a $(K^{510}+K^{22})$-approximate subring $Y \subseteq 4X + X \cdot 4X$ which is $N_2(K)$-commensurable to $X$ for which there exists an ideal $I \lhd \langle Y \rangle$ contained in $Y_{N_3(K)}$ such that $\langle Y \rangle/I$ is 
nilpotent of class at most $\lfloor4\log_2(K)\rfloor$.
\end{theorem}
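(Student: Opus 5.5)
We argue by contradiction and compactness, following the scheme of \cite[Theorem 7.2]{Dries}. Suppose Theorem \ref{theorem: structure of finite approximate rings 2} fails for some $K$. Then for every $N$ there is a finite $K$-approximate subring $X^{(N)}$ for which no $(K^{510}+K^{22})$-approximate subring $Y\subseteq 4X+X\cdot 4X$ which is $N$-commensurable to $X$ admits an ideal $I\lhd\langle Y\rangle$ with $I\subseteq Y_N$ and $\langle Y\rangle/I$ nilpotent of class $\le\lfloor 4\log_2 K\rfloor$. Taking an ultraproduct (Appendix \ref{appendix: pseudofiniteness}), we get a pseudofinite $K$-approximate subring $X$ in an $\aleph_1$-saturated $M$ with the following property: there is no definable $(K^{510}+K^{22})$-approximate subring $Y\subseteq 4X+X\cdot 4X$ commensurable with $X$ together with a definable ideal $I$ of $\langle Y\rangle^*$ contained in some $Y_m$ and with $\langle Y\rangle^*/I$ nilpotent of class $\le\lfloor4\log_2K\rfloor$. (Nilpotency of a definable quotient of class $c$ is first order, and passing from $\langle Y\rangle^*$ to $\langle Y\rangle$ is harmless.) Among all counterexamples of this kind we choose one minimizing $n=\dim\mathcal{A}$, where $\mathcal{A}$ is the target real algebra provided by Proposition \ref{proposition: improved target space} together with Corollary \ref{corollary: small dimension of the algebra}. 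Thus $n\le 4\log_2K$, and $n$ is the only place where the target enters.

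Next we apply Corollary \ref{corollary: good escape norm with finite dimensional algebra}. This yields an $M_0$-definable $Z\subseteq Y_m$ with $Z^2\subseteq Z$, a surjective model $f\colon\langle Z\rangle\to\mathcal{A}$, and a strong escape norm; moreover $I_0=\{\|z\|_Z=0\}$ is a definable ideal contained in $\ker f\subseteq Z_{m'}$. If $n=0$, then $\langle Z\rangle^*$ itself lies in some $Z_{m'}$, which already contradicts the choice of $X$. If $n>0$, then $\ker f$ is not open, and $Z$ contains elements of nonzero infinitesimal norm. We pick $u\in Z$ of minimal nonzero norm $1/N$; minimality is possible by pseudofiniteness, using a definable minimum. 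Lemma \ref{lemma: nondiscrete image} then gives a nontrivial one-parameter subgroup $t\mapsto f(\lfloor tN\rfloor u)$ in $\mathcal{A}$.

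The key claim, in the spirit of \cite[Lemma 7.4]{Dries}, is that $u$ is \emph{central modulo small elements}, in the ring sense. For every $x\in Z$, the norm estimates (1) and (2) show that $\|ux\|_Z$ and $\|xu\|_Z$ are at most $2\|u\|_Z\|x\|_Z$, which is below $\|u\|_Z$ whenever $\|x\|_Z<1/2$. Minimality of $u$ then forces $ux,xu\in I_0$ for all $x$ with $\|x\|_Z<1/2$. Since such $x$ generate (up to commensurability) $\langle Z\rangle^*$, property (3) lets us upgrade this to: the ideal $D$ of $\langle Z\rangle^*$ generated by $u$ and $I_0$ satisfies $D\cdot\langle Z\rangle^*+\langle Z\rangle^*\cdot D\subseteq I_0$. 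In particular $D$ is \emph{annihilated modulo $I_0$}. We then pass to the quotient by $D$ using Lemma \ref{lemma: passing to quotients}. The closure $\mathcal{D}$ of $f[D]$ contains the one-parameter subgroup above, so $\dim\mathcal{A}/\mathcal{D}<n$, and $\hat f$ is again a model with $\hat f^{-1}[U/\mathcal{D}]\subseteq Z_{m'}/D$. The quotient $Z/D$ is a pseudofinite approximate subring with controlled constants. By minimality of $n$, together with Proposition \ref{proposition: improved target space}, we obtain a commensurable $Y'$ and an ideal $I'\subseteq Y'_{m''}$ in $\langle Y'\rangle/D$ with nilpotent quotient of class $\le n-1$. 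Pulling back, and using that $D$ is annihilated modulo $I_0$ (a central-type extension), we get an ideal $I'' \subseteq Z_{m'''}$ with quotient nilpotent of class $\le n\le\lfloor4\log_2K\rfloor$. Finally we rewrite everything in terms of an approximate subring $\subseteq 4X+X\cdot4X$ via Lemma \ref{lemma: K510}, intersecting with the relevant subring. This contradicts the choice of $X$.

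The main obstacle is the minimality step. First, "minimal nonzero norm" must be shown to exist and to produce $D$ with $\mathcal{D}$ of positive dimension. Second, the induction has to be set up so that the quotient $Z/D$ again falls under the hypotheses with $\dim$ strictly smaller while the constants $K^{510}+K^{22}$ and the containment in $4X+X\cdot4X$ are preserved. I would handle the second point by running the induction on the pair ($\dim\mathcal{A}$, definable data) inside the single saturated model, without re-invoking compactness. Commensurability constants are only needed qualitatively there, and $N_2,N_3$ are extracted by compactness only at the very end.
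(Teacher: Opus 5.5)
Your setup and the key claim match one step of the paper's construction: compactness, passing to $Z$ via Corollary \ref{corollary: good escape norm with finite dimensional algebra}, picking $u$ of minimal nonzero norm, showing $u$ annihilates $\langle Z\rangle^*$ modulo the norm-zero ideal, and quotienting with a dimension drop via Lemmas \ref{lemma: nondiscrete image} and \ref{lemma: passing to quotients}. One small fix: the elements of norm $<\frac12$ must generate $\langle Z\rangle$ exactly, not up to commensurability. This holds because their image is a neighbourhood of $0$ in the connected $\mathcal{A}$ and they contain $\ker f$; the annihilator of $u$ modulo $I_0$ is then a definable subring, so Corollary \ref{corollary: <X>*} applies. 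The proposal breaks down in the inductive step, in two places.

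First, the induction is on the wrong statement. A minimal counterexample to the theorem for fixed $K$ says nothing about $Z/D$, for three reasons. It is not a $K$-approximate subring, since the constant of $Z$ is uncontrolled. The theorem for its constant $K'$ bounds the class by $\lfloor 4\log_2 K'\rfloor$, not by $n-1$. And it only yields an ideal of $\langle Y'\rangle$ for some commensurable $Y'$, which need not be all of $\langle Z\rangle^*/D$. A fresh application of Proposition \ref{proposition: improved target space} to $Z/D$ also gives a new model whose dimension you have not compared with $\dim(\mathcal{A}/\mathcal{D})$.

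The paper instead keeps the induced model $\hat f$, whose target $\mathbb{R}^m\times P$ may have a compact part, so it tracks Lie dimension. It re-applies Proposition \ref{corollary: model with the same target space}, which gives $Y_2\subseteq X/I_2$ with $\langle Y_2\rangle=\langle X/I_2\rangle$. The whole chain $\{0\}=I_0\lhd I_1\lhd\dots\lhd I_{2d+1}=R$ therefore lives in one ring $R$ (the paper's $I_1$ is your $I_0$). What is proved by induction on $d$ is that $R/I_1$ is nilpotent of class $\le d\le\dim_{\mathrm{Lie}}\mathcal{A}$.

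Second, and more seriously, ``pulling back'' does not give a bounded ideal. The preimage of the inductively obtained ideal contains $D\supseteq\mathbb{Z}^*u$, which lies in no $Z_m$: $f[D\cap\langle Z\rangle]$ contains the unbounded one-parameter subgroup $t\mapsto f(\lfloor tN\rfloor u)$ of the vector group $\mathcal{A}$. Nullity of $D$ modulo $I_0$ only gives $R\cdot R^n\subseteq I_0+R\cdot W$ with $W$ bounded, and $R\cdot W$ is not bounded. The ideal must be the norm-zero ideal itself, and proving $R^{d+1}\subseteq I_1$ is the real content.

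The paper does this as follows. Take $r\in R$ and $s\in R^d$; by the induction hypothesis applied to the tail of the chain, $s\in I_3\subseteq I_2+X$. It then decomposes $r$ along the chain, using $I_{2i+1}\subseteq I_{2i}+X$, $I_{2i+2}=I_{2i+1}+\mathbb{Z}^*u_i$ with $u_i\in X$, and the fact that $I_2/I_1$ is null in $R/I_1$. This gives $rs\in I_1+(2d-1)X^2\subseteq X_{2d}$. Since $\nu s\in R^d$, also $\nu rs=r(\nu s)\in X_{2d}$ for all $\nu\in\mathbb{N}^*$, so $\|rs\|_{X_{2d}}=0$, and Lemma \ref{lemma: Lou's 6.7} yields $\|rs\|_X=0$.

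That lemma needs $\cl(f[X_{2d}])$ to contain no nontrivial subgroup, which is why $Z$ is chosen with $k=2\dim_{\mathrm{Lie}}(\mathcal{A})$ beforehand. A bound fixed in advance exists only because the bounded layers are the norm-zero ideals $I_{2i+1}/I_{2i}\subseteq Y_{2i}\subseteq X/I_{2i}$. Ideals inside some $Y'_{m''}$, with $m''$ supplied later by the induction, would not give one.
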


\begin{proof}[Proof of Theorem \ref{theorem: structure of finite approximate rings 1}]
Suppose for a contradiction that it is not true. Then, by model-theoretic compactness, there is an $\aleph_1$-saturated model $M=(G,\mathbb{R}^*)$ of the theory $T_0$ (in the notation from Appendix \ref{appendix: pseudofiniteness}) and a pseudofinite $K$-approximate subring $X$  definable in $M$ such that there is NO definable $(K^{510}+K^{22})$-approximate subring $Y \subseteq 4X +X \cdot 4X$ commensurable with $X$ for which there exists a definable ideal $I$ of $\langle Y \rangle^*$ contained in $Y$ 
and such that $\langle Y \rangle^*/I$ is nilpotent (of class at most $n$ for some $n \in \mathbb{N}$).

We will show that the approximate subring $Y$ from Corollary \ref{proposition: good escape norm 2} (which was obtained in Proposition \ref{proposition: improved target space 2}) yields a contradiction. Let $f \colon \langle Y \rangle \to \mathcal{A}$ be the witnessing locally compact model. Take the approximate subring $Z$ obtained in Corollary \ref{proposition: good escape norm 2} for $k:=2\dim_{\textrm{Lie}}(\mathcal{A})$. Although $Z$ need not be a $(K^{510}+K^{22})$-approximate subring, since $\langle Z \rangle = \langle Y \rangle$, we can replace $X$ by $Z$ and then the goal becomes to show that there is a definable ideal $I$ of $\langle X \rangle^*$ contained in $X$ and such that $\langle X \rangle^*/I$ is nilpotent of class at most $d$ for some $d \in \mathbb{N}$. We will obtain it with $d \leq \dim_{\textrm{Lie}}(\mathcal{A})$. After this replacement, the escape norm $||\cdot||_X$ has all the good properties and we have a locally compact model $f \colon \langle X \rangle \to \mathcal{A}$ with all the properties from  Corollary \ref{proposition: good escape norm 2}, and everything (i.e. $X$ and $f$) is chosen to be $M_0$-definable for some countable $M_0\prec M$. 
(After the above modifications of $X$ we forget about the choice of $Y$ and $Z$, and letters $Y$ and $Z$ may be used below to denote something else.)

To be more explicit, recall that $\mathcal{A}$ is a connected locally compact ring whose additive group is of the form $\mathbb{R}^n \times C$  for a compact connected Lie group $C$, and $f^{-1}[U] \subseteq X$ for some neighborhood $U$ of $0$ in $\mathcal{A}$. Note that $\mathcal{A} \ne \{0\}$, as otherwise $X = \langle X \rangle = \langle X \rangle^*$, and $I:=X$ would be an ideal of $\langle X \rangle^*$ with $\langle X \rangle^*/I=\{0\}$, a contradiction. This implies that $k \geq 2$, as otherwise $\mathcal{A}$ is 0-dimensional so trivial, a contradiction.


Let $R:=\langle X \rangle^*$. We will construct a finite sequence 
$$\{0\}=I_0 \lhd I_1 \lhd I_2 \lhd \dots \lhd I_{2d+1} =R$$ of (two-sided) ideals of $R$ together with several other sequences $(Y_i)_{i\leq 2d+1}$, $(\mathcal{A}_i)_{i \leq 2d+1}$, $(U_i)_{i\leq 2d+1}$, $(f_i)_{i\leq 2d+1}$, and $(u_i)_{i < d}$ of appropriate objects with certain properties which will be described at the end of the construction, where $d \leq \dim_{\textrm{Lie}}(\mathcal{A})$. (The sets $I_i$ as well as other indexed sets in the above sequences have nothing to do with the recursive definition of the sets $X_n$ for an approximate subring $X$ which is given in the introduction  on page \pageref{page: definition of  X_n}.) 

Set $Y_0:=X$, $\mathcal{A}_0:=\mathcal{A}$, $U_0:=U$, $f_0:=f$, and
$$I_1=I:= \{r \in R: ||r||_X=0\}.$$

By the ``moreover'' part of Proposition \ref{proposition: good escape norm 2}, $I$ is an $M_0$-definable ideal of $R$ contained in $\ker(f)$.
Note also that $\langle X/I \rangle =\langle X \rangle/I \subseteq R/I$. Thus, $f$ induces a surjective, $M_0$-definable locally compact model $f_1 \colon \langle X/I\rangle \to \mathcal{A}$ of $X/I$ given by $f_1(r/I):=f(r)$, with $f_1^{-1}[U] \subseteq X/I$. Put $Y_1:=X/I$, $\mathcal{A}_1:=\mathcal{A}$, and  $U_1:=U$.

Since $X+I \subseteq 2X$, we have $||r||_{2X} \leq ||r/I||_{X/I} \leq ||r||_X$. On the other hand, by Lemma \ref{lemma: Lou's 6.7} and the fact that $2X \subseteq X_2$ and $\cl(f[X_2])$ contains no subgroup other than $\{0\}$, there is $n \in \mathbb{N} \setminus \{0\}$ such that $||r||_X \leq n ||r||_{2X}$ for all $r \in \langle X \rangle^*$. Therefore, using also property (2) in Proposition \ref{proposition: good escape norm 2}, for all $r,s \in X$ we have:
\begin{equation}\label{equation: * in the main proof}
||(r/I)(s/I)||_{X/I} \leq ||rs||_X \leq 2||r||_X \cdot ||s||_X \leq 2n^2||r||_{2X}\cdot ||s||_{2X} \leq 2n^2||r/I||_{X/I}\cdot ||s/I||_{X/I}, \tag{*}
\end{equation}
\begin{equation}\label{equation: ** in the main proof}
||r/I||_{X/I}=0 \iff r/I=0/I.\tag{**}
\end{equation}

Since $U$ is a neighborhood of $0$, for every $m \in \mathbb{N}$ there is an open neighborhood $V \subseteq U$ of $0$ such that for every $s \in V$ we have that $is \in U$ for all $i \leq m$. Thus, taking any element $r \in f_1^{-1}[V] \setminus \{0/I\}$ (there is one since $f_1$ is onto $\mathcal{A}$ and $\mathcal{A} \ne \{0\}$ is connected), we get that $ir \in X/I$ for all $i \leq m$, and so $||r||_{X/I} \leq \frac{1}{m+1}$. On the other hand, since $X/I$ is pseudofinite, there is $ u \in (X/I)\setminus \{0/I\}$ with minimal possible $||u||_{X/I}$. Using (\ref{equation: ** in the main proof}), we conclude that $||u||_{X/I}$ is a positive infinitesimal (i.e., smaller than all standard positive reals).

Put
$$Z:=\left\{r \in X/I: ||r||_{X/I} < \frac{1}{2n^2}\right\}.$$
By the the previous paragraph and surjectivity of $f_1$, $f_1[Z]$ is a neighborhood of $0$ in $\mathcal{A}$. Since $\mathcal{A}$ is connected, we conclude that $f_1[\langle Z \rangle] = \mathcal{A}$. 
On the other hand, note that $\ker(f_1) \subseteq Z$ (because  $\ker(f_1)$ is a subgroup of $\langle X/I \rangle$ and so all elements of $\ker(f_1)$ have infinitesimal norm $||\cdot||_{X/I}$). All of this implies that $\langle Z \rangle = \langle X/I \rangle$.

By (\ref{equation: * in the main proof}), for any $r \in Z$, we have $||ru||_{X/I} \leq 2n^2||r||_{X/I}\cdot ||u||_{X/I} < ||u||_{X/I}$, and so $ru=0/I$ by minimality of $||u||_{X/I}$. So $Zu =\{0/I\}$. Similarly, $uZ = \{0/I\}$. Hence, $u \langle Z \rangle = \langle Z \rangle u = \{0/I\}$, which, by the previous paragraph, means that $u \langle X/I \rangle = \langle X/I \rangle u = \{0/I\}$. This implies that $(\mathbb{Z}^* u) \cdot R/I = R/I \cdot (\mathbb{Z}^* u)= \{0/I\}$ (where $\mathbb{Z}^*u:=\{\nu u: \nu \in \mathbb{Z}^*\}$ is the smallest definable additive subgroup of $R/I$ containing $u$). In particular, $\mathbb{Z}^* u$ is a (two-sided) ideal in $R/I$. Take any countable model $M_1 \prec M$ containing $M_0$ and $u$. Then $\mathbb{Z}^*u$ is $M_1$-definable.

Let $\eta \colon R/I \to (R/I)/\mathbb{Z}^*u$ be the quotient map and $\mathcal{D} :=\cl(f_1[ \mathbb{Z}^*u \cap \langle X/I\rangle])$. By Lemma \ref{lemma: passing to quotients}, $\eta[X/I]$ is an $M_1$-definable approximate subring and the function $\hat{f}_1 \colon  \langle \eta[X/I] \rangle \to \mathcal{A}/\mathcal{D}$ given by $\hat{f}_1(\eta(x)):=f_1(x)/\mathcal{D}$ is a surjective $M_1$-definable locally compact model of $\eta[X/I]$, and 
$U/\mathcal{D}$ is a neighborhood of $0/\mathcal{D}$ in $\mathcal{A}/\mathcal{D}$ with $\hat{f}^{-1}[U/\mathcal{D}] \subseteq (X/I)/\mathbb{Z}^*u$.
Put $\mathcal{A}_2:=\mathcal{A}/\mathcal{D}$.

Of course, $(R/I)/\mathbb{Z}^*u$ is naturally identified with $R/I_2$ for some $M_1$-definable ideal $I_2$ of $R$ with $I \subseteq I_2$, namely $I_2:=I + \mathbb{Z}^*u_0$ for some/any $u_0 \in X$ with $u_0/I=u$ (so $||u_0/I||_{X/I}$ is a positive infinitesimal). Then $\eta[X/I]$ is identified with $X/I_2$.

Since the additive group of $\mathcal{A}_2$ is a connected locally compact abelian Lie group, it is topologically isomorphic to $\mathbb{R}^m \times P$ for some $m \in \mathbb{N}$ and compact connected Lie group $P$ (using \cite[Theorem 4.2.1]{DeEc}, as in the argument at the beginning of the proof of Proposition \ref{proposition: improved target space 2}). Applying Proposition \ref{corollary: model with the same target space} to the $M_1$-definable approximate subring $X/I_2$ and its $M_1$-definable locally compact model $\hat{f}_1 \colon \langle X/I_2 \rangle \to \mathcal{A}_2$, we find an $M_1$-definable approximate subring $Y_2 \subseteq X/I_2$ commensurable to $X/I_2$ and satisfying all the good properties from Proposition \ref{corollary: model with the same target space} for $k_2:=2\dim_{\textrm{Lie}}(\mathcal{A}_2)$. In particular, $\langle Y_2 \rangle = \langle X/I_2 \rangle$ and $f_2:=\hat{f}_1 \colon \langle Y_2 \rangle \to \mathcal{A}_2$ is a surjective $M_1$-definable locally compact model of $Y_2$ with some neighborhood $U_2$ of $0$ in $\mathcal{A}_2$ such that $f_2^{-1}[U_2] \subseteq Y_2$.

By Lemma \ref{lemma: nondiscrete image}(2), $\mathcal{D}$ is non-discrete in $\mathcal{A}$. Thus, 
$$\dim_{\textrm{Lie}}(\mathcal{A}_2)<\dim_{\textrm{Lie}}(\mathcal{A}).$$ 

If $\mathcal{A}_2=\{0\}$, then $Y_2= R/I_2$, and we put $I_3:=R$, $Y_3:=R/I_3 =R/R$, $\mathcal{A}_3:=\{0\}$, $U_3:=\{0\}$, $f_3: \langle Y_3 \rangle \to \mathcal{A}_3$ the zero map, and stop the construction. 
Otherwise, we repeat the above two steps for $Y_2$, $\mathcal{A}_2$, $f_2$, $U_2$, and $M_1$ in place of $X$, $\mathcal{A}$, $f$, $U$, and $M_0$, respectively. Continuing this construction at most $2\dim_{\textrm{Lie}}(\mathcal{A})+1$ steps, we obtain a sequence
$$\{0\}=I_0 \lhd I_1 \lhd I_2 \lhd \dots \lhd I_{2d+1} =R$$ 
of (two-sided) ideals of $R$ with $I_1= \{ r \in R: ||r||_X=0\}$ and a countable $M_0'\prec M$ such that:
\begin{enumerate}
\item if $i<2d+1$ is even, then $I_{i+1}/I_i \subseteq X/I_i$;
\item if $i<2d+1$ is odd, then $I_{i+1}/I_i$ is null in $R/I_i$ (i.e., $(R/I_i) (I_{i+1}/I_i) = (I_{i+1}/I_i)(R/I_i) =\{0/I_i\}$);
\item all $I_i$'s are $M_0'$-definable;
\end{enumerate}
as well as sequences $(Y_i)_{i\leq 2d+1}$, $(\mathcal{A}_i)_{i \leq 2d+1}$, $(U_i)_{i\leq 2d+1}$, $(f_i)_{i\leq 2d+1}$, and $(u_i)_{i < d}$, where $Y_0=X$, $\mathcal{A}_0=\mathcal{A}$, $U_0=U$, $f_0=f$, and:
 
\begin{itemize}
\item $Y_i \subseteq X/I_i$ is an $M_0'$-definable approximate subring commensurable to $X/I_i$ and generating $\langle X/I_i \rangle$;
\item $Y_j \subseteq Y_{i}/(I_{j}/I_i)$ for all $j>i$ (after the identification of $R/I_j$ and $(R/I_i)/(I_j/I_i)$);
\item if $j<2d+1$ is even and $i\leq j$, then $I_{j+1}/I_j \subseteq Y_i/(I_j/I_i)$ (after the identification);
\item $I_{2i+2} = I_{2i+1} + \mathbb{Z}^* u_i$, $||u_i/I_{2i+1}||_{Y_{2i+1}}$ is a positive infinitesimal;
\item $\mathcal{A}_i$ is a connected locally compact ring whose additive group is of the form $\mathbb{R}^{n_i} \times C_i$ for a compact Lie group $C_i$;
\item $f_i \colon \langle Y_i \rangle \to \mathcal{A}_i$ is a surjective $M_0'$-definable locally compact model of $Y_i$;
\item $d-i \leq \dim_{\textrm{Lie}}(\mathcal{A}_{2i})$;
\item  $\cl(f_{2i}[(Y_{2i})_{2\dim_{\textrm{Lie}}(\mathcal{A}_{2i})}])$ contains no subgroup other than $\{0\}$;
\item $U_i$ is a neighborhood of $0$ in $\mathcal{A}_i$ such that $f_i^{-1}[U_i] \subseteq Y_i$;
\item For $i$ even, $||\cdot||_{Y_i}$ has properties (1)-(3) from Proposition \ref{proposition: good escape norm 2}.
\end{itemize}

We will show that whenever we have such sequences, then $R/I_1$ is nilpotent of class at most $d$. Since 
$I_1$ is a definable ideal of $R$ contained in $X$, this yields a contradiction with the choice of $X$ 
(see the second paragraph of the proof) so the proof will be finished.

The proof is by induction on $d$.
The base step $d=0$ is trivial. Now, assume that $d\geq 1$ and the conclusion holds for shorter sequences. Applying the induction hypothesis to the sequence $\{0\}=I_2/I_2 \lhd I_3/I_2 \lhd \dots \lhd I_{2d+1}/I_2=R/I_2$ (for $Y_2$ playing the role of $X$), we get that $R/I_3$ is nilpotent of class at most $d-1$, i.e. $R^d \subseteq I_3$.
Our goal is to show that for any $r \in R$ and $s \in R^d$, the product $rs \in I_1$. By the previous sentence, $s \in I_3$, so $s \in I_2+X$ by property (1) above. 

Before we continue, let us slightly modify the $u_i$'s. Since $||u_i/I_{2i+1}||_{Y_{2i+1}}$ is infinitesimal, $u_i/I_{2i+1} \in Y_{2i+1}$. So $u_i/I_{2i+1} \in X/I_{2i+1}$. Thus, changing the representative $u_i$ of the coset $u_i + I_{2i+1}$, we can and do assume that $u_i \in X$ (the fourth bullet above is preserved, because  $I_{2i+1} + \mathbb{Z}^* u_i$ is the smallest definable subgroup of $(R,+)$ containing $I_{2i+1}$ and $u_i$).  

By item (1) and the fourth bullet, $r \in I_2 + (r_1 + n_1u_1) + \dots + (r_{d-1} + n_{d-1}u_{d-1}) + r_d$ for some $r_i \in X$ and $n_i \in \mathbb{Z}^*$. Since $s \in I_2 +X$ and, by item (2), $I_2/I_1$ is null in $R/I_1$, we get
$$rs \in I_1 + \sum_{i=1}^d r_is + \sum_{i=1}^{d-1} n_iu_iI_3  \subseteq I_1 + dX^2 + \sum_{i=1}^{d-1} u_i I_3 \subseteq I_1 + dX^2 + \sum_{i=1}^{d-1} u_i X \subseteq I_1+ (2d-1)X^2,$$
where in the first two inclusions we also use the fact that for $a \in I_3$ we have $(n_iu_i)a =u_i (n_ia)$ and $n_ia \in I_3 \subseteq I_2 +X$ (as $I_3$ is a definable subgroup of $R$). 

So $rs \in X_{m}$, where $m=2d$. Then for any $n \in \mathbb{Z}^*$, $nrs=r(ns) \in X_m$, as $ns \in R^d$. Hence, $||rs||_{X_m}=0$.  Since $f^{-1}[U] \subseteq X$
and 
 $\cl([f[X_m])$ contains no subgroup other than $\{0\}$, 
by Lemma \ref{lemma: Lou's 6.7}, we conclude that $||rs||_X =0$, that is $rs \in I_1$, as required.
\end{proof}

\begin{proof}[Proof of Theorem \ref{theorem: structure of finite approximate rings 2}]
The proof is essentially the same as the proof of Theorem \ref{theorem: structure of finite approximate rings 1}. Only the first two paragraphs require a modification, which we now explain.

Suppose for a contradiction that it is not true. Then, by model-theoretic compactness, there is an $\aleph_1$-saturated model $M=(G,\mathbb{R}^*)$ of the theory $T_0$ (in the notation from Appendix \ref{appendix: pseudofiniteness}) and a pseudofinite $K$-approximate subring $X$ definable in $M$ such that there is NO definable $(K^{510}+K^{22})$-approximate subring $Y \subseteq 4X +X \cdot 4X$ commensurable with $X$ for which there exists a definable ideal $I$ of $\langle Y \rangle^*$ contained in $Y_m$ for some $m \in \mathbb{N}$ and such that $\langle Y \rangle^*/I$ is nilpotent of class at most $\lfloor 4\log_2(K) \rfloor$.

Next we do the same thing as in the second paragraph of the proof of Theorem \ref{theorem: structure of finite approximate rings 1}, but using Corollary \ref{corollary: good escape norm with finite dimensional algebra} in place of Corollary \ref{proposition: good escape norm 2}. Then the target space $\mathcal{A}$ of the obtained surjective definable locally compact model $f \colon \langle X \rangle \to \mathcal{A}$ (of the modified $X$) is of Lie dimension at most $4\log_2(K)$, and this is exactly what we need to get at the end of the construction that $\langle X \rangle^*/I$ is nilpotent of class at most $\lfloor 4\log_2(K) \rfloor$ which yields a contradiction.
\end{proof}

\begin{corollary}\label{corollary: structure by finite}
For any $K \in \mathbb{N}$ there exists $N(K) \in \mathbb{N}$ such that for every finite $K$-approximate subring $X$ there exists a $(K^{510}+K^{22})$-approximate subring $Y\subseteq 4X + X \cdot 4X$ which is $N(K)$-commensurable to $X$ and for which the ring $\langle Y \rangle$ is ($\lfloor 4 \log_2(K) \rfloor$-niloptent)-by-finite (in particular, (nilpotent of class at most $\lfloor 4 \log_2(K)\rfloor$)-by-finite).
\end{corollary}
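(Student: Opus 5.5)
We argue by compactness in the pseudofinite setting of Appendix \ref{appendix: pseudofiniteness}, mirroring the proof of Theorem \ref{theorem: structure of finite approximate rings 2} but aiming for a finite-index conclusion. Suppose the corollary fails for some $K$. Then there is an $\aleph_1$-saturated model $M$ of $T_0$ and a pseudofinite $K$-approximate subring $X$ with the following property: there is no definable $(K^{510}+K^{22})$-approximate subring $Y\subseteq 4X+X\cdot 4X$ commensurable with $X$ such that $\langle Y\rangle^*$ has a definable ideal $J$ of finite index with $J$ being $\lfloor 4\log_2(K)\rfloor$-nilpotent. Here finite index is witnessed by a standard bound, which is first-order expressible once the nilpotent base is bounded.

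The first step is to run the construction of Theorem \ref{theorem: structure of finite approximate rings 2}, starting from Proposition \ref{proposition: improved target space} and Corollary \ref{corollary: good escape norm with finite dimensional algebra}. This yields a definable $Y$ and a definable ideal $I\lhd\langle Y\rangle^*$ with $I\subseteq Y_{m}$ and $\langle Y\rangle^*/I$ nilpotent of class at most $d:=\lfloor 4\log_2 K\rfloor$. We also keep the chain $I_0\lhd I_1\lhd\dots\lhd I_{2d+1}$ produced there. Its odd steps are the ideals $I_{2i+2}=I_{2i+1}+\mathbb{Z}^*u_i$, which are null (annihilated by $R$ modulo $I_{2i+1}$). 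Its even steps satisfy $I_{2i+1}/I_{2i}\subseteq X/I_{2i}$, so these pieces are small, i.e.\ contained in a commensurable set.

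The idea is that the nilpotent base of the quotient should be $u_0,\dots,u_{d-1}$. The odd steps contribute generators $u_i$ with $u_iR,Ru_i\subseteq I_{2i+1}$, which is exactly the nilpotent-base condition modulo smaller steps. The even steps contribute ideals contained in a fixed commensurable set. Since $X$ is pseudofinite and everything is definable, a definable subgroup contained in $X_m$ has finite (pseudofinite) size, and its index considerations are controlled by commensurability.

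The plan is therefore to produce the required subring as follows. Let $J$ be the definable subring generated by $u_0,\dots,u_{d-1}$, namely $\sum_i\mathbb{Z}^*u_i$ plus products. By the null properties, products of the $u_i$ fall into earlier steps. We then check that $J\cap I$-type pieces lie in $X_m$, so that $\langle Y\rangle^*$ is covered by boundedly many cosets of $J+(\text{a subgroup inside }X_m)$. We intersect appropriately to obtain a finite-index ideal that is $d$-nilpotent modulo something finite. Finally, we transfer to the finite setting by compactness, and use Lemma \ref{lemma: K510} with the subring $H:=J$ so that $Y\cap H$ is still a $(K^{510}+K^{22})$-approximate subring.

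The main obstacle is the even steps. The pieces $I_{2i+1}/I_{2i}$ lie in $X$ but need not be finite in the standard sense, so "by-finite" must really be read as: after quotienting by a definable ideal inside $Y_m$, the remaining ring has a $d$-nilpotent ideal with index bounded by the commensurability constant. I would first try to show that $\langle Y\rangle/I$ is generated, up to boundedly many cosets, by the images of the $u_i$ together with a finite set. This should follow from $f^{-1}[U]\subseteq Y$ and the compactness of $\cl(f[Y])$ covering by finitely many translates of $\mathbb{Z}u$-orbits, and it would give the bounded index.
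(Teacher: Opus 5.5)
There is a genuine gap, and it is in your first step. By recasting the negation in the pseudofinite model with the index ``witnessed by a standard bound'', you commit to finding a commensurable $Y$ with a $\lfloor 4\log_2 K\rfloor$-nilpotent ideal of \emph{uniformly bounded} index. That statement is false.

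Take $X_p:=\{-1,0,1\}\times\mathbb{F}_p\subseteq\mathbb{Z}\times\mathbb{F}_p$, which is a $2$-approximate subring. The ring $\mathbb{Z}\times\mathbb{F}_p$ is reduced, so the only nilpotent ideal of any subring is $\{0\}$. Hence $\langle Y\rangle$ is nilpotent-by-finite only if it is finite, which forces $\langle Y\rangle\subseteq\{0\}\times\mathbb{F}_p$. In that case the index is $|\langle Y\rangle|\geq |X_p|/N=3p/N$. The corollary itself holds for $X_p$ (take $Y=\{0\}\times\mathbb{F}_p$), but the ultraproduct of the $X_p$ is a pseudofinite approximate subring for which your target fails. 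So no argument carried out entirely inside the pseudofinite model can produce your contradiction.

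Your second paragraph also conflates ``pseudofinite'' with ``finite'': a definable subgroup inside $X_m$ is only pseudofinite, so its index is not standard. Your last paragraph notices this, but the proposed reinterpretation (``by-finite'' modulo a definable ideal inside $Y_m$) replaces the corollary by essentially Theorem \ref{theorem: structure of finite approximate rings 2}.

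There is a second gap. Even granting the chain, $u_0,\dots,u_{d-1}$ is not a nilpotent base. You only know $u_iR\cup Ru_i\subseteq I_{2i+1}$, and $I_{2i+1}$ contains the even-step pieces, which are not in the span of the earlier $u_j$. The missing idea is Lemma \ref{lemma: ring-theoretic Lou's 7.4}. It says that if $I_1$ is finite and $I_2/I_1$ is null and cyclic, generated by $u/I_1$, then some $\mathbb{Z}nu$ is null in $R$ and has finite index in $I_2$. To prove it, intersect the finitely many finite-index subgroups $\{x\in I_2: r_1xr_2\in\mathbb{Z}u\}$, and use that an infinite cyclic $\mathbb{Z}u$ meets the finite $I_1$ trivially.

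This lemma needs the lower piece to be honestly finite. That is why compactness should only be used to transfer the bounded data of Corollary \ref{corollary: of the proof of the main theorem} to the actual finite $X$. That data is the chain with $I_{2i+1}/I_{2i}\subseteq Y_{N(K)}/I_{2i}$ and $I_{2i+2}=I_{2i+1}+\mathbb{Z}u_i$ with $u_i\in Y_{N(K)}$.

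Then, in the finite setting, apply the lemma inductively modulo the already constructed null ideals $J_i:=\mathbb{Z}n_0u_0+\dots+\mathbb{Z}n_{i-1}u_{i-1}$. At each stage $I_{2i+1}/J_i$ is again finite, and the lemma gives $n_iu_iR\cup Rn_iu_i\subseteq J_i$. The result is a finite-index ideal with nilpotent base $n_0u_0,\dots,n_{d-1}u_{d-1}$. Its index depends on $|Y_{N(K)}|$, as the example above requires. The appeal to Lemma \ref{lemma: K510} with $H=J$ is unnecessary, since $Y$ is never changed.
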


Note first that a weaker version in which $\langle Y \rangle$ is only required to be (nilpotent of class at most $\lfloor 4 \log_2(K)\rfloor+1$)-by-finite follows immediately from Theorem \ref{theorem: structure of finite approximate rings 2}. Just take $Y$ and $I$ from Theorem \ref{theorem: structure of finite approximate rings 2}. Let $S:=\{r \in \langle Y \rangle: rI=Ir=\{0\}\}$ (i.e., the two-sided annihilator of $I$ in $\langle Y \rangle$). Since $I$ is a finite ideal, $S$ is a finite index ideal in $\langle Y \rangle$. And it is clear that $S$ is nilpotent of class at most $\lfloor 4 \log_2(K) \rfloor+1$.

The full version of Corollary \ref{corollary: structure by finite} is more delicate. First, observe that the proof of Theorem \ref{theorem: structure of finite approximate rings 2} yields the following

\begin{corollary}\label{corollary: of the proof of the main theorem}
For any $K \in \mathbb{N}$ there exists $N(K) \in \mathbb{N}$ such that for every finite $K$-approximate subring $X$ there exists a $(K^{510}+K^{22})$-approximate subring $Y \subseteq 4X + X \cdot 4X$ which is $N(K)$-commensurable to $X$ for which there exists an ideal $I_1 \lhd \langle Y \rangle$ contained in $Y_{N(K)}$ and a sequence of ideals
$$\{0\}=I_0\lhd I_1 \lhd \dots \lhd I_{2d+1}=\langle Y \rangle$$
for some $d \leq \lfloor 4 \log_2(K) \rfloor$, such that:
\begin{enumerate}
\item if $i$ is even, then $I_{i+1}/I_i \subseteq Y_{N(K)}/I_i$, so $I_{i+1}/I_i$ is finite;
\item if $i$ is odd, then $I_{i+1}/I_i$ is null in $\langle Y \rangle/I_i$ and $I_{i+1} = I_i + \mathbb{Z}u_i$ for some $u_i \in Y_{N(K)}$.
\end{enumerate}
\end{corollary}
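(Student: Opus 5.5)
The plan is to argue by contradiction with model-theoretic compactness, exactly as in the proof of Theorem \ref{theorem: structure of finite approximate rings 2}. The point is that the construction there already produces the whole chain of ideals, and the required conclusion is a first-order (in fact, uniformly bounded) property of finite data. Suppose the statement fails for some $K$. Then for every $N$ there is a finite $K$-approximate subring $X^{(N)}$ for which no $Y$, $I_1$ and chain as in the statement exist with constant $N$. Passing to an ultraproduct, i.e.\ to an $\aleph_1$-saturated model $M=(G,\mathbb{R}^*)$ of $T_0$ as in Appendix \ref{appendix: pseudofiniteness}, we obtain a pseudofinite $K$-approximate subring $X$ with the following property: there is no definable $(K^{510}+K^{22})$-approximate subring $Y\subseteq 4X+X\cdot 4X$ commensurable with $X$ together with a chain of definable ideals $\{0\}=I_0\lhd\dots\lhd I_{2d+1}=\langle Y\rangle^*$ satisfying the following:
\begin{enumerate}
\item $d\le\lfloor 4\log_2(K)\rfloor$;
\item for even $i$, $I_{i+1}/I_i\subseteq Y_m/I_i$ for some standard $m$;
\item for odd $i$, $I_{i+1}/I_i$ is null and $I_{i+1}=I_i+\mathbb{Z}^*u_i$ with $u_i\in Y_m$.
\end{enumerate}
For this step one must check that each clause transfers. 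Commensurability, membership in $Y_m$, nullity and the bound on $d$ are expressible by formulas once $m$ is fixed. The ideal $I_i+\mathbb{Z}^*u_i$ is definable, and in a finite structure it equals $I_i+\mathbb{Z}u_i$. Finally, $\langle Y\rangle^*$ coincides with $\langle Y\rangle$ in the finite models (Corollary \ref{corollary: <X>*}).

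Next I would run the construction from the proof of Theorem \ref{theorem: structure of finite approximate rings 1}, in the version of Theorem \ref{theorem: structure of finite approximate rings 2}. First, Corollary \ref{corollary: good escape norm with finite dimensional algebra} gives $Y\subseteq 4X+X\cdot 4X$ and a commensurable $Z\subseteq Y_m$ with $\langle Z\rangle=\langle Y\rangle$, a strong escape norm, and a target of Lie dimension at most $4\log_2(K)$. The recursive construction then yields ideals $I_0\lhd\dots\lhd I_{2d+1}=R=\langle Y\rangle^*$ with $d\le\dim_{\textrm{Lie}}(\mathcal{A})\le\lfloor4\log_2 K\rfloor$. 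By item (1) of that construction, $I_{i+1}/I_i\subseteq X/I_i$ for even $i$. By item (2) and the fourth bullet, for odd $i$ the quotient $I_{i+1}/I_i$ is null and equals the image of $\mathbb{Z}^*u$. Since $X$ there is $Z\subseteq Y_m$, and the sets $Y_{j}$ obtained in later steps sit inside images of $X$, all these inclusions land in $Y_{m'}$ for a standard $m'$. This includes $I_1\subseteq Z\subseteq Y_m$.

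The remaining bookkeeping concerns the elements $u_i$. At stage $2i+1$ the element $u_i$ is chosen in $Y_{2i+1}\subseteq X/I_{2i+1}$. As in the proof, we can lift $u_i$ to a representative in $X\subseteq Y_m$, which keeps $I_{2i+2}=I_{2i+1}+\mathbb{Z}^*u_i$. All the ideals are $M_0'$-definable, hence definable. The conclusion is that the pseudofinite $X$ does have such $Y$ and chain, contradicting the choice of $X$. Taking $N(K)$ to be the maximum of the commensurability constant and $m'$ gives the statement.

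The main obstacle I expect is making precise that the quantifier ``some standard $m$'' can be chosen uniformly, so that compactness yields a single bound $N(K)$. The contradiction hypothesis has to be formulated as the failure of the conjunction over each fixed $m$. Consequently the type asserting ``for all $m$, no witness with constant $m$'' must be finitely satisfiable in the finite counterexamples, which holds because each finite $X^{(N)}$ fails for every constant up to $N$. A secondary point needing care is that the finite-index clause ``$I_{i+1}/I_i$ finite'' in item (1) of the statement follows from $I_{i+1}/I_i\subseteq Y_{N(K)}/I_i$, which is immediate in the finite setting.
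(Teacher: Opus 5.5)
Your proposal is correct and takes the same approach as the paper, which only remarks that the proof of Theorem~\ref{theorem: structure of finite approximate rings 2} already yields this corollary. You spell out exactly that argument: compactness to reach a pseudofinite counterexample, then the recursive construction, whose inclusions land in $Z/I_i\subseteq Y_m/I_i$ and whose $u_i$ lift to $Z\subseteq Y_m$.
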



We also need the following ring-theoretic variant of \cite[Lemma 7.4]{Dries}.

\begin{lemma}\label{lemma: ring-theoretic Lou's 7.4}
Suppose $I_1 \subseteq I_2$ are ideals of a ring $R$ such that $I_1$ is finite, $I_2/I_1$ is null in $R/I_1$,
and  the additive group of $I_2/I_1$ is cyclic generated by an element $u/I_1$. Then $I_2$ has a finite index additive subgroup $J$ generated by $nu$ for some $n \in \mathbb{N}$ such that $J$ is null in $R$ (in particular, $J$ is an ideal in $R$).
\end{lemma}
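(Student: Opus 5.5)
The plan is to kill the finitely many products of $\mathbb{Z}u$ with $R$ that land in $I_1$ by passing to a suitable multiple of $u$. Because $I_2/I_1$ is null in $R/I_1$, we have $ru,\,ur \in I_1$ for every $r\in R$, and also $(ku)r,\ r(ku) \in I_1$ for all $k\in\mathbb{Z}$. Write $|I_1| = e$, so that $e\cdot a=0$ for every $a\in I_1$ by Lagrange's theorem applied to the finite additive group $I_1$.

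Take $n:=e$ and put $J:=\mathbb{Z}(nu)$. For any $r\in R$,
\[
(nu)r = n(ur) = 0 \quad\text{and}\quad r(nu)=n(ru)=0,
\]
since $ur,\,ru\in I_1$ and $I_1$ is killed by $e$. Hence $J\cdot R=R\cdot J=\{0\}$, so $J$ is null in $R$. Being an additive subgroup annihilated by $R$ on both sides, $J$ is an ideal of $R$. It is generated by $nu$ by construction.

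It remains to check that $J$ has finite index in $I_2$. Since $I_2/I_1=\mathbb{Z}(u/I_1)$, we get $I_2 = \mathbb{Z}u + I_1$. Now $\mathbb{Z}u/\mathbb{Z}nu$ has at most $n$ elements, so $[\mathbb{Z}u : J]\le n$. Therefore $[I_2:J]\le n\cdot|I_1| = e^2$, which is finite.

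No step is a genuine obstacle. The one point to handle is that $u$ may have finite additive order, but the index bound above covers that case as well. The key observation is that multiplying by $e$ kills all of $I_1$ while keeping the subgroup of finite index.
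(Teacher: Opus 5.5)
Your proof is correct, and it takes a genuinely different and more direct route than the paper's.

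\textbf{The paper's route.} It first handles the case where $I_2$ is finite separately, taking $J=\{0\}$. Otherwise it uses that $\mathbb{Z}u\cong(\mathbb{Z},+)$ has finite index in $I_2$. It then builds an ideal inside $\mathbb{Z}u$ as a core-type intersection $J=\bigcap_{r_1,r_2\in R\sqcup\{1\}} A_{r_1,r_2}$, where $A_{r_1,r_2}$ is the preimage of $\mathbb{Z}u$ under $x\mapsto r_1xr_2$. Only finitely many distinct $A_{r_1,r_2}$ occur, because $(I_2,+)$ is finitely generated and each $A_{r_1,r_2}$ has bounded index. This yields $J=\mathbb{Z}nu$ for some unspecified $n>0$. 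Nullity is then proved by contradiction: a nonzero $rnu$ would lie in $I_1\cap\mathbb{Z}nu$, so the infinite cyclic group $\mathbb{Z}u$ would meet the finite group $I_1$ nontrivially.

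\textbf{Your route.} You use bilinearity of multiplication directly. Since $ru,ur\in I_1$ and $|I_1|$ annihilates the finite additive group $I_1$, the element $|I_1|u$ is already annihilated by $R$ on both sides. This avoids the case distinction, the finite-generation argument, and any use of $\mathbb{Z}u$ being torsion-free. Your index computation via $I_2=\mathbb{Z}u+I_1$ is right.

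\textbf{What each buys.} Your argument gives explicit bounds, $n=|I_1|$ and $[I_2:J]\le|I_1|^2$. That would be useful for making the subsequent nilpotent-by-finite corollary quantitative. The paper's argument is modelled on the group-theoretic version of the lemma. Its device of producing an ideal inside a finite-index subgroup of a finitely generated ideal is more general, but it does not exploit the additive, bilinear nature of ring multiplication. For the statement at hand, your proof is the simpler one.
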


\begin{proof}
If $I_2$ is finite, then $J:=\{0\}$ works. So assume that $I_2$ is infinite. We have $I_2=\mathbb{Z}u+ I_1$, and $\mathbb{Z}u$ is a finite index subgroup of $I_2$, so $\mathbb{Z}u$ is isomorphic to $(\mathbb{Z},+)$.

Let $r_1,r_2$ below range over $R \sqcup \{1\}$, where $1$ is an external unit for $R$.

Define $f_{r_1,r_2} \colon I_2 \to I_2$ by $f_{r_1,r_2}(x) := r_1 x r_2$. These are endomorphisms of the additive group of $I_2$.
Put $A_{r_1,r_2}:= f_{r_1,r_2}^{-1}[\mathbb{Z}u]$. These are subgroups of $(I_2,+)$ of index at most $[I_2:\mathbb{Z}u]<\infty$. Hence, since $(I_2,+)$ is a finitely generated group, there are only finitely many possibilities for $A_{r_1,r_2}$. Define
$$J:=\bigcap_{r_1,r_2 \in R \sqcup \{1\}} A_{r_1,r_2}.$$

Taking $r_1=r_2=1$, we get that $J \subseteq \mathbb{Z}u$. We claim that $J$ is an ideal in $R$. The fact that it is an additive subgroup is obvious.  It remains to check that for any $s_1,s_2 \in  R \sqcup \{1\}$ we have $s_1Js_2 \subseteq J$, and for that it is enough to see that $s_1Js_2\subseteq A_{r_1,r_2}$ for all $r_1,r_2 \in  R \sqcup \{1\}$. The last inclusion holds, because we trivially have $s_1A_{r_1s_1,s_2r_2}s_2 \subseteq A_{r_1,r_2}$. By the previous paragraph, we also see that $J$ has finite index in $I_2$.

Since $J$ is a nontrivial subgroup of $\mathbb{Z}u$, it is of the form $\mathbb{Z}nu$ for some $n>0$. It remains to show that $J$ is null in $R$. Suppose not. Then $rnu \ne 0$ or $nur \ne 0$ for some $r \in R$. Both cases are similar, so consider the first one.  As $I_2/I_1$ is null in $R/I_1$ and $nu \in I_2$, we conclude that $rnu \in I_1$. On the other hand, as $J$ is an ideal in $R$ and $nu \in J$, we get that $rnu \in J=\mathbb{Z}nu$. Therefore, $\mathbb{Z}u \cap I_1 \ne \{0\}$ so infinite, which contradicts the assumption that $I_1$ is finite. 
\end{proof}

Now, the full version of Corollary \ref{corollary: structure by finite} follows by induction from Corollary \ref{corollary: of the proof of the main theorem} using Lemma \ref{lemma: ring-theoretic Lou's 7.4}. In fact, having the data from Corollary \ref{corollary: of the proof of the main theorem},  one obtains a finite index ideal $J$ in $\langle Y \rangle$ which is $d$-nilpotent with nilpotent base $n_1u_1,\dots, n_du_d$ for some $n_1,\dots,n_d \in \mathbb{N}$.

\medskip

\section{Applications of the main structural results}

In this section, we give two applications of Theorems \ref{theorem: structure of finite approximate rings 1} and \ref{theorem: structure of finite approximate rings 2}.

\subsection{Sum-product phenomenon}

We give a very general qualitative version of a sum-product phenomenon which contains meaningful information even in the case of ``many'' zero-divisors. 



For any nonempty subset $X$ of a ring, the subset $X-X$ is additively symmetric, and by $X_{n+1}'$ we will denote the set $(X-X)_n$ (see the recursive definition in the introduction).


The next theorem will be deduced from Theorem \ref{theorem: structure of finite approximate rings 1}. For $K \in \mathbb{N}$ let $N(K)$ be $N_2(K)$ from the conclusion of Theorem \ref{theorem: structure of finite approximate rings 1}. 

\begin{theorem}\label{theorem: sum-product phenomenon with any g}
Let $g \colon \mathbb{N} \to \mathbb{R}^+$ be any function 
such that the function $\frac{n}{g(n)}$ is non-decreasing and tends to infinity.
Then there is a non-decreasing unbounded function $f\colon \mathbb{N} \rightarrow \mathbb{N}$ such that the following holds. 

Let $R$ be a ring and $X \subseteq R$ be a finite subset. Then: 
\begin{itemize}
\item either, $|X+X+X\cdot X| \geq f(|X|) |X|$, or
\item there is a subring $R' \subseteq R$ and an ideal $I \subseteq R' \cap (4(X-X) + (X-X) \cdot 4(X-X)) \subseteq  R' \cap X_4'$ such that $R'/I$ is nilpotent and $|X_4' \cap R'| \geq g(|X|)$.\footnote{As mentioned in the introduction, in a forthcoming joined paper of the first author with Mateusz Rzepecki, the expression  $4X + X \cdot4X$ in Fact \ref{fact: locally compact model exists} will be improved to $4X + X \cdot 2X$. 
So here  $4(X-X) + (X-X) \cdot 4(X-X)$ can be replaced by $4(X-X) + (X-X) \cdot 2(X-X)$ and in consequence $X_4'$ by $X_3'$.}
\end{itemize}
\end{theorem}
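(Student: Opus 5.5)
We will deduce the statement from Theorem \ref{theorem: structure of finite approximate rings 1} combined with Remark \ref{remark: small n-pling for rings}, by defining $f$ so that the second alternative is forced whenever the first fails. For $K \in \mathbb{N}$ put $C(K):=K^5+K^{19}$ and let $N(K)$ denote the commensurability constant $N_2(C(K))$ from Theorem \ref{theorem: structure of finite approximate rings 1} applied to $C(K)$-approximate subrings; without loss of generality $N$ is non-decreasing (replace it by its running maximum). Define
$$f(n):=\max\{K \in \mathbb{N} : N(K) \leq n/g(n)\}$$
(with $f(n):=1$ if the set is empty, and adjusted to be at least $1$). Since $n/g(n)$ is non-decreasing and tends to infinity, and $N$ is non-decreasing and finite-valued, $f$ is non-decreasing and unbounded.

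Now let $X$ be finite, $|X|=n$, and suppose $|X+X+X\cdot X| < f(n)|X|$. Set $K:=f(n)$. Then $|X+X|,|X+XX| \leq |X+X+XX| \leq K|X|$, so by Remark \ref{remark: small n-pling for rings} the set $X-X$ is a $C(K)$-approximate subring. Apply Theorem \ref{theorem: structure of finite approximate rings 1} to $X-X$: we obtain an approximate subring $Y\subseteq 4(X-X)+(X-X)\cdot 4(X-X)$, $N(K)$-commensurable with $X-X$, and an ideal $I\lhd \langle Y\rangle$ with $I\subseteq Y$ and $\langle Y\rangle/I$ nilpotent. Take $R':=\langle Y\rangle$. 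Then $I \subseteq R' \cap (4(X-X)+(X-X)\cdot4(X-X))$, and this set is contained in $X_4'=(X-X)_3$ by unwinding the recursive definition ($4Z+Z\cdot 4Z\subseteq Z_3$, as in Proposition \ref{proposition: improved target space 2}).

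It remains to bound $|X_4'\cap R'|$ from below. Since $Y\subseteq X_4'\cap R'$, it suffices to bound $|Y|$. Commensurability gives that $X-X$ is covered by $N(K)$ additive translates of $Y$, so $|Y| \geq |X-X|/N(K) \geq |X|/N(K)$. By the choice $K=f(n)$ we have $N(K) \leq n/g(n)$, hence $|Y|\geq g(n)$, as required.

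The only delicate point is the bookkeeping in the definition of $f$: we must ensure that $f$ is non-decreasing and unbounded, which requires monotonising $N$ and uses the hypothesis that $n/g(n)$ is non-decreasing and tends to infinity. We also need $K=f(n)\geq 1$ so that the first alternative is meaningful; in the degenerate case where $f(n)$ is forced to $1$, the first alternative holds automatically, because $|X+X+XX|\geq |X|$. Everything else is a direct application of the cited results.
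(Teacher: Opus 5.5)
Your proposal is correct and follows essentially the same route as the paper. You apply Remark~\ref{remark: small n-pling for rings} to $X-X$, then Theorem~\ref{theorem: structure of finite approximate rings 1} with $R':=\langle Y\rangle$, choose $f$ as an inverse of $K\mapsto N_2(K^5+K^{19})$ against $n/g(n)$, and conclude with $|Y|\geq |X|/N\geq g(|X|)$.

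The one bookkeeping point to tighten is that $\max\{K : N(K)\le n/g(n)\}$ exists only if $N$ is unbounded, which you should enforce. You can do this, for example, by replacing $N(K)$ with $N(K)+K$, which is still a valid commensurability bound. The paper avoids the issue differently: when that preimage is infinite, it lets $f(n)$ be any element of it exceeding $f(0),\dots,f(n-1)$.
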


\begin{proof}
Set $f(0):=0$, and from now on assume that $X \ne \emptyset$. 
By Remark \ref{remark: small n-pling for rings}, for any choice of a function $f$, if $|X+X+X\cdot X| \leq f(|X|) |X|$, then $X-X$ is an $(f(|X|)^{5} + f(|X|)^{19})$-approximate subring. Set $K(x):=x^{5} + x^{19}$ for $x \in \mathbb{N}^+$. Then, by Theorem \ref{theorem: structure of finite approximate rings 1}, there exists $Y \subseteq 4(X-X)+ (X-X) \cdot 4(X-X)$ which is $N(K(f(|X|))$-commensurable to $X-X$ and for which there exists an ideal $I \lhd \langle Y \rangle$ contained in $Y$ such that $\langle Y \rangle/I$ is nilpotent.
Put $R':=\langle Y \rangle$. To finish the proof, we need to choose $f$ so that $|X_4' \cap R'| \geq g(|X|)$. Let $M:=N \circ K$. Define $f$ as follows:

\begin{displaymath}
f(n):= \left\{
\begin{array}{l}
0 \textrm{ if } M^{-1}[(0,\frac{n}{g(n)}]] = \emptyset\\
\textrm{the largest element of }  M^{-1}[(0,\frac{n}{g(n)}]] \textrm{ if } M^{-1}[(0,\frac{n}{g(n)}]] \ne \emptyset \textrm{ is finite}\\
\textrm{any element of }  M^{-1}[(0,\frac{n}{g(n)}]] \textrm{ greater than } f(0),\dots,f(n-1) \textrm{ if }  M^{-1}[(0,\frac{n}{g(n)}]] \textrm{ is infinite.}
\end{array}\right.
\end{displaymath}
Any such $f$ is non-decreasing and unbounded. Moreover, still assuming that  $|X+X+X\cdot X| \leq f(|X|) |X|$ (which implies that $f(|X|) \ne 0$), we have $M(f(|X|)) \leq \frac{|X|}{g(|X|)}$, so $N(K(f(|X|)))\leq \frac{|X|}{g(|X|)}$, so $\frac{|X|}{N(K(f(|X|)))} \geq g(|X|)$. Since $|Y| \geq \frac{|X-X|}{N(K(f(|X|)))} \geq \frac{|X|}{N(K(f(|X|)))}$, we conclude that $|Y| \geq g(|X|)$. As $Y \subseteq X_4' \cap R'$, the last inequality implies $|X_4' \cap R'| \geq g(|X|)$ as required.
\end{proof}

\begin{corollary}\label{corollary: sum-product phenomenon with epsilon}
Let $\epsilon > 0$. There is a non-decreasing unbounded function $f\colon \mathbb{N} \rightarrow \mathbb{N}$ such that the following holds. 

Let $R$ be a ring and $X \subseteq R$ be a finite subset. Then: 
\begin{itemize}
\item either, $|X+X+X\cdot X| \geq f(|X|) |X|$, or
\item there is a subring $R' \subseteq R$ and an ideal $I \subseteq R' \cap (4(X-X) + (X-X) \cdot 4(X-X)) \subseteq  R' \cap X_4'$ such that $R'/I$ is nilpotent and $|X_4' \cap R'| \geq |X|^{1-\epsilon}$.
\end{itemize}
\end{corollary}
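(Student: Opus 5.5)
This is a direct specialization of Theorem \ref{theorem: sum-product phenomenon with any g}. We apply it with $g(n):=n^{1-\epsilon}$ for $n\geq 1$, and set $g(0)$ to be any positive value, say $g(0):=1$. The only thing to check is the hypothesis of that theorem: the function $\frac{n}{g(n)}$ should be non-decreasing and tend to infinity.

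For $n\geq 1$ we have $\frac{n}{g(n)}=n^{\epsilon}$. This is non-decreasing in $n$ and tends to infinity because $\epsilon>0$. At $n=0$ we get $\frac{0}{g(0)}=0\leq 1^{\epsilon}$, so monotonicity is preserved across the start of the sequence.

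Theorem \ref{theorem: sum-product phenomenon with any g} then produces a non-decreasing unbounded $f\colon\mathbb{N}\to\mathbb{N}$ with the following property. For every ring $R$ and every finite $X\subseteq R$, either $|X+X+X\cdot X|\geq f(|X|)|X|$, or there are a subring $R'$ and an ideal $I\subseteq R'\cap(4(X-X)+(X-X)\cdot 4(X-X))$ such that $R'/I$ is nilpotent and $|X_4'\cap R'|\geq g(|X|)$.

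For nonempty $X$ we have $g(|X|)=|X|^{1-\epsilon}$, which is exactly the second alternative of the corollary. For $X=\emptyset$, the first alternative holds trivially, since both sides are $0$.

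The only possible obstacle is whether $0\in\mathbb{N}$ is allowed in the definition of $g$. This is handled by the choice of $g(0)$ above; alternatively, one can note that the proof of Theorem \ref{theorem: sum-product phenomenon with any g} sets $f(0):=0$ and treats $X=\emptyset$ separately, so the value $g(0)$ never matters.
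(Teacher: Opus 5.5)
Your proposal is correct and matches the paper's argument: the paper deduces the corollary directly from Theorem \ref{theorem: sum-product phenomenon with any g} with $g(n):=n^{1-\epsilon}$. Your choice $g(0):=1$ keeps $g$ positive and $n/g(n)$ non-decreasing (even when $\epsilon\geq 1$), and your separate treatment of $X=\emptyset$ supplies details the paper leaves implicit.
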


\begin{proof}
It follows from Theorem \ref{theorem: sum-product phenomenon with any g} applied for $g(n):=n^{1-\epsilon}$.
\end{proof}

A natural question arises, how fast the function $f$ in the above sum-product phenomena grows. 
We would get a satisfactory quantitative answer assuming that $N(K)$ could be polynomially bounded.

\begin{remark}\label{remark: polynomial bound on f}
If $N(K)$ could be polynomially bounded, say by $CK^d$, then the function $f$ in Theorem \ref{theorem: sum-product phenomenon with any g} could be chosen to satisfy $f(n) = \left\lfloor \frac{1}{2^{\frac{1}{19}}C^{\frac{1}{19d}}}\left(\frac{n}{g(n)}\right)^{\frac{1}{19d}} \right\rfloor$, which in the particular case of Corollary \ref{corollary: sum-product phenomenon with epsilon} yields $f(n) =  \left\lfloor \frac{1}{2^{\frac{1}{19}}C^{\frac{1}{19d}}}n^{\frac{\epsilon}{19d}} \right\rfloor$.
\end{remark}

In Theorem \ref{theorem: sum-product phenomenon with any g} and Corollary \ref{corollary: sum-product phenomenon with epsilon}, we used the sum-product condition $|X+X+X\cdot X| \geq f(|X|) |X|$. We remark that it can be replaced by any other sum-product condition for which Remark \ref{remark: small n-pling for rings} remains valid (potentially with different constant). 
For example, in \cite[Lemma 3.3.1]{Kow}, it is shown that if $X$ is a finite subset of a unital ring\footnote{In \cite[Lemma 3.3.1]{Kow}, it is assumed that the ring is commutative, but this assumption is not needed.} and $1\in X$, then the sum-product condition $|X\cdot X - X \cdot X|\leq K|X|$ implies that $X-X$ is a $K^{O(1)}$-approximate subring.
Thus, we have:

\begin{remark}
The variants of Theorem \ref{theorem: sum-product phenomenon with any g} and Corollary \ref{corollary: sum-product phenomenon with epsilon} with $R$ a unital ring, $X$ a finite subset containing $1$, and the first bullets replaced by $|X\cdot X-X \cdot X|\geq f(|X|) |X|$ remain valid.
\end{remark}

\subsection{A ring-theoretic counterpart of Gromov's theorem}

In this subsection, we show a ring-theoretic counterpart of Gromov's theorem on groups of polynomial growth. Our proof works for (finitely generated) torsion-free rings of polynomial growth. The main tool is Theorem  \ref{theorem: structure of finite approximate rings 2}.

For a subset $X$ of a ring $R$ by $X^{\leq n}$ we will denote the subset of $R$  consisting of the elements obtained from $X$ using $+$ and $\cdot$ so that the elements of $X$ are used at most $n$ times (counting potential repetitions).

\begin{definition}\label{definition: ring of polynomial growth}
A ring $R$ generated by a finite additively symmetric set $X$ has {\em polynomial growth} if there exists $d \in \mathbb{N}$ and constant $C$ such that $|X^{\leq n}| \leq Cn^d$ for all positive integers $n$.
\end{definition}

The following remark is straightforward.

\begin{remark}
For a finitely generated ring the property of being of polynomial growth does not depend on the choice of the finite set of generators.
\end{remark}

\begin{proposition}
A finitely generated virtually nilpotent ring has polynomial growth.
\end{proposition}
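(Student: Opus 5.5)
We need to show: if $R$ is finitely generated and has a finite index nilpotent subring (ideal) $N$, then $|X^{\leq n}|$ grows polynomially. The interpretation of "virtually nilpotent" we use: $R$ has a nilpotent subring $N$ of finite additive index; by intersecting conjugate-type annihilators one can pass to a finite index two-sided ideal if needed, but we avoid this.

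The plan is as follows.

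\emph{Setup.} Fix a finite additively symmetric generating set $X$ containing $0$. Let $N\le R$ be a nilpotent subring of finite index, so $N^{c+1}=\{0\}$. Choose a finite set $T$ of coset representatives of $N$ in $(R,+)$ with $0\in T$.

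\emph{Finite generation of $N$.} Express each element of $X$, and each product $t t'$ and sum $t+t'$ with $t,t'\in T$, as $t''+\nu$ with $t''\in T$ and $\nu\in N$. Also, for $t\in T$ and a generator $a$ of $N$, the products $ta,at$ need not lie in $N$; so we take for $N$ a two-sided ideal of finite index instead. Concretely, replace $N$ by the annihilator-type intersection as in Lemma~\ref{lemma: ring-theoretic Lou's 7.4}, or simply assume, as in Corollary~\ref{corollary: structure by finite}, that virtually nilpotent means nilpotent-by-finite with an ideal. Then $N$ is additively finitely generated: by a Schreier-type argument, $N$ is generated as a ring by the finite set $S$ of ``corrections'' $\nu$ above together with $X\cap N$-parts. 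Because $N^{c+1}=0$, the additive group of $N$ is generated by the finitely many products of at most $c$ elements of $S$ together with $R$-multiples that stay in $N$. Since $N$ is an ideal, closing $S$ under left and right multiplication by $T$ and then by products of length $\le c$ terminates, using nilpotency of $N$. This yields a finite set $B=\{b_1,\dots,b_k\}$ spanning $(N,+)$.

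\emph{Counting.} An element of $X^{\leq n}$ is a polynomial expression of total degree at most $n$ in elements of $X$. Rewriting each letter as $t+\nu$ and expanding, the element becomes $t_0+\sum_j m_j b_j$ with $t_0\in T$. The key estimate is $|m_j|\le C' n^{D}$ for constants $C',D$ depending on $X,T,B,c$. Granting this, we get $|X^{\leq n}|\le |T|(2C'n^D+1)^k$, which is polynomial.

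\emph{Main obstacle: bounding the coefficients.} To prove $|m_j|\le C'n^D$, argue by induction on the formation of expressions, tracking for each element a ``weighted degree''. Assign each basis element $b_j$ a weight $w(b_j)$ equal to the maximal length of a product in $N$ in which it appears, using the filtration $N\supseteq N^2\supseteq\cdots\supseteq N^{c+1}=0$. Then show that the coefficient of a weight-$w$ basis element in an expression built from $n$ letters is $O(n^{w'})$ for some $w'$ bounded in terms of $c$ and the structure constants. Sums add coefficients. Products of two expressions multiply the $N$-parts, which only feed higher filtration levels, so the degree grows additively while the depth is bounded by $c$. Products involving $T$ stay in finitely many patterns. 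Getting this bookkeeping right, especially the interaction of the finite part $T$ with the nilpotent part, is the delicate point. A clean way is to choose the basis compatible with the filtration $N^i$ after multiplying by $T$-actions, and to prove by induction on $i$ (from $c$ down to $1$) polynomial bounds on coefficients modulo $N^{i+1}$.
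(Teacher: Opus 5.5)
This is the paper's proof. It uses a finite-index nilpotent ideal; the paper also simply assumes one, and getting one from a finite-index nilpotent subring is \cite[Lemma 1]{Lew}, not Lemma~\ref{lemma: ring-theoretic Lou's 7.4}. It then uses coset representatives with correction terms, and the finite additive spanning set of words with at most $c$ correction factors interleaved with optional representatives (your $N^i$-filtration is exactly this depth filtration). A polynomial coefficient bound proved by induction on depth and on the formation of expressions gives the final count. The paper, too, leaves that coefficient bound to the reader.

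When you write it out, keep the representative part a single element of $T$ at every node, and use that multiplying a spanning word by a representative gives exactly one word of the same depth plus words of larger depth. Then the \emph{sum} of the depth-$k$ coefficients is $O(n^k)$. Tracking only the largest coefficient loses a constant factor whenever words merge, and gives only an exponential bound for expressions like $x(x(x\cdots))$.
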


\begin{proof}
Let $I\lhd R$ be a finite index ideal which is nilpotent of class at most $c$. Choose a finite symmetric set $X=\{x_1,\dots,x_m\}$ of generators of $R$ so that $R/I=\{x_1/I,\dots,x_m/I\}$. Then for every $i,j\in \{1,\dots,m\}$ we can write $x_i+x_j = x_{f(i,j)} +r_{ij}$ and $x_ix_j=x_{g(i,j)} +s_{ij}$ for some $f(i,j),g(i,j) \in \{1,\dots,m\}$ and $r_{ij},s_{ij} \in I$. Let $a_1,\dots,a_{2m^2}$ be an enumeration of all the $r_{ij}$ and $s_{ij}$ (with possible repetitions). Then each element of $R$ can be written as the sum of one of the $x_i$'s and a sum of elements of the form
\begin{equation}\label{equation: generating elements}
z_{\bar \epsilon,\bar i, \bar k, \bar j}:=x_{i_1}^{\epsilon_1} a_{j_{1}^1} \dots a_{j_{k_1}^1} x_{i_2}^{\epsilon_2} a_{j_{1}^2} \dots a_{j_{k_2}^2}\dots x_{i_{l-1}}^{\epsilon_{l-1}} a_{j_{1}^{l-1}} \dots a_{j_{k_{l-1}}^{l-1}}x_{i_l}^{\epsilon_l}, \tag{*}
\end{equation}
where $l \geq 2$, $\epsilon_1,\dots,\epsilon_{l-1} \in \{0,1\}$, $k_1,\dots,k_{l-1} \geq 1$, and $k_1+\dots+k_{l-1} \leq c$.

We claim that for any $l \geq 2$, any $k_1,\dots,k_{l-1} \geq 1$ with $k_1+\dots+k_{l-1} \leq c$, and $\alpha,\beta \in \{0,1\}$ there exists $d(\alpha,\beta;k_1,\dots,k_{l-1}) \in \mathbb{N}$ such that for every positive $n \in \mathbb{N}$ each element of the set $X^{\leq n}$ can be written as 
$$x_i + \sum_{\bar \epsilon,\bar i, \bar k, \bar j} A_{\bar \epsilon,\bar i, \bar k,\bar j} z_{\bar \epsilon,\bar i, \bar k,\bar j},$$
for some $i \in \{1,\dots,m\}$ and natural numbers $A_{\bar \epsilon,\bar i, \bar k,\bar j} < n^{d(\epsilon_1,\epsilon_l;\bar k)}$.

The proof of the above claim is by induction on $k_1+\dots +k_{l-1}$, and in order to show the induction step there is another induction on $n$. It is a rather straightforward technical exercise which requires considering several cases, so we leave it to the reader.

Having this, we can choose $d$ to be the maximum of all the numbers $d(\alpha,\beta;k_1,\dots,k_{l-1})$. Then we see that $|X^{\leq n}| \leq mn^{d d'}$, where $d'$ is the number of elements of the form (\ref{equation: generating elements}), and so we have a polynomial bound.
\end{proof}

Our goal is the converse. We will show the following more general and more precise statement, but only in the torsion-free context. 
We do not know whether this result or its appropriate variant remains true for rings with torsion elements; see also Remark \ref{remark: Gromov with weekend assumption}.

\begin{theorem}\label{theorem: Gromov for torsion-free rings}
A finitely generated torsion-free ring of polynomial growth is virtually nilpotent.
More precisely, given $d > 0$, if $S$ is a torsion-free ring generated by a finite additively symmetric set $X$ for which $|X^{\leq n}| \leq n^d|X|$ for arbitrarily large $n$, then $S$ has an ideal of index at most $O_d(1)$ which is nilpotnent of class at most $4(57d+1)$. In particular, $S$ is virtually nilpotent.
\end{theorem}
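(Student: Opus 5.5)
The plan is to imitate the standard route from the Breuillard--Green--Tao theorem to Gromov's theorem. First we find a scale at which $X^{\leq n}$ grows slowly. Then we apply Theorem \ref{theorem: structure of finite approximate rings 2} to a suitable approximate subring at that scale. Finally, torsion-freeness and finite generation let us lift the resulting nilpotent-by-finite-ideal structure to all of $S$.

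\textbf{Step 1 (pigeonholing a good scale).} Suppose $|X^{\leq n}|\leq n^d|X|$ for some large $n$. Consider the sets $A_j:=X^{\leq n^{1/2^j}}$ for $j$ ranging over about $\log\log n$ values. Their sizes increase from $\geq |X|$ to at most $n^d|X|$. A pigeonhole argument then gives some $m$ (large, tending to $\infty$ with $n$) with $|X^{\leq 4m}|\leq K|X^{\leq m}|$, where $K=2^{O(d)}$. Concretely, one compares $|X^{\leq 4^{i+1}m_0}|$ with $|X^{\leq 4^i m_0}|$ along a geometric sequence, and the exponent $d$ forces some ratio to be at most about $4^{d+\epsilon}$.

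Put $A:=X^{\leq m}$. We have $A+A\subseteq X^{\leq 2m}$ and $A\cdot A\subseteq X^{\leq 2m}$, so $|A+A+AA|\leq |X^{\leq 4m}|\leq K|A|$. Remark \ref{remark: small n-pling for rings} then says that $A-A$ is a $K'$-approximate subring with $K'=K^{5}+K^{19}=2^{O(d)}$. The constant $4(57d+1)$ in the statement should come out as $\lfloor 4\log_2(K')\rfloor$ with this choice; e.g.\ $\log_2 K'\approx 19\cdot 3d$, matching $57d$.

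\textbf{Step 2 (structure).} Apply Theorem \ref{theorem: structure of finite approximate rings 2}, or better Corollary \ref{corollary: structure by finite} and Corollary \ref{corollary: of the proof of the main theorem}, to $A-A$. This yields $Y\subseteq 4(A-A)+(A-A)\cdot 4(A-A)\subseteq X^{\leq Cm}$ that is $N(K')$-commensurable with $A-A$. It also yields a finite-index ideal $J\lhd\langle Y\rangle$ which is nilpotent of class at most $\lfloor 4\log_2 K'\rfloor$, with nilpotent base $n_1u_1,\dots,n_eu_e$. Since $m$ is large, $A$ contains $X$ and many products of generators, and $\langle Y\rangle$ is commensurable with a large piece of $S$. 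The key claim is that, for $m$ large enough relative to $N(K')$, we get $X\subseteq F+\langle Y\rangle$ for a finite set $F$ with $|F|\leq O_d(1)$. The reason is that $X^{\leq m}$ is covered by $O_d(1)$ translates of $Y$. Since $m\to\infty$, the cosets of the additive group $\langle Y\rangle_+$ met by $X^{\leq k}$ stabilise, so $\langle Y\rangle$ has finite index $O_d(1)$ in $(S,+)$.

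\textbf{Step 3 (from a subring of finite additive index to an ideal).} We now have a subring $R'=\langle Y\rangle$ of finite additive index in $S$, with a finite-index nilpotent ideal $J$. Inside $R'$, the finite-index ideal $J$ is replaced by its core $J'=\{r\in J: SrS\cup Sr\cup rS\subseteq J\}$, in the spirit of the proof of Lemma \ref{lemma: ring-theoretic Lou's 7.4}. Since $S$ is a finitely generated abelian group and the index bounds are uniform, $J'$ is an ideal of $S$ of index $O_d(1)$ and is nilpotent of the same class. Torsion-freeness is used here: the proof of Lemma \ref{lemma: ring-theoretic Lou's 7.4} and the step above need that the finite ideals $I_1$, and the finite parts in Corollary \ref{corollary: of the proof of the main theorem}(1), can be killed. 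In a torsion-free ring, a finite additive subgroup is $\{0\}$, so $I_1=0$ and the even steps of the chain are trivial. This leaves a chain with null cyclic quotients, which directly gives nilpotency of $\langle Y\rangle$ itself of class $\leq d'$.

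\textbf{Main obstacle.} The delicate point is Step 2: showing that $\langle Y\rangle$, built at scale $m$, has bounded index in all of $S$, rather than merely in the subring generated at that scale. I would try to arrange $X\subseteq Y$-translates by choosing $m$ so large that $Y$ contains $X$-translates of all bounded products. Failing that, I would use the growth bound again on the quotient $S/\langle Y\rangle_+$ to show it is finite.
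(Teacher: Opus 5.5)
Your Step~1 and the application of Theorem~\ref{theorem: structure of finite approximate rings 2} at one good scale agree with the paper. Step~2, however, has a genuine gap, and it is exactly the obstacle you flag without resolving.

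The set $Y=Y_m$ is built from $X^{\leq m}$. So all you know is that $X^{\leq m}$ meets at most $N_2(K')$ additive cosets of $\langle Y_m\rangle$. This says nothing about $X^{\leq k}$ for $k>m$, and letting $m\to\infty$ changes $Y_m$, so there is no fixed subgroup whose cosets could ``stabilise''.

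The stabilisation trick from the group case ($X^{j+1}H=X^jH\Rightarrow X^{j+2}H=X^{j+1}H$) does not transfer. First, $X^{\leq k+1}$ is not obtained from $X^{\leq k}$ by acting with $X$; it contains products $X^{\leq a}\cdot X^{\leq b}$. Second, $\langle Y\rangle$ is not an ideal of $S$, so multiplication is not well defined on its additive cosets. Hence you cannot conclude that the set of cosets met by $X^{\leq k}$ is constant once it stops growing. Your fallback fails too: $S/\langle Y\rangle_+$ carries no ring structure, and polynomially many cosets met by $X^{\leq k}$ is compatible with infinite index.

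There are also smaller problems in Step~3. Finite generation of $(S,+)$ is only available after Step~2. A core built as an intersection of bounded-index subgroups gives an index bound depending on the rank of $S$, not only on $d$; the paper uses Lewin's lemma to pass from a bounded-index nilpotent subring to an ideal. Torsion-freeness kills $I_1$, but not the later finite quotients $I_{2i+1}/I_{2i}$. You should simply use Theorem~\ref{theorem: structure of finite approximate rings 2}, where the whole ideal $I$ is finite and hence $\{0\}$.

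The paper closes the gap by using that the hypothesis holds for arbitrarily large $n$. The pigeonhole claim then gives infinitely many good scales, so $S=\bigcup_i X_i'$ for an increasing sequence of finite $C(d)$-approximate subrings, where $C(d)=K'$. Theorem~\ref{theorem: structure of finite approximate rings 2} is applied to each, with $I=\{0\}$ by torsion-freeness. An $\aleph_1$-saturated elementary extension then yields a \emph{single} pseudofinite $Y$ such that $\langle Y\rangle$ is nilpotent of class at most $c:=\lfloor 4\log_2 C(d)\rfloor$ and $N_2$ translates of $Y$ cover every $X_i'$. Hence $S':=S\cap\langle Y\rangle$ has index at most $N_2$ in $S$, and Lewin's lemma turns it into an ideal.

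Alternatively, in the torsion-free case your one-scale plan can be repaired without computing any index. Suppose $m\geq N_2:=N_2(C(d))$ and $x\in X$. The $N_2+1$ elements $0,x,2x,\dots,N_2x$ of $X^{\leq m}$ cannot lie in pairwise distinct cosets of $\langle Y\rangle$, so $N_2!\,x\in\langle Y\rangle$. Therefore $(N_2!)^{c+1}x_1\cdots x_{c+1}=0$ for all $x_i\in X$, so $x_1\cdots x_{c+1}=0$ by torsion-freeness. It follows that $S^{c+1}=\{0\}$, i.e.\ $S$ itself is nilpotent of class at most $c\leq 4(57d+1)$. Without one of these two ideas, Step~2 does not go through.
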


\begin{proof}
Consider any positive integer $N$.
\begin{clm}
There is $n_0=n_0(N)>N$ such that whenever  $X$ is a subset of a ring $S$ and $n>n_0$ satisfies $|X^{\leq n}| \leq n^d|X|$, then there is $n'$ between $N$ and $n$ such that $|X^{\leq 4n'}| \leq 8^d|X^{\leq n'}|$.
\end{clm}

\begin{clmproof}
We will show that any $n_0 \geq 64N^3$ works. Take such an $n_0$ and suppose for a contradiction that the conclusion fails which is witnessed by some $S$, $X$, and $n$. Then $|X^{\leq 4n'}| > 8^d|X^{\leq n'}|$ for any $n'$ of the form $N\cdot 4^k$ where $k \in \mathbb{N}$ is arbitrary with $N\cdot 4^k \leq n$. Hence, $|X^{\leq N \cdot 4^k}| > 8^{kd}|X^{\leq N}|$ for all such $k$'s. Pick $k$ so that $N \cdot 4^k \leq n<N \cdot 4^{k+1}$. By the choice of $n$, we get $|X^{\leq N \cdot 4^k}| \leq |X^{\leq n}| \leq  n^d|X| <N^d \cdot 4^{(k+1)d} \cdot |X| =(4N)^{d} \cdot 4^{kd} \cdot |X|$. We conclude that $|X^{\leq N}| < (4N)^d \cdot (\frac{1}{2})^{kd} \cdot |X|$. Thus, we will get a contradiction if we show that $(4N)^d \cdot (\frac{1}{2})^{kd} \cdot |X|\leq |X|$. This is equivalent to showing that $2^k \geq 4N$. And this is true, because the inequality $N \cdot 4^{k+1} >n$ implies  $2^k > \sqrt{n/4N} > \sqrt{n_0/4N} \geq 4N$.
\end{clmproof}

Consider an additively symmetric subset $X$ of a ring and any positive $n'$ satisfying $|X^{\leq 4n'}| \leq 8^d|X^{\leq n'}|$. Put $X':=X^{\leq n'}$. Note that $X'$ is additively symmetric and $X' +X' + X' \cdot X' \subseteq X^{\leq 4n'}$, so $|X' +X' + X' \cdot X'| \leq 8^d |X'|$. Hence, by Remark \ref{remark: small n-pling for rings}, $2X'$ is a $C(d)$-approximate subring, where $C(d):= 8^{5d} +8^{19d}$. 

Let now $S$ and $X$ be as in the statement of the theorem. Applying the above paragraph to the situation from Claim 1 for bigger and bigger numbers $N$, we obtain a sequence $n_0'<n_1'<n_2'<\dots$ such that $X_i':=2X^{\leq 4 n_i'}$ is a $C(d)$-approximate subring of $S$, and clearly $S = \bigcup_{i \in \mathbb{N}} X_i'$.
Hence, in order to finish the proof of the theorem, it remains to prove the following general observation (see the explanation at the end of the proof).

\begin{clm}
Let $K \in \mathbb{N}$. If a torsion-free ring $S$ can be written as $\bigcup X_i'$ where $(X_i')_{i \in \mathbb{N}}$ is an increasing sequence of finite $K$-approximate subrings, then there exists a subring $S'$ of $S$ of index $O_K(1)$ which is nilpotent of class at most $\lfloor 4\log_2(K) \rfloor$.
\end{clm} 

\begin{clmproof}
Take $N_2(K),N_3(K) \in \mathbb{N}$ satisfying the conclusion of Theorem  \ref{theorem: structure of finite approximate rings 2}.

Let us work in an $\aleph_1$-saturated elementary extension $M$ of the two-sorted structure consisting of the sorts $(S,+,\cdot)$ and $(\mathbb{R},+,\cdot,0,1,\leq,\mathbb{Z})$ and a predicate $R$ defined as in Appendix \ref{appendix: pseudofiniteness}.  In particular, $M$ is a model of $T_0$ (in the notation from the appendix).

By the choice of $N_2(K)$ and $N_3(K)$, for every $i \in \mathbb{N}$ there exists a finite $(K^{510}+K^{22})$-approximate subring $Y_i'$ of $S$ which is $N_2(K)$-commensurable with $X_i'$ for which there exists an ideal $I \lhd \langle Y_i' \rangle$ contained in $(Y_i')_{N_3(K)}$ such that $\langle Y_i' \rangle/I$ is nilpotent of class at most $\lfloor4\log_2(K)\rfloor$. Since $S$ is torsion-free and $(Y_i')_{N_3(K)}$ is finite, $I = \{0\}$, and so $\langle Y_i' \rangle$ is nilpotent of class at most $\lfloor4\log_2(K)\rfloor$.

Thus, by $\aleph_1$-saturation of $M$ and uniform definability  in $M$ (using the predicate $R$) of the finite subsets of $S$, there exists a pseudofinite $(K^{510}+K^{22})$-approximate subring $Y$ of 
$S(M)$ (where $S(M)$ the interpretation of $S$ in $M$) such that: 
\begin{enumerate}
\item for every $i \in \mathbb{N}$, $N_2(K)$ additive translates of $Y$ cover $X_i'$;
\item $\langle Y \rangle$ is nilpotent of class at most $\lfloor4\log_2(K)\rfloor$.
\end{enumerate}

Define $S':= S \cap \langle Y \rangle$. By (2), $S'$ is nilpotent of class at most $\lfloor4\log_2(K)\rfloor$. On the other hand, by (1), for every $i$, $X_i'$ is covered by $N_2(K)$ additive translates of $S'$. Since $S$ is the increasing union of the sets $X_i'$, we conclude that $[S:S'] \leq N_2(K)$.
\end{clmproof}

Applying Claim 2 to the data from the paragraph preceding it for $K:=C(d)$, we obtain a subring $S'$ of $S$ of index at most $O_d(1)$ which is nilpotent of nilpotency class at most $4\log_2(2\cdot 8^{19d})= 4(57d+1)$. By \cite[Lemma 1]{Lew}, $S'$ contains an ideal $I$ of $S$ of index at most $O_d(1)$, and $I$ is clearly still nilpotent of nilpotency class at most $4(57d+1)$.
\end{proof}

The next remark  follows from the proof of Theorem \ref{theorem: Gromov for torsion-free rings} (see the proof of Claim 2).

\begin{remark}\label{remark: Gromov with weekend assumption}
In Theorem \ref{theorem: Gromov for torsion-free rings}, instead of torsion-freeness it is enough to assume that $S$ contains no subring which has a non-zero finite ideal.
\end{remark}

\section{Sum-product for real algebras}\label{Section: Sum-product for real algebras}

In this section, we are particularly interested in interactions of approximate rings with topologies. When the field $k$ is local (e.g. $\mathbb{R}, \mathbb{C}$ or $\mathbb{Q}_p$), finite-dimensional associative $k$-algebras are naturally equipped with the topology inherited from $k$ which makes them into locally compact rings. Sum-product phenomena \emph{à la} Meyer concern approximate subrings that are \emph{uniformly discrete} with respect to this topology. A subset $X$ of a $k$-algebra $A$ over a local field $k$ is called {\em uniformly discrete} if there is a neighbourhood $W$ of $0$ such that $(x + W) \cap (y + W) =\emptyset$ for all $x, y \in X$ distinct. Meyer's theorem asserts that discrete approximate subrings of locally compact fields must have a number-theoretic origin. 

The number-theoretic objects appearing are related to \emph{Pisot-Salem numbers} and rings of integers, and are defined as such. Let $K$ be a number field, let $S_K$ denote the set of all places of $K$ and let $S \subseteq S_K$ be a finite subset. We can define the \emph{set of $S$-integers} as
\begin{equation}
    \mathcal{O}_{K,S} := \{\alpha \in K : \forall w \in S_K \setminus S, \vert \alpha \vert_w \leq 1\}. \label{Eq: Pisot}
\end{equation}
The subsets $\mathcal{O}_{K,S}$ are approximate subrings (for they are approximate subgroups as shown in \cite[\S II.13.3]{meyer1972algebraic} and stable under multiplication) and they are rings if and only if $S$ contains all the Archimedean places of $K$. See \cite[\S 2.1.1]{machado2020apphigherrank} for more on these objects and \cite[Chapter II]{NeukirchAlgebraicNumberTheory} for background on number fields and their places.

Definition (\ref{Eq: Pisot}) encompasses a number of well-known objects. If $K = \mathbb{Q}$ and $S$ has the usual absolute value as its only element, then $\mathcal{O}_{K,S} = \mathbb{Z}$. Moreover, for any number field $K$, if $S$ is the set of all Archimedean valuations, then $\mathcal{O}_{K,S}$ is the ring of (algebraic) integers of $K$ and often denoted by $\mathcal{O}_K$. Finally, if $S = \{v\}$ and the completion $K_v$ of $K$ is isomorphic to $\mathbb{R}$, then $\mathcal{O}_{K,v}\setminus \{-1,1\}$ is the set of primitive Pisot--Salem numbers of $K$ (i.e. those Pisot-Salem numbers contained in $K$ that generate $K$ as a field)\footnote{In fact, here $K$ and $\mathcal{O}_{K,v}$ are identified with  $\sigma_v[K]$ and $\sigma_v[\mathcal{O}_{K,v}]$, respectively, as explained in the paragraph after Lemma \ref{Lemma: Arithmeticity}.}.  It is this last example that appears in Meyer's original work: 

\begin{theorem}\label{theorem: classical Meyer}[Meyer, \S II.13.3, \cite{meyer1972algebraic}]
    Let  $X\subseteq \mathbb{R}$ be an infinite uniformly discrete approximate subring. Then there are a real number field $K \subseteq \mathbb{R}$ and a place $v$ of $K$ such that $\mathbb{R} \cong K_v$ and through this identification, $X$ is commensurable with 
    $$\mathcal{O}_{K,v}= \{\alpha \in K : \forall w \in S_K \setminus \{v\}, |\alpha|_w \leq 1 \}.$$
    In other words, $X$ is commensurable with the set of primitive Pisot-Salem numbers of $K$. 
\end{theorem}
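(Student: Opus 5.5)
We would prove Meyer's theorem through the cut-and-project machinery: first realize $X$ as commensurable with a weak model set, and then recognize the resulting discrete subring as an arithmetic object.

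\emph{Step 1 (model set).} By Theorem \ref{Theorem: ring cap, intro} applied with $A=\mathbb{R}$, $X$ is commensurable with $M=\pi_{\mathbb{R}}[\Delta\cap(\mathbb{R}\times W_0)]$. Here $B$ is a finite-dimensional real algebra, $\Delta\subseteq \mathbb{R}\times B$ is a discrete subring projecting injectively to $\mathbb{R}$, and $W_0$ is a relatively compact window. Concretely, $\Delta$ is the graph of a locally compact model $f\colon \langle Y\rangle\to B$ of an approximate subring $Y$ commensurable with $X$, and we may take $f$ with dense image. Set $\Gamma:=\pi_{\mathbb{R}}[\Delta]=\langle Y\rangle$. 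Since $X$ is infinite and uniformly discrete, $\Gamma$ is a non-discrete subring of $\mathbb{R}$, and $\Delta$ is discrete in $\mathbb{R}\times B$. Shrinking to the subring generated by a compact-window piece, we arrange that $\Delta$ is finitely generated as an additive group (a lattice in the closed subgroup it spans) and is cocompact in $\mathbb{R}\times B'$, where $B'$ is the closure of $f[\Gamma]$, or more precisely its identity component after replacing $Y$ as in Proposition \ref{proposition: improved target space 2}.

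\emph{Step 2 (arithmeticity).} $\Gamma\cong\Delta$ is a ring whose additive group is $\mathbb{Z}^N$. So $\Gamma\otimes\mathbb{Q}=:L$ is a finite-dimensional commutative $\mathbb{Q}$-algebra; it is commutative because it sits inside $\mathbb{R}$, and it is a domain, hence a number field $K\subseteq\mathbb{R}$. Then $\Gamma$ is an order, up to finite index, in a ring of $S$-integers. Indeed, being finitely generated as an additive group forces $\Gamma$ to consist of algebraic integers, so $\Gamma$ is commensurable with $\mathcal{O}_K$, and the inclusion gives a valuation $v$ with $K_v=\mathbb{R}$. Next, $\mathcal{O}_K$ is a lattice in $K\otimes\mathbb{R}=\prod_{w\mid\infty}K_w$. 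The ring homomorphism $f$ extends $\mathbb{R}$-linearly (by continuity and density of images) to a surjection $\prod_{w\neq v,\,w\mid\infty}K_w\to B'$. We must check that it is an isomorphism. Because $\Delta$ is discrete and cocompact in $\mathbb{R}\times B'$, the dimensions match, namely $[K:\mathbb{Q}]=1+\dim B'$, and a surjection between equal-dimensional spaces is bijective. Hence $B'\cong\prod_{w\neq v}K_w$, compatibly with the embeddings.

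\emph{Step 3 (identifying the window).} $M$ is now commensurable with $\{\alpha\in\mathcal{O}_K:\sigma_w(\alpha)\in W_0 \text{ for } w\neq v\}$. Two model sets with the same data but different relatively compact windows with nonempty interior are commensurable, since each window is covered by finitely many translates of the other, intersected with lattice points. So $M$ is commensurable with the set obtained from the window $\prod_{w\neq v}\{|x|_w\le1\}$, which is exactly $\mathcal{O}_{K,v}$ (the finite places impose integrality automatically). This finishes the proof, and the Pisot--Salem reformulation follows from the footnoted identification.

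\emph{Main obstacle.} The delicate point is Step 1's reduction to a cocompact, finitely generated $\Delta$ with $B'$ a product of archimedean completions. In particular, we must show that the compact part $C$ of the target and any non-reduced or nilpotent part of $B$ vanish. For nilpotents the plan is this: a nilpotent element of $B'$ in the image of a domain $\Gamma$ would give zero divisors in $\Gamma$ after passing back through injectivity of $\pi_{\mathbb{R}}$. For $C$, the plan is to pass to the finite-dimensional algebra quotient via Proposition \ref{proposition: improved target space}, which only changes $X$ up to commensurability.
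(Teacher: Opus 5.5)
Your route is genuinely different from the paper's. The paper proves this theorem directly: it uses Corollary~\ref{Cor: approximate subring closed under mult} to get a commensurable $Y$ with $Y\cdot Y\subseteq Y$, applies Schreiber's dichotomy to make $Y$ syndetic, and then quotes Meyer's Theorems X and VI (harmonious sets lie in Pisot--Salem sets) together with an approximate-lattice commensurability lemma. You instead specialize the proof of Theorem~\ref{Theorem: Gen. Pisot Meyer} and Corollary~\ref{corollary: gen. of Meyer to simple real algebras} to $A=\mathbb{R}$, which would avoid Meyer's theory of harmonious sets.

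As written, however, the crux is missing.
\begin{itemize}
\item In Step 1, the ``arrangement'' that $\Delta$ is cocompact in $\mathbb{R}\times B'$ has no content. $\Delta$ is automatically finitely generated, being discrete in a finite-dimensional space. Also $B'=\overline{f[\Gamma]}=B$ because $f$ has dense image, and shrinking windows changes neither $\Delta$ nor its span.
\item In Step 2, you assert without argument that the linear extension of $f$ factors through $\prod_{w\neq v}K_w$, i.e.\ kills exactly the factor $K_v$. You then get bijectivity from a dimension count that rests on the unproved cocompactness, so the step is circular.
\item Your ``main obstacle'' paragraph aims at the wrong target. The compact part $C$ is already gone, since Proposition~\ref{Proposition: Weak models for discrete app rings} goes through Proposition~\ref{proposition: improved target space}. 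Nilpotents in $B$ are impossible simply because $B$ is a quotient of the semisimple algebra $\Gamma\otimes\mathbb{R}$. Your proposed nilpotent argument would fail anyway, since a nilpotent of $B$ need not lie in the dense image $f[\Gamma]$.
\item Minor: $\Gamma$ need not be non-discrete. For $X=\mathbb{Z}$ you get $\Gamma=\mathbb{Z}$, $B=0$, $K=\mathbb{Q}$. You never use this claim, though.
\end{itemize}

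Here is what is needed. Let $N=\mathrm{rank}\,\Gamma=[K:\mathbb{Q}]$, and let $\tilde f\colon \Gamma\otimes\mathbb{R}\cong\prod_{w\mid\infty}K_w\to B$ be the $\mathbb{R}$-linear extension of $f$. It is a surjective algebra homomorphism, so its kernel is $\prod_{w\in S}K_w$ for some set $S$ of archimedean places.
\begin{enumerate}
\item[(i)] Consider $\Psi(x)=(x_v,\tilde f(x))$. It sends the lattice $\Gamma$ bijectively onto the discrete rank-$N$ group $\Delta\subseteq\mathbb{R}\times B$. A discrete subgroup of rank $N$ cannot lie in a subspace of dimension $<N$, so $\Psi$ is injective. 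Since $\ker\Psi=\prod_{w\in S\setminus\{v\}}K_w$, this gives $S\subseteq\{v\}$.
\item[(ii)] Suppose $S=\emptyset$. Then $\tilde f$ is an isomorphism, so preimages of bounded sets under $f|_\Gamma$ are finite. Relative compactness of $f[Y]$ then forces $Y$ to be finite, contradicting that $Y$ is commensurable with the infinite set $X$.
\end{enumerate}
Hence $S=\{v\}$, $B\cong\prod_{w\neq v}K_w$, and $f=(\sigma_w)_{w\neq v}$ on $\Gamma$. Cocompactness of $\Delta$ in $\mathbb{R}\times B$ is then a consequence rather than something to arrange. With this inserted, your Step 3 (changing the window and passing from $\Gamma$ to $\mathcal{O}_K$) goes through.
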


\begin{proof}
    According to Corollary \ref{Cor: approximate subring closed under mult}, there is an approximate subring $Y \subseteq \langle X \rangle$ commensurable with $X$ and closed under multiplication. By \cite[Prop. II.6.2]{schreiber1973approximations} applied to $Y$ and $\mathbb{R}$ seen as a $1$-dimensional $\mathbb{R}$-vector space, either $Y$ is contained in a compact neighbourhood of $0$ or it is syndetic in $\mathbb{R}$. Since $Y$ is uniformly discrete, either $Y$ is finite or $Y$ syndetic. So it is syndetic. Now, \cite[Thm. X]{meyer1972algebraic} implies that $Y$ is harmonious. And by \cite[Thm. VI]{meyer1972algebraic}, $Y$ is contained in a set $P$ of primitive Pisot--Salem numbers of some real number field $K \subseteq \mathbb{R}$. Since $Y$ is syndetic and $P$ is an approximate lattice, $Y$ is commensurable with $P$ (cf. \cite[Lem. A.4]{hru2}). 
\end{proof}

Our goal in this section is to produce general structure results about uniformly discrete approximate subrings inspired from and generalising Meyer's theorem. In particular, all our results will provide structural information \emph{up to commensurability}. For the sake of simplicity, we will stick to results concerning \emph{real} associative algebras, but these results should hold for finite-dimensional (associative) algebras over all local fields. Throughout this section, we will use elementary results regarding associative algebras, which can be found in \cite{PierceAssociativeAlgebras} (see also Appendix \ref{section: unitality}). Throughout, all algebras will be assumed associative. 

Our main result (Theorem \ref{Theorem: Gen. Pisot Meyer}) will extend Meyer's Theorem to semi-simple associative algebras. Note that by Wedderburn's theorem, finite-dimensional semi-simple associative algebras over the reals are finite products of the matrix algebras of the form $M_n(D)$ where $n \geq 1$, and (by Frobenius theorem)  $D$ is either the real, complex or quaternion division algebra.

\subsection{The cut-and-project construction}

In this section, we will exploit the existence of locally compact models for rings \cite{Kru} in the improved form obtained in Proposition \ref{proposition: improved target space}. When dealing with uniformly discrete approximate subrings, this can be understood using the notion of a \emph{cut-and-project scheme} borrowed from the work of Meyer on aperiodic order and uniformly discrete approximate subgroups of abelian groups. Namely:

\begin{definition}
    A \emph{(ring) cut-and-project scheme} is a triple $(A,B,\Delta)$ consisting of a finite-dimensional $\mathbb{R}$-algebra $A$, a finite-dimensional $\mathbb{R}$-algebra $B$ and a (uniformly) discrete subring $\Delta \subseteq A \times B$ projecting injectively to $A$ and whose projection to $B$ spans $B$. 

    Given a symmetric relatively compact neighbourhood $W_0$ of $0$ (the \emph{window}), the subset 
    $$ M:=\pi_A\left[ \Delta \cap \left(A \times W_0\right)\right]$$
    is called a \emph{weak model set},  where $\pi_A: A \times B \rightarrow A$ is the natural projection. We say it is a \emph{model set} if $\Delta$ is co-compact (i.e. if there is a compact subset $K \subseteq A \times B$ such that $\Delta + K = A \times B$). We call moreover $A$ the \emph{direct space} and $B$ the \emph{internal space}.
\end{definition}

Our first result is:

\begin{proposition}\label{Proposition: Weak models for discrete app rings}
    Let $\Lambda$ be a uniformly discrete approximate subring of a finite-dimensional $\mathbb{R}$-algebra $A$. Then there are a finite-dimensional $\mathbb{R}$-algebra $B$ and a (uniformly) discrete subring $\Delta \subseteq A \times B$ projecting injectively to $A$ and whose projection to $B$ spans $B$, such that for all symmetric relatively compact neighbourhoods $W_0$ of $0$, $\Lambda$ is commensurable with the weak model set 
    $$ M:=\pi_A\left[ \Delta \cap \left( A \times W_0\right)\right]$$
    where $\pi_A: A \times B \rightarrow A$ is the natural projection.
\end{proposition}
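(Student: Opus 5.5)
The plan is to build the scheme from a locally compact model of $\Lambda$, using the graph of that model as $\Delta$. Regard $\Lambda$ as a definable approximate subring in the structure $\langle\Lambda\rangle$ with full structure. By Proposition \ref{proposition: improved target space}, together with Proposition \ref{proposition: improved target space 2}, there is an approximate subring $Y\subseteq 4\Lambda+\Lambda\cdot 4\Lambda$ commensurable with $\Lambda$. It comes with a locally compact model $f\colon\langle Y\rangle\to B$ with dense image, where $B$ is a finite-dimensional real algebra, and $f^{-1}[W]\subseteq Y_m$ for some open neighbourhood $W$ of $0$ and some $m$.

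Put $\Delta:=\{(x,f(x)) : x\in\langle Y\rangle\}\subseteq A\times B$. This is a subring, since $f$ is a ring homomorphism. It projects injectively to $A$, being a graph. Its projection to $B$ is $f[\langle Y\rangle]$, which is dense and hence spans $B$.

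Next I would check that $\Delta$ is uniformly discrete. Since $\Delta$ is a subgroup, it suffices to find a neighbourhood of $0$ in $A\times B$ meeting $\Delta$ only in $0$. Take $V\subseteq A$ a neighbourhood of $0$ and consider $V\times W$. If $(x,f(x))\in V\times W$, then $x\in f^{-1}[W]\subseteq Y_m$. Now $Y_m$ is covered by finitely many translates of $\Lambda$, because $Y$ is commensurable with $\Lambda$ and $Y_m$ is covered by finitely many translates of $Y$. A finite union of translates of a uniformly discrete set is discrete near $0$, so $Y_m\cap V$ is finite. Shrinking $V$ therefore gives $Y_m\cap V=\{0\}$, and $\Delta$ is discrete.

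Finally, fix a symmetric relatively compact neighbourhood $W_0$ of $0$ in $B$. Then
\[
M:=\pi_A[\Delta\cap(A\times W_0)]=f^{-1}[W_0].
\]
By Remark \ref{remark: preimage of a compact set} (see also Fact \ref{fact: D is approximate}), the preimage of a relatively compact neighbourhood of $0$ is commensurable with $Y$, and hence with $\Lambda$. Note that $W_0$ is only required to be a relatively compact neighbourhood, not a subset of $W$, which is why commensurability with $Y$ is the right conclusion here.

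The main obstacle I expect is the discreteness step, namely making sure that $f^{-1}[W]$ lands in a set controlled by translates of $\Lambda$. This is exactly why Proposition \ref{proposition: improved target space} records the inclusion $f^{-1}[W]\subseteq Y_m$, and why we need $Y_m$ to be a finite union of translates of $\Lambda$; it is not automatic that $Y_m$ stays inside a fixed bounded region. A smaller point is that the uniform discreteness of $\Lambda$ must be transferred to $\Lambda-\Lambda$ near $0$. For this I would use the standard fact that a uniformly discrete approximate subgroup has $\Lambda-\Lambda$ covered by finitely many translates of $\Lambda$, so a neighbourhood of $0$ contains only finitely many points of $\Lambda-\Lambda$.
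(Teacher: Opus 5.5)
Your proposal is correct and follows the paper's own proof. You take the finite-dimensional-algebra model from Proposition \ref{proposition: improved target space}, let $\Delta$ be its graph, get discreteness from a box $V\times W$, and get commensurability of $\pi_A[\Delta\cap(A\times W_0)]=f^{-1}[W_0]$ with $\Lambda$ from Remark \ref{remark: preimage of a compact set}. Working with $f^{-1}[W]\subseteq Y_m$, rather than $f^{-1}[W]\subseteq Y$ as the paper's write-up says, matches what Proposition \ref{proposition: improved target space} actually provides. Your closing remark about $\Lambda-\Lambda$ is unnecessary: local finiteness of a finite union of translates of $\Lambda$ already suffices.
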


\begin{proof}
    By Proposition \ref{proposition: improved target space}, there is an approximate subring $\Lambda'$, commensurable with $\Lambda$, and a locally compact model $\phi: \langle \Lambda' \rangle \rightarrow B$ where $B$ is a finite-dimensional $\mathbb{R}$-algebra with dense image satisfying $\phi^{-1}\left[W\right] \subseteq \Lambda'$ for some neighbourhood $W$ of $0$ in $B$. Now, $\Lambda'$ is an approximate subgroup for the additive structure and $\phi$ is a good model -- in the sense of \cite[Def. 1]{machado2019goodmodels} -- for that approximate subgroup. By \cite[Lem. 3.12]{machado2019goodmodels}, the graph 
    $$\Delta:=\{(\gamma, \phi(\gamma)) \in A \times B : \gamma \in \langle \Lambda' \rangle\}$$
    is a discrete subgroup (actually, subring since $\phi$ is a ring homomorphism) in $A \times B$ and, by Remark \ref{remark: preimage of a compact set}, $\Lambda'$ is commensurable with $\pi_A\left[ \Delta \cap \left(A \times W_0\right)\right]$ for any choice of symmetric relatively compact neighbourhood $W_0$ of $0$ in $B$. This completes the proof.

    For the sake of completeness, let us recall the short proof of discreteness of $\Delta$. Since $\Delta$ is a group, it is enough to show that $0$ is isolated in $\Delta$. Recall that the neighbourhood $W$ of $0$ in $B$ has been chosen so that $\phi^{-1}\left[W\right] \subseteq \Lambda'$. Now, let $V \subseteq A$ be a neighbourhood of $0$ such that $\Lambda' \cap V =\{0\}$. Such $V$ exists because $\Lambda'$ is discrete as it is covered by finitely many translates of the uniformly discrete subset $\Lambda$ of $A$.
    
    We claim that $\Delta \cap \left(V \times W\right) = \{(0,0)\}$. Indeed, if $(\gamma, \phi(\gamma)) \in \Delta \cap \left(V \times W\right)$, then $\phi(\gamma) \in W$ so $\gamma \in \Lambda'$. Thus, $\gamma \in \Lambda' \cap V =\{0\}$. So $\gamma = 0 $ and $(\gamma, \phi(\gamma))=(0,0)$ which concludes. 
\end{proof}

We will see that syndetic approximate subrings enjoy particularly striking structural results. Recall that a subset $X$ of a locally compact abelian group $A$ is \emph{syndetic} if there is a compact subset $K \subseteq A$ such that $X + K = A$. 

We have: 

\begin{lemma}\label{lemma: M is an approximate subring}
    With notation as above, $M$ is a uniformly discrete approximate subring. It is syndetic if and only if $\Delta$ is co-compact. 
\end{lemma}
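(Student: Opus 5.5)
We have $M=\pi_A[\Delta\cap(A\times W_0)]$ for a cut-and-project scheme $(A,B,\Delta)$ and a symmetric relatively compact neighbourhood $W_0$ of $0$ in $B$. Throughout we identify $\Delta$ with its injective projection $\Gamma:=\pi_A[\Delta]\subseteq A$, and write $\phi\colon\Gamma\to B$ for the ring homomorphism whose graph is $\Delta$. Then $M=\phi^{-1}[W_0]$.

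\emph{Uniform discreteness.} Pick neighbourhoods $V\subseteq A$ and $W\subseteq B$ of $0$ with $\Delta\cap(V\times W)=\{(0,0)\}$, and shrink $V$ to be symmetric. Choose $V'$ with $V'-V'\subseteq V$. Since $\cl(W_0)-\cl(W_0)$ is compact, I would show that $M-M$ meets $V$ only in finitely many points. Indeed, $(M-M)\cap V$ is contained in $\pi_A[\Delta\cap(V\times(\cl(W_0)-\cl(W_0)))]$, which is the projection of a discrete set meeting a relatively compact set, hence finite. Shrinking $V$ to avoid these finitely many nonzero points gives $(M-M)\cap V=\{0\}$. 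This yields uniform discreteness with respect to a neighbourhood $V''$ satisfying $V''-V''\subseteq V$.

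\emph{Approximate subring.} Symmetry of $M$ is immediate from symmetry of $W_0$ and $\Delta$. I would view $\phi\colon\langle M\rangle\to B$ as a locally compact model and apply Fact \ref{fact: D is approximate}; this needs to know that $M$ is an approximate subring, so instead I would argue directly. The set $\cl(W_0+W_0)\cup\cl(W_0W_0)$ is compact, so it is covered by finitely many translates $b_j+U$, where $U$ is a symmetric neighbourhood with $U-U\subseteq W_0$. For each $j$ with $\phi^{-1}[b_j+U]\cap(M+M\cup MM)\neq\emptyset$, pick $\gamma_j$ in this intersection. Then $\phi^{-1}[b_j+U]\subseteq\gamma_j+\phi^{-1}[U-U]\subseteq\gamma_j+M$. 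Hence $M+M\cup MM$ is covered by finitely many translates of $M$, and $M$ is an approximate subring.

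\emph{Syndeticity.} I expect this equivalence to be the main point. Suppose first that $\Delta$ is co-compact, with $\Delta+K=A\times B$ for a compact $K$. The key step is to replace $K$ by a compact $K'$ whose $B$-component lies in $W_0$. Since $\pi_B[\Delta]$ is dense in $B$ (or at least its closure is an open subgroup), one can translate by elements of $\Delta$ to achieve this. I would try the following. For $a\in A$, pick $\delta\in\Delta$ with $(a,0)-\delta\in K$. Choose finitely many $\delta_1,\dots,\delta_k\in\Delta$ with $\pi_B[K]\subseteq\bigcup_i(\pi_B(\delta_i)+W_0)$; this uses density of $\pi_B[\Delta]$, which follows from co-compactness and the spanning assumption plus a Kronecker-type argument, or one can reduce to the closure. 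Then $a\in M+\pi_A[K]+\{\pi_A(\delta_i)\}$, which gives syndeticity.

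Conversely, suppose $M+K_A=A$ with $K_A$ compact. Then $\Delta+(K_A\times\cl(W_0))$ contains $A\times\{0\}$, and more generally it contains $A\times\phi[M]$. Since $\phi[M]$ is dense in a neighbourhood of $0$ and $\pi_B[\Delta]$ spans $B$, I would get $\Delta+(K_A\times C)=A\times B$ for a suitable compact $C$. To arrange this, cover a compact fundamental region for the closure of $\pi_B[\Delta]$ in $B$, and use Remark \ref{remark: preimage of a compact set}-type covering. The delicate part is justifying the density of $\pi_B[\Delta]$ in $B$ needed in both directions; if it fails, I would pass to the closed subring generated and work inside it.
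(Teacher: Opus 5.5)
\textbf{Verdict: genuine gap in the syndeticity part.} Your arguments for uniform discreteness (once $V$ is taken relatively compact) and for the approximate-subring property are correct. They are also more self-contained than the paper's proof, which takes uniform discreteness and the approximate-subgroup property from \cite[Prop. 2.13]{BjHa} and handles $M\cdot M$ via Remark~\ref{remark: preimage of a compact set}. Your direct covering of $\cl(W_0+W_0)\cup\cl(W_0W_0)$ by translates $b_j+U$ with $U-U\subseteq W_0$ avoids the circularity you noticed.

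\textbf{Where it fails.} Both directions of the syndeticity equivalence rest on density of $\pi_B[\Delta]$ in $B$, and direction two also on $\phi[M]$ being dense near $0$. You propose to derive this density from co-compactness and spanning, but that implication is false for cut-and-project schemes as defined. Take $A=B=\mathbb{R}$ with zero multiplication and $\Delta=\mathbb{Z}(1,0)+\mathbb{Z}(\sqrt{2},1)$. This $\Delta$ is a co-compact discrete subring projecting injectively to $A$, and $\pi_B[\Delta]=\mathbb{Z}$ spans $B$. Yet $\pi_B[\Delta]$ is not dense, its closure is not open, and $\phi[M]=\mathbb{Z}\cap W_0$ is not dense near $0$. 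In the specific scheme built in Proposition~\ref{Proposition: Weak models for discrete app rings}, density does hold because $\phi$ has dense image. The lemma, however, is about general schemes, which is why the paper says the density hypothesis of \cite{BjHa} must be weakened to syndeticity of $\pi_B[\Delta]$.

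\textbf{How to repair it.} Density in $B$ is not needed in either direction.

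For co-compact $\Rightarrow$ syndetic, suppose $\Delta+K=A\times B$ and put $H:=\cl(\pi_B[\Delta])$.
\begin{enumerate}
\item Writing $(a,0)=\delta+\kappa$ with $\kappa\in K$ gives $\pi_B(\delta)\in H\cap(-\pi_B[K])$, which is compact.
\item Since $\pi_B[\Delta]$ is dense in $H$, the open sets $\pi_B(\delta')+\textrm{int}(W_0)$, $\delta'\in\Delta$, cover $H$. So finitely many $\delta_1,\dots,\delta_N$ cover that compact set.
\item Then $\pi_A(\delta-\delta_i)\in M$ for a suitable $i$, and $a\in M+\pi_A(\delta_i)+\pi_A[K]$ exactly as in your computation.
\end{enumerate}

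For syndetic $\Rightarrow$ co-compact, suppose $M+K_A=A$.
\begin{enumerate}
\item You already have $A\times\{0\}\subseteq\Delta+(K_A\times\cl(W_0))$. Since $\Delta$ is a group, this gives $A\times\pi_B[\Delta]\subseteq\Delta+(K_A\times\cl(W_0))$.
\item Because $\pi_B[\Delta]$ spans $B$, it contains $\dim B$ linearly independent vectors and hence a lattice of $B$. So $B=\pi_B[\Delta]+C$ for some compact $C$.
\item Therefore $A\times B=\Delta+(K_A\times(\cl(W_0)+C))$.
\end{enumerate}

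Your fallback, ``pass to the closure'', points in this direction. As written, though, the proposal does not carry it out and instead relies on a false density claim.
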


\begin{proof}
    According to \cite[Prop. 2.13]{BjHa}, $M$ is a uniformly discrete approximate subgroup and it is syndetic if and only if $\Delta$ is co-compact. Note that in \cite[Prop. 2.13]{BjHa}, it is assumed that the projection of $\Delta$ to the second coordinate is dense and $\Delta$ has finite co-colume. However, the latter assumptions in not used in the proof; the former one is used, but it is easy to deduce that it can be weakened to saying that the projection of $\Delta$ to the second coordinate is syndetic.  
    
    It only remains to show that $M$ is \emph{a fortiori} an approximate subring.  Note first that if $W_1$ is any other relatively compact subset in $B$, $\pi_A\left[\Delta \cap \left(A \times W_1\right)\right]$ is covered by finitely many (additive) translates of $M$, see Remark \ref{remark: preimage of a compact set}. But 
    $$ M\cdot M = \pi_A\left[\Delta \cap \left(A \times W_0\right)\right]\pi_A\left[\Delta \cap \left(A \times W_0\right)\right] \subseteq \pi_A\left[\Delta \cap\left(A \times W_0^2\right)\right] \subseteq F + M$$
    for some finite subset $F \subseteq A$. Here, the last inclusion is a consequence of the first observation of this paragraph. 
\end{proof}


So Proposition \ref{Proposition: Weak models for discrete app rings} simply asserts that all uniformly discrete approximate subrings are commensurable with weak model sets. In other words, Proposition \ref{Proposition: Weak models for discrete app rings} asserts the existence of an \emph{internal space} ($B$ in the above) which governs the structure of the approximate subring under consideration. 

An idea of Schreiber \cite[Prop. 2]{schreiber1973approximations} complements Meyer-type results in the \emph{direct space} ($A$ in the above). Adapted to the context of rings, it helps measure the discrepancy between weak model sets and model sets in terms of two-sided ideals in the direct space.

\begin{proposition}\label{Prop: Schreiber}
    Let $\Lambda$ be an approximate subring in a finite-dimensional real algebra $A$. Suppose that $\Lambda$ spans $A$. Then there are a compact subset $K \subseteq A$ and a unique two-sided ideal $I$ such that 
    $$ \Lambda \subseteq I + K \text{ and } I \subseteq \Lambda + K.$$
\end{proposition}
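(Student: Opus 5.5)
We will follow Schreiber's argument for approximate subgroups of vector spaces and then exploit multiplicative stability. Recall \cite[Prop. II.6.2]{schreiber1973approximations}, already used in the proof of Theorem \ref{theorem: classical Meyer}. For an additive approximate subgroup $\Lambda$ of a finite-dimensional real vector space $A$, there is a unique vector subspace $V \subseteq A$ and a compact $K \subseteq A$ with $\Lambda \subseteq V + K$ and $V \subseteq \Lambda + K$. Roughly, $V$ is the set of directions along which $\Lambda$ is ``coarsely unbounded''. Uniqueness holds because two such subspaces are at bounded Hausdorff distance from each other, hence equal.

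Since $\Lambda$ is in particular an additive approximate subgroup, we obtain $V$ and $K$ as above. Take $I := V$. It remains to show that $V$ is a two-sided ideal of $A$. Uniqueness of $I$ then follows from the uniqueness of $V$ among subspaces, and every ideal is a subspace.

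Fix $\lambda \in \Lambda$. We first show $\lambda V \subseteq V$. Let $v \in V$ and $t>0$. Then $tv \in V \subseteq \Lambda + K$, so $tv = \mu + k$ with $\mu \in \Lambda$ and $k \in K$. Hence
\[
t\lambda v = \lambda\mu + \lambda k \in \Lambda\cdot\Lambda + \lambda K \subseteq F + \Lambda + \lambda K \subseteq V + K',
\]
where $F$ is the finite set of translates from the approximate subring property and $K' := F + K + \lambda K$ is compact. So the ray $\{t\lambda v : t>0\}$ stays at bounded distance from $V$, which forces $\lambda v \in V$. The same argument gives $V\lambda \subseteq V$. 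Since $V$ is a subspace, $aV \cup Va \subseteq V$ for every $a \in \Span(\Lambda) = A$, so $V$ is a two-sided ideal.

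The main obstacle is that Schreiber's result is stated for approximate subgroups of $\mathbb{R}^n$, so we must check that it applies here. An additive approximate subgroup in our sense requires $\Lambda + \Lambda \subseteq F + \Lambda$ with $F$ finite, and that is exactly Schreiber's hypothesis. If that result were unavailable, we would reprove it directly. Define $V$ as the set of $v$ such that $\mathbb{R}v \subseteq \Lambda + K_0$ for a large enough compact $K_0$. Then use the covering property and a compactness argument on the unit sphere of $A/V$ to show that $\Lambda$ is bounded modulo $V$.
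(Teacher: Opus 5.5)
Your proposal is correct and follows the paper's proof essentially verbatim: apply Schreiber's Prop.~II.6.2 to get $V$ and $K$, deduce $\lambda V \subseteq V + K'$ for $\lambda \in \Lambda$ (hence $\lambda V \subseteq V$, and symmetrically $V\lambda \subseteq V$), and use that $\Lambda$ spans $A$ to conclude that $V$ is a two-sided ideal. Your ray argument is just an explicit form of the paper's remark that a subspace lying within bounded distance of another subspace is contained in it, and your uniqueness argument matches the paper's ``uniqueness is clear''.
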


\begin{proof}
    According to \cite[Prop. II.6.2]{schreiber1973approximations}, there are a vector subspace $V \subseteq A$ and $K \subseteq A$ compact such that 
    $$ \Lambda \subseteq V + K \text{ and } V \subseteq \Lambda + K.$$
    It is enough to show that $V$ is a two-sided ideal. To do so, it suffices to show that for every $\lambda \in \Lambda$, $\lambda V \subseteq V$ and $V \lambda \subseteq V$. We have

    $$ \lambda V \subseteq \lambda( \Lambda + K) \subseteq \Lambda_2 + \lambda K \subseteq \Lambda + K' \subseteq V + K + K'$$
    for some further compact subset $K'$. But both $\lambda V$ and $V$ are vector subspaces of the finite-dimensional space $A$, so $\lambda V \subseteq V$. The inclusion $V\lambda \subseteq V$ is obtained in a similar fashion. Finally, uniqueness is clear.
\end{proof}

Putting together these results, we will prove the main result: 

\begin{theorem}[Ring cut-and-project]\label{Theorem: Ring cap}
    Let $\Lambda$ be a uniformly discrete approximate subring in a finite-dimensional real algebra $A$. There are: 
    \begin{enumerate}
        \item a finite-dimensional real algebra $A'$;
        \item a uniformly discrete co-compact subring $\Delta \subseteq A'$;
        \item two ideals $I,J \subseteq A'$;
    \end{enumerate}
    such that:
    \begin{enumerate}
        \item $I \cap J =\{0\}$;
        \item there is a homomorphism $\pi:A'  \to A$ with kernel $J$;
        \item $\Lambda$ is commensurable with $\pi\left[\Delta
    \cap \left(I + W_0\right)\right]$ for any relatively compact symmetric neighbourhood $W_0$ of $0$ in $A'$, and $J \cap \Delta =\{0\}$. 
    \end{enumerate}
\end{theorem}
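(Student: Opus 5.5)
Combine Proposition \ref{Proposition: Weak models for discrete app rings} with Proposition \ref{Prop: Schreiber}, applied not in $A$ but in the product $A \times B$. First apply Proposition \ref{Proposition: Weak models for discrete app rings} to get $B$ and the discrete subring $\Delta_0 = \{(\gamma,\phi(\gamma)) : \gamma \in \langle \Lambda'\rangle\} \subseteq A \times B$. By construction $\Delta_0$ projects injectively to $A$, and $\Lambda$ is commensurable with $\pi_A[\Delta_0 \cap (A \times W_0)]$ for every symmetric relatively compact neighbourhood $W_0$ of $0$ in $B$.

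The subring $\Delta_0$ need not be co-compact, so we restrict to the subspace it spans. Let $A' := \Span_{\mathbb{R}}(\Delta_0) \subseteq A \times B$. It is a subalgebra, since $\Delta_0$ is a subring and products of spans are spans of products. A discrete additive subgroup of a real vector space is a lattice in its $\mathbb{R}$-span, so $\Delta := \Delta_0$ is a uniformly discrete co-compact subring of $A'$. Let $\pi := \pi_A|_{A'}$, a homomorphism $A' \to A$, and put $J := \ker \pi = A' \cap (\{0\} \times B)$. This is a two-sided ideal of $A'$, and $J \cap \Delta = \{0\}$ by injectivity of $\pi_A$ on $\Delta_0$. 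Thus items (1), (2) and the second half of (3) are settled.

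To produce $I$, consider $\Delta_W := \Delta \cap (A \times W_0)$. By Lemma \ref{lemma: M is an approximate subring}, or directly, this is an approximate subring of $A'$. Apply Proposition \ref{Prop: Schreiber} to it inside the subalgebra $V := \Span(\Delta_W) \subseteq A'$. This gives a two-sided ideal $I$ of $V$ and a compact $K$ with $\Delta_W \subseteq I + K$ and $I \subseteq \Delta_W + K$.

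The next step is to show that $I$ is an ideal of $A'$, not only of $V$, and that $I \cap J = \{0\}$. I expect this to be the main obstacle. For the ideal property, I would rerun the argument in the proof of Proposition \ref{Prop: Schreiber} with $\lambda$ ranging over $\Delta$. For $\delta \in \Delta$, the set $\delta \Delta_W$ lies in $\Delta \cap (A \times \phi(\delta)W_0)$, which is covered by finitely many translates of $\Delta_W$ by Remark \ref{remark: preimage of a compact set}. Hence $\delta I \subseteq I + K'$ for some compact $K'$, and therefore $\delta I \subseteq I$ as both are subspaces. The same holds on the right, and since $\Delta$ spans $A'$, $I$ is an ideal of $A'$. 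For $I \cap J = \{0\}$: the set $I \cap J$ is a subspace contained in $\{0\} \times B$. Elements of $I$ are within bounded distance of $\Delta_W$, whose $B$-coordinates lie in $W_0$, so the $B$-coordinates of points of $I$ are bounded. A subspace with bounded $B$-coordinates that sits inside $\{0\} \times B$ must be $0$. If $I$ turns out to depend on $W_0$, I would fix one $W_0$. Any two choices of window give commensurable sets, so $I$ is in fact independent of the window by the uniqueness part of Proposition \ref{Prop: Schreiber}.

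Finally, for the commensurability in (3), compare $\Delta \cap (I + W_0')$, where $W_0'$ is a neighbourhood in $A'$, with $\Delta_W$. Since $\Delta_W \subseteq I + K$ and the $B$-coordinates of $I$ are bounded, the two sets are contained in each other up to compact thickenings intersected with the lattice $\Delta$. Such sets are covered by finitely many translates of one another, because $\Delta$ is uniformly discrete. Applying the injective map $\pi$ then shows that $\pi[\Delta \cap (I + W_0')]$ is commensurable with $\pi_A[\Delta_W]$, and hence with $\Lambda$.
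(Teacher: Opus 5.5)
Your proposal is correct and follows the paper's construction: $A'$ is the span of the graph $\Delta$, $J=\ker(\pi_A|_{A'})$, $I$ is the Schreiber subspace attached to $\Delta\cap(A\times W)$, and commensurability comes from essentially the same two-way covering comparison. The differences are local:
\begin{itemize}
\item The paper avoids your extension of the ideal property from the span of $\Delta_W$ to $A'$ by taking the window large enough that $\Delta\cap(A\times W)$ already spans $A'$. This also removes the need for your claim that the span of $\Delta_W$ is a subalgebra, which you did not justify, though your argument does not actually depend on it.
\item The paper proves $I\cap J=\{0\}$ via uniform discreteness of the model set in $A$. Your observation that $I$ has bounded $B$-coordinates is an equally valid and slightly more direct argument.
\end{itemize}
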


\begin{proof}
Let $(A,B, \Delta)$ denote the cut-and-project scheme provided by Proposition \ref{Proposition: Weak models for discrete app rings} applied to $\Lambda$. Let $M=\pi_A[\Delta\cap (A \times W)]$ be a weak model set commensurable with $\Lambda$ obtained in Proposition \ref{Proposition: Weak models for discrete app rings}. Choose $W$ so large that the span of $\Delta$ coincides with the span of $\pi_A^{-1}[M] \cap \Delta$, and define $A'$ to be this common span. Note that since $\Delta$ is a subgroup of $A'$ that spans $A'$, $\Delta$ must be co-compact in $A'$. Set now $\pi := (\pi_A)_{\vert A'}$, $J := \ker \pi = A' \cap \left(\{0\} \times B\right)$, and $I$ the two-sided ideal and $K$ the compact subset of $A'$ given by applying Proposition \ref{Prop: Schreiber} to $\pi^{-1}\left[M\right] \cap \Delta$. (Note that $\pi^{-1}\left[M\right] \cap \Delta$ is an approximate subring since $M$ is an approximate subring, $\Delta$ is a subring, and $M \subseteq \pi[\Delta]$.) We claim that such assignments satisfy the conclusions of the theorem.

    Indeed, (2) is obvious by construction. For (3) note first that $J \subseteq \{0\} \times B$, so $J \cap \Delta$ is trivial by injectivity of the projection  $(\pi_A)|_\Delta$. To show that $\Lambda$ is commensurable with $\pi\left[\Delta \cap \left(I + W_0\right)\right]$, first consider the case when $W_0$ contains $K$. Then 
$$\pi\left[\Delta \cap \left(I + W_0\right)\right] \supseteq \pi\left[\Delta \cap \left(I + K\right)\right] \supseteq \pi\left[\Delta \cap \pi^{-1}\left[M\right]\right] =M.$$ On the other hand, 
\begin{align*}
\pi\left[\Delta \cap \left(I + W_0\right)\right] \subseteq \pi\left[\Delta \cap \left(\left(\Delta \cap \pi^{-1}\left[M\right]\right) + K + W_0\right)\right] = \\ \pi\left[\Delta \cap \pi^{-1}\left[M\right]\right] + \pi\left[\Delta \cap \left(K + W_0\right)\right] = M + \pi\left[\Delta \cap \left(K + W_0\right)\right]
\end{align*} 
Moreover, since  $K + W_0$ is compact, by Remark \ref{remark: preimage of a compact set}, $\pi\left[\Delta \cap \left(K + W_0\right)\right]$ is covered by finitely many translates of $M$. Thus, using  Lemma \ref{lemma: M is an approximate subring}, we conclude that $\pi\left[\Delta \cap \left(I + W_0\right)\right]$ is commensurable with $M$ which is commensurable with $\Lambda$, as required. Now, consider an arbitrary relatively compact symmetric neighborhood $W_1$ of $0$ in $A'$, and fix $W_0$ as above.
%
%
We can find a third symmetric relatively compact neighborhood $W_2$ of $0$ in $A'$ such that $2W_2 \subseteq W_1$. 
Now, the subsets $I+W_0$, $I + W_1$ and $I+W_2$ are commensurable, and so $\Delta \cap (I + 2W_0)$, $\Delta \cap (I + 2W_1)$ and $\Delta \cap (I + 2W_2)$ are commensurable by \cite[Lem. 2.3(2)]{machado2019goodmodels}. Hence, $\pi\left[\Delta \cap (I + 2W_1)\right]$ and $\pi\left[\Delta \cap (I + 2W_2)\right]$ are commensurable with $\pi\left[\Delta \cap (I + 2W_0)\right]$  which, in turn, is commensurable with $\Lambda$ as shown above. But 
    $$ \pi\left[\Delta \cap (I + 2W_2)\right] \subseteq \pi\left[\Delta \cap (I + W_1)\right] \subseteq \pi\left[\Delta \cap (I + 2W_1)\right], $$
    so $\pi\left[\Delta \cap (I + W_1)\right]$ is also commensurable with $\Lambda$. This concludes the proof of (3).

     It remains to prove (1). Suppose otherwise, then $I \cap J$ is not trivial and, in particular, unbounded. 
We know that $I \subseteq \left(\pi^{-1}\left[M\right] \cap \Delta\right) + K$. 
Since $I \cap J$ is unbounded and $K$ is bounded, we can find a sequence $(x_n)_{n \geq 0}$ of pairwise distinct elements of $\pi^{-1}\left[M\right] \cap \Delta$ such that $(x_n + K) \cap I \cap J \neq \emptyset$. Thus, the sequence $(\pi(x_n))_{n \geq 0}$ consists of pairwise distinct elements of $M$ contained in the compact subset $-\pi[K]$. This contradicts  uniform discreteness of $M$.
\end{proof}


As already mentioned, the structure of syndetic approximate subrings is even more clear. In fact, we stumble upon Meyer's cut-and-project construction once more in this case: 

\begin{corollary}
    If $\Lambda$ is moreover syndetic, then $I \cong A$ and $A' \cong I \times J$.
\end{corollary}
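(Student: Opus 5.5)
We keep the notation of the proof of Theorem \ref{Theorem: Ring cap}: $A' \subseteq A \times B$ is the span of $\Delta$, $\pi = (\pi_A)|_{A'}$, $J = \ker \pi = A' \cap (\{0\}\times B)$, and $I$ with compact $K$ come from Proposition \ref{Prop: Schreiber} applied to $\Delta \cap \pi^{-1}[M]$. We want $\pi|_I \colon I \to A$ to be an isomorphism of real algebras and $A' = I \oplus J$ (internal direct sum of two-sided ideals, hence $A' \cong I \times J$ as rings). By item (1) of the theorem, $I \cap J = \{0\}$, so $\pi|_I$ is injective. It therefore suffices to show that $\pi[I] = A$: then $\dim I = \dim A = \dim A' - \dim J$, so $I + J = A'$.

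For surjectivity I would use syndeticity. Because $\Lambda$ is syndetic and commensurable with $M$, the set $M$ is syndetic in $A$. So there is a compact $L \subseteq A$ with $M + L = A$. Moreover $M = \pi[\Delta \cap \pi^{-1}[M]] \subseteq \pi[I + K] = \pi[I] + \pi[K]$, which gives $A = \pi[I] + \pi[K] + L$. Since $\pi[I]$ is a linear subspace of $A$, passing to the quotient $A/\pi[I]$ shows that this finite-dimensional vector space equals a compact set. Hence $A/\pi[I] = 0$, i.e. $\pi[I] = A$.

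Next, the ring structure. Both $I$ and $J$ are two-sided ideals of $A'$ with $I \cap J = \{0\}$, so $IJ \subseteq I \cap J = \{0\}$ and likewise $JI = \{0\}$. Combined with $I + J = A'$, the map $I \times J \to A'$, $(x,y) \mapsto x + y$, is a ring isomorphism. Finally, $\pi|_I$ is an injective, surjective algebra homomorphism, so $I \cong A$ (the inverse is automatically linear, hence continuous).

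The only step needing care is deducing syndeticity of $M$ from that of $\Lambda$. If $\Lambda \subseteq F + M$ with $F$ finite and $\Lambda + L_0 = A$, then $M + (F + L_0) = A$, and $F + L_0$ is compact. I do not expect any real obstacle beyond this bookkeeping.
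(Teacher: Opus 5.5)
Your argument is correct and follows the paper's route. Syndeticity of $\Lambda$, transferred to a commensurable set contained in $\pi[I]$ plus a compact set, forces the subspace $\pi[I]$ to be co-compact and hence equal to $A$; then $I\cap J=\{0\}$ with $J=\ker\pi$ gives $I\cong A$ and $A'\cong I\times J$. The only differences are cosmetic: you use the inclusion $\Delta\cap\pi^{-1}[M]\subseteq I+K$ from the construction rather than item (3) of Theorem \ref{Theorem: Ring cap}, and you spell out the dimension count and $IJ\subseteq I\cap J=\{0\}$, which the paper leaves implicit.
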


\begin{proof}
    Let $A',\Delta, I,J, \pi$ be given by Theorem \ref{Theorem: Ring cap}. We have that $\Lambda$, hence $\pi\left[ \Delta
    \cap \left(I + W_0\right)\right]$, is syndetic for some $W_0$ compact. Hence, $\pi[I]$ is co-compact in $A$. Since it is a vector subspace, $\pi[I] = A$. But $J \cap I =\{0\}$ and $J$ is the kernel of $\pi$. So $I$ is isomorphic to $A$ via $\pi$ and $A' \cong I \times J$. 
\end{proof}

\subsection{Notions of arithmeticity}

The notion of arithmeticity we will be interested in is given by the sets of $S$-integers defined at the beginning of this section. Discrete subrings of algebras often have a number-theoretic origin, and so will  uniformly discrete approximate subrings. We make a repeated use of the elementary theory of associative algebras throughout this part, background can be found in \cite{PierceAssociativeAlgebras}. 
As explained in Corollary \ref{Cor: Unitality}, all semi-simple finite-dimensional algebras are unital.

To be able to state the results of this section precisely, we first have to introduce a notion of \emph{irreducibility}.

\begin{definition}
    Let $A$ be a finite-dimensional real algebra and $
\Lambda \subseteq A$ be a uniformly discrete approximate subring. We say that $\Lambda$ is \emph{reducible} if there is a decomposition of $A$ into a non-trivial direct product $B \times C$ such that $\Lambda$ is commensurable with $\left(B \cap 2\Lambda\right) \times \left(C \cap 2\Lambda \right)$. 

Otherwise, we say that $\Lambda$ is {\em irreducible}. 
\end{definition}

While the definition makes sense for general rings, finite dimension implies the existence of a decomposition into irreducibles: 

\begin{lemma}
    Let $A$ be a finite-dimensional real algebra and $
\Lambda \subseteq A$ be a uniformly discrete approximate subring. There is a non-trivial decomposition $A = A_1 \times \cdots \times A_r$ and irreducible approximate subrings $\Lambda_i \subseteq A_i$ for $i=1, \ldots, r$ such that $\Lambda$ is commensurable with $\Lambda_1 \times \cdots \times \Lambda_r$. 
\end{lemma}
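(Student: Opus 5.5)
Approach: induction on $\dim_{\mathbb{R}} A$. If $\Lambda$ is irreducible we take $r=1$, $A_1=A$, $\Lambda_1=\Lambda$. Otherwise there is a non-trivial decomposition $A=B\times C$ such that $\Lambda$ is commensurable with $(B\cap 2\Lambda)\times(C\cap 2\Lambda)$. Both factors have strictly smaller dimension, so the induction hypothesis will handle them once the pieces are shown to be legitimate inputs. Concatenating the resulting decompositions then finishes the proof.

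The first step is to check that $\Lambda_B:=B\cap 2\Lambda$ (and likewise $\Lambda_C$) is a uniformly discrete approximate subring of $B$. Uniform discreteness follows because $2\Lambda\subseteq F+\Lambda$ for a finite $F$, and a finite union of translates of a uniformly discrete set is uniformly discrete after shrinking the neighbourhood. Note that $B$ and $C$ are two-sided ideals, hence subrings, since $A=B\times C$ is a product of algebras. For the approximate subring property, I would use the intersection argument of Lemma \ref{lemma: K510} (i.e.\ \cite[Lemma 2.9]{Dries}). That argument covers $2(2\Lambda)\cup(2\Lambda)^2$ by finitely many translates of $2\Lambda$, and then intersecting with the subring $B$ gives finitely many translates of $(2\Lambda - 2\Lambda)\cap B\subseteq 4\Lambda\cap B$. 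However, this only yields translates of $4\Lambda\cap B$, not of $2\Lambda\cap B$. To fix this I would use commensurability with the product: $(4\Lambda)\cap B$ is covered by finitely many translates of $2\Lambda\cap B$. Indeed, $4\Lambda\subseteq E+\Lambda\subseteq E+ (\Lambda_B\times\Lambda_C)+E'$ by commensurability, and projecting onto $B$ (the projection $p_B$ is a ring homomorphism) covers $4\Lambda\cap B\subseteq p_B[4\Lambda]$ by finitely many translates of $\Lambda_B$. Symmetry and $0\in\Lambda_B$ are clear.

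The next step is to make the final commensurability statement compose. By induction, $\Lambda_B$ is commensurable with $\Lambda_1\times\cdots\times\Lambda_s$ inside $B=A_1\times\cdots\times A_s$, and similarly for $C$. Products of commensurable sets are commensurable, since covering sets multiply. Hence $\Lambda$ is commensurable with $\Lambda_1\times\cdots\times\Lambda_r$, and $A=A_1\times\cdots\times A_r$. The $\Lambda_i$ are irreducible by construction, since the recursion stops exactly at irreducible pieces. One should also keep every $A_i\neq 0$, so that the decomposition is non-trivial in the intended sense, i.e.\ with no zero factors. The base case is $\dim A=0$, or more generally the irreducible case, where we stop.

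The main obstacle I expect is the approximate subring verification in the first step, specifically controlling $(\Lambda_B)^2$ and $2\Lambda_B$ by translates of $\Lambda_B$ itself rather than of some larger $n\Lambda\cap B$. The plan is to route everything through the projection $p_B$ together with the given commensurability, rather than through intersections alone. Concretely, $\Lambda_B\cdot\Lambda_B\subseteq p_B[(2\Lambda)^2]\subseteq p_B[F''+\Lambda]$. This reduces the problem to showing $p_B[\Lambda]$ is covered by finitely many translates of $\Lambda_B$, which follows from $\Lambda\subseteq E+(\Lambda_B\times\Lambda_C)$.
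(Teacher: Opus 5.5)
Your induction is essentially the paper's proof. You split along a decomposition witnessing reducibility, observe that the pieces $2\Lambda\cap B$ are again approximate subrings, and stop because the dimension drops. The paper phrases termination as stabilisation of descending chains of subalgebras, and cites \cite[Lem.~2.9]{Dries} for the pieces.

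The obstacle you anticipate comes only from running the intersection argument with translates of $2\Lambda$ instead of $\Lambda$. Take a finite $F$ with $4\Lambda\cup 4\Lambda^2\subseteq F+\Lambda$, and note $2\Lambda_B\cup\Lambda_B^2\subseteq(4\Lambda\cup 4\Lambda^2)\cap B$. Whenever $(f+\Lambda)\cap B$ is non-empty, pick $b_f$ in it. Since $f-b_f\in\Lambda$, this gives $(f+\Lambda)\cap B\subseteq b_f+\bigl((\Lambda+\Lambda)\cap B\bigr)=b_f+\Lambda_B$. So $2\Lambda\cap B$ is an approximate subring for every subring $B$, without the reducibility hypothesis. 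Your route through $p_B$ and the commensurability with $\Lambda_B\times\Lambda_C$ is correct but unnecessary.

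The one step that is actually wrong is your justification of uniform discreteness. A finite union of translates of a uniformly discrete set need not be uniformly discrete. For example, $\Lambda=\{2^n,\ 2^n+1+2^{-n}: n\geq 1\}\subseteq\mathbb{R}$ is uniformly discrete, but $\Lambda\cup(1+\Lambda)$ contains points at distance $2^{-n}$. So $2\Lambda\subseteq F+\Lambda$ alone proves nothing.

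Use the difference set instead. We have $\Lambda_B-\Lambda_B\subseteq 2\Lambda-2\Lambda=4\Lambda\subseteq F+\Lambda$. A uniformly discrete subset of $A$ meets every compact set in a finite set, hence so does $F+\Lambda$. Thus some neighbourhood $V$ of $0$ satisfies $4\Lambda\cap V=\{0\}$, and any neighbourhood $W$ of $0$ with $W-W\subseteq V$ witnesses uniform discreteness of $\Lambda_B$. With this repair, the induction and the composition of commensurabilities go through.
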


\begin{proof}
    This is a straightforward consequence of the fact that any descending sequence of subalgebras must stabilise and that for any subalgebra of $B$ of $A$, the subset $2\Lambda \cap B$ is also an approximate subring, see \cite[Lem. 2.9]{Dries}.
\end{proof}

We can now explore the number-theoretic aspects of irreducible co-compact discrete subrings.

\begin{lemma}\label{Lemma: Arithmeticity}
    Let $\Delta$ be an irreducible discrete co-compact subring in a finite-dimensional semi-simple real algebra $A$. There are a number field $K$, a torsion-free finite rank $\mathcal{O}_K$-algebra $A_{\mathcal{O}_K}$ and  $R$ the set of Archimedean places of $K$ such that 
    $$ A \cong \prod_{v \in R} K_v \otimes_{\mathcal{O}_K} A_{\mathcal{O}_K}$$
    and under this identification 
    $ \Delta$ is commensurable with  $A_{\mathcal{O}_K}$ embedded diagonally. Moreover, for all $v \in R$, $K_v \otimes_{\mathcal{O}_K} A_{\mathcal{O}_K}$ is simple.
\end{lemma}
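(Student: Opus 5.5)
The plan is to extract a number field from the center of the $\mathbb{Q}$-algebra spanned by $\Delta$. Set $A_{\mathbb{Q}}:=\mathbb{Q}\Delta$. Since $\Delta$ is a discrete co-compact subgroup of $(A,+)$, it is a lattice, so $\Delta\cong\mathbb{Z}^{\dim A}$. Hence $A_{\mathbb{Q}}$ is a $\mathbb{Q}$-form of $A$, with $A\cong \mathbb{R}\otimes_{\mathbb{Q}} A_{\mathbb{Q}}$ as algebras. Semisimplicity descends along field extensions in characteristic zero, because the Jacobson radical commutes with separable base change. So $A_{\mathbb{Q}}$ is a semisimple $\mathbb{Q}$-algebra, and Wedderburn gives $A_{\mathbb{Q}}\cong\prod_j M_{n_j}(D_j)$ with centers $K_j=Z(D_j)$ number fields.

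\textbf{Step 1: irreducibility forces a single factor.} The central idempotents $e_j$ of $A_{\mathbb{Q}}$ lie in $A_{\mathbb{Q}}$. Hence $N e_j\Delta\subseteq\Delta$ for some integer $N$, and $\Delta$ is commensurable with $\prod_j(\Delta\cap e_jA)$, where $\Delta\cap e_jA\supseteq N e_j\Delta$. Each $\Delta\cap e_jA$ is contained in $2\Delta\cap e_jA$, so $\Delta$ is also commensurable with $\prod_j(2\Delta\cap e_jA)$. If there were more than one factor, $\Delta$ would be reducible. So $A_{\mathbb{Q}}$ is simple, with center a number field $K$, and $A_{\mathbb{Q}}$ is a central simple $K$-algebra.

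\textbf{Step 2: build the order.} Let $A_{\mathcal{O}_K}:=\mathcal{O}_K\cdot\Delta$, the $\mathcal{O}_K$-span of $\Delta$ inside $A_{\mathbb{Q}}$. This is a finitely generated, torsion-free $\mathcal{O}_K$-module. It is a subring because $\mathcal{O}_K$ is central. It is commensurable with $\Delta$: since $\mathcal{O}_K$ is a finitely generated $\mathbb{Z}$-module contained in $\frac1N(\mathbb{Z}\text{-span of }\Delta\text{-products})$, and since $K\cdot 1\subseteq A_{\mathbb{Q}}$, some integer multiple of each generator of $\mathcal{O}_K$ multiplies $\Delta$ into $\Delta$. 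Hence $m A_{\mathcal{O}_K}\subseteq\Delta\subseteq A_{\mathcal{O}_K}$ for some $m$, and both are lattices of the same rank.

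\textbf{Step 3: the product decomposition.} We have $\mathbb{R}\otimes_{\mathbb{Q}}K\cong\prod_{v\in R}K_v$, where $R$ is the set of archimedean places. Therefore
$$A\cong \mathbb{R}\otimes_{\mathbb{Q}}A_{\mathbb{Q}}\cong(\mathbb{R}\otimes_{\mathbb{Q}}K)\otimes_K A_{\mathbb{Q}}\cong\prod_{v\in R}K_v\otimes_K A_{\mathbb{Q}}=\prod_{v\in R}K_v\otimes_{\mathcal{O}_K}A_{\mathcal{O}_K}.$$
The last equality holds because $A_{\mathcal{O}_K}\otimes_{\mathcal{O}_K}K=A_{\mathbb{Q}}$. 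Under this isomorphism, $A_{\mathcal{O}_K}$ sits diagonally. Each factor $K_v\otimes_K A_{\mathbb{Q}}$ is central simple over $K_v$ by base change of a central simple algebra, which gives the moreover clause.

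\textbf{Main obstacle.} The delicate point is Step 1: checking that the definition of reducibility, stated with $2\Lambda$, is satisfied by the idempotent splitting. This requires controlling denominators of the $e_j$ relative to $\Delta$. I handle it by the integer $N$ above. A second delicate point is that semisimplicity of $A$ really descends to $A_{\mathbb{Q}}$. This follows because $\operatorname{rad}(A_{\mathbb{Q}})\otimes\mathbb{R}$ is a nilpotent ideal of $A$, hence zero.
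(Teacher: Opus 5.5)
Your proof is correct, but it follows a different route from the paper's after the common first step. Both arguments set $A_{\mathbb{Q}}=\mathbb{Q}\Delta$, note that $A\cong\mathbb{R}\otimes_{\mathbb{Q}}A_{\mathbb{Q}}$, deduce that $A_{\mathbb{Q}}$ is semisimple, and use irreducibility to make it simple.

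\textbf{The paper's route.} It then passes to $\mathbb{C}$. It decomposes $A_{\mathbb{C}}$ into simple factors $A_1,\dots,A_n$ and invokes irreducibility a second time to get transitivity of the $\mathrm{Gal}(\mathbb{C}/\mathbb{Q})$-action on these factors. It defines $K$ as the fixed field of the stabiliser of $A_1$, which needs a footnote reducing to $\mathbb{Q}^{alg}$. A dimension count shows that the projection $A_{\mathbb{Q}}\to A_K\cap A_1$ is bijective. $A_{\mathcal{O}_K}$ is taken to be the $\mathcal{O}_K$-span of a rescaled $K$-basis. The paper then assembles $A_{\mathbb{C}}\cong\prod_\sigma\mathbb{C}\otimes_{\sigma[\mathcal{O}_K]}\sigma[A_{\mathcal{O}_K}]$ from chosen extensions $\tau_i$ of the embeddings. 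Finally it recovers $A$ as the fixed points of complex conjugation.

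\textbf{Your route.} You take $K=Z(A_{\mathbb{Q}})$ and $A_{\mathcal{O}_K}=\mathcal{O}_K\Delta$ directly. The product decomposition is then the one-line identity $\mathbb{R}\otimes_{\mathbb{Q}}A_{\mathbb{Q}}\cong(\mathbb{R}\otimes_{\mathbb{Q}}K)\otimes_K A_{\mathbb{Q}}\cong\prod_{v}K_v\otimes_K A_{\mathbb{Q}}$. The ``moreover'' clause is just base change of a central simple algebra. This is the same field as the paper's: the simple factors of $A_{\mathbb{C}}$ correspond to the embeddings of the centre, so the stabiliser's fixed field is an embedded copy of $Z(A_{\mathbb{Q}})$.

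\textbf{Comparison.} Your argument is shorter and avoids Galois theory of the non-algebraic extension $\mathbb{C}/\mathbb{Q}$. It also shows that the paper's second appeal to irreducibility is redundant, since transitivity is automatic once $A_{\mathbb{Q}}$ is simple. Moreover, your order actually contains $\Delta$, with $mA_{\mathcal{O}_K}\subseteq\Delta$. Your descent of semisimplicity, via nilpotency of $\operatorname{rad}(A_{\mathbb{Q}})\otimes\mathbb{R}$ in the semisimple (hence unital) $A$, is a self-contained replacement for the paper's citation of Pierce. What the paper's explicit construction buys is that the embeddings $\sigma_v$ and the identifications $K_v\otimes_{\sigma_v[\mathcal{O}_K]}\sigma_v[A_{\mathcal{O}_K}]$ are built in. These match the conventions used later, e.g.\ in the Pisot--Salem corollary. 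With your abstract $K$ you recover the same picture by fixing one embedding $\sigma_v$ per archimedean place.
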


This result and its proof are inspired by celebrated `arithmeticity' results due to Margulis for lattices (i.e. discrete and finite co-volume subgroups) in simple Lie groups, \cite{Margulis}.  Recall first that for any number field $K \subseteq \mathbb{C}$, the set of Archimedean places admits a concrete description: each infinite place $v$ corresponds to an absolute value $\lambda \in K \mapsto |\sigma(\lambda)|$ where $\vert \cdot \vert$ denotes the complex modulus and $\sigma\colon K \rightarrow \mathbb{C}$ denotes a field embedding; the completion $K_v$ is then simply built as the closure of $\overline{\sigma(K)} \subseteq \mathbb{C}$ in the usual topology and is equal to either $\mathbb{R}$ or $\mathbb{C}$, see \cite[Prop. II.9.1]{NeukirchAlgebraicNumberTheory} for this and more. In the case when $K_v=\mathbb{R}$, the above embedding $\sigma$ yielding $v$ is unique, whereas for $K_v=\mathbb{C}$ there are exactly two embeddings of $K$ into $\mathbb{C}$ yielding $v$, namely $\sigma$ and $\overline{\sigma}$ (i.e. the composition of $\sigma$ and the complex conjugate). In Lemma \ref{Lemma: Arithmeticity} and other statements below (and also in Theorem \ref{theorem: classical Meyer} and in the discussion preceding it), the rings $\mathcal{O}_{K}$ and $\mathcal{O}_{K,v}$ are identified with $\sigma_v[\mathcal{O}_{K}]$ and $\sigma_v[\mathcal{O}_{K,v}]$, respectively, for any choice of an embedding $\sigma_v$ of $K$ into $\mathbb{C}$ yielding the place $v$ in question, e.g. $K_v \otimes_{\mathcal{O}_K} A_{\mathcal{O}_K}$ really means $K_v \otimes_{\sigma_v[\mathcal{O}_K]} \sigma_v[A_{\mathcal{O}_K}]$ where $\sigma_v[A_{\mathcal{O}_K}]$ is the $\sigma_v[\mathcal{O}_K]$-algebra naturally induced by $\sigma_v$.

\begin{proof}
    Let $A_\mathbb{Q}$ denote the $\mathbb{Q}$-algebra generated by $\Delta$. We first notice that $A \cong \mathbb{R} \otimes_\mathbb{Q} A_\mathbb{Q}$. Indeed, the inclusion map $A_\mathbb{Q} \rightarrow A$ gives rise to a natural homomorphism $\pi\colon \mathbb{R} \otimes_\mathbb{Q} A_\mathbb{Q} \rightarrow A$. Since $A_\mathbb{Q}$ contains $\Delta$, its  $\mathbb{R}$-span must be $A$. Thus, $\pi$ is surjective. But the real dimension of $\mathbb{R} \otimes_\mathbb{Q} A_\mathbb{Q}$ is equal to the dimension of $A_\mathbb{Q}$ over $\mathbb{Q}$, itself equal to the rank as an additive subgroup of $\Delta$. But $\Delta$ has rank $\dim_\mathbb{R} A$ because it is a discrete and co-compact subgroup of $A_\mathbb{R}$. Which concludes. As an immediate consequence, $A_\mathbb{Q}$ is semi-simple \cite[\S 4.1 Cor. a and \S 4.4 Prop.]{PierceAssociativeAlgebras}.

    By irreducibility, we have furthermore that $A_\mathbb{Q}$ is simple as a $\mathbb{Q}$-algebra. Indeed, otherwise by semi-simplicity there are two proper $\mathbb{Q}$-subalgebras $B,C$ such that $A_\mathbb{Q} = B \times C$. But $\Delta \cap B$ and $\Delta \cap C$ must have $\mathbb{Q}$-span equal to $B$ and $C$ respectively according to the following claim.     
    \begin{clm}
        Let $V$ be a finite-dimensional $\mathbb{Q}$-vector space and $\Gamma \subseteq V$ be a subgroup whose $\mathbb{Q}$-span is $V$. Then for every subspace $W \subseteq V$, the subgroup $\Gamma \cap W$ spans $W$. 
    \end{clm}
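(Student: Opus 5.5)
The claim is elementary linear algebra over $\mathbb{Q}$. The plan is to reduce it to the observation that every vector of $V$ has a nonzero integer multiple in $\Gamma$, that is, $V=\mathbb{Q}\Gamma$.

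First I will show this. Since the $\mathbb{Q}$-span of $\Gamma$ is $V$, every $v \in V$ can be written as $v=\sum_{i} q_i\gamma_i$ with $q_i \in \mathbb{Q}$ and $\gamma_i \in \Gamma$. Let $N$ be a common denominator of the $q_i$. Then $Nv=\sum_i (Nq_i)\gamma_i$ is an integer combination of elements of $\Gamma$, hence lies in $\Gamma$, because $\Gamma$ is a subgroup. So for every $v\in V$ there is $N \in \mathbb{N}_{>0}$ with $Nv \in \Gamma$.

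Next, let $W \subseteq V$ be a subspace and fix a $\mathbb{Q}$-basis $w_1,\dots,w_k$ of $W$. For each $j$, choose $N_j>0$ with $N_jw_j \in \Gamma$. Since $W$ is a subspace, $N_jw_j \in \Gamma\cap W$. The vectors $N_1w_1,\dots,N_kw_k$ are nonzero rational rescalings of a basis, so they still form a basis of $W$. Therefore the $\mathbb{Q}$-span of $\Gamma\cap W$ contains a basis of $W$. As $\Gamma\cap W\subseteq W$, it equals $W$, which proves the claim.

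There is no genuine obstacle here. The only point needing care is the first step: passing from ``$\Gamma$ spans $V$'' to ``$\mathbb{Q}\Gamma=V$'' uses both that $\Gamma$ is closed under integer combinations and that denominators can be cleared. Finite dimensionality of $V$ is only needed to have a finite basis of $W$, and even that could be avoided by applying the first step pointwise to every element of $W$.
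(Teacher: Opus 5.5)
Your proof is correct and follows essentially the same route as the paper. Both show that every vector of $V$ has a nonzero integer multiple in $\Gamma$, then rescale a basis of $W$ into $\Gamma \cap W$. You use separate multipliers $N_j$ where the paper uses a common $m$, and you spell out the denominator-clearing step that the paper asserts without justification.
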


    \begin{clmproof}
          Since $\Gamma$ is a subgroup spanning $V$, for any $a \in V$ there is $m \geq 0$ such that $m\cdot a \in \Gamma$. So for any basis $(e_1, \ldots, e_r)$ of $W$ there is $m > 0$ such that $(m\cdot e_1, \ldots, m\cdot e_r)$ is  contained in $\Gamma$. But it is still a basis of $W$. This concludes.
    \end{clmproof}
   \noindent Hence, $\left(\Delta \cap B \right)\times \left(\Delta \cap C\right)$ is a subring of $\left(\mathbb{R} \otimes_\mathbb{Q} B\right) \times\left(\mathbb{R} \otimes_\mathbb{Q} C \right)= A$ whose $\mathbb{Q}$-span is $A_\mathbb{Q}$ and is contained in $\Delta$. Since they are finitely generated groups, they must have the same rank, equal to $\dim A_\mathbb{Q}$. So $\left(\Delta \cap B\right) \times \left(\Delta \cap C\right)$ and $\Delta$ are commensurable, contradicting irreducibility of $\Delta$. 

    From now on, we identify $A$ and $\mathbb{R} \otimes_\mathbb{Q} A_\mathbb{Q}$ through the map $\pi$ above. For any field $K$ containing $\mathbb{Q}$, write $A_K : = K \otimes_\mathbb{Q} A_\mathbb{Q}$. Note that because $A_{\mathbb{Q}}$ is semi-simple, so is $A_K$ for any extension $K$ of $\mathbb{Q}$. Since $A_\mathbb{C}$ is semi-simple, we have $A_\mathbb{C} = \prod_{i=1}^n A_i$ where the $A_i$'s are simple $\mathbb{C}$-subalgebras and this decomposition is unique. By uniqueness, the obvious Galois action of $Gal(\mathbb{C}/\mathbb{Q})$ on $A_\mathbb{C}$ permutes the simple subalgebras of $A_i$. 
    
    Because $\Delta$ is irreducible, the action on $\{A_1, \ldots, A_n\}$ is moreover transitive. Indeed, suppose otherwise. Up to reindexing, let $A_1, \ldots, A_r$ be the Galois orbit of $A_1$ where $r < n$. Then both $B_1 := A_1 \times \ldots \times A_r$ and $B_2 := A_{r+1} \times \ldots \times A_n$ are proper subalgebras of $A_\mathbb{C}$ such that $A_\mathbb{C} = B_1 \times B_2$ and both are invariant under the Galois action. Let $\delta \in \Delta$, then $\delta = (\delta_1, \delta_2)$ with $\delta_i \in B_i$ for $i=1,2$. If $\sigma \in Gal(\mathbb{C}/\mathbb{Q})$, then 
    $$(\delta_1, \delta_2) = \delta = \sigma(\delta) = (\sigma(\delta_1), \sigma(\delta_2)).$$
    Since $\sigma(\delta_i) \in B_i$ for $i=1,2$, we find that $\delta_i =\sigma(\delta_i)$. But this holds for all $\sigma \in Gal(\mathbb{C} / \mathbb{Q})$. So $\delta_i \in A_\mathbb{Q}$ for $i=1,2$. Thus, $A_\mathbb{Q} \cap B_i$ contains the projection of $A_\mathbb{Q}$ to $B_i$. In particular, $A_\mathbb{Q} \cap B_i$ has $\mathbb{C}$-span equal to $B_i$. We conclude that $\Delta \cap B_i$ also has $\mathbb{C}$-span equal to $B_i$. Finally, $\Delta':=\left(\Delta \cap B_1\right) \times \left(\Delta \cap B_2\right) $ is a subring contained in $\Delta$ and has $\mathbb{C}$-span equal to $A$. So they must have equal rank. Hence, $\Delta'$ and $\Delta$ are commensurable, which contradicts irreducibility of $\Delta$.

    Let now $H \subseteq Gal(\mathbb{C}/\mathbb{Q})$ denote the stabiliser of $A_1$. 
It is a closed (in the pointwise convergence topology) subgorup of index $n$ in $Gal(\mathbb{C}/\mathbb{Q})$.
Let $K$ be the subfield of $\mathbb{C}$ fixed by $H$. By Galois theory, we know that $Gal(\mathbb{C}/K)=H$ and so $[K:\mathbb{Q}]=n$\footnote{Since the extension $\mathbb{Q} \subseteq \mathbb{C}$ is non-algebraic, it order to use the classical Galois correspondence, one should reduce the situation to the Galois extension $\mathbb{Q} \subseteq \mathbb{Q}^{alg}$ by replacing $H$ by the image of $H$ under the restriction to $Gal(\mathbb{Q}^{alg}/\mathbb{Q})$. We leave the details as an exercise.}.
Choose now any $\delta \in A_\mathbb{Q}$ and $\sigma \in H$. Write $\delta =(\delta_1, \ldots, \delta_n)$ where $\delta_i \in A_i$ for $i = 1, \ldots, n$. As in the fourth paragraph, notice that since $\sigma$ stabilises $A_1$, there are  $\delta_i'$ for $i=2, \ldots, n$ such that
    $$(\delta_1, \ldots, \delta_n) = \delta = \sigma(\delta) = (\sigma(\delta_1), \delta'_2, \ldots, \delta'_n)$$
    which implies $\delta_1 = \sigma(\delta_1)$. Since $\sigma \in H$ is arbitrary and $K$ is the fixed field of $H$, we have $\delta_1 \in A_K$. So the projection $\pi_1[A_\mathbb{Q}]$ of $A_\mathbb{Q}$ to $A_1$ is contained in $A_K \cap A_1$. Moreover, by simplicity of $A_\mathbb{Q}$, it is either trivial or isomorphic to $A_\mathbb{Q}$, and since its $\mathbb{C}$-span is equal to $A_1$, it is isomorphic to  $A_\mathbb{Q}$. In particular, 
$\pi_1[A_\mathbb{Q}]$ has $\mathbb{Q}$-dimension $\dim_\mathbb{Q} A_\mathbb{Q} = \dim_\mathbb{R} A$. 
Since the Galois action is transitive on the $A_i$'s, we have $\dim_\mathbb{R} A = \dim_\mathbb{C} A_\mathbb{C} =  n \dim_\mathbb{C} A_1$. Now, the inclusion $A_K \cap A_1 \subseteq A_1$ induces an injective homomorphism of $\mathbb{C}$-algebras $\mathbb{C} \otimes_K (A_K \cap A_1) \hookrightarrow A_1$. This yields that $A_K \cap A_1$ has $K$-dimension at most $\dim_\mathbb{C} A_1 = \frac{\dim_\mathbb{R} A}{n}$. 
So for $B:=A_K \cap A_1$ we have 
\begin{align*}
\dim_{\mathbb{R}}(A) &= \dim_{\mathbb{Q}} (\pi_1[A_\mathbb{Q}]) \leq \dim_{\mathbb{Q}}(\Span_K(\pi_1[A_\mathbb{Q}]))= \dim_{\mathbb{K}}(\Span_K(\pi_1[A_\mathbb{Q}])) \cdot [K : \mathbb{Q}]\leq \\ & \leq  \dim_K(B) \cdot n \leq \dim_{\mathbb{R}}(A),
\end{align*}
which implies equality everywhere, and hence $\pi_1[A_\mathbb{Q}] =B$. Thus, the projection $p\colon A_\mathbb{Q} \rightarrow B$ is bijective. In particular, $B$ has $\mathbb{C}$-span equal to $A_1$ and $A_1 \cong \mathbb{C} \otimes_K B$ via the natural map.

    We now identify the $\mathcal{O}_K$-algebra in $B$. Let $e_1, \ldots, e_d$ be a $K$-basis of $B$. Upon considering the basis $m\cdot e_1, \ldots, m \cdot e_d$ for some $m$ sufficiently large, we may assume that for all $1 \leq i,j \leq d$, $e_i \cdot e_j \in \sum_{i} \mathcal{O}_K \cdot e_i$. Write $A_{\mathcal{O}_K} := \sum_{i} \mathcal{O}_K \cdot e_i$ which has an obvious $\mathcal{O}_K$-algebra structure. Since $\Delta$ is a finitely generated group, there is also an integer $m > 0$ such that $m \cdot p[\Delta] :=\{mx  : x \in p[\Delta] \} \subseteq A_{\mathcal{O}_K}$. Conversely, we have seen that the $\mathbb{Q}$-span of $p[\Delta]$ is equal to $B$. Since $A_{\mathcal{O}_K}$ is finitely generated as an $\mathcal{O}_K$-algebra and $\mathcal{O}_K$ is finitely generated as an abelian group, $A_{\mathcal{O}_K}$ is finitely generated as an abelian group. So there is  an integer $m' > 0$ such that $m' \cdot A_{\mathcal{O}_K} \subseteq p[\Delta]$. In particular, the rank of $A_{\mathcal{O}_K}$ coincides with the rank of $p[\Delta]$ which (by injectivity of $p$) is the same as the rank of $\Delta$. So the rank of $A_{\mathcal{O}_K}$ equals $\dim_{\mathbb{Q}}(A_{\mathbb{Q}})$.
    
    Finally, since $Gal(\mathbb{C} /\mathbb{Q})$ acts transitively on $\{A_1, \ldots, A_n\}$ and $K$ is the fixed filed of the stabiliser of $A_1$, we can construct an isomorphism $$A_\mathbb{C} \cong \prod_{\sigma :K \rightarrow \mathbb{C}} \mathbb{C} \otimes_{\sigma\left[\mathcal{O}_K\right]} \sigma\left[A_{\mathcal{O}_K}\right].$$  Indeed, let $\{\sigma_1:=\id, \sigma_2, \ldots, \sigma_n \}= Hom(K, \mathbb{C})$. For $i=2, \ldots, n$ choose $\tau_i \in Gal(\mathbb{C}/\mathbb{Q})$ extending $\sigma_i$. For every $i =2, \ldots, n$ there is $j(i) \in \{1, \ldots, n\}$ such that $\tau_i[A_1] = A_{j(i)}$. 
If $i_1, i_2$ satisfy $j(i_1)=j(i_2)$, we have $\tau_{i_1}^{-1}\tau_{i_2}[A_1] = A_1$ which implies $\sigma_{i_1} = \sigma_{i_2}$, and so $i_1=i_2$. Upon reindexing, we may therefore assume that $\tau_i[A_1]=A_i$. 
Since $A_1$ was shown to be isomorphic with $\mathbb{C} \otimes_K B$ which is further isomorphic with $\mathbb{C} \otimes_{\mathcal{O}_K} A_{\mathcal{O}_K}$, we get that $A_i \cong \mathbb{C} \otimes_{\sigma_i\left[\mathcal{O}_K\right]} \sigma_i \left[A_{\mathcal{O}_K}\right]$. This gives rise to the isomorphism that we are looking for. We will now identify the image of $\Delta$ under this isomorphism up to commensurability. Define the map 
    \begin{align*}
        \phi \colon A_{\mathcal{O}_K} &\longrightarrow A_\mathbb{C} = A_1 \times \ldots \times A_n \\
        b &\longmapsto b+\tau_2(b)+ \cdots + \tau_n(b) =(b,\tau_2(b),\dots,\tau_n(b)).
    \end{align*} 
    Any other choice of $\tau_i$'s extending the $\sigma_i$'s provides the same map, 
so $\sigma \circ \phi = \phi$ for all $\sigma \in Gal(\mathbb{C}/\mathbb{Q})$. Hence, $\phi\left[A_{\mathcal{O}_K}\right]\subseteq A_\mathbb{Q}$. But $\phi\left[A_{\mathcal{O}_K}\right]$ has the same rank as $A_{\mathcal{O}_K}$ which was shown to be equal to $\dim_{\mathbb{Q}}(A_{\mathbb{Q}})$, and so its $\mathbb{Q}$-span is $A_\mathbb{Q}$. Since both groups $\Delta$ and  $\phi\left[A_{\mathcal{O}_K}\right]$ are finitely generated and their $\mathbb{Q}$-spans coincide, we conclude that there are positive integers $m_1,m_2$ such that $m_1\cdot \Delta \subseteq \phi\left[A_{\mathcal{O}_K}\right]$ and $m_2\cdot \phi\left[A_{\mathcal{O}_K}\right]\subseteq \Delta$. Hence, $\phi\left[A_{\mathcal{O}_K}\right]$ is commensurable with $\Delta$.  

    Finally,  $A_\mathbb{R} = A$ is the set of points in $A_\mathbb{C}$ that are fixed under complex conjugation. In the identification $A_\mathbb{C} \cong \prod_{\sigma :K \rightarrow \mathbb{C}} \mathbb{C} \otimes_{\sigma\left[\mathcal{O}_K\right]} \sigma\left[A_{\mathcal{O}_K}\right]$, complex conjugation corresponds to the map $\prod_{\sigma :K \rightarrow \mathbb{C}} \mathbb{C} \otimes_{\sigma\left[\mathcal{O}_K\right]} \sigma\left[A_{\mathcal{O}_K}\right] \rightarrow \prod_{\sigma :K \rightarrow \mathbb{C}} \mathbb{C} \otimes_{\sigma\left[\mathcal{O}_K\right]} \sigma\left[A_{\mathcal{O}_K}\right]$ that permutes the factors, sending  $\mathbb{C} \otimes_{\sigma\left[\mathcal{O}_K\right]} \sigma\left[A_{\mathcal{O}_K}\right]$ to $\mathbb{C} \otimes_{\overline{\sigma}[\mathcal{O}_K]} \overline{\sigma}[A_{\mathcal{O}_K}]$ by mapping $\lambda \otimes \sigma(a)$ to $\overline{\lambda} \otimes \overline{\sigma}(a)$ where $\overline{\sigma}$ denotes the composition of $\sigma$ and complex conjugation. The fixed points are therefore spanned over $\mathbb{R}$ by elements of the form:
    \begin{itemize}
        \item $1 \otimes \sigma(a)$ for $a \in A_{\mathcal{O}_K}$ in the coordinate $\sigma$ and $0$ elsewhere if $\overline{\sigma}=\sigma$;
        \item $1 \otimes \sigma(a)$ in the coordinate $\sigma$, $1 \otimes \overline{\sigma}(a)$ in the coordinate $\overline{\sigma}$  for $a \in A_{\mathcal{O}_K}$ and $0$ elsewhere if $\overline{\sigma}\neq\sigma$; and 
        \item $i \otimes \sigma(a)$ in the coordinate $\sigma$, $-i \otimes \overline{\sigma}(a)$ in the coordinate $\overline{\sigma}$ and $0$ elsewhere for $a \in A_{\mathcal{O}_K}$ if $\overline{\sigma}\neq\sigma$.
    \end{itemize}
    Choose now $R':=\{\sigma_1, \ldots, \sigma_r, \ldots, \sigma_{r+s}\}$ a set containing precisely one element of each pair $\{\sigma, \overline{\sigma}\}$ and such that for $i=1, \ldots, r$, we have $\sigma_i =\overline{\sigma_i}$ and for all $j = 1, \ldots, s$, $\sigma_{r+j} \neq \overline{\sigma_{r+j}}$. Then the map 
    $$\prod_{i=1}^r \mathbb{R} \otimes_{\sigma_i[\mathcal{O}_K]} \sigma_i\left[A_{\mathcal{O}_K}\right] \times \prod_{j=1}^s\mathbb{C} \otimes_{\sigma_{r+j}[\mathcal{O}_K]}\sigma_{r+j}\left[A_{\mathcal{O}_K}\right] \rightarrow \prod_{\sigma :K \rightarrow \mathbb{C}} \mathbb{C} \otimes_{\sigma\left[\mathcal{O}_K\right]} \sigma\left[A_{\mathcal{O}_K}\right] $$
    defined on each factor by 
    \begin{itemize}
        \item if $i=1, \ldots, r$, $x \otimes \sigma_i(a)$ is sent to $x \otimes \sigma_i(a)$ in coordinate $\sigma_i$ and $0$ elsewhere;
        \item if $j =1, \ldots, s$, $z \otimes \sigma_{r+j}(a)$ is sent to $z \otimes\sigma_{r+j}(a) $ in coordinate $\sigma_{r+j}$, $\overline{z} \otimes\overline{\sigma_{r+j}}(a) $ in coordinate $\overline{\sigma_{r+j}}$ and $0$ elsewhere;
    \end{itemize}
    is an isomorphism onto $A_\mathbb{R}$. According to the description of Archimedean places of $K$ in terms of embeddings $K \rightarrow \mathbb{C}$ we therefore have
    $$ A \cong \prod_{v \in R} K_v \otimes_{\mathcal{O}_K} A_{\mathcal{O}_K}$$
    and the lemma is proven.
\end{proof}

We can combine Lemma \ref{Lemma: Arithmeticity} and Theorem \ref{Theorem: Ring cap} to conclude the generalisation of Meyer's sum-product theorem: 

\begin{theorem}\label{Theorem: Gen. Pisot Meyer}
Let $\Lambda$ be an irreducible uniformly discrete infinite approximate subring in a finite-dimensional semi-simple $\mathbb{R}$-algebra $A$. Suppose that the $\mathbb{R}$-span of $\Lambda$ is $A$. Then the data $A', \Delta, I,J , \pi$ from Theorem \ref{Theorem: Ring cap} can be constructed as follows.

There is a number field $K \subseteq \mathbb{R}$ with the set of Archimedean places $R$, a finite rank $\mathcal{O}_K$-algebra $A_{\mathcal{O}_K}$ and $R_1 \subseteq R_2 \subseteq R$ such that $A' \cong \prod_{v \in R} K_{v} \otimes_{\mathcal{O}_K} A_{\mathcal{O}_K}$ and through this identification  
$$A \cong \prod_{v \in R_2} K_{v} \otimes_{\mathcal{O}_K} A_{\mathcal{O}_K},$$ $$I \cong \prod_{v \in R_1} K_v \otimes_{\mathcal{O}_K} A_{\mathcal{O}_K} \text{ and } J \cong \prod_{v \in R \setminus R_2 } K_v \otimes_{\mathcal{O}_K} A_{\mathcal{O}_K}$$ and $\Delta$ is commensurable with the diagonal embedding to $A_{\mathcal{O}_K}$. 

\end{theorem}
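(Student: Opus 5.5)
Start from the data $A',\Delta,I,J,\pi$ of Theorem \ref{Theorem: Ring cap}. Recall that $A'$ is the $\mathbb{R}$-span of the discrete subring $\Delta$, which is co-compact in $A'$. The plan is to transfer semi-simplicity and irreducibility from $(A,\Lambda)$ to $(A',\Delta)$, apply Lemma \ref{Lemma: Arithmeticity} to $\Delta\subseteq A'$, and then read off $A$, $I$, $J$ as sub-products of simple factors.

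\emph{Step 1: $A'$ is semi-simple.} Let $N$ be the Jacobson radical of $A'$; it is a nilpotent two-sided ideal. Since $\pi[N]$ is a nilpotent ideal of the semi-simple algebra $A$ (using surjectivity of $\pi$, which follows from $\Lambda$ spanning $A$ and the commensurability in (3)), we get $N\subseteq J$. To rule out $N\neq 0$, I would use that $\Delta$ is co-compact with $\mathbb{Q}$-span $A'_\mathbb{Q}$, whose radical $N_\mathbb{Q}$ satisfies $\mathbb{R}\otimes N_\mathbb{Q}=N$. By the Claim in the proof of Lemma \ref{Lemma: Arithmeticity}, $\Delta\cap N$ spans $N$, so $\Delta\cap J\neq\{0\}$ whenever $N\neq 0$, contradicting $J\cap\Delta=\{0\}$. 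This requires showing that the radical is defined over $\mathbb{Q}$, which is standard in characteristic $0$. Thus $A'$ is semi-simple, so $A'=I\times J\times L$ for a complementary ideal $L$, and $J$ and $I$ are products of simple factors of $A'$.

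\emph{Step 2: $\Delta$ is irreducible.} Suppose $A'=B\times C$ nontrivially with $\Delta$ commensurable with $(\Delta\cap B)\times(\Delta\cap C)$. Then $\pi[\Delta\cap(I+W_0)]$ splits accordingly, and since $I,J,B,C$ are all sums of simple factors, $\pi[B]\times\pi[C]=A$. This contradicts the irreducibility of $\Lambda$ unless one of $\pi[B]$, $\pi[C]$ is $0$, say $\pi[B]=0$, so $B\subseteq J$. But $\Delta\cap B\subseteq\Delta\cap J=\{0\}$, and commensurability would then force $\Delta\cap B$ to be of full rank in $B$, so $B=0$, a contradiction. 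I expect this step to be the main obstacle: it requires care in matching commensurability classes after intersecting with the window.

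\emph{Step 3: Apply Lemma \ref{Lemma: Arithmeticity}.} Applied to $\Delta\subseteq A'$, the lemma gives $K$, $A_{\mathcal{O}_K}$ and $A'\cong\prod_{v\in R}K_v\otimes A_{\mathcal{O}_K}$, with each factor simple and $\Delta$ commensurable with the diagonal. The simple ideals of $A'$ are exactly these factors, because the factors are pairwise non-isomorphic only up to place, but the ideals of a product of simple algebras are always sub-products. Hence $J=\prod_{v\in R\setminus R_2}$ for some $R_2$, $A\cong A'/J=\prod_{v\in R_2}$, and $I=\prod_{v\in R_1}$. Finally $R_1\subseteq R_2$ because $I\cap J=0$. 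Realizing $K\subseteq\mathbb{R}$ amounts to a choice of embedding attached to a real place in $R_1$, or to any embedding, identified via $\sigma_v$ as in the paper's convention.
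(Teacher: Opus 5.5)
\textbf{Gap: surjectivity of $\pi$.} Your Steps 1--3 follow the paper's route: use the rational structure of $\Delta$ and injectivity of $\pi$ on $\Delta$ to kill the radical, split the window to transfer irreducibility to $\Delta$, then read off $A$, $I$, $J$ from Lemma \ref{Lemma: Arithmeticity}. However, all three steps rest on $\pi[A']=A$. You use it to make $\pi[N]$ an ideal of $A$, to get $A=\pi[B]\times\pi[C]$, and to identify $A$ with $A'/J$. Your justification for it is not valid.

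Commensurability in Theorem \ref{Theorem: Ring cap}(3) only gives $\Lambda\subseteq F+\pi[A']$ with $F$ finite. Hence $\Span(\Lambda)\subseteq\Span(F)+\pi[A']$, and the finitely many translates can supply the missing directions. Concretely, $\Lambda=\mathbb{Z}\times\{0,\pm1\}\subseteq\mathbb{R}\times\mathbb{R}$ is an infinite uniformly discrete approximate subring spanning $A$. Yet it is commensurable with $\mathbb{Z}\times\{0\}$, and for it the construction behind Theorem \ref{Theorem: Ring cap} actually gives $\pi[A']=\mathbb{R}\times\{0\}$: the real algebra produced by Proposition \ref{proposition: improved target space} is $\{0\}$ here. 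This $\Lambda$ is reducible, and that is the point. Surjectivity of $\pi$ is exactly where irreducibility of $\Lambda$ must enter. Your proposal never uses it there; in Step 2 you invoke irreducibility only after already assuming $\pi[B]\times\pi[C]=A$.

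\textbf{How the paper closes it.}
\begin{enumerate}
\item Let $I'$ be the two-sided ideal of $A$ from Proposition \ref{Prop: Schreiber}. Then $I'\subseteq\Lambda+K'$ and $\Lambda\subseteq I'+K'$ for some compact $K'$; the paper also checks $I'=\pi[I]$.
\item Take a subspace $V\subseteq\pi[A']$ of minimal dimension such that $\Lambda$ is covered by finitely many translates of $V$. Note $V\neq\{0\}$ because $\Lambda$ is infinite.
\item Let $\lambda$ be a non-zero-divisor. Then $\lambda I'=I'$, which gives $\Lambda\subseteq\lambda\Lambda+(\Lambda_1\cap L)$ for some compact $L$. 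Uniform discreteness makes $\Lambda_1\cap L$ finite, so $\Lambda$ is covered by finitely many translates of $\lambda V$. Minimality then forces $\lambda V=V$.
\item The same argument applies to $n\cdot1-\lambda$ for $\lambda\in\Lambda$; here $A$ is unital, being semi-simple. For all but finitely many $n$ this element is a non-zero-divisor, so $\lambda V\subseteq V$ for every $\lambda\in\Lambda$. Since $\Lambda$ spans $A$, and arguing symmetrically on the right, $V$ is a two-sided ideal.
\item Semi-simplicity gives $A=V\times V'$. If $V'\neq\{0\}$, then $2\Lambda\cap V'$ is finite and $\Lambda$ is commensurable with $(2\Lambda\cap V)\times(2\Lambda\cap V')$. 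This contradicts irreducibility, so $V=A$ and hence $\pi[A']=A$.
\end{enumerate}

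With this in place, the rest of your proposal goes through. In Step 2 you still need to carry out the window-splitting commensurability argument you flagged.
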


\begin{proof}[Proof of Theorem \ref{Theorem: Gen. Pisot Meyer}.]
    
    Let $A', \Delta ,I,J, \pi: A' \rightarrow A$ be provided by Theorem \ref{Theorem: Ring cap} and its proof applied to $\Lambda \subseteq A$. By construction, there is a compact subset $K \subseteq A$ such that 
    $$\pi[I] \subseteq \Lambda + K \text{ and } \Lambda \subseteq \pi[I] + K.$$
    By Proposition \ref{Prop: Schreiber} and because $\Lambda$ spans $A$, there is a two-sided ideal $I' \subseteq A$ and a compact subset $K' \subseteq A$ such that 
    $$I' \subseteq \Lambda + K' \text{ and } \Lambda \subseteq I' + K'.$$
    So 
        $$I' \subseteq \pi[I] + K + K' \text{ and } \pi[I] \subseteq I' + K + K'.$$
    Hence, $\pi[I]=I'$. In particular, $\pi[I]$ is a two-sided ideal of $A$.  Since $A$ is semi-simple, we have that $\pi[I]$ is a factor of $A$ i.e. there is a two-sided ideal $I''$ of $A$ such that $A = \pi[I] \times I''$. 

    In addition, we claim that $\pi[A']=A$ as a result of the irreducibility of $\Lambda$. Let $V \subseteq \pi[A']$ denote a vector subspace of lowest dimension such that $\Lambda$ is covered by finitely many translates of $V$. Then for every 
$\lambda \in \Lambda$ that is not a zero divisor, we have that $\lambda \cdot \Lambda$ is commensurable with $\Lambda$. Indeed, since $\lambda$ is not a zero divisor, $\lambda(\pi[I]) \subseteq \pi[I]$ implies that $\lambda\cdot\pi[I] = \pi[I]$ because $\pi[I]$ has finite dimension. Thus, $\pi[I] \subseteq \lambda \cdot (\Lambda + K) = \lambda \cdot \Lambda + \lambda \cdot K$. In particular, 
    $\Lambda \subseteq \lambda \cdot \Lambda + \lambda \cdot K + K$.
    Hence, $\Lambda \subseteq \lambda \cdot \Lambda + \left(\Lambda_{1} \cap L\right)$ where $L$ denotes the compact subset  $\lambda \cdot K + K$. 
Since $\Lambda_1$ is covered by finitely many cosets of $\Lambda$, uniform discreteness of $\Lambda$ implies that $F:=\Lambda_{1} \cap L$ is finite, and so we find that $\Lambda$ and $\lambda \cdot \Lambda$ are indeed commensurable.
But $\Lambda$ is covered by finitely many translates of $V$, so $\lambda \cdot \Lambda$ -- and hence $\Lambda$ -- is covered by finitely many translates of $\lambda \cdot V$. By for instance \cite[Lem. 2.2]{machado2019goodmodels}, $\Lambda$ is covered by finitely many translates of $\lambda \cdot V \cap V$ and, by minimality, $\lambda \cdot V \cap V = V$ i.e. $\lambda \cdot V = V$. Now, for every $\lambda \in \Lambda$, there is $n \geq 0$ such that $n \cdot 1 - \lambda$ is not a zero divisor, so $(n \cdot 1 - \lambda) \cdot V =V$. This implies, $\lambda \cdot V \subseteq V$. But $\Lambda$ spans $A$, so $V$ is a left-ideal. Similarly, one proves that it is a right-ideal. Recall that $V \subseteq \pi[A']$, and we claim that $V=A$. Otherwise, since $A$ is semi-simple, there is a non-trivial two-sided ideal $V'$ such that $V \times V' = A$. Moreover, by \cite[Lem. 2.9]{Dries} and \cite[Lem. 2.2]{machado2019goodmodels}, $2\Lambda \cap V$ is an approximate subring commensurable with $\Lambda$. Hence, $\Lambda$ is commensurable with $(2\Lambda \cap V) \times \{0\} \subseteq V \times V'$ which contradicts irreducibility of $\Lambda$.
    

    Our next step is to show that $A'$ is semi-simple. Suppose otherwise. Write $B$ for the $\mathbb{Q}$-span of $\Delta$ and let $J(B)$ be the Jacobson radical of $B$. Then $B$ is a nilpotent (two-sided) ideal of $B$ by Corollary \ref{corollary: semisimplicity in unitization}. Since the restriction of $\pi$ to $\Delta$ is injective and the $\mathbb{Q}$-span of $\Delta$ is $B$, the restriction of $\pi$ to $B$ is also injective. Moreover, $\pi[J(B)]$ is a nilpotent ideal of $\pi[B]$. The $\mathbb{R}$-span of $\pi[J(B)]$ is a nilpotent ideal  of the $\mathbb{R}$-span of $\pi[B]$. But the $\mathbb{R}$-span of $B$ is $A'$ since $\Delta$ is co-compact in the finite-dimensional space $A'$, so the $\mathbb{R}$-span of $\pi[B]$ is $A$ by the previous paragraph. So the $\mathbb{R}$-span of $\pi[J(B)]$ is a nilpotent ideal of $A$. Since $A$ is semi-simple, it is the trivial ideal by \cite[\S 4.4 Cor.]{PierceAssociativeAlgebras}. Hence, $\pi[J(B)]$ is trivial, and so is $J(B)$ by injectivity of $\pi$ restricted to $B$. So $B$ has trivial Jacobson radical, i.e. $B$ is semi-simple. In particular, $B$ is unital (Corollary \ref{Cor: Unitality}) and so is $A'$. Now, there is a surjective map $\mathbb{R} \otimes_\mathbb{Q} B \rightarrow A'$ induced by the inclusion $B \subseteq A'$. So $A'$ is semi-simple as well.  

    We now wish to apply Lemma \ref{Lemma: Arithmeticity}. To do so, it remains to prove that $\Delta$ is irreducible. Assume, for the sake of contradiction, that $A'$ is the direct product of two non-trivial ideals $I_1$ and $I_2$ such that $I_1 \cap \Delta$ and $I_2 \cap \Delta$ span $I_1$ and $I_2$, respectively. If $I_1 \subseteq J$ or $I_2 \subseteq J$, then the restriction of $\pi$ to $\Delta$ is not injective, a contradiction. Thus, $\pi[I_1]$ and $\pi[I_2]$ are non-trivial ideals of $\pi[A']=A$, and $A$ is the direct product of $\pi[I_1]$ and $\pi[I_2]$ (relying here on the semi-simplicity of $A$). For each $i=1,2$, the subring $\Delta_i:=\Delta \cap I_i$ is co-compact in $I_i$. Let $I_i':= I_i \cap I$ and $I_i''$ be the ideal of $I_i$ such that $I_i = I_i' \times I_i''$. Then the restriction of $\pi$ to $I_i'$ is an isomorphism onto $\pi[I_i']$. Moreover, for any symmetric compact neighbourhood $W_i$ of $0$ in $I_i''$, $\pi\left[\Delta_i \cap \left(I_i' \times W_i \right)\right]$ is a uniformly discrete approximate subring of $A$. 
We conclude that 
\[ \Lambda':=\pi\left[\left(\Delta_1 \times \Delta_2\right) \cap (I_1' \times W_1 \times I_2' \times W_2)\right] \]
is reducible. Indeed,  
\begin{align*}
\Lambda' & =\pi\left[\left(\Delta_1 \cap (I_1' \times W_1)\right) \times \left(\Delta_2 \cap (I_2' \times W_2)\right)\right] =
\pi\left[\Delta_1 \cap (I_1' \times W_1)\right] \times \pi\left[\Delta_2 \cap (I_2' \times W_2)\right] \subseteq \\ &
\subseteq \pi[I_1] \times \pi[I_2] =A,
\end{align*}
so $2 \Lambda' \cap \pi[I_i] = 2 \pi\left[ \Lambda_i \cap (I_i' \times W_i)\right]$ is commensurable with $\pi\left[\Delta_i \cap \left(I_i' \times W_i \right)\right]$, which implies that $(2 \Lambda' \cap \pi[I_i]) \times  (2 \Lambda' \cap \pi[I_2])$ is commensurable with $\Lambda'$, hence $\Lambda'$ is reducible.
However, $\Delta_1 \times \Delta_2$ is contained in and commensurable with $\Delta$; thus, according to \cite[Lem. 2.3]{machado2019goodmodels}, $\Lambda'$ is commensurable with 
\[\pi\left[\Delta \cap (I_1' \times 2W_1 \times I_2' \times 2W_2)\right]. \]
Since by semisimplity of $A'$ we have  $I_1' \times I_2' = I$, the latter is commensurable with $\Lambda$ by Theorem \ref{Theorem: Ring cap}(3) (to use Theorem \ref{Theorem: Ring cap}(3), note that the set $I_1' \times 2W_1 \times I_2' \times 2W_2$ can be written as $I + (U \times (2W_1 \times 2W_2))$, where $U$ is any symmetric compact neighborhood of $0$ in $I$ and so $U \times (2W_1 \times 2W_2)$ is a compact symmetric neighborhood of $0$ in $A'$). So $\Lambda$ is reducible, a contradiction.

    Now, by Lemma \ref{Lemma: Arithmeticity} applied to $A'$ and $\Delta$ together with the above observation that $\pi[A']=A$, there is a number field $K$ with the set of Archimedean places $R$, a finite-dimensional  $\mathcal{O}_K$-algebra such that $K_v \otimes_{\mathcal{O}_K}A_{\mathcal{O}_K}$ is a simple real algebra for all $v \in R$ and $R_1 \subseteq R_2 \subseteq R$ such that $A' \cong \prod_{v \in R} K_{v} \otimes_{\mathcal{O}_K} A_{\mathcal{O}_K}$ and through this identification  $A \cong \prod_{v \in R_2} K_{v} \otimes_{\mathcal{O}_K} A_{\mathcal{O}_K}$, $I \cong \prod_{v \in R_1} K_v \otimes_{\mathcal{O}_K} A_{\mathcal{O}_K}$ and $J \cong \prod_{v \in R \setminus R_2 } K_v \otimes_{\mathcal{O}_K} A_{\mathcal{O}_K}$ and $\Delta$ is identified with the diagonal embedding to $A_{\mathcal{O}_K}$. This concludes the proof.
\end{proof}

\begin{remark}
    Using the uniqueness of decomposition into indecomposable - valid for larger classes of algebras (e.g. perfect algebras by Krull--Schmidt-type theorems) - one can in fact adapt the proof of Lemma \ref{Lemma: Arithmeticity} for these more general classes. We refrain from doing so for the sake of brevity.
\end{remark}

We conclude with a corollary of Theorem \ref{Theorem: Gen. Pisot Meyer} involving Pisot--Salem numbers:

 \begin{corollary}\label{corollary: gen. of Meyer to simple real algebras}
Let $X$ be an infinite uniformly discrete approximate subring in a finite-dimensional simple real algebra $A$. Assume that $X$ spans $A$. There are a number field $K$ and an Archimedean place $v$ such that $A$ admits a $K_v$-algebra structure extending the $\mathbb{R}$-algebra structure and a $K_v$-basis $(e_1, \ldots, e_d)$ of $A$ such that $X$ is commensurable with $\sum_{i=1}^d \mathcal{O}_{K,v} \cdot e_i.$
 \end{corollary}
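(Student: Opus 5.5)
Since $A$ is simple, it is semi-simple, and we first reduce to the irreducible case. Suppose $X$ were reducible, i.e.\ $A = B \times C$ nontrivially. Then $B$ and $C$ would be nonzero two-sided ideals of $A$, contradicting simplicity. So $X$ is irreducible, spans $A$, and is infinite and uniformly discrete. Theorem \ref{Theorem: Gen. Pisot Meyer} therefore applies. It gives a number field $K$ with Archimedean places $R$, a finite rank $\mathcal{O}_K$-algebra $A_{\mathcal{O}_K}$, and $R_1 \subseteq R_2 \subseteq R$ with the following properties: $A \cong \prod_{v \in R_2} K_v \otimes_{\mathcal{O}_K} A_{\mathcal{O}_K}$, $I \cong \prod_{v\in R_1}(\cdots)$, $J \cong \prod_{v \in R\setminus R_2}(\cdots)$, and $\Delta$ is commensurable with the diagonal image of $A_{\mathcal{O}_K}$. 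Moreover, by Lemma \ref{Lemma: Arithmeticity} each factor $K_v \otimes_{\mathcal{O}_K} A_{\mathcal{O}_K}$ is simple, hence nonzero.

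Next we pin down $R_1$ and $R_2$. Simplicity of $A$ forces $|R_2| = 1$, say $R_2=\{v\}$. Then $A \cong K_v \otimes_{\mathcal{O}_K} A_{\mathcal{O}_K}$, which gives $A$ a $K_v$-algebra structure extending the $\mathbb{R}$-structure. We also need $R_1 = R_2$, i.e.\ $I \neq 0$. If $I=0$, Theorem \ref{Theorem: Ring cap}(3) would make $X$ commensurable with $\pi[\Delta \cap W_0]$ for a relatively compact $W_0$. Since $\Delta$ is discrete, that set is finite, contradicting that $X$ is infinite. So $R_1=\{v\}$, $I$ is the $v$-factor, and $\pi$ restricted to $I$ is an isomorphism onto $A$, with $J = \prod_{w \neq v}(\cdots)$.

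Now choose a $K$-basis of $B = K\otimes A_{\mathcal{O}_K}$ lying in $A_{\mathcal{O}_K}$. Replacing $A_{\mathcal{O}_K}$ by the commensurable lattice $\sum_i \mathcal{O}_K e_i$ (after scaling by an integer, as in the proof of Lemma \ref{Lemma: Arithmeticity}) changes $\Delta$ only up to commensurability. We identify $e_i$ with its image $\sigma_v(e_i)$ in $A$; this gives a $K_v$-basis of $A$. By Theorem \ref{Theorem: Ring cap}(3), $X$ is commensurable with $\pi[\Delta\cap(I+W_0)]$. By \cite[Lem. 2.3]{machado2019goodmodels}, we may replace $\Delta$ by the commensurable subring $\{(\sigma_w(a))_w : a \in \sum \mathcal{O}_K e_i\}$. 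Taking $W_0$ of product form $\prod_{w\neq v}W_w$ in the $J$-coordinates, the condition becomes $\sigma_w(\sum \alpha_i e_i)\in W_w$ for all Archimedean $w\neq v$. Expanding in the fixed basis $\sigma_w(e_i)$ of each $K_w\otimes A_{\mathcal{O}_K}$, this condition is commensurable, again via Remark \ref{remark: preimage of a compact set}-type window-change arguments, with the coordinatewise condition $|\alpha_i|_w \le 1$ for all $w\neq v$ Archimedean. Since the $\alpha_i$ are algebraic integers, $|\alpha_i|_w\le 1$ holds automatically at non-Archimedean $w$. Hence the projected set is commensurable with $\sum_i \mathcal{O}_{K,v}\, e_i$.

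The main obstacle is this last window comparison. We must check that a box window in coordinates, $\{\sum \alpha_i e_i : |\sigma_w(\alpha_i)|\le 1\}$, and an arbitrary relatively compact neighbourhood yield commensurable weak model sets. Both windows are relatively compact neighbourhoods of $0$ in $J$, so each is covered by finitely many translates of the other, and commensurability of the model sets follows exactly as in the end of the proof of Theorem \ref{Theorem: Ring cap}. For $A=M_n(\mathbb{R})$, the place $v$ must be real since $K_v$ acts centrally on $A$. We then have $K_v=\mathbb{R}$, and $\mathcal{O}_{K,v}$ is the Pisot--Salem set $P(K)$ up to commensurability.
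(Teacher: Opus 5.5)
Your proof is correct. Its skeleton matches the paper's: apply Theorem~\ref{Theorem: Gen. Pisot Meyer}, get $R_2=\{v\}$ from simplicity and $R_1\neq\emptyset$ from $X$ being infinite, then pass to the diagonally embedded lattice via \cite[Lem.~2.3]{machado2019goodmodels}. You are more explicit than the paper in two places: you check that $X$ is automatically irreducible, which the paper uses without comment, and you justify $I\neq 0$ by noting that $\pi[\Delta\cap W_0]$ is finite.

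The real difference is the final comparison with $\sum_i\mathcal{O}_{K,v}e_i$.

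\emph{The paper's route.} It only proves the containment $\Lambda_0=\sum_i\mathcal{O}_{K,v}e_i\subseteq\Sigma_{W_0}$ for a sufficiently large ball window. It then upgrades containment to commensurability using \cite[Lem.~A.4]{hru2}. This needs two inputs: $\Lambda_0$ is syndetic, inherited from syndeticity of $\mathcal{O}_{K,v}$ in $K_v$, and $\Sigma_{W_0}$ is uniformly discrete.

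\emph{Your route.} You replace $A_{\mathcal{O}_K}$ by the finite-index sublattice $\sum_i\mathcal{O}_K e_i$, and take the window to be the box adapted to the bases $(\sigma_w(e_i))_i$. With that window, the model set is \emph{exactly} $\sum_i\mathcal{O}_{K,v}e_i$. This uses two facts: uniqueness of coordinates in a $K$-basis, and $\mathcal{O}_{K,v}=\{\alpha\in\mathcal{O}_K: |\alpha|_w\le 1 \text{ for all Archimedean } w\neq v\}$. The only remaining input is that the commensurability class of the model set does not depend on the window. That is the standard covering argument already used at the end of the proof of Theorem~\ref{Theorem: Ring cap}, and it is genuinely needed here, because passing from $\Delta$ to the commensurable lattice may enlarge the window.

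\emph{What each buys.} Your argument is more elementary and self-contained: it needs neither syndeticity of $\mathcal{O}_{K,v}$ nor the approximate-lattice lemma. The paper's argument avoids having to make the lattice free on the chosen basis and the window adapted to it.
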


\begin{remark}
    With $A, X, K, v$ as above. In the specific case where $A = M_n(\mathbb{R})$ for some $n \geq 1$, we have $K_v \cong \mathbb{R}$. Indeed, $K_v \cdot 1$ is then contained the centre of $A$ which is isomorphic to $\mathbb{R}$. Hence, in that case $\mathcal{O}_{K,v}\setminus \{-1,1\}$ is the set of primitive Pisot--Salem numbers of $K$ as defined in the introduction (see also the first page of Section \ref{Section: Sum-product for real algebras}, and be aware that this is under the identification of $K$ with $\sigma_v[K]$ and $\mathcal{O}_{K,v}$ with $\sigma_v[\mathcal{O}_{K,v}]$ as explained the paragraph after Lemma \ref{Lemma: Arithmeticity}).
\end{remark}

\begin{proof}[Proof of Corollary \ref{corollary: gen. of Meyer to simple real algebras}]
    Let us apply Theorem \ref{Theorem: Gen. Pisot Meyer} to $X$ and let $K$, $A_{\mathcal{O}_K}$, $R_1 \subset R_2 \subset R$ be as in the conclusions. Since $A$ is simple, $R_2$ is made of a single element $v \in R$. Since $X$ is infinite, $R_1$ contains at least one element. So $R_1 = R_2 = \{v\}$. Therefore, by Theorem \ref{Theorem: Gen. Pisot Meyer}, there exists a real algebra $A'$, a co-compact discrete subring $\Delta \subseteq A'$, and two ideals $I, J \subseteq A'$ such that $A'$ is the direct product $I \times J$ , $\pi\colon A' \to A$ is a projection with kernel $J$, and $X$ is commensurable with 
    \[ M := \pi\left[ \Delta \cap (I \times W) \right] \]
    for any compact neigbourhood $W \subseteq J$ of $0$. Let $W_0$ be such a neighbourhood to be chosen later. The algebra $A'$ furthermore decomposes as:
    \[ A' = \prod_{w \in R} K_w \otimes_{\mathcal{O}_K} A_{\mathcal{O}_K} \]
   and $\Delta$ is commensurable with the diagonal embedding of $A_{\mathcal{O}_K}$. 
    
    Moreover, $I = K_v \otimes_{\mathcal{O}_K} A_{\mathcal{O}_K}$ and $J$ corresponds to the product over the remaining places $R \setminus \{v\}$. Let $\{p_w\colon A_{\mathcal{O}_K} \rightarrow K_w \otimes_{\mathcal{O}_K} A_{\mathcal{O}_K}\}_{w \in R}$ denote the natural projections.  Under these identifications:
    \begin{enumerate}
        \item $\Delta$ is commensurable with $A_{\mathcal{O}_K}$;
        \item The projection of $\delta \in A_{\mathcal{O}_K}$ to $I$ corresponds to the embedding $p_v(\delta)$;
        \item The projection of $\delta\in A_{\mathcal{O}_K}$ to $J$ corresponds to the tuple of embeddings $(p_w(\delta))_{w \in R \setminus \{v\}}$.
    \end{enumerate}
    The condition $\delta \in I \times W_0$ is therefore equivalent to requiring that $(p_w(\delta))_{w \in R \setminus \{v\}} \in W_0$. Since $\Delta$ and $A_{\mathcal{O}_K}$ are commensurable we know by \cite[Lem. 2.3]{machado2019goodmodels}, that $M$, and hence $X$, is commensurable with:
    \[ \Sigma_{W_0} := \{ \delta \in A_{\mathcal{O}_K} : (p_w(\delta))_{w \in R \setminus \{v\}} \in W_0 \}. \]
    To complete the proof, we compare $\Sigma_{W_0}$ to the ``module generated'' by $\mathcal{O}_{K,v}$. Recall that $\mathcal{O}_{K,v} = \{ \alpha \in K : \forall w \neq v, |\alpha|_w \le 1 \}$. For any $K$-basis $(e_i)_{i=1}^d$ of $A_K:=K \otimes_{\mathcal{O}_K} A_{\mathcal{O}_K}$ contained in $A_{\mathcal{O}_K}$, we have the inclusion
    \[ \Lambda_0 := \sum_{i=1}^d \mathcal{O}_{K,v} \cdot e_i \subseteq \{ \delta \in A_{\mathcal{O}_K} : \forall w \in R \setminus \{v\}, \|p_w(\delta)\|_w \le C \} \]
    where $\|\cdot\|_w$ denotes any choice of $K_w$-vector space norm on $K_w \otimes_{\mathcal{O}_K}A_{\mathcal{O}_K}$ and $C$ is sufficiently large depending on the choice of norms and the basis. The latter subset in fact corresponds to $\Sigma_{W_0}$ for the choice of a window $W_0$ defined by a product of balls. In other words, we may choose $W_0$ such that $\Lambda_0 \subset \Sigma_{W_0}$. 

    But $\Lambda_0$ is a syndetic approximate subgroup of $A$, since $\mathcal{O}_{K,v}$ is a syndetic approximate subgroup in $K_v$. Moreover, $\Sigma_{W_0}$ is a uniformly discrete approximate subgroup. So $\Lambda_0$ and $\Sigma_{W_0}$ are commensurable according to \cite[Lem. A.4]{hru2}. Thus, $X$ is commensurable with $\sum_{i=1}^d \mathcal{O}_{K,v} \cdot e_i$.
\end{proof}

\begin{remark}\label{corollary: gen. of Meyer to semi-simple real algebras}
    In the context of semi-simple algebras, we can also obtain a result of a similar nature. Namely:

    Let $X$ be an infinite uniformly discrete approximate subring in a finite-dimensional semi-simple real algebra $A$. Assume that $X$ spans $A$ and is irreducible. There are a number field $K$ and two subsets of inequivalent archimedean places $R_1 \subseteq R_2$ such that $A$ admits a $\prod_{v \in R_2}K_v$-algebra structure extending the $\mathbb{R}$-algebra structure and a $\prod_{v \in R_2}K_v$-basis $(e_1, \ldots, e_d)$ of $A$ such that $X$ is commensurable with $\left(\sum_{i=1}^d \mathcal{O}_{K,R_1} \right)\cdot e_i.$

    We leave the verification of this result to the reader. 
    
\end{remark}

\section{Other structural results}\label{section: additional structural results}

In this section, we will prove results \ref{theorem: thickness and sum-product} - \ref{corollary: classification of finite approximate subfields} from the introduction. They are essentially also consequences of the existence of definable locally compact models of definable approximate subrings, but it is very convenient to use model-theoretic connected components which in fact stand behind definable locally compact models which is briefly recalled below.

Let $M$ be an infinite structure in a language $\mathcal{L}$, $X$ an approximate subring definable in $M$, $\C \succ M$ a sufficiently saturated elementary extension (so-called {\em monster model}), $R:=\langle X \rangle$, $\bar X:=X(\C)$, $\bar R :=\langle \bar X \rangle$. More generally, for a set $D$ which is definable in $M$, by $\bar D$ we will denote its interpretation $D(\C)$ in $\C$. In the definition of a monster model one usually assumes $\kappa$-saturation and strong $\kappa$-homogeneity (each elementary map between subsets of cardinality less than $\kappa$ extends to an automorphism of $\C$) for a strong limit cardinal greater than $|M| + |\mathcal{L}|$. It is folklore that such a model always exists. (In fact, in the applications below, it is enough to assume that $\C$ is $(2^{|M|+ |\mathcal{L}|})^+$-saturated.). By a bounded cardinal we mean any cardinal less than $\kappa$. 

The model-theoretic component ${\bar R}^{00}_M$ is defined as the smallest $M$-type-definable, bounded index two-sided ideal of $\bar R$. As stated in \cite[Proposition 3.1 ]{Kru} (which is based on \cite{GJK}), in this definition, instead of ``two-sided ideal'' we can equivalently write ``left ideal'' or ``right ideal'' or ``subring''. However, the existence of  ${\bar R}^{00}_M$ is a non-trivial issue. It is contained in  the main result of \cite{Kru}, but let us first recall more basic observations.

The equivalence between (1) and (3) in the next fact is \cite[Corollary 3.4]{Kru}. The equivalence between (2) and (3) is easy.

\begin{fact}
The following conditions are equivalent.
\begin{enumerate}
\item A definable locally compact model of $X$ exists.
\item There exists an $M$-type-definable two-sided  ideal of $\bar R:=\langle \bar X \rangle$ of bounded index. 
\item $\bar R^{00}_M$ exists.
\end{enumerate}
\end{fact}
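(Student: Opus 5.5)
The statement is the equivalence of (1) a definable locally compact model of $X$, (2) an $M$-type-definable two-sided ideal of $\bar R$ of bounded index, and (3) existence of $\bar R^{00}_M$. The paper attributes (1)$\Leftrightarrow$(3) to \cite[Corollary 3.4]{Kru}, so I only plan (2)$\Leftrightarrow$(3).

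The direction (3)$\Rightarrow$(2) is immediate. By definition $\bar R^{00}_M$ is itself an $M$-type-definable, bounded index two-sided ideal of $\bar R$.

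For (2)$\Rightarrow$(3), let $J$ be an $M$-type-definable two-sided ideal of bounded index. I will show that the intersection $I_0$ of all $M$-type-definable bounded index two-sided ideals of $\bar R$ is again such an ideal; it is then the smallest one, i.e.\ $\bar R^{00}_M$.

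\begin{enumerate}
\item \emph{Reduce to a set-sized family.} Each such ideal, being bounded index, contains some $\bar X_n$-relative piece. I only use ideals contained in $J$, since $I\cap J$ is again $M$-type-definable, bounded index and two-sided. Since $J$ is type-defined inside some $\bar X_m$, every $M$-type-definable subset of $\bar X_m$ is determined by a set of $\mathcal{L}_M$-formulas. So there are at most $2^{|M|+|\mathcal{L}|}$ such ideals, and $I_0$ is an intersection of a small family of partial types over $M$.
\item \emph{Check $I_0$ is an $M$-type-definable two-sided ideal.} This is clear from the previous step.
\item \emph{Bound the index of $I_0$.} The point is that a type-definable subgroup of $\bar R$ defined over $M$ has bounded index iff it has index $\le 2^{|M|+|\mathcal{L}|}$. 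This holds because its cosets are determined by types over $M$: if $a-b\notin I$, then $\tp(a/M)\neq\tp(b/M)$, using that $I$ is $M$-invariant and contains $E$-classes for the bounded $M$-type-definable equivalence relation of lying in the same coset. Hence the quotient injects into $S(M)$-data. For the intersection, $a-b\in I_0$ holds whenever $\tp(a/M)=\tp(b/M)$, since this gives $a-b\in I$ for each $I$ in the family. So the index of $I_0$ is at most $|S_1(M)|$, which is bounded.
\end{enumerate}

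The main obstacle is the subtle point in step 3, where being in the same coset of each $I$ is implied by equality of types over $M$. I would justify it via $\mathrm{Aut}(\C/M)$-invariance: with $\C$ strongly homogeneous, the quotient $\bar R/I$ has bounded size. If $\tp(a/M)=\tp(b/M)$ but $a-b\notin I$, one gets infinitely many, indeed unboundedly many, realizations of $\tp(a/M)$ in distinct cosets via an indiscernible sequence. By $M$-type-definability of $I$ and compactness, differences of elements of an $M$-indiscernible sequence realizing $\tp(a/M)$ avoid $I$, which contradicts bounded index. Bounded index would otherwise force $a_i-a_j\in I$ for some $i\ne j$, hence for all by indiscernibility, and then $a-b\in I$ follows by the standard Lascar/compact-strong-type argument. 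If this last step is delicate, I would instead cite the general fact that bounded index $M$-type-definable subgroups contain $\bar R^{000}$-type equivalences and are unions of Lascar classes over $M$.
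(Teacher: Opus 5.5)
Your reduction for (2)$\Rightarrow$(3) is the natural one. Intersect all $M$-type-definable bounded-index two-sided ideals contained in $J$; the intersection $I_0$ is clearly an $M$-type-definable two-sided ideal, so everything comes down to showing that $I_0$ has bounded index. That is exactly where your argument is incomplete.

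Step 3 relies on the claim that $\tp(a/M)=\tp(b/M)$ forces $a-b\in I$ for every $M$-type-definable bounded-index subgroup $I$ of $(\bar R,+)$. The claim is true here, but neither of your justifications proves it.
\begin{itemize}
\item The first one (``$I$ is $M$-invariant and contains $E$-classes\dots'') is circular.
\item In the indiscernibility sketch, bounded index honestly gives only this: for every $M$-indiscernible sequence $(a_i)$ inside some $\bar X_n$ one has $a_i-a_j\in I$. Otherwise $M$-invariance of $I$ and indiscernibility, after stretching the sequence, yield unboundedly many cosets. This shows the coset relation of $I$ is coarser than equality of Lascar strong types over $M$. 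It says nothing yet about elements that merely have the same type over $M$.
\item Your sentence producing ``unboundedly many realizations of $\tp(a/M)$ in distinct cosets'' from $a\equiv_M b$ and $a-b\notin I$ has no mechanism behind it.
\end{itemize}
The missing ingredient is that $M$ is a \emph{model}, which you never use, and without it the claim is false. In $(\mathbb{Z}/3\mathbb{Z},+)$ over $\emptyset$, the elements $1$ and $2$ have the same type (via $x\mapsto -x$). Yet $1-2\notin\{0\}$, and $\{0\}$ is an $\emptyset$-definable subgroup of finite index.

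To close the gap, let $a,b\in\bar X_n$ and take a global coheir $q$ of $\tp(a/M)$, which exists because $M$ is a model. Take a Morley sequence $(c_i)_{i<\omega}$ of $q$ over $Mab$; all $c_i$ lie in $\bar X_n$. Then $(a,c_0,c_1,\dots)$ and $(b,c_0,c_1,\dots)$ are both Morley sequences of the $M$-invariant type $q$ over $M$, hence $M$-indiscernible. By the previous paragraph $a-c_0\in I$ and $b-c_0\in I$, so $a-b\in I$, and your bound $[\bar R:I_0]\le|S_1(M)|$ then goes through.

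Alternatively, you can avoid types altogether. Every $M$-type-definable bounded-index ideal is in particular an $M$-type-definable bounded-index subgroup of $(\bar R,+)$. It therefore contains $(\bar R,+)^{00}_M$, whose existence the paper recalls just before Fact~\ref{fact: 00 exists}. Hence $I_0\supseteq(\bar R,+)^{00}_M$ has bounded index immediately. The paper gives no argument for (2)$\Leftrightarrow$(3) beyond calling it easy, and this shortcut is shorter than your route.
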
 

The next fact is \cite[Proposition 3.3]{Kru}. Here, the quotient ring  $\bar R/\bar R^{00}_M$ is equipped with the {\em logic topology}: a subset $F$ of $\bar R/\bar R^{00}_M$ is closed if $\pi^{-1}[F] \cap \bar X_n$ is type-definable (equivalently, $M$-type-definable) for every $n \in \mathbb{N}$, where $\pi \colon \bar R \to \bar R/\bar R^{00}_M$ is the quotient map.

\begin{fact}
Assume that ${\bar R}^{00}_M$ exists. Then the quotient map $R \to \bar R/\bar R^{00}_M$ is the universal definable locally compact model of $X$.
\end{fact}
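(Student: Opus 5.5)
The statement to prove is that, when $\bar R^{00}_M$ exists, the quotient map $R \to \bar R/\bar R^{00}_M$, with the logic topology, is the universal definable locally compact model of $X$. The argument has three parts: the quotient is a locally compact ring, the composed map is a definable locally compact model, and every definable locally compact model factors uniquely through it.

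\textbf{The quotient is a locally compact ring.} Let $\pi\colon \bar R \to \bar R/\bar R^{00}_M$ be the quotient map. The ideal $\bar R^{00}_M$ is $M$-type-definable and of bounded index. By the usual compactness argument it is contained in some $\bar X_m$; I would check this via the finite intersection property together with generic covering by translates of $\bar X$.

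Hausdorffness comes from the fact that $\pi^{-1}$ of a point is a coset of a type-definable set. Closed sets are defined by $M$-type-definability of their traces on the $\bar X_n$. Continuity of $+$ and $\cdot$ follows from the fact that images and preimages of type-definable sets under definable maps, restricted to the $\bar X_n$, remain type-definable.

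For local compactness, each $\pi[\bar X_n]$ is compact, by saturation: a family of closed subsets with the finite intersection property has nonempty intersection. Next, $\pi[\bar X]$ is a neighbourhood of $0$. This holds because $\bar R^{00}_M \subseteq \bar X_m$ and a compactness argument give a definable $D$ with $\bar R^{00}_M \subseteq D$. Using bounded index, there is an $M$-type-definable neighbourhood structure; concretely, the complement of $\pi[\bar X_k \setminus \mathrm{int}]$ is open, as in the standard proof for $G/G^{00}$.

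\textbf{The composition is a definable locally compact model.} Put $f := \pi|_R$. It is a ring homomorphism. The set $f[X]$ is relatively compact, since it is contained in $\pi[\bar X]$. Some neighbourhood $U$ of $0$ satisfies $\pi^{-1}[U] \subseteq \bar X_m$, and restricting to $R$ gives $f^{-1}[U] \subseteq X_m$.

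For definability, given compact $C \subseteq$ open $O$, the set $\pi^{-1}[C]$ is type-definable and $\pi^{-1}[O]$ is a union of definable sets. So by compactness there is an $M$-definable $Y$ between them, and we take $Y \cap M$.

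\textbf{Universality.} Let $g\colon R \to S$ be any definable locally compact model. Extend it to $\bar g$ by Fact~\ref{fact: extension to the monster}. The kernel $\ker \bar g = \bigcap_V \bar g^{-1}[V]$, with $V$ ranging over compact neighbourhoods of $0$, is $M$-type-definable. It has bounded index, since $S$ is a set of bounded size. Hence $\ker \bar g \supseteq \bar R^{00}_M$ by minimality.

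So $\bar g$ factors as $h \circ \pi$, and $h$ is continuous because $\bar g$ is $M$-definable: $\pi^{-1}[h^{-1}[F]] \cap \bar X_n = \bar g^{-1}[F] \cap \bar X_n$ is type-definable. Uniqueness of $h$ holds because $\pi[R]$ is dense in $\pi[\bar R]$.

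\textbf{Main obstacle.} The delicate step is showing that $\pi[\bar X]$, or some $\pi[\bar X_k]$, has nonempty interior, i.e.\ local compactness. I would follow the argument of \cite[Proposition 3.3]{Kru} for this, or simply invoke that result, which is precisely this statement.
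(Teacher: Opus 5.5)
Your route is the standard one, and it is what underlies \cite[Proposition 3.3]{Kru}. The paper gives no proof of its own here; it only quotes that result. Your second and third parts are sound: $\bar g$ extends $g$, its kernel is an $M$-type-definable ideal of bounded index and so contains $\bar R^{00}_M$, the factor map is continuous by $M$-definability of $\bar g$, and it is unique by density of $\pi[R]$. One correction there: bounded index should come from $\bar g[\bar R]=\bigcup_n \cl(g[X_n])$ having size at most $2^{2^{|M|}}$, not from the size of $S$, which can be arbitrary.

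The first part, however, contains a false step and two omissions.

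(i) It is not true that $\pi[\bar X]$ is a neighbourhood of $0$. Take $X=[-2,-1]\cup\{0\}\cup[1,2]$ in $(\mathbb{R},+,\cdot)$. Then $\bar R$ is the ring of finite elements and $\bar R^{00}_M$ is the ideal of infinitesimals. Under the standard part identification, $\pi[\bar X]=X$, which is not a neighbourhood of $0$.

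The correct statement is easy, not the delicate step you expected. By saturation, the type-definable set $\bar R^{00}_M\subseteq \bigcup_n \bar X_n$ lies in some $\bar X_m$. Put $U:=\{a+\bar R^{00}_M : a+\bar R^{00}_M\subseteq \bar X_m\}$. Its complement $\pi[\bar R\setminus \bar X_m]$ has preimage $(\bar R\setminus\bar X_m)+\bar R^{00}_M$, whose traces on the $\bar X_n$ are type-definable, so $U$ is open. Moreover $0\in U\subseteq \pi[\bar X_m]$, and $\pi[\bar X_m]$ is compact. This $U$ is also exactly what you use later to get $f^{-1}[U]\subseteq X_m$.

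(ii) Closedness of points gives only $T_1$, not Hausdorffness. Given $a-b\notin \bar R^{00}_M$, compactness gives an $M$-definable $E\supseteq \bar R^{00}_M$ with $a-b\notin E-E$. Then $\{c+\bar R^{00}_M : c+\bar R^{00}_M\subseteq a+E\}$ and the analogous set for $b$ are disjoint open sets separating the two points.

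(iii) Your type-definability argument proves continuity of $+$ and $\cdot$ only for the logic topology on $(\bar R\times\bar R)/(\bar R^{00}_M\times \bar R^{00}_M)$. You still have to show this agrees with the product topology. On each $\pi[\bar X_n]\times\pi[\bar X_n]$ it does: that set is compact in the logic topology by saturation, the product topology is Hausdorff, and the identity map from the former to the latter is continuous. The interiors of these sets cover the whole product, so continuity, being local, follows.
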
 

Finally, we have \cite[Theorem 4.1]{Kru}. In this statement, $(\RR,+)^{00}_M$ denotes the smallest $M$-type-definable, bounded index subgroup of $(\bar R,+)$, which always exists as $(R,+)$ is abelian (see \cite[Fact 4.2]{Kru}).

\begin{fact}\label{fact: 00 exists}
 $\RR^{00}_M$ exists and equals $(\RR,+)^{00}_M +\bar R (\RR,+)^{00}_M = (\RR,+)^{00}_M +\bar X (\RR,+)^{00}_M \subseteq 4\bar X + \bar X \cdot 4\bar X$.
\end{fact}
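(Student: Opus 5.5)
Write $H:=(\RR,+)^{00}_M$; it exists because $(\bar R,+)$ is abelian, and it is $M$-type-definable of bounded index. The plan is to show that $J:=H+\bar R H$ is an $M$-type-definable two-sided ideal of bounded index, that $J=H+\bar X H$, and that it is the smallest such ideal. Minimality is the easy part. Any $M$-type-definable bounded-index ideal $I$ is in particular an $M$-type-definable bounded-index additive subgroup, so $H\subseteq I$. Since $I$ is an ideal, $\bar R H\subseteq I$, hence $J\subseteq I$. So $J=\RR^{00}_M$ as soon as $J$ is shown to be an $M$-type-definable bounded-index ideal. Bounded index is automatic from $H\subseteq J$, and $\bar R\cdot J\subseteq J$ (and symmetrically) is clear once $J$ is a group containing $H$ and $\bar R H$.

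The steps in order are as follows. First, I would show $H\subseteq 4\bar X$ (indeed $H\subseteq 2\bar X-2\bar X$). This is the standard abelian fact for definable approximate subgroups, via a Bogolyubov/stabilizer argument as in Massicot–Wagner, and it gives the final inclusion once $J=H+\bar X H$ is known, since then $J\subseteq 4\bar X+\bar X\cdot 4\bar X$.

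Second, I would handle left multiplication by a single element $a\in\bar R$. The map $x\mapsto ax$ is an additive endomorphism, and $K_a:=\{x: ax\in H\}$ is type-definable over $Ma$ of index at most $[\bar R:H]$. I would use $M$-invariance of $H$ together with a compactness/indiscernibility argument (on sequences of realizations of $\tp(a/M)$) to show that $aH+H$ depends, modulo $H$, only on finitely much data. Concretely, the aim is the key inclusion
\[ \bar R\,H\subseteq H+\bar X H. \]
This should follow from writing $a\in\bar X_n$ as a sum and product of elements of $\bar X$ and pushing $H$ through. For the sum part, $(a+b)H\subseteq aH+bH$. For the product part, $(ab)H=a(bH)\subseteq a(H+\bar X H)$, so one needs absorption of $a\bar X H$ into $H+\bar X H$. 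For this I would use that $\bar X\cdot\bar X\subseteq F+\bar X$ with $F$ finite and that $F$ can be taken in $M$.

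Third, I would show that $H+\bar X H$ is closed under addition, hence an additive subgroup, and is type-definable as the image of the type-definable set $H\times\bar X\times H$ under $(h,a,h')\mapsto h+ah'$.

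I expect this third step to be the main obstacle: showing $aH+bH\subseteq H+\bar X H$ for $a,b\in\bar X$. The approach I would try first has two parts. Using boundedness of $\bar R/H$ and $M$-invariance, I would pick $c\in M$ and $h_0\in H$ with $a-b\in c+H$ after covering $2\bar X$ by translates. Then I would establish $H\cdot H\subseteq H$ (or at least $H\cdot H\subseteq \bar X H$) via an indiscernible-sequence argument. Together these should reduce a sum $ah+bh'$ to an element $b(h+h')+(\text{element of }H\cdot H+cH)$, and the term $cH$ with $c\in M$ lies in $H$ by $M$-invariance. If this reduction fails, the fallback is to prove type-definability of the generated group $\langle H+\bar X H\rangle$ directly, by showing that it stabilizes after finitely many sums via a bounded-index compactness argument.
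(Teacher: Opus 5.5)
The paper does not prove this Fact. It imports it from \cite[Theorem 4.1]{Kru}, and the non-trivial content of that theorem is exactly what your proposal leaves open. With $H:=(\bar R,+)^{00}_M$, the missing claims are that $J:=H+\bar X H$ is closed under addition and that it is a \emph{two-sided} ideal.

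Your minimality argument is fine, as are the type-definability of $H+\bar XH$ (a definable image of $H\times\bar X\times H$) and the inclusion $H\subseteq 4\bar X$. The trouble is the step you yourself flag as the main obstacle: both ingredients you propose for it are false in general.
\begin{itemize}
\item You cannot choose $c\in M$ with $a-b\in c+H$. That would force $\bar R=R+H$, i.e.\ every point of the locally compact group $\bar R/H$ would come from $M$. Covering $2\bar X$ by finitely many translates only gives $a-b\in c+\bar X$.
\item $H\cdot H\subseteq H$ fails. Take $M=\mathbb{Z}$ with full structure and $X=\mathbb{Z}$. Then $\bar R/H$ is the Bohr compactification of $\mathbb{Z}$, so also $\mathbb{Z}+H\neq\bar R$, refuting the first claim. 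Fix an irrational $\alpha$ and $0\neq h\in H$. Then $\|h\alpha\|$ is a non-zero infinitesimal. Since Bohr sets are syndetic, saturation gives $h'\in H$ with $\|hh'\alpha\|\approx 1/2$, so $hh'\notin H$.
\end{itemize}
This example is harmless for the theorem itself, since there $\bar X=\bar R$, but it shows your intermediate claims cannot be proved. The weaker inclusion does not help either. What one actually gets is $H\cdot H\subseteq H+\bar XH$ (write $h\in 4\bar X\subseteq F+\bar X$ with $F\subseteq R$ finite). That reintroduces a sum of two elements of $\bar XH$, so your reduction is circular. The fallback, that the generated group stabilizes after finitely many sums, is not an argument. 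Even if carried out, it would only give an ideal inside $n(4\bar X+\bar X\cdot 4\bar X)$ for some $n$, not the stated formula or inclusion.

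Two further points.
\begin{itemize}
\item Your Step~2 as written also presupposes Step~3, since pushing $H$ through sums and products produces sums of several elements of $\bar XH$. It has a direct proof instead. Since $X_n\subseteq F_n+X$ with $F_n\subseteq R$ finite, we get $\bar R=R+\bar X$. For $f\in R$, the set $H\cap\{x: fx\in H\}$ is an $M$-type-definable bounded-index subgroup, hence equals $H$ by minimality. This, not mere $M$-invariance, is why $fH\subseteq H$ for $f\in M$. So for $r=f+x$ with $x\in\bar X$, $rh=fh+xh\in H+\bar XH$.
\item Right multiplication is not ``symmetric''. $J=H+\bar RH$ is visibly stable only under left multiplication, and $H\bar R\subseteq J$ needs its own argument.
\end{itemize}
As it stands, the proposal shows only that every $M$-type-definable bounded-index ideal contains $H+\bar XH$. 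The existence of $\bar R^{00}_M$, the equality with $H+\bar XH$, and the inclusion in $4\bar X+\bar X\cdot 4\bar X$ all hinge on the missing closure argument.
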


In a forthcoming paper of the first author with Mateusz Rzepecki, more formulas for  $\RR^{00}_M$ are given from which it follows that $4\bar X + \bar X \cdot 4\bar X$ can be decreased to $4\bar X + \bar X \cdot 2\bar X$, and there is an example showing that it cannot be further decreased to $4\bar X + \bar X \cdot \bar X$. Having this mind, in the statements below the set $4X + X \cdot 4X$ can be decreased to $4X + X \cdot 2X$. 

Recall Definition 4.1 from \cite{HKP} of a thick subset of $\bar R$. 

\begin{definition}\label{definition: thick}
A definable, additively symmetric subset $D$ of $\bar R$ is {\em $\bigvee$-thick} if for every sequence $(r_i)_{i<\lambda}$ of unbounded length which consists of elements of $\bar R$ there are $i<j<\lambda$ with $r_j -r_i \in D$.
\end{definition}

By compactness, we get the following \cite[Remark 4.4]{Kru}.

\begin{remark}\label{remark: thick} A definable, additively symmetric subset $D$ of $\RR$ is $\bigvee$-thick if and only if  for every $m \in \omega$ there exists a positive integer $N$ such that for every $r_0,\dots,r_{N-1} \in \bar X_m$ there are $i<j<N$ with $r_j -r_i\in D$. For any $N$ with this property, 
we will say that $D$ is {\em $N$-thick in $\bar X_m$}.
\end{remark}

More generally,

\begin{definition}\label{definition: thickness abstractly}
Let $Y$ be a symmetric subset of a group $G$ and $N$ be a positive integer. We say that a symmetric subset $D$ of $G$ is {\em $N$-thick in $Y$} if for every $r_0,\dots,r_{N-1} \in Y$ there are $i<j<N$ with $r_j^{-1}r_i\in D$. If $D \subseteq Y$, then instead of saying that $D$ is $N$-thick in $Y$  we will be also saying that $D$ is an {\em $N$-thick subset of $Y$}. By ``thick'' we mean ``$N$-thick'' for some $N>0$.
\end{definition}

\begin{remark}
If $D \subseteq G$ is $N$-thick in $Y \subseteq G$ (where $G$ is a group) and both $D$ and $Y$ are finite, then $|D| \geq \frac{|Y|}{N-1}$.
\end{remark}

\begin{proof}
From the definition of $N$-thickness it follows that $N>1$ and $Y$ is covered by $N-1$ left translates of $D$.
\end{proof}

The next remark is \cite[Remark 4.5]{Kru}.

\begin{remark}\label{remark: intersection of thick sets}
Every definable, additively symmetric subset of $\bar R$ which contains $(\RR,+)^{00}_M$ is thick. Thus, $(\RR,+)^{00}_M$ is the intersection of a downward directed family of $M$-definable $\bigvee$-thick subsets of $\RR$.
\end{remark}

We will prove now  Theorem \ref{theorem: thickness and sum-product}. The proof is an elaboration of the proof of the weaker \cite[Theorem 5.4]{Kru}, where it was assumed that there are no zero divisors. It is a rather standard ultraproduct argument but playing around with thick sets. Thickness in Theorem  \ref{theorem: thickness and sum-product} is of course with respect to the additive group $(R,+)$.

\begin{proof}[Proof of Theorem \ref{theorem: thickness and sum-product}]
Suppose for a contradiction that for every $n \in \omega$ there is a finite $K$-approximate subring $X_n$ for which there is no $n$-thick subset of $Y_n:=4X_n + X_n \cdot 4X_n$ consisting of zero divisors and $Y_n$ is not a subring. (Note that $|Y_n| \geq n$, so $|X_n|$ tends to $\infty$; note also that the sets $X_n$ and $Y_n$ have nothing to do with the sets $X_n$ recursively defined in the introduction.) Let $R_n:= \langle X_n \rangle$, all considered in the language of rings expanded by an additional relation symbol $P$ interpreted in $R_n$ as $P(R_n):=X_n$. Take a non-principal ultrafilter $\mathcal{U}$ on $\omega$, and let $M := \prod R_n/\mathcal{U}$ (the ultraproduct with respect to $\mathcal{U}$) and $X:= P(M)=\prod X_n/\mathcal{U}$. Then $M$ is a ring and $X$ is an infinite, definable $K$-approximate subring.  Let $R :=\langle X \rangle$. Pass to a monster model $\C \succ M$. 

By Fact \ref{fact: 00 exists}, the ideal $\bar R^{00}_M$ of $\bar R$ exists and is contained in $\bar Y:=4\bar X + \bar X \cdot 4\bar X$. Hence, by Remarks \ref{remark: intersection of thick sets} and \ref{remark: thick}, $\bar R^{00}_M =\bigcap \bar D_i$, where $\{D_i\}_{i \in I}$ is a downward directed family of $M$-definable thick subsets of $Y$. As $\bar R^{00}_M$ is a left ideal, by compactness (or rather $|M|^+$-saturation of $\C$), we conclude that there is $i \in I$ such that $\bar{D}_i (\bar Y \cdot \bar Y + \bar Y + \bar Y) \subseteq \bar Y$. Hence, $D_i (Y \cdot Y + Y + Y) \subseteq Y$ (where $Y:=4X + X \cdot 4X$). Denote $D_i$ by $D$. It is an $n$-thick subset of $Y$ for some $n$. We can write $D=D(M,(a_{1m})_{m<\omega}/\mathcal{U},\dots, (a_{km})_{m<\omega}/\mathcal{U})$, where $D(x,y_1,\dots,y_k)$ is a formula without parameters. Then there is $U \in \mathcal{U}$ such that for every $m \in U$, $D(R_m,a_{1m},\dots,a_{km})$ is an $n$-thick subset of $Y_m$ and $D(R_m,a_{1m},\dots,a_{km})(Y_m \cdot Y_m + Y_m +Y_m) \subseteq Y_m$. Since $Y_m$ is additively symmetric and not a subring, we see that $Y_m \subsetneq Y_m \cdot Y_m + Y_m +Y_m$, and so, by finiteness of $Y_m$, we conclude that $D(R_m,a_{1m},\dots,a_{km})$ consists of zero divisors. Taking $m \geq n$ in $U$ (it exists as $\mathcal{U}$ is non-principal), we get a contradiction with the choice of the $X_m$'s.

The fact that $K^{11}$ additive translates of $X$ cover $4X + X \cdot 4X$ is an easy computation. 
\end{proof}

Regarding Theorem \ref{theorem: NSOP}, first we recall the definition of NSOP (the non strict order property).

\begin{definition}\label{definition: NSOP}
Let $T$ be a consistent theory. We say that a formula $\varphi(x,y)$ has {\em SOP} if for every model $M$ of $T$ there is no sequence $(a_i)_{i<\omega}$ in $M$ with $\varphi(M,a_0) \subsetneq \varphi(M,a_1) \subsetneq \dots$. 
We say that $T$ has {\em SOP} if some formula has SOP. A structure $M$ has {\em SOP} if $\Th(M)$ has SOP. Finally, {\em NSOP} is defined as the negation of SOP. 
\end{definition}



\begin{proof}[Proof of Theorem \ref{theorem: NSOP}]
By Fact \ref{fact: 00 exists}, the ideal $\bar R^{00}_M$ of $\bar R$ exists and is contained in $\bar Y:=4\bar X + \bar X \cdot 4\bar X$. Hence, since by Remarks \ref{remark: intersection of thick sets} and \ref{remark: thick} the ideal ${\bar R}^{00}_M$ is the intersection of a downward directed family of $M$-definable thick subsets of $\bar Y$, there is an $M$-definable thick $D \subseteq Y:=4X + X\cdot 4X$ such that $\bar D (\bar Y \bar Y + \bar Y + \bar Y) \subseteq \bar Y$ and so $D (Y\cdot Y + Y + Y) \subseteq  Y$.

Suppose $Y$ is not a subring. Since $Y$ is additively symmetric, this implies that $Y \subsetneq Y \cdot Y + Y +Y$. We claim that $D$ consists of zero divisors. Otherwise,  there is $d \in D$ such that $x \mapsto dx$ is an injection from $R$ to $R$. Since $d(Y \cdot Y + Y +Y) \subseteq Y \subsetneq Y \cdot Y + Y +Y$, this leads to the following strictly decreasing chain of uniformly definable sets, contradicting NSOP:
$$d(YY +Y+Y) \supsetneq d^2(YY+Y+Y) \supsetneq d^3(YY+Y+Y) \supsetneq \dots.$$
In order to see that this sequence is indeed uniformly definable (i.e. of the form $(\varphi(M,a_n))_{n<\omega}$ for some formula $\varphi(x,y)$ and some parameters $a_n$), it is enough to show that there is $m\in \mathbb{N}$ such that $d^n \in X_m$ for all $n >0$ (because $\cdot$ is definable on any given $X_m$). Since $d \in Y$, we clearly have $d \in Y \subseteq X_3$, and for $n>1$ we have $d^n \in d^{n-1}(YY +Y+Y) \subseteq d(YY +Y+Y) \subseteq X_5$. 
\end{proof}

In order to prove Proposition \ref{proposition: infinite approximate subfields}, one has to slightly extend the context of Fact \ref{fact: 00 exists}. Namely, instead of assuming that $X$ is a definable approximate subring, it is enough to assume that $X$ is a definable in $M$ subset of a ring such that there is an increasing sequence $(X_n)_{n<\omega}$ of definable symmetric subsets of that ring such that $R:=\bigcup X_n$ is a subring, $X$ is contained in some $X_k$, for every $n \in \mathbb{N}$ there exists $m \in \mathbb{N}$ such that $X_n \cdot X_n \cup (X_n+X_n)\subseteq X_m$, finitely many additive translates of $X$ by elements of $R$ cover $X_n$, and $+$ and $\cdot$ restricted to $X_n$ are definable. (In other words, $R$ is a $\bigvee$-definable ring and $X$ is a definable generic subset of $R$.)  The proof of Fact \ref{fact: 00 exists} (i.e. Theorem 4.1 from \cite{Kru}) goes through word for word in this more general context. The only minor thing to be observed is that $(\bar R,+)^{00}_M$ exists, where $\bar R= \bigcup \bar X_n$. This follows from the fact that $\bar X_k$ is a definable approximate subring of $\bar R$ and so $(\langle \bar X_k \rangle,+)^{00}_M$ exists, as then clearly $(\bar R,+)^{00}_M=(\langle \bar X_k \rangle,+)^{00}_M$ exists.

\begin{proof}[Proof of Proposition  \ref{proposition: infinite approximate subfields}]
Take $R$ to be the subfield generated by $X$, and let $M$ be $R$ equipped with the full structure (i.e. all subsets of all finite Cartesian powers of $M$ are predicates). Presenting $R$ as $\bigcup_{n \in \mathbb{N}} \textrm{Alg}_{n+1}(X)$, we are in the context from the above paragraph.  So, using the aforementioned extension of Fact \ref{fact: 00 exists}, we have that the ideal $\bar R^{00}_M$ exists and equals $(\RR,+)^{00}_M +\bar R (\RR,+)^{00}_M = (\RR,+)^{00}_M +\bar X (\RR,+)^{00}_M \subseteq 4\bar X + \bar X \cdot 4\bar X$ (where by $\bar R$ we mean $\bigcup_{n >0} \textrm{Alg}_n(\bar X)$). On the other hand, $\bar R$ is a field, so does not have proper ideals, and hence $\bar R^{00}_M = \bar R$. Thus, $\bar R = 4\bar X + \bar X \cdot 4\bar X$, so $R = 4X + X \cdot 4X$.
\end{proof}

\begin{proof}[Proof of Corollary \ref{corollary: generics in fields}]
By Proposition \ref{proposition: infinite approximate subfields}, $R:=4X + X \cdot 4X$ is a subfield of $F$. If it was a proper subfield of $F$, then $(R,+)$ would be an infinite index subgroup of  $(F,+)$ which would contradict the assumption that $X$ is additively generic.
\end{proof}

\begin{proof}[Proof of Corollary \ref{corollary: classification of finite approximate subfields}]
If not, then a non-principal ultraproduct of a sequence forming a counter-example would contradict Proposition \ref{proposition: infinite approximate subfields}.
\end{proof}

Let us finish with  a question (that we did not really think about) on a potential extension of \cite[Lemma 5.2]{Bre} to infinite approximate subfields defined in the same way as the finite ones.

\begin{question}
Let $X$ be an approximate subfield. 
Is it true that $\textrm{Alg}_n(X)$ is covered by finitely many additive translates of $X$?
\end{question}

\appendix

\section{Pseudofinite approximate subrings}\label{appendix: pseudofiniteness}

There are several ways to formalize the idea of pseudofinitness. Below we describe a more or less self-contained way to formalize the notion of a pseudofinite approximate subring. We work with quotients (so imaginary sorts), as they are needed in Section \ref{section: structure of finite approximate rings}.


Consider the class $\mathcal{K}$ of all 2-sorted structures consisting of a countable ring $(G,+_G,\cdot_G)$ and the ordered field of reals $(\mathbb{R},+,\cdot,0,1, \leq, \mathbb{Z})$ (where the set of integers $\mathbb{Z}$ is considered as a predicate) together with a predicate $R \subseteq \mathbb{N} \times G \times \mathbb{N}$ defined as follows. Enumerate all functions $f \colon [n] \to G$ (where $n$ ranges over $\mathbb{N}$, and $[n]:=\{0,\dots,n-1\}$) as $f_0,f_1,\dots$, and declare that $R(a,b,c)$ holds in our structure if $c \in \mathbb{N}$ and $f_c(a)=b$. In particular, our language  is finite, and we will denote it by $\mathcal{L}$.

Let $T_0$ be the $\mathcal{L}$-theory which consists of all sentences which are true in all structures from the class $\mathcal{K}$. It is clearly consistent.

By a {\em pseudofinite approximate subring} we mean an approximate subring of the form $\varphi(G,a)/\psi(G,a)$ of the ring $H/\psi(G,a)$, where $M=(G,\mathbb{R}^*)$ is a model of $T_0$, $\varphi(x,a), \psi(x,a)$ are formulas with some parameters $a$ from $M$ such that $\psi(G,a)$ is a two-sided ideal of a definable subring $H$ of $G$, and there is $n \in \mathbb{N}^*$ (where $\mathbb{N}^*$ is the interpretation of $\mathbb{N}$ in $\mathbb{R}^*$) and definable bijection $f \colon [n] \to \varphi(G,a)/\psi(G,a)$ (where $[n]:=\{ i \in \mathbb{N}^*: i <n\}$).

The following remark trivially holds in every structure $M=(G,\mathbb{R})$ from the class $\mathcal{K}$.

\begin{remark}
For all formulas $\varphi(x,a),\psi(x,a)$, where $a$ is a tuple of parameters from $M$, such that $\psi(G,a)$ is an additive subgroup of $G$ and for all $n \in \mathbb{N}$, every function $f \colon [n] \to \varphi(G,a)/\psi(G,a)$ is induced by $R(x,y,c)$ for some $c \in \mathbb{N}$ in the sense that $\textrm{dom}(R(\cdot,\cdot,c))=[n]$ and $f(i) = y/\psi(G,a)$ for a unique $y \in G$ with $R(i,y,c)$.
\end{remark}

Therefore, the following holds in every model $M=(G,\mathbb{R}^*)$ of $T_0$.

\begin{remark}\label{remark: uniform definability}
For all formulas $\varphi(x,a),\psi(x,a)$, where $a$ is a tuple of parameters from $M$, such that $\psi(G,a)$ is an additive subgroup of $G$ and for all $n \in \mathbb{N}^*$, every definable function $f \colon [n] \to \varphi(G,a)/\psi(G,a)$ is induced by $R(x,y,c)$ for some $c \in \mathbb{N}^*$ in the sense that  $\textrm{dom}(R(\cdot,\cdot,c))=[n]$ and $f(i) = y/\psi(G,a)$ for a unique $y \in G$ with $R(i,y,c)$.
\end{remark}

The next remark also trivially holds in every structure $M=(G,\mathbb{R})$ from the class $\mathcal{K}$.

\begin{remark}
Let $\star \in \{+_G,\cdot_G\}$. For all formulas $\varphi(x,a),\psi(x,a)$, where $a$ is a tuple of parameters from $M$, such that $\psi(G,a)$ is a two-sided ideal of a definable subring of $G$, for every positive $n \in \mathbb{N}$ and every function $f \colon [n] \to \varphi(G,a)/\psi(G,a)$, there is $d \in \mathbb{N}$ with $\textrm{dom}(R(\cdot,\cdot,d)) =[n]$ such that $R(0,y,d) \rightarrow y/\psi(G,a)=f(0)$ and for every $\alpha+1 <n$ we have 
$$R(\alpha+1,y,d) \rightarrow (\exists y')(R(\alpha,y',d) \wedge y/\psi(G,a) = y'/\psi(G,a) \star f(\alpha)).$$
\end{remark}

Therefore, the following holds in every model $M=(G,\mathbb{R}^*)$ of $T_0$.

\begin{remark}\label{remark: initial products}
Let $\star \in \{+_G,\cdot_G\}$. For all formulas $\varphi(x,a),\psi(x,a)$, where $a$ is a tuple of parameters from $M$, such that $\psi(G,a)$ is a two-sided ideal of a definable subring $H$ of $G$, for every positive $n \in \mathbb{N}^*$ and every definable function $f \colon [n] \to \varphi(G,a)/\psi(G,a)$, there is $d \in \mathbb{N}$ with $\textrm{dom}(R(\cdot,\cdot,d)) =[n]$ such that $R(0,y,d) \rightarrow y/\psi(G,a)=f(0)$ and for every $\alpha+1 <n$ we have 
$$R(\alpha+1,y,d) \rightarrow (\exists y')(R(\alpha,y',d) \wedge y/\psi(G,a) = y'/\psi(G,a) \star f(\alpha)).$$
\end{remark}

The last remark tells us that whenever we have a definable pseudofinite sequence (i.e., a definable function $f \colon [n] \to \varphi(G,a)/\psi(G,a)$ for some $n \in \mathbb{N}^*$) of elements of  $\varphi(G,a)/\psi(G,a)$, then we have a well-defined definable pseudofinite sequence of all initial pseudofinite  sums [resp. products]. In particular, for any element $h \in H/\psi(G,a)$ and $n \in \mathbb{N}^*$ we have a well-defined element $n h$ (the $n$-fold sum of $h$). By a {\em definable pseudofinite sum [product]} we will mean the pseudofinite sum [product] of a definable pseudofinite sequence.

By Remarks \ref{remark: uniform definability} and \ref{remark: initial products}, we get 

\begin{corollary}\label{corollary: <X>*}
If $X$ is a pseudofinite approximate subring (of some $H/\psi(G,a)$), then there exists a definable set $\langle X \rangle^*$ consisting of all definable pseudofinite sums of all definable pseudofinite products of elements of $X$. Moreover, this is the smallest definable subring of $H/\psi(G,a)$ containing $X$.
\end{corollary}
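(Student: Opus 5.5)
The plan is to use the coding predicate $R$ to quantify over all definable pseudofinite sequences at once. Fix $X=\varphi(G,a)/\psi(G,a)\subseteq H/\psi(G,a)$. By Remark \ref{remark: uniform definability}, every definable $f\colon[n]\to X$ is induced by $R(\cdot,\cdot,c)$ for some $c\in\mathbb{N}^*$. Conversely, the condition ``$R(\cdot,\cdot,c)$ has domain some $[n]$ and takes values in $\varphi(G,a)$'' is first-order in $c$. By Remark \ref{remark: initial products}, for each such $c$ there is a code $d$ of the sequence of initial products. So
\[P:=\{\,\text{last term of } d : c \text{ codes a sequence in } X,\ d \text{ codes its initial products}\,\}\]
is definable, and it is exactly the set of definable pseudofinite products of elements of $X$. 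Applying Remark \ref{remark: initial products} again with $\star=+_G$ and with $P$ in place of $\varphi(G,a)/\psi(G,a)$, we define $\langle X\rangle^*$ in the same way as the set of last terms of initial-sum sequences of sequences in $P$. Well-definedness needs a small check: the last term does not depend on the choice of $d$. This holds because the recursion determines $d$ uniquely modulo $\psi(G,a)$, which follows from a first-order internal-induction argument, as in the next paragraph.

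Next we show that $\langle X\rangle^*$ is a subring, proving each closure property by transfer from the class $\mathcal{K}$. For every structure in $\mathcal{K}$ the following hold.
\begin{enumerate}
\item The concatenation of two coded sequences is coded.
\item The initial sums of a concatenation end in the sum of the two last terms.
\item The product of two pseudofinite sums is the pseudofinite sum of the sequence of pairwise products, indexed by $[n]\times[m]\cong[nm]$; this is generalized distributivity.
\item Multiplying the first factor of a product by $-1$ negates the product.
\end{enumerate}
Each of these is a single first-order sentence quantifying over codes $c,c',d,\dots\in\mathbb{N}$, uniformly in the parameters $a$, and with $\varphi,\psi$ fixed. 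They therefore belong to $T_0$ and hold in $M$. Together with the additive symmetry of $X$, this gives closure of $\langle X\rangle^*$ under $+$, $-$ and $\cdot$; it contains $0$, and $X\subseteq P\subseteq\langle X\rangle^*$ via length-one sequences.

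For minimality, let $S$ be any definable subring of $H/\psi(G,a)$ containing $X$. We use internal induction in $\mathbb{N}^*$: in every structure of $\mathcal{K}$, every nonempty subset of $\mathbb{N}$ definable with parameters has a least element, and this is expressed by a scheme of sentences in $T_0$. Given a code $d$ of initial products of a sequence in $X$, consider the set of $\alpha<n$ whose $\alpha$-th initial product lies outside $S$. This set is definable, and it cannot have a least element: the $0$-th term lies in $X\subseteq S$, and $S$ is closed under multiplication, so failure at $\alpha+1$ would force failure at $\alpha$. Hence the set is empty and $P\subseteq S$. The same argument with sums gives $\langle X\rangle^*\subseteq S$.

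The main obstacle is the bookkeeping in the second step. One must check that generalized distributivity and concatenation really are expressible as single first-order sentences about codes, valid in all of $\mathcal{K}$. The delicate point is reindexing $[n]\times[m]$ as $[nm]$ by a function definable from $n,m$ alone, so that the resulting statement is uniform in the codes.
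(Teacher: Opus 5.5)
Your proposal is correct. The definability part matches the paper: you use the coding predicate $R$ (Remark \ref{remark: uniform definability}) together with codes for initial products and initial sums (Remark \ref{remark: initial products}), applied twice.

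You diverge from the paper on the subring and minimality claims. The paper does not break these into primitive facts. It only observes that the formula $\theta(x,y)$ produced this way depends solely on $\varphi$ and $\psi$. Two statements are then first-order, quantified over all parameters:
\begin{enumerate}
\item ``$\theta(G,a)/\psi(G,a)$ is a subring'', under the hypotheses on $a$;
\item for each formula $\chi$, ``$\theta(G,a)/\psi(G,a)$ is contained in every subring $\chi(G,b)/\psi(G,a)$ containing $\varphi(G,a)/\psi(G,a)$'', which gives a scheme.
\end{enumerate}
Both hold in every structure of $\mathcal{K}$, where $\theta$ defines the subring actually generated by $X$, so they lie in $T_0$.

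Transferring the conclusions wholesale like this removes the bookkeeping you flag as the main obstacle: concatenation, generalized distributivity, and reindexing $[n]\times[m]$ as $[nm]$.

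Your route transfers only elementary facts and rebuilds the conclusions inside $M$, with minimality coming from an internal least-number principle. It is longer, but it makes explicit the induction the paper leaves implicit. It also checks that pseudofinite products are well defined, which the paper takes for granted.

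If you keep your route, your list of transferred facts needs two additions, both transferring the same way:
\begin{itemize}
\item the multiplicative analogue of item (2), to get $P\cdot P\subseteq P$, which item (3) already presupposes;
\item the fact that negating every term of a sum negates the sum, to get closure of $\langle X\rangle^*$ under $-$.
\end{itemize}
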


\begin{proof}
By Remarks \ref{remark: uniform definability} and \ref{remark: initial products}, we easily get that the set of all definable pseudofinite products of elements of $X$ is definable. Applying again Remarks \ref{remark: uniform definability} and \ref{remark: initial products} to the obtained definable set, the conclusion follows. For the moreover part
note that working with explicit formulas, we obtain a formula $\theta(x,y)$ depending only on $\varphi(x,y)$ and $\psi(x,y)$ (and not on $M$, $H$, $a$) such that $\theta(x,a)$ defines $\langle X \rangle^*$ in $M$ in the sense that $\theta(G,a)/\psi(G,a) = \langle X \rangle^*$. Now, the fact that it is the smallest definable subring containing $X$ follows from the observation that this is true in every structure from $\mathcal{K}$.
\end{proof}

These kinds of properties are used throughout the paper. We will not justify them, as they follow easily from the above remarks. 

Note also that whenever we have a pseudofinite approximate subring $X$ of $H/\psi(G,a)$, $R:=\langle X \rangle^*$, and $I \lhd R$ is definable, then $X/I$ is naturally a pseudofinite approximate subring of the quotient $R/I$. This is because $R/I$ can be naturally identified with $H'/J$ for some definable ideal $J \supseteq \psi(G,a)$ of a definable subring $H'$ of $H$.


\section{Unitality and finite dimensional algebras}\label{section: unitality}
Standard references on associative algebras usually assume unitality of the algebras, whereas in this paper we do not have this assumption. Herstein's book \cite{Herstein} presents the theory of non-commutative rings which are not necessarily unital, but it does not fully develop the parallel theory for non-unital algebras. The goal of the short discussion below is to show that semi-simple, finite-dimensional algebras are always unital, and so the whole standard knowledge is at our disposal.

Let $A$ be a (not necessarily unital) $K$-algebra, where $K$ is a field. By ideals we mean ideals in the sense of algebras (not just rings). By the {\em Jacobson radical}, denoted by $J(A)$, we mean the subset of $A$ defined as the intersection of all regular maximal left ideals of $A$ if at least one regular left ideal exists and $J(A):=A$ otherwise. ({\em Regularity} of a left ideal $I$ means that there is $a \in A$ such that for every $x \in A$ we have $x-xa \in I$; if $A$ is unital, then every left ideal is regular, so this property may be removed from the definition.) We say that $A$ is {\em semi-simple} if $J(A) =\{0\}$. It trivially follows from the definitions that $A/J(A)$ is a semi-simple algebra provided that $J(A)$ is a two-sided ideal (which for instance is the case for unital algebras by Proposition 4.1 in \cite{PierceAssociativeAlgebras}).

The next result is contained in Theorem 3.5 of \cite{PierceAssociativeAlgebras}.

\begin{theorem}[Wedderburn's Theorem]\label{theorem: Wedderburn} A semi-simple, unital, finite-dimensional (or Artinian) $K$-algebra $A$ is isomorphic with a product of finitely many algebras of matrices with coefficients from some division algebras over $K$. Conversely, every product of finitely many such matrix algebras is a finite-dimensional, unital, semi-simple algebra.
\end{theorem}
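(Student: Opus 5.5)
The statement is Wedderburn's theorem, which the paper cites from Pierce; I would reproduce the classical module-theoretic proof rather than anything specific to this paper. Throughout, $A$ is unital, semi-simple and finite-dimensional over $K$; the Artinian case runs the same way with descending chains in place of dimension counts.

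The first step is to show that the left regular module ${}_AA$ is a finite direct sum of minimal left ideals. Because $A$ is unital, every left ideal is regular. Finite dimension gives finitely many maximal left ideals $L_1,\dots,L_k$ whose intersection is already $J(A)=\{0\}$: choose them one at a time so that the dimension of the intersection strictly drops. Then the natural map $A\to\bigoplus_i A/L_i$ is injective. Each $A/L_i$ is simple, so $A$ embeds into a semisimple module. A submodule of a semisimple module is semisimple, and therefore ${}_AA$ is a direct sum of simple submodules.

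Second, I group isomorphic simple summands. If $S$ and $S'$ are simple and non-isomorphic, then $\operatorname{Hom}_A(S,S')=0$ by Schur's lemma. The isotypic components $B_1,\dots,B_r$ of ${}_AA$ are then two-sided ideals: left multiplication preserves them, and right multiplication by $a$ is an $A$-endomorphism of ${}_AA$, so it maps each isotypic component into itself. This gives $A\cong B_1\times\dots\times B_r$ as rings, and each $B_j$ is unital with identity equal to the component of $1$.

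Third, I compute each factor through endomorphism rings. For unital $A$ the standard identification is $A^{op}\cong \operatorname{End}_A({}_AA)$, sending $a$ to right multiplication by $a$. If $B_j\cong S_j^{n_j}$, then Schur's lemma gives $\operatorname{End}_A(S_j^{n_j})\cong M_{n_j}(D_j')$, where $D_j'=\operatorname{End}_A(S_j)$ is a division algebra, and it is finite-dimensional over $K$. Since $M_n(D')^{op}\cong M_n(D'^{op})$, we get $A\cong\prod_j M_{n_j}(D_j)$ with $D_j:=D_j'^{op}$.

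For the converse, $M_n(D)$ is unital and finite-dimensional. It is simple: starting from any nonzero element, multiplying by matrix units produces every $E_{ij}$. A simple unital ring has $J=\{0\}$, because $J$ is a proper two-sided ideal (in the unital case, by Pierce Prop.~4.1) and hence is zero. A finite product of such rings is again unital and finite-dimensional. Its Jacobson radical is the product of the radicals of the factors, because the maximal left ideals of a product are those that are maximal in one coordinate and everything in the others. So the product is semi-simple.

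The main obstacle is the first step, passing from $J(A)=0$ to semisimplicity of ${}_AA$. That is where unitality and the finiteness condition are really used, the finiteness to get the finite intersection of maximal left ideals; the remaining steps are formal bookkeeping with Schur's lemma.
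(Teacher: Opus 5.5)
Your proof is correct. It is the standard module-theoretic argument: $J(A)=0$ forces ${}_AA$ to be semisimple, then you decompose into isotypic components and use $A^{op}\cong\operatorname{End}_A({}_AA)$ together with Schur's lemma. The paper gives no proof of its own and simply defers to this textbook argument by citing Pierce's Theorem 3.5.

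Your first step is the useful link between the paper's radical-based definition of semi-simplicity and the module-theoretic one under which the structure theorem is usually stated. In the converse, say explicitly that the division algebras are finite-dimensional over $K$; this is implicit in the statement, but without it $M_n(D)$ is not finite-dimensional.
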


For the next result see Prop. 4.4 \cite{PierceAssociativeAlgebras}

\begin{fact}\label{fact: nilpotent Jacobson for algebras}
The Jacobson radical of an Artinian unital algebra is nilpotent. In particular, any finite-dimensional, unital $K$-algebra has nilpotent Jacobson radical.
\end{fact}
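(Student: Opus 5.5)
The plan is the classical argument: first force the powers of $J:=J(A)$ to stabilise, then use minimality to produce a nonzero element fixed by left multiplication by an element of $J$, which is impossible because $1-x$ is invertible for $x\in J$. Throughout, $A$ is unital, so every left ideal is regular. I take the following two standard facts about the Jacobson radical of a unital algebra as known (Proposition 4.1 and the surrounding material in \cite{PierceAssociativeAlgebras}):
\begin{enumerate}
\item $J$ is a two-sided ideal;
\item for every $x\in J$ the element $1-x$ is invertible, in particular left invertible.
\end{enumerate}
Fact (2) follows because otherwise $A(1-x)$ is a proper left ideal, hence lies in some maximal left ideal $\mathfrak m$; since also $x\in J\subseteq \mathfrak m$, we would get $1\in\mathfrak m$.

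\emph{Step 1 (stabilisation).} Since $J$ is a two-sided ideal, $J\supseteq J^2\supseteq J^3\supseteq\cdots$ is a descending chain of left ideals. By the Artinian hypothesis there is $n$ with $J^n=J^{n+1}$, and hence $J^n=J^{2n}$. Suppose, towards a contradiction, that $J^n\neq\{0\}$.

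\emph{Step 2 (minimal counterexample).} Consider the family of left ideals $L$ with $J^nL\neq\{0\}$. It is nonempty, since it contains $L=A$: indeed $J^n A\supseteq J^n\cdot 1\neq\{0\}$. By the Artinian hypothesis it has a minimal member $L$. Pick $a\in L$ with $J^na\neq\{0\}$. Then $J^na$ is a left ideal contained in $L$, and
\[J^n(J^na)=J^{2n}a=J^na\neq\{0\},\]
so $J^na$ belongs to the family. By minimality, $J^na=L\ni a$.

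\emph{Step 3 (contradiction).} From $a\in J^na$ we get $a=xa$ for some $x\in J^n\subseteq J$. Thus $(1-x)a=0$. Multiplying on the left by an inverse of $1-x$, which exists by Fact (2), gives $a=0$, contradicting $J^na\neq\{0\}$. Hence $J^n=\{0\}$, and $J$ is nilpotent.

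For the ``in particular'' part, a finite-dimensional unital $K$-algebra is Artinian: every left ideal is a $K$-subspace, so any strictly descending chain of left ideals has length at most $\dim_K A$. The first part then applies.

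The main obstacle I expect is Step 2. One must choose the family so that minimality really yields $J^na=L$, and this relies on $J^{2n}=J^n$, which is why the stabilisation in Step 1 is needed first. Everything else reduces to the two standard facts (1) and (2) about the radical, which I would import rather than reprove.
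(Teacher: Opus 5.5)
Your argument is correct: it is the standard textbook proof, namely stabilisation $J^n=J^{2n}$, then a minimal left ideal $L$ with $J^nL\neq\{0\}$, then $a=xa$ with $x\in J$, contradicted by left invertibility of $1-x$. The paper does not prove this fact but cites \cite[Prop.~4.4]{PierceAssociativeAlgebras}, so your route is essentially the one being invoked. One small point: Step 3 needs only a left inverse of $1-x$, which is exactly what your justification of (2) establishes, so full invertibility need not be imported.
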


\begin{lemma}\label{lemma: J(A)=J(A')}
Let $A$ be a finite-dimensional (not necessarily unital) $K$-algebra and let $A':=K \oplus A$ be the unitization of $A$.  Then $J(A) = J(A')$.
\end{lemma}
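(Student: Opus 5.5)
We prove the two inclusions $J(A)\subseteq J(A')$ and $J(A')\subseteq J(A)$ by relating regular maximal left ideals of $A$ to maximal left ideals of $A'$. First we dispose of a degenerate case. If $A$ has no regular maximal left ideal, then $J(A)=A$, and we must show $A\subseteq J(A')$. We will show that every maximal left ideal $L$ of $A'$ contains $A$. Since $A$ is a two-sided ideal of $A'$ with $A'/A\cong K$, the set $L+A$ is either $L$ or $A'$. Suppose $L+A=A'$. Then $L\cap A$ is a left ideal of $A$ with $A/(L\cap A)\cong A'/L$. This quotient is a simple $A'$-module, hence a simple $A$-module (note $A$ acts nontrivially, otherwise $A'/L$ is killed by $A$ and then $A\subseteq L$). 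We then check that $L\cap A$ is regular maximal: write $1=\ell+a$ with $\ell\in L$, $a\in A$; for $x\in A$ we get $x-xa=x\ell\in L\cap A$. So $L\cap A$ is a regular maximal left ideal of $A$, contradicting the assumption. Hence every maximal left ideal of $A'$ contains $A$, and $A\subseteq J(A')$.

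In the general case, we set up the correspondence. The paragraph above already gives a map $L\mapsto L\cap A$ from maximal left ideals $L$ of $A'$ with $A\not\subseteq L$ to regular maximal left ideals of $A$. Conversely, given a regular maximal left ideal $I$ of $A$ with witness $a$, we put $L:=\{\lambda+y:\ \lambda\in K,\ y\in A,\ \lambda a+y\in I\}$, i.e.\ $L=I+K(1-a)$. This is a left ideal of $A'$ because $x(1-a)=x-xa\in I$ for $x\in A$. It is proper, since $a\notin I$: otherwise $x\in I$ for all $x$. It is maximal with $L\cap A=I$, because $A'=L+A$. The only maximal left ideal of $A'$ containing $A$ is $A$ itself, since $A'/A\cong K$.

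Now we prove the two inclusions. For $J(A')\subseteq J(A)$: in the nondegenerate case, every regular maximal $I$ of $A$ equals $L\cap A$ for some maximal $L$ of $A'$, and $J(A')\subseteq A$ because $A$ is a maximal left ideal of $A'$. Hence $J(A')\subseteq\bigcap_L (L\cap A)=J(A)$. For the reverse inclusion, $J(A)\subseteq A$ is automatic, and $J(A)\subseteq L\cap A\subseteq L$ for every maximal $L$ of $A'$ not containing $A$. Therefore $J(A)\subseteq J(A')$. Finite-dimensionality is not really needed here, except that it guarantees existence of maximal left ideals in $A'$.

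The main obstacle is the verification that $L\cap A$ is maximal, and not merely a left ideal, in $A$. We handle it through the isomorphism $A/(L\cap A)\cong A'/L$ of $A$-modules: any $A$-submodule of $A'/L$ is an $A'$-submodule, because $K$ acts by scalars. So simplicity transfers, and maximality follows.
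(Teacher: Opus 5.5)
Your proof is correct. It handles the inclusion $J(A)\subseteq J(A')$ by a genuinely different route from the paper.

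For $J(A')\subseteq J(A)$ you do essentially what the paper does. The paper's left ideal $(1-a)+I$ of $A'$ is exactly your $L=I+K(1-a)$. The paper only checks that this ideal is proper, extends it to a maximal left ideal $J'$ of $A'$, and uses maximality of $I$ to get $J'\cap A=I$. You instead verify directly that $L$ is already maximal with $L\cap A=I$.

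For $J(A)\subseteq J(A')$ the paper relies on finite-dimensionality and Wedderburn's theorem. Since $A'/J(A')$ is a finite product of simple matrix algebras, the two-sided ideal $A/J(A')$ is a subproduct, hence semi-simple, and this forces $J(A)\subseteq J(A')$. You instead show that every maximal left ideal $L$ of $A'$ either contains $A$ or meets it in a regular maximal left ideal of $A$, with witness the $A$-component of $1$ in $A'=L+A$. This is the standard argument behind $J(I)=I\cap J(R)$ for a two-sided ideal $I$ of a ring $R$, and it also covers the degenerate case $J(A)=A$ without structure theory.

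Your route is elementary and self-contained, and it proves the lemma for arbitrary $K$-algebras. Even the existence of maximal left ideals of $A'$, which you mention as the one place finite-dimensionality might be needed, is never used, since your $L$ is built explicitly. The paper's argument is shorter only because Wedderburn's theorem is already on hand for the corollaries that follow, and it genuinely depends on $A'$ being finite-dimensional (or Artinian).
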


\begin{proof}
Note that $A$ is a maximal left ideal of $A'$, so $J(A') \subseteq A$.


First, we show that $J(A') \subseteq J(A)$. The case when $J(A)=A$ is clear by the above observation. In the remaining case, let $I$ be a regular maximal left ideal of $A$, in particular there is $a \in A$ such that $x-xa \in I$ for all $x \in A$. Note that $I$ is a left ideal of $A'$. Consider any $r \in A \setminus I$. Let $J:=(1-a) + I$, where $(1-a)$ is the left ideal of $A'$ generated by $1-a$. We claim that $r \notin J$. Otherwise, $r=(k+s)(1-a) +i= k -ka +s -sa +i$ for some $k \in K$, $s \in A$, $i \in I$. Since $r,a,s,i \in A$, we get that $k=0$, but then the fact that $s-sa, i \in I$ yields $r \in I$, a contradiction. Thus, we can extend $J$ to a maximal left ideal $J'$ of $A'$. Since $1-a \in J$, we get that $a \notin J'$. So $J' \cap A$ is a proper left ideal of $A$ extending $I$, and hence $J' \cap A=I$ by maximality of $I$. So $J(A') \subseteq I$. This shows that  $J(A') \subseteq J(A)$.

On the other hand, by Theorem \ref{theorem: Wedderburn}, $A'/J(A')$ is a product of finitely many matrix algebras over some division rings, so the two-sided ideal $A/J(A')$ of $A'/J(A')$ is a subproduct of some of these matrix algebras (because all these matrix algebras are simple), and hence it is a semi-simple algebra. Thus, $J(A) \subseteq J(A')$.
\end{proof}

\begin{corollary}\label{corollary: semisimplicity in unitization}
Let $A$ be a finite-dimensional $K$-algebra.
\begin{enumerate}
\item $J(A)$ is a nilpotent two-sided ideal.
\item $A$ is semi-simple iff its unitization $A'$ is semi-simple.
\end{enumerate}
\end{corollary}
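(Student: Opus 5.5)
Item (1) follows from Fact \ref{fact: nilpotent Jacobson for algebras} applied to the unitization, once we know $J(A)$ is a two-sided ideal of $A$. Item (2) is a direct consequence of Lemma \ref{lemma: J(A)=J(A')}.

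For (1), let $A' := K \oplus A$. This is a finite-dimensional unital $K$-algebra, so by Fact \ref{fact: nilpotent Jacobson for algebras} its radical $J(A')$ is nilpotent. It is also a two-sided ideal of $A'$, by \cite[Proposition 4.1]{PierceAssociativeAlgebras} as recalled above. By Lemma \ref{lemma: J(A)=J(A')}, $J(A) = J(A')$. As observed at the start of the proof of that lemma, $A$ is a maximal left ideal of $A'$, so $J(A') \subseteq A$. Hence $J(A)$ is a subset of $A$, and it is closed under left and right multiplication by elements of $A' \supseteq A$. It is therefore a two-sided ideal of $A$. It is nilpotent because $J(A)^n = J(A')^n = \{0\}$ for suitable $n$.

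For (2), $A$ is semi-simple iff $J(A) = \{0\}$. By Lemma \ref{lemma: J(A)=J(A')} this holds iff $J(A') = \{0\}$, i.e.\ iff $A'$ is semi-simple.

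The only point needing care is that the ideal property and nilpotency are transferred through the identification $J(A)=J(A')$ rather than proved directly for the non-unital algebra. Lemma \ref{lemma: J(A)=J(A')} does all the real work, so I do not expect any substantial obstacle.
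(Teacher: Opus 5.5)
Your proposal is correct and follows the intended argument. The paper gives no separate proof of this corollary; it is meant as an immediate consequence of Lemma~\ref{lemma: J(A)=J(A')} together with Fact~\ref{fact: nilpotent Jacobson for algebras} (and the two-sidedness of $J(A')$ for the unital algebra $A'$), and transferring both the ideal property and nilpotency through $J(A)=J(A')\subseteq A$, as you do, is exactly what is needed.
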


\begin{corollary} [Wedderburn's theorem and unitality]\label{Cor: Unitality}
A semi-simple, finite-dimensional $K$-algebra $A$ is isomorphic with a product of finitely many algebras of matrices with coefficients from some $K$-division algebras. In consequence, a semi-simple, finite-dimensional $K$-algebra $A$ is always unital.
\end{corollary}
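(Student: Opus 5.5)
The statement is Corollary~\ref{Cor: Unitality}. I would reduce it to the unital Wedderburn theorem (Theorem~\ref{theorem: Wedderburn}) by passing to the unitization $A':=K\oplus A$, and then show that the semi-simple algebra $A$ is a direct factor of $A'$.

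First, by Corollary~\ref{corollary: semisimplicity in unitization}(2), semi-simplicity of $A$ gives semi-simplicity of $A'$. Since $A'$ is unital and finite-dimensional, Theorem~\ref{theorem: Wedderburn} yields an isomorphism $A'\cong \prod_{i=1}^k M_{n_i}(D_i)$ with each $D_i$ a division algebra over $K$.

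Next, $A$ is a two-sided ideal of $A'$, of codimension $1$. Each factor $M_{n_i}(D_i)$ is a simple unital algebra. The two-sided ideals of a finite product of simple unital algebras are exactly the subproducts: if $I$ is an ideal, multiplying by the central idempotents $e_i$ (the units of the factors) gives $I=\bigoplus_i e_iI$, and each $e_iI$ is an ideal of the $i$-th factor, hence $0$ or the whole factor. Therefore $A\cong\prod_{i\in S}M_{n_i}(D_i)$ for some $S\subseteq\{1,\dots,k\}$. This gives the first claim, and $A$ is unital with unit $\sum_{i\in S}e_i$.

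The main point requiring care is the subproduct description of ideals. In particular, one must use ideals in the algebra sense, that is, $K$-subspaces closed under two-sided multiplication. That $e_iI\subseteq I$ uses $e_i\in A'$ and that $I$ is an ideal of $A'$. That $e_iI$ is an ideal of the $i$-th factor uses centrality of $e_i$. The same subproduct argument already appears in the last paragraph of the proof of Lemma~\ref{lemma: J(A)=J(A')}, so I would cite it there.
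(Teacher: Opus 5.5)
Your proposal is correct and follows the paper's proof essentially verbatim. Both pass to the unitization $A'=K\oplus A$, use Corollary~\ref{corollary: semisimplicity in unitization}(2) and Theorem~\ref{theorem: Wedderburn} to write $A'$ as a product of matrix algebras, and identify the two-sided ideal $A$ as a subproduct as in the proof of Lemma~\ref{lemma: J(A)=J(A')}. Your central-idempotent argument for why ideals of such a product are subproducts spells out a step the paper leaves implicit.
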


\begin{proof}
Let $A'$ be the unitization of $A$. By Corollary \ref{corollary: semisimplicity in unitization}(2), $A'$ is also semi-simple. Hence, by Theorem \ref{theorem: Wedderburn}, it is isomorphic to the product of matrix algebras over some division rings. Hence, as in the proof of Lemma \ref{lemma: J(A)=J(A')}, the two-sided ideal $A$ of $A'$ is a subproduct of some of these matrix algebras. In particular, $A$ is of the desired form and so unital.
\end{proof}

\nocite{*}
\printbibliography
\end{document}